\documentclass[11pt,a4paper,fleqn]{article}
\usepackage[T1]{fontenc}
\usepackage{lmodern}
\usepackage{amssymb,amsthm,mathtools,mathrsfs}
\usepackage{enumitem,microtype}
\usepackage[margin=23mm]{geometry}
\usepackage{xcolor}
\usepackage[numbers,sort&compress]{natbib}
\usepackage{hyperref}
\usepackage[capitalise,nameinlink,noabbrev]{cleveref}
\hypersetup{colorlinks=true,linkcolor=blue,citecolor=blue,urlcolor=blue,
 pdftitle={Pure Type-One Generalized Hypercomplex Structures in Dimension Four},
 pdfauthor={Guangzhen Ren}}
\allowdisplaybreaks[1]
\numberwithin{equation}{section}
\newcommand{\RR}{\mathbb R}
\newcommand{\ZZ}{\mathbb Z}
\newcommand{\QQ}{\mathbb Q}
\newcommand{\ii}{\mathrm{i}}
\newcommand{\Id}{\operatorname{Id}}
\newcommand{\Span}{\operatorname{span}}
\newcommand{\rank}{\operatorname{rank}}
\newcommand{\vol}{\operatorname{vol}}
\newcommand{\im}{\operatorname{im}}
\newcommand{\Lie}{\mathcal L}
\newcommand{\Ric}{\operatorname{Ric}}
\newcommand{\diver}{\operatorname{div}}
\theoremstyle{plain}
\newtheorem{maintheorem}{Theorem}

\newtheorem{proposition}{Proposition}[section]
\newtheorem{lemma}[proposition]{Lemma}
\newtheorem{corollary}[proposition]{Corollary}
\theoremstyle{definition}
\newtheorem{example}[proposition]{Example}
\newtheorem{remark}[proposition]{Remark}
\crefname{maintheorem}{Theorem}{Theorems}
\Crefname{maintheorem}{Theorem}{Theorems}
\crefname{proposition}{Proposition}{Propositions}
\crefname{lemma}{Lemma}{Lemmas}
\crefname{corollary}{Corollary}{Corollaries}
\crefname{example}{Example}{Examples}
\crefname{remark}{Remark}{Remarks}

\title{Pure Type-One Generalized Hypercomplex\\Structures in Dimension Four}
\author{Guangzhen Ren\thanks{\raggedright Department of Mathematics,
Zhejiang International Studies University, Hangzhou, China.
Email: \texttt{gzren@zisu.edu.cn}.}}
\date{}

\begin{document}
\maketitle

\begin{abstract}
We classify untwisted generalized hypercomplex structures on four-manifolds whose entire sphere has type one. Their Poisson tensors determine a canonical line field, and their local geometry is described by Gauduchon--Tod coframes. On a closed manifold, the canonical foliation is automatically Riemannian: the non-Riemannian alternative would require a twisting three-form with a nonzero period. Without assuming a circle action, we classify the compact structures into four families, distinguished by their first Betti numbers \(b_1=1,2,3,4\): Berger quotients, flat axis quotients, nontrivial circle bundles over flat three-tori, and constant tori. The underlying manifolds are parallelizable and admit symplectic forms precisely when the intrinsic Gauduchon--Tod invariant vanishes. These results address the four-dimensional untwisted case of the intermediate-type problem posed by Fino and Grantcharov.
\end{abstract}

\noindent\textbf{Keywords.} Generalized hypercomplex structure; Gauduchon--Tod geometry; Riemannian flow; Compact classification.

\noindent\textbf{2020 Mathematics Subject Classification.}
Primary 53D18; Secondary 53C12, 53C26, 53C28, 57R30.

\section{Introduction}
Generalized hypercomplex geometry includes ordinary hypercomplex
and holomorphic symplectic structures in a common twistor framework
\cite{Stienon2009,FinoGrantcharov2025}. The type distinguishes these
classical geometries from spheres of intermediate type.

An orthogonal quaternionic triple
\[ \mathcal J_i\mathcal J_j=-\delta_{ij}\Id
                         +\varepsilon_{ijk}\mathcal J_k\]
on \(TM\oplus T^*M\) is generalized hypercomplex if every member
\(\mathcal J_a=\sum_i a_i\mathcal J_i\), \(a\in S^2\), is an
integrable generalized complex structure
\cite{Hitchin2003,Gualtieri2011,Stienon2009}.
It suffices that two anticommuting generators are integrable:
all six Nijenhuis concomitants then vanish
\cite[Proposition~12, (i)$\Rightarrow$(iii)]{Stienon2009}, and
bilinearity gives integrability of the sphere.
We study the \emph{odd} sector in real dimension four. Pure-spinor
parity is locally constant on $M\times S^2$, and type lies in
$\{0,1,2\}$ \cite[Definition~1.1 and Proposition~1.3]{Gualtieri2011}.
Thus, on connected $M$, type one at one point and one parameter
forces type one everywhere: every real Poisson tensor $\pi_a$ has
rank two. We call these structures \emph{pure type-one}.
Throughout, triples are globally ordered, all data are smooth,
and the initial Courant bracket is untwisted; compact manifolds
have no boundary. Equivalence means a diffeomorphism, a common
closed real $B$-field and a constant $\operatorname{SO}(3)$ rotation.

Fino and Grantcharov ask for a description when the twistor-space
type lies in $2\leq\operatorname{type}(\mathcal I_Z)\leq2n$ on a
$4n$-dimensional base \cite[p.~106, after Theorem~4.4]{FinoGrantcharov2025}.
For $n=1$ this interval consists of the single value $2$.
The twistor structure $\mathcal I_Z$ on $Z=M\times\mathbb{CP}^1$
restricts to $\mathcal J_a$ in the $M$-directions and to the ordinary
complex structure in the sphere directions
\cite[Section~4, Theorem~4.2]{FinoGrantcharov2025}.
The latter contributes no Poisson block, so
\[\operatorname{type}(\mathcal I_Z)(x,a)
 =\frac{6-\rank\pi_a(x)}2
 =1+\operatorname{type}(\mathcal J_a)(x).\]
Thus, on a connected four-manifold, intermediate twistor type at
one point forces twistor type two everywhere. Theorem~A describes
this sector for the untwisted Courant bracket; Theorem~B classifies
its compact members. Their untwisted Kodaira--Thurston example
\cite[Proposition~5.3]{FinoGrantcharov2025}, whose entire sphere
has type one, belongs to the circle-bundle family with $b_1=3$ below;
its classification parameters are computed in \cref{ex:FG-parameters}.

Pantilie classified generalized quaternionic vector spaces up to
linear \(B\)-field transformations
\cite[Theorem~4.4]{Pantilie2014} and constructed generalized
quaternionic structures from three-dimensional Einstein--Weyl
geometry \cite[Example~5.3]{Pantilie2014}.
Our pointwise normal form specializes that algebraic framework to
dimension four. We determine its realization by an integrable
ordered sphere, equivalence under closed $B$-fields, and the
restrictions imposed by compactness and the untwisted bracket.
The foliation criterion of Chen and Nie \cite{ChenNie2021} concerns
the existence of a single odd-type generalized complex structure,
allowing a closed twisting three-form; the simultaneous untwisted
quaternionic sphere imposes additional conditions.
Ren, Tang and Wu \cite{RenTangWu2026} construct generalized
hypercomplex structures from rank-three generalized Clifford data
and prove integrability of the associated twistor structure.

The essential local datum is a Gauduchon--Tod (GT) coframe
$\eta=(\eta^1,\eta^2,\eta^3)$ on a three-manifold $N$. Set
$h:=\sum_{i=1}^{3}\eta^i\otimes\eta^i$ and orient $N$ by the
coframe. The GT equations are
\begin{equation}\label{eq:GT}
 d\eta^i=\theta\wedge\eta^i-\kappa *_h\eta^i,
 \qquad i=1,2,3,
\end{equation}
where $\theta\in\Omega^1(N)$ is a real one-form and
$\kappa\in C^\infty(N,\mathbb R)$ is a real-valued function.
For a fixed coframe, \eqref{eq:GT} determines $\theta$ and
$\kappa$ uniquely: wedging the difference of two such systems
with $\eta^i$ gives equality of $\kappa$, and the remaining
three identities give equality of $\theta$.
These equations and their hyperCR interpretation are classical
\cite{GauduchonTod1998,CalderbankPedersen2000}.
For invariant circle-bundle models, the Courant isomorphism of
Cavalcanti and Gualtieri \cite{CavalcantiGualtieri2010} also transports
generalized structures by T-duality. Here the canonical foliation
is recovered intrinsically from the Poisson tensors; no circle
action or invariance is assumed in the compact classification.

The local normal form supplies a transverse Weyl connection and
an ordered transverse projective frame. Rigal's rigidity theorem
reduces the non-Riemannian case to a positive scalar-affine structure;
the GT equations then force $\kappa=0$.
The resulting Hopf model requires a twist with nonzero three-form
period, whereas an untwisted lift would make that twist exact.
This proves automatic Riemannianity. The nonflat branch is then
taut and has Berger transverse geometry; in the flat branch, the
developing image separates the axis models from the Euclidean
models. Elliptic equations determine the remaining lifting data,
and preservation of the ordered coframe determines the deck groups
and the finite-covering criteria.

\begin{maintheorem}\label{thm:main-local}
An untwisted odd generalized hypercomplex structure on a
four-manifold $M$ determines a unique line bundle $L\subset TM$
such that
\[
 \Span\{\pi_1,\pi_2,\pi_3\}=L\wedge TM,
 \qquad TM\cong L^{\oplus4}.\]
In suitable local coordinates $(t,x)\in I\times N^3$, and up to
a common closed real $B$-field transformation, the pure spinors
of the triple are
\begin{equation}\label{eq:normal-spinors}
 \rho_i
 =e^{-t*_h\theta}
  (\eta^j-\ii\eta^k)
  e^{\ii\,dt\wedge\eta^i},
 \qquad (i,j,k)\text{ cyclic}.
\end{equation}
Here $\eta=(\eta^1,\eta^2,\eta^3)$ is a coframe on $N$ satisfying
the GT equations~\eqref{eq:GT} for a real one-form $\theta$ and
a real-valued function $\kappa$, and
$
 h=\sum_{i=1}^{3}\eta^i\otimes\eta^i.$
The transverse forms are understood to be pulled back from $N$
to $I\times N$. Conversely, every such GT coframe defines an
untwisted pure type-one generalized hypercomplex structure.

Two such local models are equivalent if and only if
\begin{equation}\label{eq:main-local-equivalence}
 \psi^*\eta'=a^{-1}R\eta
\end{equation}
for a local diffeomorphism $\psi$, a nowhere-zero real function
$a$, and a constant matrix $R\in\operatorname{SO}(3)$. In this
case,
\[
 \psi^*\theta'=\theta-d\log|a|,
 \qquad
 \psi^*\kappa'=a\kappa.
\]
The transverse equivalence is realized by
$(t,x)\mapsto(a(x)t,\psi(x))$. The additional time translations
and their closed $B$-field compensators are described in
\cref{thm:classification}.
\end{maintheorem}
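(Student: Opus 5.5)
The proof splits into three stages: pointwise linear algebra, local integrability, and equivalence.

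\emph{Linear algebra.} At each point, every $\mathcal J_a$ has type one, so its pure spinor has the form $\rho_a=e^{B_a+\ii\omega_a}\Omega_a$ with $\Omega_a$ a one-form. The real Poisson tensor $\pi_a$ is the $T^*M\to TM$ block of $\mathcal J_a$ and depends linearly on $a$. Every nonzero combination of $\pi_1,\pi_2,\pi_3$ is therefore a decomposable bivector of rank two. A three-dimensional space of decomposable bivectors in $\Lambda^2\RR^4$ lies inside the Klein quadric, so it is either of the form $\ell\wedge\RR^4$ for a line $\ell$, or equal to $\Lambda^2 P$ for a $3$-plane $P$.

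The second alternative would force all the kernels $\ker\pi_a\supset\mathrm{Ann}(P)$ to share a common covector. I expect this to contradict the quaternionic relations together with orthogonality of the triple: $\mathcal J_i\mathcal J_j=\varepsilon_{ijk}\mathcal J_k$ mixes the Poisson blocks with the $\omega$-blocks, and that should force the Poisson images to span $TM$. I plan to use Pantilie's linear classification \cite[Theorem~4.4]{Pantilie2014} as a check here.

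This yields a unique line $L$ with $\Span\{\pi_i\}=L\wedge TM$, and it is smooth because the $\pi_i$ are. Writing $\pi_i=v\wedge X_i$ locally, the vectors $v,X_1,X_2,X_3$ form a frame, unique up to a common rescaling of $v$ and the corresponding $L$-valued normalization. This is exactly the statement $TM\cong L^{\oplus4}$.

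\emph{Integrability and normal form.} I will apply a closed $B$-field transformation to put the pure spinors in the form $\rho_i=\Omega_i\wedge e^{\ii\omega_i}$. Since $L\subset\ker\Omega_i$ for all $i$, integrability of the type-one structures gives $d\rho_i=(X+\xi)\cdot\rho_i$. The symplectic leaves are then the $2$-planes $L\oplus\langle X_i\rangle$, and $\ker\Omega_i$ is involutive.

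Integrability of the full sphere means every combination must also be integrable, which makes the three kernel distributions $\ker\Omega_i$ compatible. From this, $L$ integrates to a flow, and a transversal $N$ can be chosen so that $L=\partial_t$. Write $\Omega_i=\eta^j-\ii\eta^k$ up to a common function, with $\omega_i=dt\wedge\eta^i$ modulo $B$.

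Expanding $d\rho_i$ and matching the $dt$ and transverse parts, I expect to obtain the GT system \eqref{eq:GT}. The one-form $\theta$ appears as the Lee form from $d(\eta^j-\ii\eta^k)$. The $t$-dependence then forces the conformal factor $e^{-t*_h\theta}$, which is a closed $B$-field exactly when $d*_h\theta=0$, and this should follow from $d^2\eta^i=0$ applied to \eqref{eq:GT}. The converse direction is the same computation run backwards: from \eqref{eq:GT}, compute $d\rho_a$ and exhibit $X+\xi$ for each $a$.

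This is the main obstacle: the bookkeeping that shows the $t$-dependence and the $B$-field are forced, rather than merely consistent, and that $\kappa$ survives only as the $*\eta^i$ term. I would start with the case $\theta=0$, $\kappa=0$ (the flat torus) and then perturb.

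\emph{Equivalence.} An equivalence must preserve $L$ because $L$ is canonical, so it has the form $(t,x)\mapsto(f(t,x),\psi(x))$. Preserving the ordered triple up to a constant $R\in\mathrm{SO}(3)$ and closed $B$-fields forces $\psi^*\eta'=a^{-1}R\eta$ with $f=at+c$. Uniqueness of $(\theta,\kappa)$ for a fixed coframe, noted after \eqref{eq:GT}, then gives the stated transformation laws for $\theta$ and $\kappa$ by direct substitution into \eqref{eq:GT}.
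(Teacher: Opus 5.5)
Your pointwise Klein-quadric dichotomy matches the paper. The integrability stage, however, has a genuine gap, and it rests on two false assertions.

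First, $e^{-t*_h\theta}$ is not a closed $B$-field unless $\theta=0$, since $d(-t*_h\theta)=-dt\wedge*_h\theta-t\,d(*_h\theta)$. It cannot be closed: a closed factor could be discarded, but the spinor criterion for $(\eta^j-\ii\eta^k)e^{\ii\,dt\wedge\eta^i}$ with zero twist forces $\theta=0$. The factor is precisely the non-closed field that converts integrability of the standard spinors for the twist $H=d(-t*_h\theta)$ into untwisted integrability.

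Second, $d(*_h\theta)=0$ does not follow from $d^2\eta^i=0$, which yields only $*_hd\theta=d\kappa+\kappa\theta$. For example, the coframe $\eta^i=e^{x^1}dx^i$ satisfies \eqref{eq:GT} with $\theta=dx^1$ and $\kappa=0$, yet $d(*_h\theta)=e^{x^1}dx^{123}\neq0$.

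The missing idea is the untwisting step. After pointwise normalization, the common two-form $B$ is unique and $dB=-dt\wedge*_h\theta+\tau\vol_h$. Closedness gives $\tau=-tq+\tau_0(x)$ with $d(*_h\theta)=q\vol_h$, hence $d(B+t*_h\theta)=\tau_0\vol_h$. By uniqueness of $B$, no closed $B$-field removes this residue while the time coordinate $t$ is kept. You must pass to $s=t+v(x)$. The identity $(\eta^j-\ii\eta^k)\wedge(*_hu+\ii\,u\wedge\eta^i)=0$, valid for transverse $u$, trades $dt$ for $ds$ at the cost $B\mapsto B+*_hdv$. After that, $d(B+*_hdv+s*_h\theta)=(\tau_0+\Delta_hv+\langle\theta,dv\rangle_h+qv)\vol_h$. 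Solving $\Delta_hv+\langle\theta,dv\rangle_h+qv=-\tau_0$ locally, for instance by a Dirichlet problem on a small ball, is what makes the normal form forced rather than merely consistent. Your proposal has no counterpart to this elliptic step.

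Three further steps are asserted rather than proved.

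\emph{Excluding $W=\Lambda^2P$.} This needs an argument. A short one: if $\xi\neq0$ annihilates $P$, then $\pi_i^\sharp\xi=0$, so each $\mathcal J_i$ maps $q=(0,\xi)$ into $T_p^*M$. The vectors $q,\mathcal J_1q,\mathcal J_2q,\mathcal J_3q$ are linearly independent (apply the conjugate quaternion), hence span $T_p^*M$. That subspace is therefore $\mathcal J_i$-invariant, which forces every Poisson block to vanish.

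\emph{$t$-independence of the transverse frame.} Straightening $L$ to $\partial_t$ does not by itself achieve this. You need the Poisson condition $[\pi_i,\pi_j]_{\mathrm{Sch}}=0$, which gives $[X,Y_i]\equiv bY_i\bmod L$ with a single $b$ for $i=1,2,3$. You then rescale the generator so that it preserves all three tensors.

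\emph{Sufficiency in the equivalence statement.} You address only necessity, and sufficiency is not a substitution into \eqref{eq:GT}. For $R=\Id$, the map $(t,x)\mapsto(a(x)t,\psi(x))$ pulls $\rho_i'$ back to $a^{-1}\exp\bigl(t\{*_hu+\ii\,u\wedge\eta^i\}\bigr)\rho_i$ with $u=d\log|a|$. The same star identity is needed to see that this exponential acts trivially.
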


The line $L$ defines the \emph{canonical foliation} $\mathcal F$.
It is \emph{Riemannian} if $TM/L$ has a holonomy-invariant positive
metric, and \emph{taut} if some metric on $M$ makes its leaves
geodesic. A local generator $X$ is adapted if it preserves the
ordered Poisson tensors; writing $\pi_i=X\wedge E_i$ then defines
the basic coframe dual to $[E_i]\in TM/L$. For another adapted
generator $X'=fX$, where $Xf=0$, the GT equations give
\begin{equation}\label{eq:nonflat-intrinsic-generator}
\left\{
\begin{aligned}
&X'=fX,\\
&\eta'=f\eta,\\
&\theta'=\theta+d\log|f|,\\
&\kappa'=\kappa/f,\\
&K:=\kappa X=\kappa'X'\in\Gamma(L).
\end{aligned}
\right.
\end{equation}
In particular, $X\mapsto-X$ sends $(\eta,\kappa)$ to
$(-\eta,-\kappa)$, so $K$ is defined even when $L$ is nonorientable.
The global section $K$ is the \emph{GT invariant}; its scalar
coefficient $\kappa$ depends on the generator. The term \emph{flat}
means $K\equiv0$, equivalently $\kappa\equiv0$.
A finite \emph{ordered}
quotient is the quotient by a finite subgroup of Courant automorphisms
acting freely on $M$ and preserving the three structures individually,
allowing common closed $B$-field compensators.

\begin{maintheorem}\label{thm:intro-compact}
Let $M$ be a closed connected four-manifold with an untwisted odd
generalized hypercomplex structure. Its canonical foliation is
Riemannian, and its structure belongs, up to equivalence, to exactly
one of the following four families:
\[
 \renewcommand{\arraystretch}{1.25}
 \begin{array}{c|l|c}
 b_1(M)&\text{model family}&\text{$M$ is finitely covered by}\\\hline
 1&\text{Berger quotients (including round)}&S^3\times S^1\\
 2&\text{non-taut flat axis quotients}&N_C\times S^1\\
 3&\text{circle bundles over flat }T^3&\mathrm{KT}\\
 4&\text{constant tori}&T^4
 \end{array}
\]
Here $N_C$ is the mapping torus of $C\in \operatorname{SL}(2,\ZZ)$ with
$\operatorname{tr}C>2$, and $\mathrm{KT}$ is the product of $S^1$
with the Euler-one circle bundle over $T^2$. The circle bundles in
the third row have nonzero Euler class. Every family occurs.

In each of the four families, $M$ is parallelizable and
$b_2(M)=2b_1(M)-2$. Moreover, the intrinsic invariant $K$
vanishes identically if and only if $b_2(M)>0$, or equivalently,
if and only if $M$ admits a symplectic form.
If $K\not\equiv0$, it is nowhere zero and the canonical
foliation is oriented and taut. Whenever all canonical leaves
are closed, they form a principal circle bundle without
exceptional fibres.
\end{maintheorem}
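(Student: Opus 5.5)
Theorem~\ref{thm:main-local} is the starting point: on closed $M$ we have the canonical line field $L$, the transverse GT structure $(\eta,\theta,\kappa)$ defined up to the gauge \eqref{eq:nonflat-intrinsic-generator}, and the global GT invariant $K\in\Gamma(L)$. The plan has four stages. First prove that $\mathcal F$ is Riemannian. Then split according to whether $K$ vanishes. In each branch, determine the global transverse geometry and the lifting data. Finally, read off topology (Betti numbers, parallelizability, symplecticity) from the explicit models.

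\textbf{Riemannianity.} The transverse structure is a Weyl connection $[\theta]$ together with an ordered projective frame $[\eta]$, invariant under holonomy. Suppose $\mathcal F$ is not Riemannian. Then no holonomy-invariant representative $h$ of the conformal class exists, and Rigal's rigidity theorem forces the transverse structure to be a positive scalar-affine one. Plugging this into \eqref{eq:GT} should force $\kappa=0$ and realise the model as the Hopf-type flow. For that flow, the $B$-field compensators needed to glue \eqref{eq:normal-spinors} along the time translations assemble into a closed three-form $H$ with nonzero period on the generator of $H_3$. An untwisted global structure makes $H$ exact, which gives the contradiction. I expect this to be the main obstacle. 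It requires a precise cocycle computation: tracking how the term $e^{-t*_h\theta}$ transforms under the holonomy pseudogroup, and showing that its obstruction class equals the period of $*_h\theta\wedge dt$-type terms. I would first test this on the explicit Hopf model and then argue by deformation invariance of the period.

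\textbf{Case analysis.} With $\mathcal F$ Riemannian, choose a bundle-like metric. If $K\not\equiv0$, then $\kappa$ is transversally constant up to the gauge, so $K$ is nowhere zero, orients $L$, and its flow preserves a transverse metric, which makes $\mathcal F$ taut. Normalise $\kappa=1$ and $\theta=0$ locally. Then \eqref{eq:GT} reads $d\eta^i=-*\eta^i$, a left-invariant coframe on $S^3$ up to scale. Molino theory together with the ordered-coframe condition yields a Berger transverse geometry. Lifting this, using an elliptic equation for the time function, produces quotients of $S^3\times S^1$, and the deck groups are exactly the finite subgroups commuting with the ordered frame.

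If $K\equiv0$, the equations are $d\eta^i=\theta\wedge\eta^i$, so $\theta$ is closed and the coframe is conformally flat with a parallel frame. Develop into $\RR^3$ with holonomy in homotheties times translations. If the holonomy is purely Euclidean, we get a flat $T^3$ base, and the leaf lifting is either a trivial product (constant tori, giving $T^4$) or a circle bundle with nonzero Euler class (giving $\mathrm{KT}$). If there is a genuine homothety, the developing image is the complement of an axis. Tracking the deck group gives a mapping torus $N_C$ with $\operatorname{tr}C>2$, and the foliation is non-taut. Exhibiting explicit models in each family, including \cref{ex:FG-parameters}, shows that every family occurs.

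\textbf{Topology.} Compute $b_1$ of each model by a Gysin or Wang sequence: $1$, $2$, $3$, $4$. Since $\chi=0$ (there is a nowhere-zero line field, and $TM\cong L^{\oplus4}$), Poincaré duality gives $b_2=2b_1-2$, and $b_1$ distinguishes the families. Parallelizability follows from $TM\cong L^{\oplus4}$ together with a choice of a finite cover or trivial $w_1(L)$ in the models, or directly from the explicit Lie-group descriptions.

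For the symplectic criterion: $K\equiv0$ occurs exactly in the flat families ($b_1\ge2$, hence $b_2>0$), and these carry explicit symplectic forms built from $dt\wedge\eta^i+\eta^j\wedge\eta^k$-type combinations. Conversely, the Berger family has $b_2=0$, so it admits no symplectic form.

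The final claim concerns the case where all leaves are closed. Riemannianity and Epstein's theorem give a Seifert fibration. The ordered coframe is preserved by the holonomy of each leaf, and a nontrivial finite rotation cannot preserve an ordered frame, so there are no exceptional fibres.
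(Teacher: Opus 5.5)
There are genuine gaps, starting with the branch $K\not\equiv0$, which is circular at its key step. You assert that $\kappa$ is transversally constant up to gauge, deduce that $K$ is nowhere zero, and then conclude tautness because the flow of $K$ preserves a transverse metric. Preserving a transverse metric is just Riemannianity, and Riemannian does not imply taut. Your own second row shows this: the axis quotients are Riemannian but non-taut, since on the orientation cover $\vol_h=d(\eta^1\wedge d\varphi)$, so Stokes' theorem rules out a characteristic form $\lambda'$ with $\iota_Yd\lambda'=0$. Constancy of $\kappa$ is not available beforehand either. It comes from the maximum principle applied to $\Delta_g\kappa+\langle\theta,d\kappa\rangle=0$. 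That equation needs the lifting condition $*_h\theta=d_B\beta$, which holds only in a compatible Killing gauge. For a general characteristic form the twist is $H=\lambda\wedge *_h(u-\theta)+\tau\vol_h$, with mean-curvature term $u=\iota_Xd\lambda$, and nothing in your argument forces $\kappa$ to be constant or nonvanishing. The missing idea is an independent proof of tautness when $K\not\equiv0$. The paper takes Molino's fibration by leaf closures and bounds the rank of the commuting sheaf by one using $\kappa|\eta(Y)\times\eta(Z)|^2=0$ on commuting structural fields. The Gauss map $v$ satisfies $v^*\omega_{S^2}=(\theta_v^2+\kappa^2)\omega_b$, which makes a double cover of the closure base $S^2$ and the commuting-sheaf monodromy finite; the Nozawa--Royo~Prieto criterion then gives tautness. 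Only after that do the compatible Killing generator, constancy of $\kappa$, and nonvanishing of $K$ follow. Separately, your normalization $\kappa=1$, $\theta=0$ is impossible unless $\theta$ already vanishes. With $\kappa=1$, the identity $*d\theta=d\kappa+\kappa\theta$ gives $d\theta=*_h\theta$, so $\theta$ is not locally exact. This normalization yields only the round coframe and loses the Berger coframes with $|\theta|=a>0$, contradicting your later appeal to Berger geometry.

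In the Riemannianity step, you jump from positive scalar-affine to the Hopf model, but that reduction is the substance of the argument. A nonatomic invariant transverse measure bounds the holonomy scales above and below; Rigal's relative-compactness criterion then makes the foliation Riemannian. Hence ergodic measures sit on periodic orbits, whose scalar holonomy cannot be one. A source--sink analysis and a frame-preserving smooth gluing then give the Hopf model. Your obstruction argument also fails as stated, because the period of $H$ on $S^3$ is not deformation invariant: for speed $f=c\,r^2$ it equals $-4\pi^2/c$. A general $f$ with $f(at,ax)=a^2f(t,x)$ therefore needs a sign argument. Integrating $u=1/f>0$ in $t$ gives a positive harmonic $U$ on $\RR^3\setminus0$ of degree $-1$, so $U=C/|x|$ with $C>0$ and the period is $-4\pi C$.

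In the flat branch, the claim that a genuine homothety forces an axis complement is false unless you use untwistedness again. Asuke's theorem allows $\RR^3$ or a component of the complement of a point, a line or a plane. For example, $S^1\times(\RR^3\setminus0)/\langle2\Id\rangle$ with $\eta=dx/|x|$ is a Riemannian flat example with point-complement image, realized with a closed non-exact twist. It is excluded only by the obstruction $\int_MH\wedge\sigma=\int_M\lambda\wedge\vol_h>0$, where $\sigma=d\log|x|$. Plane complements need the arithmetic argument on $\chi_A$ for $A\in\operatorname{SL}(3,\ZZ)$. Your Euclidean case also presupposes closed leaves. Nonclosed leaves first need tautness, e.g. from $H^3_B\neq0$, and then a compatible Killing generator to reach the constant torus.

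Two smaller points. Parallelizability does not descend from a finite cover. Instead use $TM\cong L\otimes\mathbb H$ together with a nowhere-zero field $V$, which exists because $\chi=0$; then $(V,I_1V,I_2V,I_3V)$ is a global frame. Finally, \emph{principal} circle bundle requires proving that $L$ is orientable in the closed-leaf flat case, which you do not address.
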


\begin{remark}\label{rem:four-geometries}
As smooth manifolds, the finite covers in the table admit, respectively,
the product geometries
\[S^3\times\mathbb E^1,\qquad
 \operatorname{Sol}^3\times\mathbb E^1,\qquad
 \operatorname{Nil}^3\times\mathbb E^1,\qquad\mathbb E^4.\]
The structure metrics \eqref{eq:Berger-metric-explicit} are Berger
metrics for $a>0$ and round for $a=0$; both belong to the nonflat
GT branch. The axis models have transverse
developing image equal to a line complement.
Examples are obtained from the nonflat deck group
$\langle(1,T)\rangle$, $T>0$; the axis data
\[
 C=\begin{pmatrix}2&1\\1&1\end{pmatrix},\qquad
 v=k=0,\quad n=1\]
in \eqref{eq:axis-oriented-decks}; the circle bundle over
$\RR^3/\ZZ^3$ with $e=[dx^1\wedge dx^2]$ and $\mu=0$;
and a constant triple on $\RR^4/\ZZ^4$.
\end{remark}

The parameter equivalences for the four rows are
\eqref{eq:nonflat-deck-conjugacy}, \eqref{eq:axis-full-equivalence},
\eqref{eq:closed-flat-arithmetic-equivalence} and
\eqref{eq:euclidean-double-coset}, respectively.
\Cref{cor:nonflat-explicit-parameters} gives explicit nonflat parameters.
Finite-covering criteria are given in
\cref{cor:nonflat-finite-cover-criterion,cor:axis-all-finite-quotients,prop:closed-flat-finite-coverings,thm:euclidean-lattice-moduli};
normal covering subgroups give finite ordered quotients.
The last column alone is not an existence criterion for arbitrary
finite quotients. The moduli spaces are orbit sets; no smoothness or
Hausdorffness is asserted. The subgroup criteria retain the data of
a specified covering in addition to the equivalence class of its target.
The higher-dimensional intermediate-type problem is not addressed.

\paragraph{Organization.}
Section~\ref{sec:local} proves Theorem~A; Section~\ref{sec:compact-foliations}
proves automatic Riemannianity, develops compatible Killing generators
and classifies the four compact branches. Appendix~\ref{app:scalar-dynamics}
supplies the scalar-affine dynamics used in the Riemannianity argument.

\subsection*{Notation and conventions}
We use the natural pairing and Clifford action
\[
 \langle X+\xi,Y+\zeta\rangle
 =\tfrac12\bigl(\xi(Y)+\zeta(X)\bigr),
 \qquad
 (X+\xi)\cdot\rho=\iota_X\rho+\xi\wedge\rho.
\]
We use $\pi^\sharp\xi=\iota_\xi\pi$ and
$B^\flat X=\iota_XB$. Thus
$\iota_\xi(X\wedge Y)=\xi(X)Y-\xi(Y)X$, and the common
$B$-field transformation is
\[
 T_B(X+\xi)=X+\xi-\iota_XB,\qquad \rho\longmapsto e^B\rho.\]
For $d_H=d+H\wedge$ the sign convention is
\begin{equation}\label{eq:twist-conjugation}
 d_{H-dB}(e^B\rho)=e^B d_H\rho.
\end{equation}
The annihilator of a pure spinor is the $+\ii$ eigenspace.
Products of differential forms mean wedge products. We write
$\eta^{ij}=\eta^i\wedge\eta^j$ and $\eta^{123}=\vol_h$;
the transverse Hodge star always uses the ordered coframe orientation.
For a connection $D$, our curvature and Ricci conventions are
\[
 R^D(X,Y)=[D_X,D_Y]-D_{[X,Y]},\qquad
 \Ric^D(Y,Z)=\operatorname{tr}\bigl(X\mapsto R^D(X,Z)Y\bigr).\]
Our Laplacians have opposite sign conventions:
\[
 \Delta_h=\diver_h\operatorname{grad}_h,\qquad
 \Delta_H=d\delta+\delta d\geq0,\qquad \Delta_H|_{\Omega^0}=-\Delta_h.\]
The symbols $L$, $K$, $H$, $\eta$, $\theta$ and $\kappa$ denote,
respectively, the canonical line, the intrinsic GT section, the twist,
the transverse coframe, its Weyl form and its GT coefficient.
Local scalar coefficients and deck groups are specified within each model.

\section{Proof of Theorem A}\label{sec:local}
For the parity reduction, let $p:M\times S^2\to M$.
The images of $(\Id-\ii \mathcal J_a)/2$ form a smooth maximal
isotropic bundle. Their annihilators form a pure-spinor line
\[
 \mathcal K\subset p^*\bigl(\Lambda^{\mathrm{even}}T^*M\otimes\mathbb C\bigr)
       \oplus p^*\bigl(\Lambda^{\mathrm{odd}}T^*M\otimes\mathbb C\bigr).\]
Its parity equals type modulo two and is locally constant
\cite[Proposition~1.3]{Gualtieri2011}. On each connected component
of $M\times S^2$, the type lies in $\{0,1,2\}$; hence type one at
a single point forces type one throughout that component.

\begin{proposition}\label{thm:canonical-line}
The Poisson tensors of a pure type-one triple determine a unique
smooth line bundle $L\subset TM$ with
\[
 \Span\{\pi_1,\pi_2,\pi_3\}=L\wedge TM,
 \qquad TM\cong L^{\oplus4}.\]
It is unchanged by common $B$-fields and constant rotations. In
particular $w_1(TM)=w_2(TM)=p_1(TM)=0$. On a closed connected
manifold, $TM$ is trivial, $\chi=\sigma=0$ and $b_2=2b_1-2$.
\end{proposition}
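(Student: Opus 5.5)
Let me set up notation first. Write $\mathcal J_i=\begin{pmatrix}A_i&\pi_i\\ \sigma_i&-A_i^*\end{pmatrix}$. The quaternionic relations give, on the $T^*M\to TM$ block,
\[
 A_i\pi_j+\pi_iA_j^*+\pi_j A_i^*+A_j\pi_i=0\quad(i\neq j),\qquad
 \pi_a^\sharp\text{ linear in }a\in S^2 .
\]
Type one everywhere means every $\pi_a$, $a\in S^2$, has rank exactly two. So $a\mapsto\pi_a$ is a linear injection of $\RR^3$ into $\Lambda^2T_xM$, and its image $W_x$ is a $3$-plane all of whose nonzero elements are decomposable. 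Equivalently, $W_x$ is totally isotropic for the wedge pairing $\Lambda^2\times\Lambda^2\to\Lambda^4$: indeed $\pi_a\wedge\pi_a=0$ for all $a$, and polarization gives the rest. Maximal isotropic $3$-planes of the split quadric $\Lambda^2\RR^4$ (signature $(3,3)$) come in two families. These are $v\wedge\RR^4$ for a line $\RR v$ ($\alpha$-planes) and $\Lambda^2P$ for a $3$-plane $P$ ($\beta$-planes).

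\textbf{Step 1: excluding the $\beta$-case.} To rule out $W_x=\Lambda^2P$, I would use the compatibility with the anchors. For a generalized complex structure of type one, the kernel of $\pi_a^\sharp$ is the annihilator of the symplectic leaf direction, and $\im\pi_a^\sharp$ is $A_a$-invariant, with $A_a$ restricted to it a complex structure. In the $\beta$-case, all $\im\pi_a^\sharp$ lie in $P$. The relation above with $i\neq j$ then shows that $A_i$ maps $\im\pi_j^\sharp$ into $P$. Hence $P$ is invariant under all $A_i$, so $A_i|_P$ satisfies quaternionic-type relations on a $3$-dimensional space. I would then derive a contradiction through the spinor description instead: a type-one pure spinor is $\Omega\wedge e^{B+\ii\omega}$ with $\Omega$ a complex $1$-form, and the three generic members $\rho_a$ are mutually Mukai-orthogonal in the appropriate sense. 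In the $\beta$-case the complex $1$-forms $\Omega_a$ would all annihilate a common real vector, namely a normal to $P$ paired dually. I expect this to be incompatible with $\langle\rho_a,\bar\rho_a\rangle\ne0$ for the rotated members.

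I expect this exclusion to be the main obstacle. What I would try first is a pointwise normal form: use a linear $B$-field to make $\mathcal J_1$ standard, $\rho_1=(e^2-\ii e^3)e^{\ii e^0\wedge e^1}$, and then parametrize $\mathcal J_2$ anticommuting with it. This is essentially Pantilie's algebraic classification specialized to $n=1$. Reading off $\pi_2,\pi_3$ from that normal form should directly show that they share a common vector factor, namely $\partial_{e^0}$-type, which gives the $\alpha$-case.

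\textbf{Step 2: the line $L$.} Once the $\alpha$-case holds at every point, we get $W_x=L_x\wedge T_xM$ for a unique line $L_x$. Uniqueness follows because $v\wedge\RR^4=v'\wedge\RR^4$ forces $v\parallel v'$. Smoothness holds because $L_x=\bigcap_a\im\pi_a^\sharp$, which is an intersection of smooth rank-two images of constant rank, for example $\im\pi_1^\sharp\cap\im\pi_2^\sharp$, and this intersection has constant rank one. A common $B$-field leaves $\pi$ unchanged, and a constant rotation only permutes the span, so $L$ is invariant.

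\textbf{Step 3: $TM\cong L^{\oplus4}$.} Write $\pi_i=X\wedge E_i$ locally, with $X$ a local generator of $L$. Under $X\mapsto-X$ we get $E_i\mapsto-E_i$ modulo $L$. Hence $TM/L\cong L\otimes\RR^3$ canonically up to the ordered triple, and $TM\cong L\oplus(TM/L)\cong L^{\oplus4}$. The characteristic classes then follow: $w(TM)=(1+w_1(L))^4=1+w_1(L)^4$, so $w_1=w_2=0$; $p_1(L^{\oplus4})=0$ since $p_1$ of a line bundle vanishes rationally, and indeed $L\oplus L$ is a complex line bundle with $c_1$ $2$-torsion. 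For closed connected $M$, $M$ is orientable, and $TM\cong L\otimes\RR^4$. Moreover $4L\cong L\otimes\mathbb C^2$ is an $\operatorname{SO}$-bundle whose Euler class is $e(L\oplus L)^2=c_1^2$, which is torsion, hence $\chi=0$. In fact $L^{\oplus2}$ is trivial in dimension $\le4$? Rather, I would show triviality via obstruction theory: the oriented rank-$4$ bundle $L\otimes\RR^4$ is classified by $w_2=0$, $p_1=0$, and $e=0$, with $e$ computed as $\chi=0$ using that $L$ admits a nowhere-zero section after passing to the orientation double cover, where $\chi$ doubles. By Dold–Whitney, the bundle is then trivial. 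Finally $\sigma=p_1/3=0$, and $\chi=2-2b_1+b_2=0$ gives $b_2=2b_1-2$.
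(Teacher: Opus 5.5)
\textbf{The gap is Step~1.} This step carries the substance of the proposition, and you do not carry it out. You end with ``I expect this to be incompatible'' and ``should directly show'', so the $\beta$-case $W_x=\Lambda^2P$ is never excluded, and without that exclusion there is no line $L$.

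Several claims along the way are also wrong.
\begin{itemize}
\item For type one, $A_a$ is not a complex structure on $\im\pi_a^\sharp$. For the standard structure with $\pi_1=e_0\wedge e_1$, $A_1$ kills $e_0,e_1$ and rotates $e_2,e_3$; the complex structure lives on $T/\im\pi_1^\sharp$.
\item The $A_i|_P$ do not satisfy quaternionic relations, since the upper-left block of $\mathcal J_i\mathcal J_j$ is $A_iA_j+\pi_i^\sharp\sigma_j$.
\item Your displayed relation has the wrong signs; it should read $A_i\pi_j^\sharp-\pi_i^\sharp A_j^*+A_j\pi_i^\sharp-\pi_j^\sharp A_i^*=0$.
\item Anticommutation alone does not give $A_i(\im\pi_j^\sharp)\subset P$. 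That needs $\mathcal J_i\mathcal J_j=\mathcal J_k$, whose upper-right block is $A_i\pi_j^\sharp-\pi_i^\sharp A_j^*=\pi_k^\sharp$.
\item Anticommuting members have transverse $+\ii$-eigenbundles: a common eigenvector $v$ would give $(\mathcal J_1\mathcal J_2+\mathcal J_2\mathcal J_1)v=-2v$. Hence their pure spinors pair nondegenerately rather than being Mukai-orthogonal.
\end{itemize}

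\textbf{The missing idea} is to test the structures on a covector $\xi\neq0$ annihilating $P$. Since every $\pi_i\in\Lambda^2P$, we have $\pi_i^\sharp\xi=0$. So $q=(0,\xi)$ satisfies $\mathcal J_iq=(0,-A_i^*\xi)\in T_x^*M$.

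The paper then observes that $q,\mathcal J_1q,\mathcal J_2q,\mathcal J_3q$ are linearly independent: apply the conjugate quaternion to a vanishing combination. Hence they span $T_x^*M$. This span is $\mathcal J_i$-invariant, so each $\mathcal J_i$ preserves $T_x^*M$. That forces $\pi_i^\sharp=0$, contradicting rank two.

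Your own partial route also closes once the correct relation is used. $A_1$ preserves $\im\pi_1^\sharp$ (from $\mathcal J_1^2=-\Id$) and maps $\im\pi_2^\sharp$ into $P$. These two distinct planes span $P$, so $A_1P\subset P$. Then $A_1^*\xi\in P^\circ=\RR\xi$, so $q$ would be a real eigenvector of $\mathcal J_1$, which is impossible since $\mathcal J_1^2=-\Id$.

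\textbf{The remaining steps are sound.}
\begin{itemize}
\item The Klein-quadric dichotomy is a legitimate substitute for the paper's hand computation.
\item For the integral statement about $p_1$, the clean argument is $c_2(TM\otimes\mathbb C)=6u^2=3(2u)u=0$.
\item Dold--Whitney ($w_2=p_1=e=0$ for an oriented rank-four bundle over a $4$-complex) validly gives triviality. It is heavier than the paper's argument, which uses $TM\cong L\otimes\mathbb H$ to get global $I_1,I_2,I_3$. A nowhere-zero $V$ (from $\chi=0$) then gives the frame $(V,I_1V,I_2V,I_3V)$.
\end{itemize}
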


\begin{proof}
Fix a point $p\in M$ and work in $T_pM$. For every nonzero
$c=(c_1,c_2,c_3)\in\RR^3$, the bivector
$\sum_{i=1}^3 c_i\pi_i(p)$
has rank two. In particular, $\pi_1(p),\pi_2(p),\pi_3(p)$ are
linearly independent. Moreover, every rank-two bivector has zero
square, so polarization gives
\begin{equation}\label{eq:local-poisson-pencil}
 W:=\Span\{\pi_1(p),\pi_2(p),\pi_3(p)\},\qquad
 \dim W=3,\qquad
 \pi_i(p)\wedge\pi_j(p)=0
 \quad(i,j=1,2,3).
\end{equation}
By bilinearity, $\pi\wedge\widetilde\pi=0$ for all
$\pi,\widetilde\pi\in W$.

Choose two linearly independent elements $\pi_1',\pi_2'\in W$.
Since every nonzero element of $W$ has rank two, both are
decomposable. The relation $\pi_1'\wedge\pi_2'=0$ shows that
their corresponding two-planes intersect nontrivially. These
planes are distinct, because $\pi_1'$ and $\pi_2'$ are linearly
independent, and hence their intersection is a line. We may
therefore write
\[
 \pi_1'=X\wedge Y,\qquad
 \pi_2'=X\wedge Z,\]
where $X,Y,Z$ are linearly independent. Extend them to a basis
$(X,Y,Z,T)$ of $T_pM$.
Choose $\pi\in W$ such that
$\{\pi_1',\pi_2',\pi\}$ is a basis of $W$, and write
\begin{equation}\label{details}
 \pi=aX\wedge Y+bX\wedge Z+cX\wedge T
     +dY\wedge Z+eY\wedge T+fZ\wedge T.
\end{equation}
By the last relation in \eqref{eq:local-poisson-pencil} and
bilinearity of the wedge product, we have
\begin{equation}\label{details02}
 \pi\wedge(X\wedge Y)=0,
 \qquad
 \pi\wedge(X\wedge Z)=0.
\end{equation}
Substituting \eqref{details} into \eqref{details02} gives
$f=0$ and $e=0$, respectively. Since $\pi$ is independent
of $X\wedge Y$ and $X\wedge Z$, we have $(c,d)\ne(0,0)$.
The remaining relation $\pi\wedge\pi=0$ then yields
$
 0=2cd\,X\wedge Y\wedge Z\wedge T,$
so $cd=0$.\\
If $c\ne0$, then $d=0$ and
\[
 W=\Span\{X\wedge Y,X\wedge Z,X\wedge T\}
  =\RR X\wedge T_pM.\]
If $d\ne0$, then $c=0$ and
\[
 W=\Span\{X\wedge Y,X\wedge Z,Y\wedge Z\}
  =\Lambda^2\Span\{X,Y,Z\}.\]
Thus these are the only two possibilities for $W$.

Write the generalized complex structures at $p$ in block form:
\begin{equation}\label{details03}
 \mathcal J_i=
 \begin{pmatrix}
  A_i & P_i\\
  S_i & -A_i^*
 \end{pmatrix},
 \end{equation}
where $P_i=\pi_i^\sharp.$ Suppose that the second possibility holds, so that
$ W=\Lambda^2U$ and $U:=\Span\{X,Y,Z\}\subset T_pM.$
Since $U$ has dimension three, there is a nonzero covector
$\xi\in T_p^*M$ satisfying
$
 \xi(X)=\xi(Y)=\xi(Z)=0.$
As $\pi_i(p)\in\Lambda^2U$, we have
\begin{equation}\label{details04}P_i\xi=0.\end{equation}
Identifying $T_p^*M$ with the subspace
$\{0\}\oplus T_p^*M$ of $T_pM\oplus T_p^*M$, set
$
 q=(0,\xi).$
Then by (\ref{details04}), we have
\[
 \mathcal J_iq=(0,-A_i^*\xi)\in T_p^*M
 \qquad(i=1,2,3).
\]
The quaternionic identities imply that $q,\ \mathcal J_1q,\ \mathcal J_2q,\ \mathcal J_3q$
are linearly independent over $\RR$. Indeed, if
\[
 (a_0\Id+a_1\mathcal J_1+a_2\mathcal J_2+a_3\mathcal J_3)q=0,\]
then applying
$a_0\Id-a_1\mathcal J_1-a_2\mathcal J_2-a_3\mathcal J_3$
gives
\[
 (a_0^2+a_1^2+a_2^2+a_3^2)q=0.\]
Since $q\ne0$, all coefficients vanish.
The four vectors above therefore form a basis of the
four-dimensional space $T_p^*M$. The quaternionic identities
also show that their span is preserved by each $\mathcal J_i$.
Consequently,
\[
 \mathcal J_i(T_p^*M)\subseteq T_p^*M
 \qquad(i=1,2,3).\]
By the block form of $\mathcal J_i$, this implies $P_i=0$ for
every $i$, contradicting $\rank P_i=2$. Thus the second
possibility is excluded.

It follows that $W=L_p\wedge T_pM$ for a unique line
$L_p\subset T_pM$, namely
\[
 L_p=\left\{
 v\in T_pM:
 v\wedge\pi_i(p)=0,\quad i=1,2,3
 \right\}.\]
As $p$ varies, these lines are the kernels of a smooth bundle
map of constant rank. They therefore form a smooth line bundle
$L\subset TM$. This characterization also shows that $L$ is
unchanged by common $B$-field transformations and constant
rotations of the triple.

The ordered Poisson tensors trivialize $L\otimes(TM/L)$, so
$
 L\otimes(TM/L)\cong\underline{\RR}^{\,3}.$
Choosing a splitting of $TM\to TM/L$ gives
\[
 TM\cong L\oplus(L^*)^{\oplus3}
    \cong L^{\oplus4}.\]
Set $x=w_1(L)$ and $u=c_1(L\otimes\mathbb C)$. Then
\[
 w(TM)=(1+x)^4,\qquad
 c(TM\otimes\mathbb C)=(1+u)^4,\qquad
 2u=0.\]
It follows that $w_1(TM)=w_2(TM)=0$ and
\[
 p_1(TM)=-c_2(TM\otimes\mathbb C)
        =-6u^2=0,
\]
because $6u^2=3(2u)u=0$.

The orientation cover of $L$ trivializes $L$ and is therefore
parallelizable. If $M$ is closed and connected, Euler
multiplicativity, the signature theorem and Poincar\'e duality
give
\[
 \chi(M)=0,\qquad
 \sigma(M)=\tfrac13\langle p_1(TM),[M]\rangle=0,\qquad
 b_2(M)=2b_1(M)-2.
\]

Finally, the isomorphism $TM\cong L^{\oplus4}\cong L\otimes\mathbb H$
gives global endomorphisms $I_1,I_2,I_3$ induced by left quaternionic
multiplication. These are well defined because quaternionic
multiplication commutes with the real transition functions of $L$.
Since $M$ is closed and oriented, $H^4(M;\ZZ)\cong\ZZ$; thus
$\chi(M)=0$ implies that the Euler class of $TM$ vanishes.
There is consequently a nowhere-zero vector field $V$, and
$(V,I_1V,I_2V,I_3V)$
is a global frame of $TM$. Hence $M$ is parallelizable.
\end{proof}

Fix an adapted frame $(e_0,e_1,e_2,e_3)$, with dual $(e^0,e^1,e^2,e^3)$,
and put
\[
 P_i=(e_0\wedge e_i)^\sharp,\quad
 S_i^0=(e^0\wedge e^i)^\flat,\quad
 C_ie_0=0,\quad C_ie_j=\varepsilon_{ijk}e_k.
\]
The standard quaternionic triple and its spinors are
\begin{equation}\label{eq:standard-matrix}
 \mathcal J_i^0=\begin{pmatrix}C_i&P_i\\S_i^0&-C_i^*\end{pmatrix},
 \qquad (e^j-\ii e^k)e^{\ii  e^0\wedge e^i},
 \quad (i,j,k)\text{ cyclic}.
\end{equation}
The cross-product identity gives
$\mathcal J_i^0\mathcal J_j^0=-\delta_{ij}\Id+
\varepsilon_{ijk}\mathcal J_k^0$.
Write $e=(e^1,e^2,e^3)^T$. For a constant
$R\in\operatorname{SO}(3)$, keeping $e^0$ fixed and replacing
the transverse coframe $e$ by $Re$ gives
$\mathcal J_i^0(R e)=\sum_jR_{ij}\mathcal J_j^0(e)$.
Thus each member of the sphere is the first member of a rotated
standard triple.

\begin{lemma}\label{prop:linear-B-normal}
Every orthogonal quaternionic triple with these ordered Poisson
blocks is $T_B\mathcal J_i^0T_B^{-1}$ for a unique real two-form
$B$, depending smoothly on the triple and frame.
\end{lemma}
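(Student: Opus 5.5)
We plan to reduce everything to linear algebra at a point, treating existence and uniqueness separately. Smooth dependence will follow because $B$ is obtained by solving a linear system whose coefficients are the frame and the blocks of the triple. Write each member in the block form \eqref{details03}, $\mathcal J_i=\begin{pmatrix}A_i&P_i\\S_i&-A_i^*\end{pmatrix}$, with $P_i=(e_0\wedge e_i)^\sharp$ fixed. A direct computation with $T_B=\begin{pmatrix}1&0\\-B^\flat&1\end{pmatrix}$ shows the following. Conjugation by $T_B$ leaves the $P$-block unchanged. It replaces $A_i$ by $A_i+P_iB^\flat$, up to a sign fixed by the conventions. It changes the $S$-block by terms linear and quadratic in $B^\flat$.

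\emph{Uniqueness.} Suppose $T_B\mathcal J_i^0T_B^{-1}=\mathcal J_i^0$ for $i=1,2,3$. Comparing $A$-blocks gives $P_iB^\flat=0$ for every $i$. Hence $B^\flat X$ lies in $\bigcap_i\ker (e_0\wedge e_i)^\sharp$. Since $\ker(e_0\wedge e_i)^\sharp=\Span\{e^j,e^k\}$, this intersection is zero. Therefore $B^\flat=0$, so $B=0$, and the two-form is unique.

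\emph{Existence, step one.} We first normalize the $A$-blocks. The quaternionic identities $\mathcal J_i\mathcal J_j=-\delta_{ij}\Id+\varepsilon_{ijk}\mathcal J_k$ have upper-right block
\[
 A_iP_j-P_iA_j^*=\varepsilon_{ijk}P_k .
\]
This block is linear in the $A$'s. The standard blocks $C_i$ also satisfy it, so the differences $D_i:=A_i-C_i$ satisfy the homogeneous system
\[
 D_iP_j=P_iD_j^*,\qquad i,j=1,2,3 .
\]
We intend to solve this system in the adapted frame. Here $P_i$ maps $e^0\mapsto e_i$ and $e^i\mapsto -e_0$, up to sign, and kills $e^j,e^k$. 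The diagonal equations $i=j$ constrain the columns of $D_i$ on $\operatorname{im}P_i=\Span\{e_0,e_i\}$. The off-diagonal equations then link the three matrices.

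The target is to show that the solution space is exactly six-dimensional and equals $\{(P_iB^\flat)_{i}:B\in\Lambda^2T_p^*M\}$. These tuples are solutions, and the map $B\mapsto(P_iB^\flat)_i$ is injective by the uniqueness argument. So it suffices to bound the dimension of the solution space by $6$. This is the step we expect to be the main obstacle: it is a bookkeeping computation of a $48$-unknown linear system. We would first try to organize it by the $\operatorname{SO}(3)$-covariance of the standard triple. Under this action the $D_i$ split into irreducible pieces, and the equations should kill everything except a $\Lambda^2\cong\RR^3\oplus\RR^3$ worth of data.

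\emph{Existence, step two.} After replacing $\mathcal J_i$ by $T_B^{-1}\mathcal J_iT_B$, we may assume $A_i=C_i$, and it remains to show that $S_i=S_i^0$. Orthogonality makes each $S_i$ skew. The relations $\mathcal J_i^2=-\Id$ give
\[
 C_i^2+P_iS_i=-\Id ,
\]
which determines $P_iS_i$. The lower-left blocks of the products give
\[
 S_iC_j-C_i^*S_j=\varepsilon_{ijk}S_k \qquad(i\neq j),
\]
together with the analogous equations for $i=j$. Subtracting the same identities for $\mathcal J^0$ yields a homogeneous linear system for the differences $\Sigma_i:=S_i-S_i^0$: each $\Sigma_i$ is skew, $P_i\Sigma_i=0$, and $\Sigma_iC_j-C_i^*\Sigma_j=\varepsilon_{ijk}\Sigma_k$. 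From $P_i\Sigma_i=0$, the image of $\Sigma_i$ lies in $\ker P_i=\Span\{e^j,e^k\}$. Skewness then forces $\Sigma_i=s_i\,(e^j\wedge e^k)^\flat$. The cyclic relations should relate the scalars $s_i$ to one another with a sign that forces $s_i=0$. The contribution of $C_j$ on $e_0$ vanishes, so these are short checks.

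Combining the two steps gives $\mathcal J_i=T_B\mathcal J_i^0T_B^{-1}$ with the unique $B$ found in step one.
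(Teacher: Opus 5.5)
Your overall route is the paper's: reduce existence to the homogeneous system $D_iP_j=P_iD_j^*$ for $D_i=A_i-C_i$, absorb its solutions into a two-form $B$, then show that the $S$-blocks agree, with uniqueness coming from $\bigcap_i\ker P_i=0$. Your uniqueness argument and step two are sound. In step two, the diagonal lower-left identity $\Sigma_iC_i=C_i^*\Sigma_i$ already gives $-2s_i=0$. The paper avoids skewness and this scalar bookkeeping altogether. It uses the upper-left blocks $C_iC_j+P_iS_j=-\delta_{ij}\Id+\varepsilon_{ijk}C_k$ for \emph{all} $i,j$, which give $P_i(S_j-S_j^0)=0$ and hence $S_j=S_j^0$ by the same kernel intersection.

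The gap is step one. You correctly reduce existence to the claim that every solution of $D_iP_j=P_iD_j^*$ has the form $D_i=P_iB^\flat$, equivalently that the solution space has dimension at most six. However, you only sketch a possible $\operatorname{SO}(3)$-representation-theoretic strategy and never establish that bound. That claim is the entire nontrivial content of the lemma, so existence is not yet proved.

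It is a short direct computation rather than a 48-unknown bookkeeping problem. Relative to $\RR e_0\oplus\RR^3$, write $P_i=\begin{pmatrix}0&-q_i^T\\q_i&0\end{pmatrix}$ and $D_i=\begin{pmatrix}a_i&\beta_i\\u_i&M_i\end{pmatrix}$. The blocks of $D_iP_j=P_iD_j^*$ then read $\beta_iq_j+\beta_jq_i=0$, $M_iq_j=a_jq_i$ and $u_iq_j^T+q_iu_j^T=0$.
\begin{itemize}
\item Applying the last equation with $i=j$ to $q_i$ gives $u_i=0$.
\item The first says that the matrix $Q$ with rows $\beta_i$ is skew.
\item The second forces $M_i=q_ia^T$, where $a=(a_1,a_2,a_3)^T$.
\end{itemize}
Hence the solutions are parametrized by $(a,Q)$, and $D_i=P_iB^\flat$ with $B^\flat=\begin{pmatrix}0&a^T\\-a&-Q\end{pmatrix}$. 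This map is skew, so it is a genuine two-form. The explicit formula also gives the smooth dependence directly.
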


\begin{proof}
Put $D_i=A_i-C_i$. The upper-right quaternionic identities give
$D_iP_j-P_iD_j^*=0$. Relative to
$\RR e_0\oplus\RR^3$, with $(q_1,q_2,q_3)$ the standard basis,
write
\[
 P_i=\begin{pmatrix}0&-q_i^T\\q_i&0\end{pmatrix},\qquad
 D_i=\begin{pmatrix}a_i&\beta_i\\u_i&M_i\end{pmatrix}.
\]
Equating blocks gives
\[
 \beta_iq_j+\beta_jq_i=0,\qquad
 M_iq_j=a_jq_i,\qquad
 u_iq_j^T+q_iu_j^T=0.
\]
Taking $i=j$ in the last equation gives $u_i=0$. Thus, if
$a=(a_1,a_2,a_3)^T$ and $Q$ has rows $\beta_i$, then
$Q^T=-Q$, $M_i=q_i a^T$, and
\begin{equation}\label{eq:local-algebraic-gauge}
 D_i=P_iB^\flat,\qquad
 B^\flat=\begin{pmatrix}0&a^T\\-a&-Q\end{pmatrix}.
\end{equation}
By \eqref{eq:local-algebraic-gauge}, transformation by $T_{-B}$
sets every upper-left block equal to $C_i$.
The upper-left quaternionic identities then give
$P_i(S_j-S_j^0)=0$ for all $i,j$, so $S_j=S_j^0$ because
$\bigcap_i\ker P_i=0$. This intersection also proves uniqueness;
\eqref{eq:local-algebraic-gauge} proves smooth dependence.
\end{proof}

\begin{proposition}\label{prop:GT-reduction}
Locally there are coordinates $(t,x)\in I\times N^3$ with
$\pi_i=\partial_t\wedge E_i(x)$, where $E_i$ is a transverse
frame. Write $\eta^i$ for its dual, $h=\sum_i(\eta^i)^2$ and
$\alpha_i=\eta^j-\ii\eta^k$. The standard spinors
$\alpha_i e^{\ii \,dt\wedge\eta^i}$ are integrable for a closed twist
$H$ precisely when
\begin{equation}\label{eq:H-GT}
 d\eta^i=\theta\wedge\eta^i-\kappa*\eta^i,
 \qquad H=-dt\wedge*\theta+\tau\vol_h.
\end{equation}
Here $\theta,\kappa$ are independent of $t$.
\end{proposition}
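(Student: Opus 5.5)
The plan has two stages. First I produce the coordinates from the Poisson data alone. Then I translate integrability of the three standard spinors into equations on the transverse coframe, via Gualtieri's criterion $d_H\rho\in(T\oplus T^*)\cdot\rho$.

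\textbf{Step 1 (straightening the canonical line).} By \cref{thm:canonical-line} I write locally $\pi_i=X\wedge E_i$ for a local generator $X$ of $L$ and vectors $E_i$, determined modulo $L$. Integrability of every member $\mathcal J_a$ makes each $\pi_a$ Poisson. By bilinearity in $a$ this gives $[\pi_i,\pi_i]=0$ and $[\pi_i,\pi_j]=0$ for all $i,j$ (Schouten bracket).

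For a decomposable bivector, $[X\wedge E_i,X\wedge E_i]=0$ says that $\Span\{X,E_i\}$ is involutive. So $[X,E_i]\in\Span\{X,E_i\}$, and $L$ lies in the leaves of three rank-two foliations. The mixed brackets $[\pi_i,\pi_j]=0$ should then constrain the $E_i$-components of $[X,E_i]$. Concretely, I expect them to force $[X,E_i]\equiv\lambda E_i \pmod X$ with a single function $\lambda$ independent of $i$.

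I then rescale $X\mapsto fX$ and $E_i\mapsto f^{-1}E_i$, which leaves $\pi_i$ unchanged, choosing $f$ so that $\mathcal L_X\pi_i=0$. Next I straighten $X=\partial_t$. Finally I use the freedom $E_i\mapsto E_i+g_iX$ to make the $E_i$ lift transverse vector fields $E_i(x)$ on $N$.

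This step is the one I expect to be the main obstacle. The difficulty is showing that the common rescaling exists. This needs the cross-bracket identities and possibly the $S^2$-wide integrability, not just the three individual Poisson conditions. If the Schouten computation alone is insufficient, I will instead take it from the integrability computation in Step 2: that computation shows that $\partial_t$ acting on the frame produces only a conformal factor, and this factor can be absorbed by a reparametrization of $t$.

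\textbf{Step 2 (spinor integrability).} With $\pi_i=\partial_t\wedge E_i$, \cref{prop:linear-B-normal} says the triple is a $B$-transform of the standard one. For a closed twist $H$ it therefore suffices to test the standard spinors $\rho_i=\alpha_i e^{\ii\omega_i}$, where $\omega_i=dt\wedge\eta^i$. The non-closed part of $B$ is absorbed into $H$ by \eqref{eq:twist-conjugation}.

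I decompose
\[
d\eta^i=dt\wedge\dot\eta^i+\textstyle\sum_{j<k}c^i_{jk}\eta^{jk},
\qquad
H=dt\wedge\beta+\tau\vol_h .
\]
Since $\rho_i$ has type one, the condition $d_H\rho_i=(v+\xi)\cdot\rho_i$ amounts to two requirements: the degree-three component of $d_H\rho_i$ is $\alpha_i\wedge$(a one-form) plus $\iota_v(\alpha_i\wedge\ii\omega_i)$, and the degree-five component is the corresponding wedge with $\omega_i$. Concretely, I would impose the equivalent pairing conditions. These are that $d\alpha_i\wedge\alpha_i\wedge\bar\alpha_i$ vanishes in suitable degrees, together with the analogous conditions for $\omega_i$ and $H$.

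\textbf{Step 3 (reading off the GT system).} Running over the three cyclic indices, I expect the vanishing of the $dt\wedge\eta^j\wedge\eta^k$ components to give three conclusions. First, $\dot\eta^i=0$, so the coframe and hence $\theta,\kappa$ are independent of $t$. Second, the trace-free symmetric part of the structure constants $c^i_{jk}$ vanishes, leaving only the Weyl part $\theta\wedge\eta^i$ and the scalar part $-\kappa*\eta^i$. Third, the $dt$-part of $H$ is determined as $\beta=-*\theta$, with $\tau$ left free.

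Conversely, substituting \eqref{eq:H-GT} back into $d_H\rho_i$ exhibits $v+\xi$ explicitly, with $v\propto$ a combination of $E_j,E_k$ and $\xi$ built from $\theta$ and $\kappa$. Closedness of $H$ then follows from $d*\theta$ computed via the GT equations, giving both directions of the equivalence.
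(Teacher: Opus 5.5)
\paragraph{Step 1 is fine.} Your Step 1 is the paper's argument, and it needs no fallback. Once the diagonal conditions give $[X,Y_i]=a_iX+b_iY_i$, the mixed brackets are $[X\wedge Y_i,X\wedge Y_j]_{\mathrm{Sch}}=(b_j-b_i)\,X\wedge Y_i\wedge Y_j$, so the polarized Poisson conditions force a common $b$. Then $X\log f=b$, $W=fX$ and $E_i=f^{-1}Y_i+g_iW$ with $W(g_i)=-c_i$ give $[W,E_i]=0$. The fallback would also clash with your Step 3. Integrability alone gives only $\alpha_i\wedge dt\wedge\dot\alpha_i=0$, i.e.\ $\dot\eta^i=p\,\eta^i$ with a common $p$, not $\dot\eta^i=0$. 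After Step 1, $t$-independence is automatic rather than a conclusion.

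\paragraph{The gap is Step 2.} Step 2 is the substance of the proposition, and you never reduce Gualtieri's criterion to a correct system; the one concrete condition you name would lose the GT equations. First, for the odd spinor $\rho=\alpha e^{\ii\omega}$, $d_H\rho$ is even, with components in degrees $2$ and $4$. Matching it with $(v+\xi)\cdot\rho$ also imposes $\alpha(v)=0$ in degree $0$. With that constraint, your degree-two requirement is precisely $d\alpha_i\in\alpha_i\wedge\Omega^1$. You then replace it by the ``pairing condition'' $d\alpha_i\wedge\alpha_i\wedge\bar\alpha_i=0$. In the adapted coordinates this condition is vacuous: it is a four-form built from forms pulled back from $N^3$. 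The condition that matters is the Frobenius condition $\alpha_i\wedge d\alpha_i=0$. Over the three cyclic indices it says exactly that the trace-free symmetric part of the structure functions vanishes, which is the GT system. The $dt\wedge\vol_h$ component of $(H+\ii\,d\omega_i)\wedge\alpha_i=0$ then fixes the $dt$-part of $H$ as $-dt\wedge*\theta$ and leaves $\tau$ free. As written, Step 3 only lists these expected outcomes; nothing in your Step 2 derives them.

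\paragraph{The missing step.} Conjugate by $e^{\ii\omega}$:
\[
 e^{-\ii\omega}d_H(\alpha e^{\ii\omega})=d\alpha+(H+\ii\,d\omega)\wedge\alpha,\qquad
 e^{-\ii\omega}(Z+\xi)\cdot(\alpha e^{\ii\omega})=\alpha(Z)+(\xi+\ii\,\iota_Z\omega)\wedge\alpha .
\]
This shows that integrability is equivalent to $\alpha\wedge d\alpha=0$ together with $(H+\ii\,d\omega)\wedge\alpha=0$. For sufficiency, $d\alpha=\nu\wedge\alpha$ gives $d_H\rho=\nu\wedge\rho$. Step 3 then becomes a finite linear computation in the coefficients of $d\eta^i$ and of the vertical part of $H$. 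The converse follows from $d\alpha_i=(\theta+\ii\kappa\eta^i)\wedge\alpha_i$ and $\alpha_i\wedge(*u+\ii\,u\wedge\eta^i)=0$ for real transverse $u$.

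\paragraph{Closedness of $H$.} Closedness of $H$ does not follow from the GT equations, contrary to your last step. With $d(*\theta)=q\vol_h$ one has $dH=(\partial_t\tau+q)\,dt\wedge\vol_h$. So closedness is part of the hypothesis and only constrains $\tau$; it is not something to be proved.
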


\begin{proof}
Write $\pi_i=X\wedge Y_i$. The bivector of each generalized complex
structure is Poisson \cite[Proposition~3.21]{Gualtieri2011}.
Applying this to every $\sum_i a_i\mathcal J_i$ and polarizing gives
$[\pi_i,\pi_j]_{\mathrm{Sch}}=0$. The diagonal equations imply
$[X,Y_i]=a_iX+b_iY_i$, and for $i\ne j$,
\[
 0=[\pi_i,\pi_j]_{\mathrm{Sch}}
   =(b_j-b_i)X\wedge Y_i\wedge Y_j.
\]
Hence all $b_i$ equal $b$. Solve $X(\log f)=b$ with $f>0$,
and put $W=fX$. Then
\begin{equation}
\left\{
\begin{aligned}
& [W,f^{-1}Y_i]=c_iW,\\
 & W(g_i)=-c_i,\\
 &E_i:=f^{-1}Y_i+g_iW.
\end{aligned}
\right.
\end{equation}
Consequently,
\[
 [W,E_i]=0,
 \qquad
 \pi_i=W\wedge E_i.
\]
Choose the initial values of $g_i$ to make $E_i$ tangent to a
transverse slice. Flowing this slice by $W$ gives
$W=\partial_t$ and $E_i=E_i(x)$.

For a nondegenerate type-one spinor $\alpha e^{\ii \omega}$, the
pure-spinor criterion \cite[Theorem~2.9]{Gualtieri2011} reduces to
\begin{equation}\label{eq:spinor-criterion}
 \alpha\wedge d\alpha=0,
 \qquad (H+\ii\,d\omega)\wedge\alpha=0.
\end{equation}
Indeed, the degree-zero, degree-two and degree-four terms in
$ e^{-\ii\omega}d_H(\alpha e^{\ii \omega})
   =d\alpha+(H+\ii\,d\omega)\wedge\alpha$ and
 $e^{-\ii\omega}(\iota_Z+\xi\wedge)(\alpha e^{\ii \omega})
   =\alpha(Z)+(\xi+\ii\,\iota_Z\omega)\wedge\alpha$
give the necessity of \eqref{eq:spinor-criterion}.
Conversely $\alpha\wedge d\alpha=0$ gives
$d\alpha=\nu\wedge\alpha$, and \eqref{eq:spinor-criterion}
then gives $d_H\rho=\nu\wedge\rho$.
Write the vertical part of $H$ as
$dt\wedge(h_{012}\eta^{12}+h_{013}\eta^{13}+h_{023}\eta^{23})$.
For the three cyclic indices, the real and imaginary parts of
\eqref{eq:spinor-criterion} are equivalent to
\begin{equation}
\left\{
\begin{aligned}
 d\eta^1&=-h_{013}\eta^{12}+h_{012}\eta^{13}+c\eta^{23},\\
 d\eta^2&=-h_{023}\eta^{12}-c\eta^{13}+h_{012}\eta^{23},\\
 d\eta^3&=c\eta^{12}-h_{023}\eta^{13}+h_{013}\eta^{23}.
\end{aligned}
\right.
\end{equation}
Setting $\theta=-h_{023}\eta^1+h_{013}\eta^2-h_{012}\eta^3$
and $\kappa=-c$ gives \eqref{eq:H-GT}. Its coefficients are
independent of $t$ because the coframe is. Conversely the GT equations
give
\begin{equation}\label{eq:star-identity}
 d\alpha_i=(\theta+\ii\kappa\eta^i)\wedge\alpha_i,
 \qquad \alpha_i\wedge(*u+\ii\,u\wedge\eta^i)=0
\end{equation}
for every real transverse one-form $u$. Substitution into
\eqref{eq:spinor-criterion} proves the converse, since
$\vol_h\wedge\alpha_i=0$.
\end{proof}

\begin{proposition}
\label{prop:untwisting-existence}
Every untwisted lift in the adapted Poisson form is locally equivalent, by a common closed $B$-field and a change of canonical time, to \eqref{eq:normal-spinors}. Conversely every GT coframe produces that untwisted quaternionic sphere.
\end{proposition}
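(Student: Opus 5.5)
The plan is to reduce to \cref{prop:GT-reduction} through the linear normalization of \cref{prop:linear-B-normal} and the conjugation rule \eqref{eq:twist-conjugation}. Then the leftover exact transverse twist is absorbed by a change of canonical time and a closed $B$-field.

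\textbf{Reduction to a twisted standard triple.} Start from an untwisted lift whose Poisson tensors are in the adapted form $\pi_i=\partial_t\wedge E_i$ of \cref{prop:GT-reduction}. Apply \cref{prop:linear-B-normal} pointwise to the adapted frame $(\partial_t,E_i)$. This gives a unique smooth real two-form $B$ such that the structure is $T_B\mathcal J_i^0T_B^{-1}$, so its pure spinors are $e^{B}\rho_i^0$ with $\rho_i^0=\alpha_ie^{\ii\,dt\wedge\eta^i}$.

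By \eqref{eq:twist-conjugation} with $H=dB$, untwisted integrability of $e^B\rho_i^0$ is equivalent to $d_{dB}$-integrability of $\rho_i^0$. \Cref{prop:GT-reduction} then yields the GT equations together with
\[
 dB=H=-dt\wedge*\theta+\tau\vol_h .
\]
The function $\tau$ is otherwise unconstrained, because $\vol_h\wedge\alpha_i=0$. Closedness of $H$ forces the relation $\partial_t\tau\,\vol_h=d*\theta$ up to sign. Hence $\tau=\tau_0(x)+t\,c(x)$, where $c\,\vol_h=\pm d*\theta$.

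\textbf{Peeling off the normal-form field.} Put $B_0=-t*\theta$, so that $dB_0=-dt\wedge*\theta-t\,d*\theta$. Then $H-dB_0=f(x)\vol_h$ is a closed transverse top form on $N$, with $f$ independent of $t$ after the cancellation above. The next step is to realize $f\vol_h$ as $d\beta$ with $\beta$ of a special shape, namely $\beta=*u$ for a transverse one-form $u$.

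The identity \eqref{eq:star-identity}, $\alpha_i\wedge(*u+\ii\,u\wedge\eta^i)=0$, shows that
\[
 e^{*u}\alpha_ie^{\ii\,dt\wedge\eta^i}=\alpha_ie^{\ii\,(dt-u)\wedge\eta^i}
\]
simultaneously for all three $i$. So a transverse field of the form $*u$ acts on the whole triple as the replacement $dt\mapsto dt-u$. This is a change of canonical time $t'=t-g(x)$ exactly when $u=dg$. The time shift also moves $B_0$ by $g*\theta$, which is again of the form $*u$, now with $u=g\theta$.

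\textbf{The key equation.} Collecting terms, what I would try is to find $g$ with
\[
 d*(dg+g\theta)=\pm f\,\vol_h .
\]
This is a second-order linear elliptic equation $\Delta_hg+\langle\theta,dg\rangle_h+(\diver_h\theta)g=\pm f$ on $N$. It is locally solvable, for instance by the Cauchy--Kovalevskaya theorem or standard local elliptic existence on a small ball.

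With such a $g$, the difference between $B$ and $B_0+*(dg+g\theta)$, written in the new time $t'$, is closed. Hence the original spinors equal a closed $B$-field transform of \eqref{eq:normal-spinors} in the coordinates $(t',x)$. Signs and orientation conventions must be tracked carefully here, and I expect this bookkeeping, namely that the $*\theta$ term and the $\vol_h$ term recombine into a single elliptic equation for $g$ rather than an overdetermined system, to be the main obstacle.

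\textbf{Converse.} Given a GT coframe, the field $B_0=-t*\theta$ satisfies $dB_0=-dt\wedge*\theta-t\,d*\theta$. The last term is a multiple of $\vol_h$, because $d*\theta$ is a transverse top form. So $dB_0$ has exactly the form $H$ of \eqref{eq:H-GT}, with $\tau=-t\,c$ and $c$ as above. \Cref{prop:GT-reduction} then makes $\rho_i^0$ $d_{dB_0}$-integrable, and \eqref{eq:twist-conjugation} makes $e^{B_0}\rho_i^0$ untwisted-integrable. Integrability of the whole sphere follows from the two-generator criterion recalled in the introduction.
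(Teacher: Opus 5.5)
Your argument is correct and is essentially the paper's proof. The paper likewise obtains $dB=-dt\wedge*\theta+(-tq+\tau_0)\vol_h$ from the algebraic normalization, shifts time by $s=t+v(x)$ (your $g=-v$) using \eqref{eq:star-identity}, and solves exactly your equation $\mathscr P_\theta v=-\tau_0$; the $ds\wedge*\theta$ terms cancel identically, so the bookkeeping you worried about produces no overdetermined system. One caveat: Cauchy--Kovalevskaya does not apply here because the coframe is only smooth, so local solvability must come from the elliptic route, which the paper carries out by rescaling to a small ball and perturbing the invertible constant-coefficient Dirichlet problem in H\"older spaces.
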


\begin{proof}
The algebraic gauge argument writes the original spinors as
$e^B\alpha_i e^{\ii \,dt\wedge\eta^i}$, where $H=dB$ has the form
\eqref{eq:H-GT}. We use $\Delta_h=\diver_h\operatorname{grad}_h$,
so $d(*dv)=(\Delta_hv)\vol_h$, and set
\begin{equation}\label{eq:Ltheta}
 d(*\theta)=q\vol_h,
 \qquad \mathscr P_\theta v=\Delta_hv+\langle\theta,dv\rangle_h+qv.
\end{equation}
By \eqref{eq:H-GT}, closure of $H$ gives
\[
 dH=(\partial_t\tau+q)\,dt\wedge\vol_h=0,
 \qquad \tau=-tq+\tau_0(x).
\]
Set $s=t+v(x)$ and $\widetilde B=B+*dv$.
Equation \eqref{eq:star-identity} identifies the spinors with
$e^{\widetilde B}\alpha_i e^{\ii\,ds\wedge\eta^i}$, while
\eqref{eq:Ltheta} gives
\begin{equation}\label{eq:local-time-gauge}
 d\widetilde B=-ds\wedge*\theta+
        \bigl(-sq+\tau_0+\mathscr P_\theta v\bigr)\vol_h.
\end{equation}
On a small coordinate ball $B_r$, impose $v|_{\partial B_r}=0$.
Under $x=ry$, set $w(y)=v(ry)$. The rescaled operator is
\[
 \mathscr P_{\theta,r}
 =a^{ij}(ry)\partial_i\partial_j+r b^i(ry)\partial_i+r^2q(ry),
 \qquad (a^{ij}(0))>0.
\]
For any fixed $0<\alpha<1$, this is a small perturbation, as an
operator from
$\{w\in C^{2,\alpha}(\overline B_1):w|_{\partial B_1}=0\}$
to $C^{0,\alpha}(\overline B_1)$, of the invertible
constant-coefficient Dirichlet operator
$a^{ij}(0)\partial_i\partial_j$.
Thus $\mathscr P_{\theta,r}w=-r^2\tau_0(ry)$ has a solution with
zero boundary values for small $r$. Then $v(x)=w(x/r)$ solves
$\mathscr P_\theta v=-\tau_0$; interior elliptic regularity makes
it smooth on the smaller ball used for the local normal form.
Equation \eqref{eq:local-time-gauge} then becomes
\[
 d\widetilde B=-d(s*\theta),\qquad
 d(\widetilde B+s*\theta)=0.
\]
Removing the common closed form $\widetilde B+s*\theta$ gives
\eqref{eq:normal-spinors}.

Conversely \eqref{eq:star-identity} and
\eqref{eq:twist-conjugation} give directly
\[
 d\rho_i=(\theta+\ii\kappa\eta^i)\wedge\rho_i.
\]
The GT equations are invariant under constant $\operatorname{SO}(3)$ rotations;
the same calculation applies to the entire quaternionic sphere.
Every nonzero $\pi_a=\partial_t\wedge\sum a_iE_i$ has rank two.
\end{proof}

\begin{proposition}\label{thm:classification}
Let $(\eta,\theta,\kappa)$ on $N$ and
$(\widehat\eta,\widehat\theta,\widehat\kappa)$ on
$\widehat N$ define two normal forms. They are locally equivalent
precisely when there are a local diffeomorphism
$\psi:N\to\widehat N$, a nowhere-zero real function $a$ and
a constant $R\in \operatorname{SO}(3)$ such that
\begin{equation}\label{eq:weyl-gauge}
\left\{\begin{aligned}
 \psi^*\widehat\eta
 &=a^{-1}R\eta,\\
 \psi^*\widehat\theta
 &=\theta-d\log|a|,\\
 \psi^*\widehat\kappa
 &=a\kappa.
\end{aligned}\right.
\end{equation}
For such data, every equivalence has an underlying map of the form
\begin{equation}\label{eq:local-equivalence-map}
\begin{aligned}
 F(t,x)
 &=\bigl(a(x)t+v(x),\psi(x)\bigr),\\
 \mathscr P_\theta u
 &=0,
\end{aligned}
\end{equation}
where $u:=v/a$.
Let $\rho_i^R$ denote the normal spinors associated with the rotated
coframe $R\eta$, and define $B_u:=-*_h(du+u\theta)$. Then
\begin{equation}\label{eq:local-equivalence-compensator}
\begin{aligned}
 F^*\widehat\rho_i
 &=a^{-1}e^{B_u}\rho_i^R,\\
 dB_u
 &=-(\mathscr P_\theta u)\vol_h.
\end{aligned}
\end{equation}
Since $\mathscr P_\theta u=0$, the inverse compensator $-B_u$
is closed.
\end{proposition}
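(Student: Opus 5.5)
We have to prove the local equivalence statement (\cref{thm:classification}). The plan is to pin down an equivalence's underlying map by its action on the canonical foliation, reduce the spinor identity to the algebraic data, and then check the converse by an explicit pullback.

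\emph{Necessity.} Let $F$ be an equivalence: a local diffeomorphism together with a closed real $B$-field $C$ and a constant rotation $R$, such that $F^*\widehat\rho_i$ is a nonzero multiple of $e^{C}\rho_i^R$ for each $i$.

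1. By \cref{thm:canonical-line}, the line $L$ is determined by the span of the Poisson tensors. That span is unchanged by $B$-fields and rotations, so $F$ maps $L=\RR\partial_t$ to $\widehat L=\RR\partial_{\widehat t}$. Hence $F$ preserves the leaves and has the form $F(t,x)=(\phi(t,x),\psi(x))$.
2. The pulled-back Poisson tensors satisfy $F^*\widehat\pi_i=\sum_j R_{ij}\pi_j$, where $\pi_j=\partial_t\wedge E_j$. Write $F_*\partial_t=(\partial_t\phi)\partial_{\widehat t}$. Comparing the two sides modulo $L$ gives $\psi^*\widehat\eta=(\partial_t\phi)^{-1}R\eta$.
3. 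The left side of this identity is independent of $t$. It follows that $a:=\partial_t\phi$ depends only on $x$, so $\phi=a(x)t+v(x)$.
4. The GT relations for $\widehat\eta$, combined with the uniqueness of $(\theta,\kappa)$ for a fixed coframe stated after \eqref{eq:GT}, force the stated transformation laws. Concretely, $d(a^{-1}R\eta)$ expands to $(\theta-d\log|a|)\wedge a^{-1}R\eta-a\kappa\,*_{a^{-2}h}(a^{-1}R\eta)$. Here one uses that $*$ on one-forms scales as $a^{-1}$ under $h\mapsto a^{-2}h$ in dimension three, and that the rotation $R$ commutes with $*$.

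\emph{The compensator.} Next I compute $F^*\widehat\rho_i$ directly. The spinor $\widehat\rho_i$ equals $e^{-\widehat t*\widehat\theta}\widehat\alpha_ie^{\ii\,d\widehat t\wedge\widehat\eta^i}$, and it pulls back using the following substitutions:
\[
 F^*\widehat\alpha_i=a^{-1}\alpha^R_i,\qquad
 F^*(d\widehat t\wedge\widehat\eta^i)=dt\wedge(R\eta)^i+a^{-1}d(at+v)\wedge\ldots
\]
After expanding, the extra horizontal terms can be absorbed. By \eqref{eq:star-identity}, $\alpha_i\wedge(*u'+\ii u'\wedge\eta^i)=0$, so any term $\ii u'\wedge\eta^i$ may be exchanged for $-*u'$. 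The exponents also contain $t*\theta$-type terms, and these must cancel against $-\widehat t\,*\widehat\theta$. That cancellation uses $*_{\widehat h}\widehat\theta=a^{-1}*_h(\theta-d\log|a|)$. What remains should be $e^{B_u}$ with $B_u=-*_h(du+u\theta)$ and $u=v/a$, giving $F^*\widehat\rho_i=a^{-1}e^{B_u}\rho_i^R$.

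Then $dB_u=-(d*du+d(u*\theta))\vol$-coefficient, which equals $-(\Delta_hu+\langle\theta,du\rangle+qu)\vol_h=-(\mathscr P_\theta u)\vol_h$ by \eqref{eq:Ltheta}. Closedness of the compensator is required by the definition of equivalence, so it forces $\mathscr P_\theta u=0$. This completes necessity, and it also shows that every equivalence has the form \eqref{eq:local-equivalence-map}.

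\emph{Sufficiency.} Given $(\psi,a,R)$ satisfying \eqref{eq:weyl-gauge}, take $v=0$. The computation above then gives $F^*\widehat\rho_i=a^{-1}\rho_i^R$ with $B_0=0$. The factor $a^{-1}$ does not change the annihilator. The rotated triple is equivalent to the original by the observation following \eqref{eq:standard-matrix}, so the two models are equivalent.

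The main obstacle is the bookkeeping in the compensator computation. Sign conventions for $*$ under rescaling, possibly negative $a$ (which reverses orientation terms through $|a|$), and the $t$-dependent terms all must cancel exactly. I would first do the case $R=\Id$, $a>0$, and then track the sign of $a$ separately.
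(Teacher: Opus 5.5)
Your proposal is correct and follows the paper's route. Preservation of $L$ and comparison of the Poisson tensors give $F=(a(x)t+v(x),\psi(x))$ and $\psi^*\widehat\eta=a^{-1}R\eta$, and uniqueness of $(\theta,\kappa)$ gives the other two laws. \eqref{eq:star-identity} then absorbs the extra exponent terms to leave $a^{-1}e^{B_u}\rho_i^R$. The paper merely organizes that expansion as $F=F_0\circ\Phi_u$.

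Two points need fixing.

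\emph{The compensator step.} You assert that closedness of the compensator is required by the definition, but the definition only gives some closed $C$ with $F^*\widehat\rho_i\propto e^{C}\rho_i^R$. To conclude $C=B_u$, and hence $\mathscr P_\theta u=0$, you need the uniqueness clause of \cref{prop:linear-B-normal}: a real two-form fixing all three spinor lines has zero wedge with each degree-one factor, and therefore vanishes. The paper invokes exactly this uniqueness at that step.

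\emph{The displayed pullback.} Your formula double-counts $dt$. It should read
$F^*(d\widehat t\wedge\widehat\eta^i)=dt\wedge(R\eta)^i+t\,d\log|a|\wedge(R\eta)^i+a^{-1}dv\wedge(R\eta)^i$.
Also, it is only the $t\,d\log|a|$ terms that drop out via \eqref{eq:star-identity}; the term $-t*_h\theta$ is retained as part of $\rho_i^R$.
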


\begin{proof}
Preservation of $L$ makes an equivalence
$(t,x)\mapsto(\varphi(t,x),\psi(x))$. Comparison of the
Poisson tensors gives
\[
 (\partial_t\varphi)\,d\psi(E_i)
   =\sum_j R_{ji}\widehat E_j,
 \qquad\partial_t^2\varphi=0,
 \qquad\varphi=at+v.
\]
Dualizing gives the first relation in \eqref{eq:weyl-gauge};
substitution into \eqref{eq:GT} gives the other two.

Pull all transverse data back by $\psi$ and first take $R=\Id$.
The respective coframe orientations give
$\widehat h=a^{-2}h$ and $\vol_{\widehat h}=a^{-3}\vol_h$,
so $\widehat *=a^{-1}*$ on one-forms for either sign of $a$.
By \eqref{eq:star-identity}, the two-form
$\Omega_i(\xi)=*\xi+\ii\,\xi\wedge\eta^i$ satisfies
$\Omega_i(\xi)\wedge\rho_i=0$, hence $e^{s\Omega_i(\xi)}\rho_i=\rho_i$.
For $F_0(t,x)=(a(x)t,\psi(x))$, taking $\xi=d\log|a|$ gives
\begin{equation}\label{eq:local-scaled-spinors}
 F_0^*\widehat\rho_i
  =a^{-1}\exp\!\left(
       t\{*d\log|a|+\ii\,d\log|a|\wedge\eta^i\}\right)\rho_i
  =a^{-1}\rho_i.
\end{equation}
For constant $R$, replace $\rho_i$ by $\rho_i^R$.
With $u=v/a$ and $\Phi_u(t,x)=(t+u(x),x)$, we have
$F=F_0\circ\Phi_u$. Since the transverse data are
independent of $t$, \eqref{eq:star-identity}, applied to
the rotated coframe $R\eta$, gives
\begin{align*}
 \Phi_u^*\rho_i^R
 &=\exp\!\left(
      -u*_h\theta+\ii\,du\wedge(R\eta)^i
    \right)\wedge\rho_i^R\\
 &=\exp\!\left[-*_h(du+u\theta)\right]\wedge\rho_i^R.
\end{align*}
Moreover, using \eqref{eq:Ltheta} and
$du\wedge *_h\theta=\langle du,\theta\rangle_h\vol_h$,
we obtain
\begin{align*}
 d\bigl(*_h(du+u\theta)\bigr)
 &=d(*_hdu)+du\wedge *_h\theta+u\,d(*_h\theta)\\
 &=\bigl(\Delta_hu+\langle du,\theta\rangle_h+qu\bigr)\vol_h\\
 &=(\mathscr P_\theta u)\vol_h.
\end{align*}
Together with \eqref{eq:local-scaled-spinors} and \eqref{eq:Ltheta},
these identities give \eqref{eq:local-equivalence-compensator}.
Uniqueness in
\cref{prop:linear-B-normal} makes $\mathscr P_\theta u=0$ necessary and
sufficient for a closed compensator. This condition is covariant:
\[
 \widehat *\{d(au)+au\widehat\theta\}=*(du+u\theta).
\]
\end{proof}

The transverse data include a chosen hyperCR structure.
Passing to the underlying Einstein--Weyl geometry forgets this
choice and may identify distinct GT data; the oriented round
three-sphere admits two hyperCR structures
\cite[Section~3]{CalderbankPedersen2000}.

\begin{lemma}\label{prop:GTcompat}
Every GT coframe satisfies
\begin{equation}\label{eq:GTcompat-intro}
 *d\theta=d\kappa+\kappa\theta.
\end{equation}
The torsion-free Weyl connection $Dh=2\theta\otimes h$ has
\begin{equation}\label{eq:EWricci}
 \Ric^D_{ij}=\frac{\kappa^2}{2}h_{ij}-\frac32(d\theta)_{ij}.
\end{equation}
\end{lemma}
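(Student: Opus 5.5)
The plan is to differentiate the GT equations \eqref{eq:GT} and to compute the Weyl connection in the ordered GT coframe, where its connection matrix turns out to be very simple.

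\textbf{Compatibility identity.} First I compute $d(*\eta^i)$. For $(i,j,k)$ cyclic, $*\eta^i=\eta^{jk}$. By \eqref{eq:GT},
\[
 d\eta^{jk}=d\eta^j\wedge\eta^k-\eta^j\wedge d\eta^k .
\]
The $\kappa$-terms drop out, since $\eta^{ki}\wedge\eta^k=0$ and $\eta^j\wedge\eta^{ij}=0$. Hence
\[
 d(*\eta^i)=2\theta\wedge*\eta^i .
\]
Next I apply $d$ to \eqref{eq:GT} and use $\theta\wedge d\eta^i=-\kappa\,\theta\wedge*\eta^i$. This gives
\[
 0=d\theta\wedge\eta^i-(d\kappa+\kappa\theta)\wedge*\eta^i,\qquad i=1,2,3 .
\]
In dimension three, $u\wedge*\eta^i=u_i\vol_h$ for a one-form $u$, and $\beta\wedge\eta^i=(*\beta)_i\vol_h$ for a two-form $\beta$. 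The three scalar identities therefore say exactly that $*d\theta=d\kappa+\kappa\theta$, which is \eqref{eq:GTcompat-intro}. This step is routine.

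\textbf{Weyl connection in the coframe.} Write $D\eta^i=-\sum_j\omega^i{}_j\otimes\eta^j$. Torsion-freeness is the first structure equation $d\eta^i=-\omega^i{}_j\wedge\eta^j$. The condition $Dh=2\theta\otimes h$ forces
\[
 \omega=-\theta\,\Id+\sigma,\qquad \sigma\ \text{skew}.
\]
Comparing with \eqref{eq:GT}, the skew part must satisfy $\sigma^i{}_j\wedge\eta^j=\kappa*\eta^i$ (up to the sign fixed by the conventions). I will check that
\[
 \sigma^i{}_j=\pm\tfrac{\kappa}{2}\,\varepsilon_{ijk}\eta^k
\]
works, and fix the sign by testing one index. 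Uniqueness of the torsion-free connection with $Dh=2\theta\otimes h$ then identifies this as $D$. So in the GT frame the Weyl connection has matrix $-\theta\Id\pm\frac{\kappa}{2}\varepsilon\eta$.

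\textbf{Curvature and Ricci.} I will use Cartan's second structure equation
\[
 \Omega=d\omega+\omega\wedge\omega .
\]
The scalar part $-\theta\Id$ commutes with everything. This leaves
\[
 \Omega=-d\theta\,\Id\pm\tfrac12 d(\kappa\,\varepsilon\eta)+\tfrac{\kappa^2}{4}(\varepsilon\eta)\wedge(\varepsilon\eta).
\]
For the middle term I use $d(\kappa\eta^k)=d\kappa\wedge\eta^k+\kappa\theta\wedge\eta^k-\kappa^2*\eta^k$. I then substitute \eqref{eq:GTcompat-intro} to replace $d\kappa+\kappa\theta$ by $*d\theta$. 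After this substitution every first-order term is expressed through $d\theta$. The $\kappa^2$ terms combine, via $\varepsilon$-contraction identities, into a multiple of the standard constant-curvature form $\eta^i\wedge\eta^j$.

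Finally I take the trace $\Ric^D(Y,Z)=\operatorname{tr}(X\mapsto R^D(X,Z)Y)$ with the paper's convention. The $\kappa^2$ part should give $\frac{\kappa^2}{2}h_{ij}$. The $d\theta$ parts should collect into $-\frac32(d\theta)_{ij}$: the scalar piece $-d\theta\,\Id$ contributes the usual Weyl antisymmetric term, and the $*d\theta$ pieces from the middle term contribute both symmetric terms that must cancel and a further antisymmetric multiple of $d\theta$.

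\textbf{Main obstacle.} I expect the sign and factor bookkeeping in this last step to be the hardest part: the sign of $\sigma$, the orientation convention for $*$, the trace convention for $\Ric^D$, and the cancellation of the symmetric $d\theta$ contributions, which is needed for the symmetric part to be purely $\frac{\kappa^2}{2}h$. As a check on all signs, I will evaluate on the round $S^3$ model ($\theta=0$, $\kappa$ constant, $d\eta^i=-\kappa*\eta^i$). There the Levi-Civita Ricci tensor must be $\frac{\kappa^2}{2}h$, and I will fix conventions against it before doing the general trace.
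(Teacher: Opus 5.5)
Your proposal is correct and follows the paper's proof essentially step for step: the same $d^2\eta^i=0$ computation gives \eqref{eq:GTcompat-intro}, and the paper likewise uses the connection forms $-\delta_{ij}\theta-\frac{\kappa}{2}\varepsilon_{ijk}\eta^k$ (so your undetermined sign is the minus one), Cartan's second structure equation, and contraction together with \eqref{eq:GTcompat-intro}. One small simplification is available: the trace of the middle curvature term is just $-\frac12\varepsilon_{ijk}(d\kappa+\kappa\theta)_k=-\frac12(d\theta)_{ij}$, which is already antisymmetric, so no symmetric $d\theta$ contributions arise that would need cancelling.
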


\begin{proof}
Equation \eqref{eq:GT} gives
\[
 d(*\eta^i)=2\theta\wedge*\eta^i,\qquad
 0=d^2\eta^i=d\theta\wedge\eta^i
                    -(d\kappa+\kappa\theta)\wedge*\eta^i,
\]
which is \eqref{eq:GTcompat-intro}. The connection
$D_XY=\nabla_X^hY-\theta(X)Y-\theta(Y)X+h(X,Y)\theta^\sharp$
has connection and curvature forms
\begin{align*}
 \Omega^i{}_j&=-\delta_{ij}\theta
              -\frac\kappa2\varepsilon_{ijk}\eta^k,\\
 \mathcal R^i{}_j&=-\delta_{ij}d\theta
 -\frac12\varepsilon_{ijk}(d\kappa+\kappa\theta)\wedge\eta^k
 +\frac{\kappa^2}{4}\eta^i\wedge\eta^j.
\end{align*}
Contraction and \eqref{eq:GTcompat-intro} yield
\begin{align*}
 \Ric^D_{ij}
 &=\frac{\kappa^2}{2}\delta_{ij}-(d\theta)_{ij}
   -\frac12\varepsilon_{ijk}(d\kappa+\kappa\theta)_k\\
 &=\frac{\kappa^2}{2}\delta_{ij}-\frac32(d\theta)_{ij},
\end{align*}
which proves \eqref{eq:EWricci}.
\end{proof}

\begin{proof}[Proof of Theorem~\ref{thm:main-local}]
\Cref{thm:canonical-line} gives $L$ and $TM\cong L^{\oplus4}$;
\cref{prop:GT-reduction,prop:untwisting-existence} give the normal
form and its converse. The equivalence criterion and the induced
transformations of $\theta,\kappa$ follow from \cref{thm:classification}.
For sufficiency, take $v=0$ there: the local diffeomorphism
$F_0(t,x)=(a(x)t,\psi(x))$ satisfies
$F_0^*\rho_i'=a^{-1}\rho_i^R$ by \eqref{eq:local-scaled-spinors}.
The nonzero scalar preserves the pure-spinor lines; composing with
the closed $B$-fields used for normalization proves the claim.
\end{proof}

\section{Proof of Theorem B}
\label{sec:compact-foliations}
We first establish automatic Riemannianity and compatible Killing
reconstruction. The argument then establishes the following branches;
``closed'' and ``nonclosed'' refer to the canonical leaves:
\begin{equation}\label{eq:topology-exhaustion}
 \begin{array}{c|c}
 K\not\equiv0&\text{Berger quotients}\\
 K\equiv0,\ \text{non-taut}&\text{affine axis quotients}\\
 K\equiv0,\ \text{taut},\ \text{nonclosed}&\text{constant tori}\\
 K\equiv0,\ \text{taut},\ \text{closed}
       &\text{flat circle bundles; }e=0\text{ gives tori}.
 \end{array}
\end{equation}
The subsequent subsections determine their parameters and finite
coverings. \Cref{prop:compact-exhaustion} completes the exhaustion
argument and proves the topological assertions.

\begin{lemma}
\label{lem:finite-Courant-descent}
For a finite group $G$ acting by ordered Courant automorphisms of an
untwisted triple, a common closed real $B$-field makes the triple
invariant under the underlying diffeomorphisms. If their action on
the manifold is free, the resulting untwisted triple descends to
the quotient.
\end{lemma}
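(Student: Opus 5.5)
Each $g\in G$ acts by an ordered Courant automorphism of the untwisted bracket. Such an automorphism has the form $(\phi_g,B_g)$: a diffeomorphism $\phi_g$ together with a closed real two-form $B_g$. The composition law is
\[
(\phi_g,B_g)(\phi_{g'},B_{g'})=(\phi_g\phi_{g'},\ B_{g'}+\phi_{g'}^*B_g)
\]
in a suitable convention. The invariance condition reads $\phi_g^*\mathcal J_i=T_{B_g}\mathcal J_iT_{B_g}^{-1}$, or the analogous statement on the spinor lines. The group law shows that $g\mapsto B_g$ is a $1$-cocycle of $G$ with values in the $G$-module $Z^2(M)$ of closed real two-forms, where $G$ acts by pullback.

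The plan is to show this cocycle is a coboundary by averaging. Since $G$ is finite and $Z^2(M)$ is a real vector space, $H^1(G;Z^2(M))=0$. Concretely, I would set
\[
\beta:=\frac1{|G|}\sum_{h\in G}B_h ,
\]
which is a closed real two-form. Using the cocycle identity $B_{gh}=B_h+\phi_h^*B_g$ (with whatever order the convention dictates), a short computation gives $B_g=\beta-\phi_g^*\beta$, up to a global sign fixed by the convention. Replacing the triple by its transform $T_{\beta}\mathcal J_iT_{\beta}^{-1}$ then absorbs every compensator. The transformed triple is equivalent to the original, since it differs by a common closed $B$-field. It is still untwisted, because $d\beta=0$ and \eqref{eq:twist-conjugation} gives $d(e^{\beta}\rho)=e^{\beta}d\rho$. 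It satisfies $\phi_g^*\mathcal J_i'=\mathcal J_i'$ for every $g$ and $i$.

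The step needing care is bookkeeping rather than analysis. One has to fix the sign and order conventions for $T_B$ from the Notation section and check that the action is a genuine left action, so that the averaged form cancels $B_g$ rather than doubling it. I would verify the cocycle identity directly from $T_BT_{B'}=T_{B+B'}$ and $\phi^*\circ T_B=T_{\phi^*B}\circ\phi^*$. Once the orientation of the cocycle is fixed, the average is forced.

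For descent, freeness makes $M\to M/G$ a covering map. Each $\mathcal J_i'$ is a smooth field of endomorphisms of $TM\oplus T^*M$ invariant under all $\phi_g$ (acting by $d\phi_g\oplus(\phi_g^{-1})^*$), so it descends to a field on $T(M/G)\oplus T^*(M/G)$. The quaternionic relations, orthogonality for the pairing, and the integrability condition (vanishing Nijenhuis tensor for the untwisted Courant bracket) are all local and natural under local diffeomorphisms. They therefore hold downstairs, and the descended triple is an untwisted generalized hypercomplex structure on $M/G$.
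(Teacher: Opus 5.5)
Your proposal is correct and follows essentially the paper's argument. The paper writes the action on spinors as $\mathcal A_g=e^{B_g}(f_g^{-1})^*$ and derives the cocycle identity $B_{gh}=B_g+(f_g^{-1})^*B_h$. It then averages to $C=\frac1{|G|}\sum_h B_h$, giving $B_g=C-(f_g^{-1})^*C$, and applies the common closed field $-C$ so that the pure-spinor lines are invariant and descend under a free action. You work with the endomorphisms $\mathcal J_i$ rather than the spinor lines, which is only a cosmetic difference.
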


\begin{proof}
Write the action of $G$ on spinors as
\[
 \mathcal A_g=e^{B_g}(f_g^{-1})^*,\qquad
 f_{gh}=f_g\circ f_h,\qquad dB_g=0.
\]
The group law and its average give
\begin{equation}\label{eq:finite-Courant-descent}
\left\{
\begin{aligned}
 &B_{gh}=B_g+(f_g^{-1})^*B_h,\\
 &C=\frac1{|G|}\sum_{h\in G}B_h,\\
 &B_g=C-(f_g^{-1})^*C,\\
 & e^{-C}\mathcal A_ge^C=(f_g^{-1})^*.
\end{aligned}
\right.
\end{equation}
Thus the common closed field $-C$ makes all three pure-spinor
lines invariant under the diffeomorphisms, and a free action gives
an untwisted triple on $M/G$.
\end{proof}

\subsection{Automatic Riemannianity}

Throughout this section manifolds are closed and connected. The labels
of the three structures are fixed; changes of charts therefore preserve
the ordered transverse projective frame, without an $\operatorname{SO}(3)$ rotation.

\begin{proposition}
\label{prop:transverse-Weyl}
The canonical foliation has a holonomy-invariant transverse conformal
class and torsion-free Weyl connection $D$. A holonomy germ fixing a
point $p$ has derivative $c\Id$ with $c>0$.
\end{proposition}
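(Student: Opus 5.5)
The plan is to use Theorem~A chart by chart and check that the transverse data glue. Cover $M$ by normal-form charts $(t,x)\in I\times N$. In each chart the leaves of $\mathcal F$ are the $t$-lines and the transverse data are $(\eta,\theta,\kappa)$ on $N$. The transverse conformal class is $[h]$ with $h=\sum_i(\eta^i)^2$, and the transverse connection is the Weyl connection $D$ of \cref{prop:GTcompat}, determined by $h$ and $\theta$ through $Dh=2\theta\otimes h$.

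On an overlap of two charts, \cref{thm:classification} says the transition between normal forms has the shape $(t,x)\mapsto(a(x)t+v(x),\psi(x))$, with
\[
\psi^*\widehat\eta=a^{-1}R\eta,\qquad \psi^*\widehat\theta=\theta-d\log|a|.
\]
Because the labels of the three structures are fixed, the transition preserves each $\pi_i$ up to the scalar induced by the change of generator, so $R=\Id$. The transverse transition $\psi$ is then a conformal map, $\psi^*\widehat h=a^{-2}h$. The Weyl form changes by exactly the gauge term that keeps $D$ invariant: replacing $h$ by $a^{-2}h$ and $\theta$ by $\theta-d\log|a|$ leaves $Dh=2\theta\otimes h$ unchanged, and torsion-freeness is unaffected. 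Hence $[h]$ and $D$ are invariant under all transverse transition maps, which means they are holonomy-invariant. The same conclusion also follows intrinsically from \eqref{eq:nonflat-intrinsic-generator}: changing the adapted generator by $f$ sends $\eta\mapsto f\eta$ and $\theta\mapsto\theta+d\log|f|$, which is again a Weyl gauge change.

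For the derivative statement, take a holonomy germ $\gamma$ with $\gamma(p)=p$. It is a composite of transition maps, each satisfying $\psi^*\widehat\eta=a^{-1}\eta$ with no rotation. Composing gives $\gamma^*\eta=A^{-1}\eta$ for a nowhere-zero function $A$. At $p$ the ordered coframe is simply rescaled, so $d\gamma_p=c\,\Id$ in the frame $(E_i)$, with $c=A(p)$.

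Positivity of $c$ is the main point to check. A priori $a$ may be negative, corresponding to $X\mapsto-X$, and then \eqref{eq:nonflat-intrinsic-generator} sends $\eta\mapsto-\eta$. I will argue directly on the transverse derivative. The map $d\gamma_p=c\Id$ must preserve the ordered Poisson tensors $\pi_i=X\wedge E_i$ modulo $L$, and $X$ is transported along the leaf loop to $\lambda X$. This gives $c\lambda=1$, so $c$ has the sign of the holonomy of the line $L$ around the loop. Since $c\Id$ acts on the three-dimensional space $T_pM/L_p$, orientation bookkeeping alone does not rule out $c<0$. I therefore expect the real input to be that $TM\cong L^{\oplus4}$ and $L\otimes(TM/L)$ is trivialised by the $\pi_i$: the transverse frame $[E_i]$ is a section of $L^*\otimes\RR^3$, and its transport along a loop can only be multiplied by the $L$-holonomy. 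I expect that $c>0$ is intended for germs whose loops have trivial $L$-holonomy, or after passing to the orientation cover of $L$, where $X$ is global and $a>0$ can be arranged in every chart. In that setting all transition scalars are positive and $c>0$ follows immediately. This sign step is where I anticipate needing care; the first thing to verify is whether the statement implicitly works on the orientation double cover of $L$.
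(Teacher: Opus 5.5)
Your argument for the first two assertions follows the paper. With the labels fixed, the ordered transitions satisfy $\psi^*\widehat\eta=a^{-1}\eta$, $\psi^*\widehat\theta=\theta-d\log|a|$ and $\psi^*\widehat h=a^{-2}h$. Uniqueness of the torsion-free Weyl connection with $Dh=2\theta\otimes h$ then gives holonomy invariance of $[h]$ and $D$, and at a fixed point the coframe relation makes the derivative scalar. There is one minor slip: $\gamma^*\eta=A^{-1}\eta$ gives $d\gamma_p=A(p)^{-1}\Id$, not $A(p)\Id$. This does not affect the sign.

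\textbf{The gap is the sign of $c$.} You leave it unresolved and suggest it may only hold on the orientation cover of $L$. You correctly reduce it to the sign of $\lambda$, where the transported generator returns as $\lambda X$. What you miss is why $\lambda>0$ always. The loop defining a holonomy germ lies in a single leaf $\ell$, and along $\ell$ the bundle $L$ is just $T\ell$. A leaf is a connected one-manifold ($\RR$ or $S^1$ in its leaf topology), so $T\ell$ is trivial. Hence a nowhere-zero continuous section of $L$ along a leafwise loop returns to a positive multiple of itself.

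In chart terms, orient $\ell$ and let $\epsilon_k=\pm1$ be the sign of the plaque generator $\partial_{t_k}$ relative to this orientation on the $k$th plaque segment. The relation $\partial_{t_k}=a_{k+1}\partial_{t_{k+1}}$ on overlaps gives $\operatorname{sgn}a_{k+1}=\epsilon_k\epsilon_{k+1}$. The product around the loop telescopes to $\epsilon_0^2=1$, because the loop begins and ends on the same plaque through $p$.

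So your ``trivial $L$-holonomy'' alternative holds automatically for every holonomy germ. Thus $c>0$ on $M$ itself, even when $w_1(L)\ne0$, since that class can only be nonzero on loops that are not leafwise. This is the paper's argument.

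The distinction is not cosmetic. \cref{thm:closed-leaves-regular} uses $c>0$ on $M$ before orientability of $L$ is known, to exclude holonomy $\exp_p^D v\mapsto\exp_p^D(-v)$ at a closed leaf, which would produce an exceptional fibre. A result stated only on the orientation cover would not transfer to germs on $M$ without exactly the observation above: a leafwise loop must first be known to lift to a closed loop.
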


\begin{proof}
For ordered charts, \eqref{eq:weyl-gauge} gives
\begin{equation}\label{eq:riem-transverse-transition}
 \psi^*\eta'=a^{-1}\eta,\qquad
 \psi^*\theta'=\theta-d\log|a|,\qquad
 \psi^*h'=a^{-2}h.
\end{equation}
Indeed,
\[
 D(a^{-2}h)
 =2(\theta-d\log|a|)\otimes(a^{-2}h).
\]
Uniqueness of the torsion-free Weyl connection therefore makes $D$
holonomy invariant. At a fixed point, the first identity in
\eqref{eq:riem-transverse-transition} makes the holonomy derivative
scalar. To determine its sign, transport an adapted generator
continuously around the leafwise loop defining the holonomy germ.
The tangent line of the one-dimensional leaf is orientable, so its
terminal generator is a positive multiple of its initial generator.
The relation between generators and transverse coframes in
\eqref{eq:riem-transverse-transition} then makes the scalar normal
transition positive. Hence $c>0$, including for periodic leaves.
\end{proof}

\begin{proposition}\label{prop:global-scale}
Orient $L$, choose a positive generator $X$, write $\pi_i=X\wedge Y_i$,
and let $\eta^i$ be dual to $[Y_i]$ in $TM/L$. There is a common smooth
function $b$ with $[X,Y_i]\equiv bY_i\pmod L$. The foliation is
Riemannian if and only if a positive function $f$ solves
\begin{equation}\label{eq:global-scale-equation}
 X\log f=b.
\end{equation}
Equivalently, $fX$ preserves all three Poisson tensors; then $f\eta$
is a basic coframe.
\end{proposition}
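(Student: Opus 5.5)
The plan is to work globally with the orientation of $L$, which comes from the proof of \cref{prop:transverse-Weyl} after passing, if necessary, to the orientation double cover. If $L$ is nonorientable, I will assume the statement is made on that cover, and that the Riemannian property can be checked there. Orientability of $L$ is also implicit in the hypothesis ``choose a positive generator $X$''.

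\emph{Step 1: the common function $b$.} Write $\pi_i=X\wedge Y_i$ globally. The $Y_i$ are determined modulo $L$, and their classes $[Y_i]$ form a global frame of $TM/L$, because $L\otimes(TM/L)$ is trivialized by the $\pi_i$. The first half of the proof of \cref{prop:GT-reduction} applies pointwise. From $[\pi_i,\pi_i]_{\mathrm{Sch}}=0$ we obtain $[X,Y_i]=a_iX+b_iY_i$ locally, and the mixed brackets give $b_i=b$ for all $i$. The function $b$ is independent of the choice of representatives $Y_i+g_iX$, since $[X,g_iX]\in L$. It is therefore a globally defined smooth function, and $[X,Y_i]\equiv bY_i \pmod L$.

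\emph{Step 2: basic coframes are equivalent to solutions of the scale equation.} Compute $\mathcal L_{fX}\pi_i$. Using $[fX,f^{-1}Y_i]\equiv (Xf - fb + fb - Xf)$-type terms, one finds
\[
 \mathcal L_{fX}(X\wedge Y_i)\equiv\bigl(2Xf-\ldots\bigr),
\]
and more cleanly,
\[
 \mathcal L_{fX}\pi_i=\bigl(X f- f b\bigr)\,X\wedge Y_i
\]
up to terms in $X\wedge X=0$. Thus $fX$ preserves every $\pi_i$ if and only if $X\log f=b$. In that case $[fX,f^{-1}Y_i]\in L$, so the classes $[f^{-1}Y_i]$ are holonomy invariant, and the dual coframe $f\eta$ is basic.

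\emph{Step 3: the equivalence with Riemannianity.} If $f>0$ solves the scale equation, then $h_f=\sum(f\eta^i)^2$ is a holonomy-invariant positive transverse metric, so the foliation is Riemannian. Conversely, suppose $g$ is a holonomy-invariant transverse metric. By \cref{prop:transverse-Weyl}, the class $[h]$ of $h=\sum(\eta^i)^2$ is a holonomy-invariant conformal class, and the ordered transverse frame is invariant up to a scalar. Holonomy invariance of $g$ together with conformal invariance of $[h]$ gives a relation $g=\lambda^2 h$ on the ordered frame. Here I must argue that $g$ is conformal to $h$. The first attempt is to average $g$ into $[h]$: take $\lambda^2:=\frac13\operatorname{tr}_hg$, which is basic-compatible because both traces transform naturally, and check that $\lambda\eta$ is holonomy invariant. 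Locally, $\eta$ transforms by $a^{-1}$ with $a>0$ along leaves, and $\lambda$ transforms by $a$, so $\lambda\eta$ is basic. Then $f:=\lambda$ satisfies $\mathcal L_X(f\eta)\equiv0$, which is equivalent to the scale equation with $f>0$.

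The main obstacle is this converse. The delicate points are verifying that $\operatorname{tr}_h g$ transforms with exactly the factor $a^{2}$ under the leafwise transitions of \eqref{eq:riem-transverse-transition}, and that the sign $c>0$ from \cref{prop:transverse-Weyl} allows $f$ to be taken positive globally.
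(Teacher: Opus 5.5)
Your proof is correct. Steps 1--2 and the forward half of Step 3 are the paper's argument; the only genuine difference is in the converse. The paper takes the transverse volume form $r\eta^{123}$, $r>0$, of the invariant metric and reads off $0=\Lie_X(r\eta^{123})=(Xr-3br)\eta^{123}$, so $f=r^{1/3}$. You use the trace instead, and it also closes in one line, with no chart transitions, no appeal to \cref{prop:transverse-Weyl} and no use of $c>0$. Since $\operatorname{tr}_hg=\sum_ig(Y_i,Y_i)$, $\Lie_Xg=0$ and $[X,Y_i]\equiv bY_i$ modulo $L$, one gets $X(\operatorname{tr}_hg)=2b\operatorname{tr}_hg$. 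Hence $f=(\tfrac13\operatorname{tr}_hg)^{1/2}$ solves $X\log f=b$, and it is positive pointwise because $g$ is positive definite. The two ``delicate points'' you list are therefore not obstacles. Both routes extract from $g$ a positive scalar of the correct weight relative to $h$ (determinant versus trace). The paper's version uses only an invariant transverse volume form, so it shows that an invariant transverse density already forces Riemannianity here. Yours shows in addition that the given metric can be replaced by the invariant metric $\tfrac13(\operatorname{tr}_hg)h$ in the conformal class $[h]$.

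Two corrections. First, your remark that invariance gives $g=\lambda^2h$, so that you ``must argue'' $g$ is conformal to $h$, is false and should be dropped. On a constant torus, $(dx^1)^2+2(dx^2)^2+3(dx^3)^2$ is basic but not conformal to $h=\sum_i(dx^i)^2$, and the trace construction does not need conformality. Second, the correct formula is $\Lie_{fX}\pi_i=(fb-Xf)\pi_i$, since $[fX,X]=-(Xf)X$ and $[fX,Y_i]\equiv fbY_i$ modulo $L$. Your sign is reversed, which is harmless for the equivalence, and the garbled preliminary line in Step 2 should be deleted.
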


\begin{proof}
The local Poisson normal form gives $b$, independently of the lifts
$Y_i$. Since $\eta^i(X)=0$, Cartan's formula gives
\begin{equation}\label{eq:riem-scale-identities}
\left\{
\begin{aligned}
 &\Lie_X\eta^i=-b\eta^i,\\
& \Lie_{fX}\pi_i=(fb-Xf)\pi_i,\\
 &\Lie_{fX}(f\eta^i)=f(Xf-fb)\eta^i.
\end{aligned}
\right.
\end{equation}
By \eqref{eq:riem-scale-identities}, a solution of
\eqref{eq:global-scale-equation} makes $f\eta$ basic and
$\sum_i(f\eta^i)^2$ invariant. Conversely, the transverse
volume of an invariant metric is $r\eta^{123}$, $r>0$, and
\[
 0=\Lie_X(r\eta^{123})=(Xr-3br)\eta^{123},
 \qquad X\log(r^{1/3})=b.
\]
Taking $f=r^{1/3}$ proves the converse.
\end{proof}

\paragraph{The transverse rigidity input.}
For a foliation $\mathcal F$ with normal bundle $Q$, an ordered
transverse projective frame is a holonomy-invariant reduction
\[
 E_T\subset\operatorname{Fr}(Q),\qquad
 \operatorname{structure\ group}(E_T)=\RR_{>0}\Id.
\]
Its points are adapted frames, with common positive rescaling;
$\mathcal F_T$ denotes the lift of $\mathcal F$ by linear holonomy.
For codimension at least two this is Rigal's transversely conformally
parallelizable (TCP) structure
\cite[Definition~1, p.~407; p.~409]{Rigal1998}. Indeed, the coordinate
lines and the line spanned by the sum of the frame vectors are
holonomy-invariant transverse line bundles. Their inverse images in
$TM$ are involutive and give the $q+1$ foliations in Rigal's definition.
\begin{proposition}
\label{prop:rigal-input}
Let $\mathcal F$ be a transversely oriented TCP foliation of
codimension $q\ge2$ on a closed connected orientable manifold.
For its adapted-frame bundle $E_T$ and lifted foliation
$\mathcal F_T$, the following results of Rigal hold:
\begin{enumerate}[label=(\roman*),nosep]
\item Either $\mathcal F$ is Riemannian, or the prescribed TCP
structure admits adapted affine coordinates with transitions
$x\mapsto ax+b$, $a>0$.
\item If a leaf of $\mathcal F_T$ is relatively compact in $E_T$,
$\mathcal F$ admits a global adapted transverse parallelism.
\end{enumerate}
\end{proposition}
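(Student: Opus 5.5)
This proposition is an import of Rigal's results, so my plan is to check that our setting matches his hypotheses exactly and then quote the relevant statements. Nothing new would be proved.

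\emph{Matching the definitions.} First I would make precise the identification, sketched before the statement, between an ordered transverse projective frame and Rigal's TCP structure in codimension $q\ge2$.

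Given $E_T$ with structure group $\RR_{>0}\Id$, choose an adapted frame $(e_1,\dots,e_q)$ of $Q$. The $q+1$ line fields $\RR e_1,\dots,\RR e_q$ and $\RR(e_1+\dots+e_q)$ do not depend on the choice, since any two adapted frames differ by a positive scalar. They are holonomy invariant because $E_T$ is. Their inverse images in $TM$ are therefore integrable distributions containing $T\mathcal F$, which gives the $q+1$ foliations of \cite[Definition~1]{Rigal1998}.

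Conversely, for $q\ge2$ these $q+1$ lines in general position determine the frame up to a common scalar: a linear map fixing each line $\RR e_i$ and the diagonal line is scalar. Transverse orientability then allows us to fix the positive scalar class. So the two notions coincide. Rigal's bundle of adapted frames is our $E_T$, and his lifted foliation is our $\mathcal F_T$.

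\emph{Checking the standing hypotheses.} Next I would confirm that Rigal's hypotheses hold: $M$ closed, connected and orientable; $\mathcal F$ transversely oriented; codimension at least two. These are exactly the assumptions written into the statement, so nothing further is needed there.

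\emph{Reading off (i) and (ii).} Item (i) is Rigal's structure theorem for TCP foliations on closed manifolds \cite[p.~409]{Rigal1998}. Either the transverse parallelism can be rescaled to a holonomy-invariant metric, which makes $\mathcal F$ Riemannian. Or the TCP structure is transversely modelled on the homothety group, so its adapted coordinates have transitions $x\mapsto ax+b$ with $a>0$; positivity of $a$ comes from the transverse orientation and the structure group $\RR_{>0}\Id$. Item (ii) is the step in Rigal's argument where relative compactness of a leaf of $\mathcal F_T$ in $E_T$ lets one average, or take a closure, along that leaf. This yields a holonomy-invariant section of $E_T$, that is, a global adapted transverse parallelism.

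\emph{Main obstacle.} The difficulty is purely bibliographic. Rigal phrases his structure group, and the affine alternative, in his own conventions, and he may state (ii) inside a proof rather than as a numbered result. I would therefore trace his argument carefully, checking the following: the positive scalars $\RR_{>0}\Id$ act on $E_T$ exactly as in his setup; his ``homothetic'' alternative yields affine transitions whose linear part is a positive scalar multiple of $\Id$; and the closure argument for (ii) needs only compactness of $M$ and relative compactness of the single leaf.
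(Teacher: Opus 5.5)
Your plan matches the paper's own treatment. The paper also identifies the ordered transverse projective frame with Rigal's TCP structure through the $q+1$ line fields (the coordinate lines and the diagonal line). It then quotes \cite[Theorem, p.~408]{Rigal1998} for (i) and \cite[Proposition~2, p.~410]{Rigal1998} for (ii). Your ``main obstacle'' is settled there by noting that Rigal's affine coordinate coframes are adapted sections of the given reduction (\S II.3 and Proposition~3 there), so $dx^i=s\eta^i$ with one common $s>0$.

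One small slip, harmless here: your claim that transverse orientation fixes the positive scalar class fails for even $q$, because $-\Id$ preserves orientation. Only the forward direction is needed, though, since $E_T$ is given with structure group $\RR_{>0}\Id$.
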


These are \cite[Theorem, p.~408; Proposition~2, p.~410]{Rigal1998}.
In Rigal's affine construction, the coordinate coframes are adapted
sections of the given TCP reduction
\cite[\S II.3 and Proposition~3, p.~411]{Rigal1998}. Thus they
preserve the prescribed transverse projective frame: in our notation,
$dx^i=s\eta^i$ with one common positive function $s$.
In our application the codimension is $q=3$.
Here and below, \emph{positive scalar-affine} refers to the affine
atlas in~(i), including its compatibility with the prescribed frame.

\begin{proposition}\label{thm:transverse-dichotomy}
If the canonical foliation is not Riemannian, its intrinsic Weyl
connection is flat and $\kappa\equiv0$. On the orientation cover of
$L$, its given ordered transverse projective frame has an affine atlas
with changes $x\mapsto ax+b$, $a>0$.
\end{proposition}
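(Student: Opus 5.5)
We pass first to the orientation cover of $L$, then to the orientation double cover of $M$ if needed; both are closed and connected. The canonical foliation is Riemannian on $M$ if and only if it is Riemannian on the cover. Indeed, by \cref{prop:global-scale} the obstruction is the solvability of $X\log f=b$, and an invariant transverse metric on a finite cover can be averaged over the deck group. On the cover, $L$ is oriented and $M$ is orientable. By \cref{prop:transverse-Weyl} the holonomy acts on the ordered transverse frame by positive scalars, so the transverse frame orientation is holonomy invariant and the foliation is transversely oriented. The ordered transverse projective frame is a TCP structure of codimension $q=3$, as explained before \cref{prop:rigal-input}. So \cref{prop:rigal-input}(i) applies. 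If the foliation is not Riemannian, the given frame admits adapted affine coordinates with transitions $x\mapsto ax+b$, $a>0$, and with $dx^i=s\eta^i$ for one common positive function $s$. This already gives the last sentence of the statement.

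Next we feed these affine charts into the GT equations. In an affine chart, put $\eta'=s\eta=dx$. By \eqref{eq:nonflat-intrinsic-generator}, this is again a GT coframe for the adapted generator $X'=s^{-1}X$. Here we need $s$ to be basic, i.e.\ $X's=0$; we check this from the fact that $dx^i$ are basic forms, and choose the generator accordingly. The new coframe satisfies $d\eta'^i=0$. Comparing with \eqref{eq:GT}, we get $\theta'\wedge\eta'^i=\kappa'*\eta'^i$ for $i=1,2,3$. Wedging with $\eta'^i$ gives $\kappa'\vol=0$, so $\kappa'=0$. The remaining identities then force $\theta'\wedge\eta'^i=0$ for all $i$, hence $\theta'=0$. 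Since $\kappa'=\kappa/s$ with $s>0$, this gives $\kappa\equiv0$ on each chart and hence globally. Equivalently $K\equiv0$, and this descends to $M$.

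For flatness, the Weyl connection in the chart has $\theta'=0$ and $h'=\sum(dx^i)^2$, so $D$ is the flat Levi-Civita connection of the Euclidean chart. Transitions $x\mapsto ax+b$ preserve this flat connection, and the Weyl connection is unique and holonomy invariant by \cref{prop:transverse-Weyl}. Hence $D$ is globally flat, consistent with \eqref{eq:EWricci} when $\kappa=0$ and $d\theta'=0$.

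The main obstacle I expect is the bookkeeping in the second paragraph: showing that Rigal's adapted coordinates really yield a GT coframe within our normal form, namely that the common function $s$ can be realized by a change of adapted generator $X'=X/s$ with $s$ basic. If $s$ fails to be basic directly, I would argue via \cref{prop:global-scale}: the forms $dx^i$ are basic and span the same projective frame as $\eta$, so $s\eta$ is Lie-invariant along $X$. This forces $\Lie_X s=bs$, which is exactly the scale relation making $s^{-1}X$ preserve the Poisson tensors locally. Then \cref{thm:classification} identifies the chart with a GT normal form of coframe $dx$.
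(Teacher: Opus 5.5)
Your proposal is correct and follows essentially the paper's route. You pass to the orientation cover of $L$ (which is already orientable, so the extra passage to the orientation cover of $M$ is never needed), apply Rigal's affine alternative with $dx^i=s\eta^i$, let the GT equations force $\kappa=0$, and read off flatness of $D$ from the resulting closed coframe. The paper gets $\kappa=0$ directly from $0=d(s\eta^i)\wedge s\eta^i=s^2\,d\eta^i\wedge\eta^i=-s^2\kappa\,\eta^{123}$, which makes your concern about $s$ being basic unnecessary; in a normal-form chart $s$ is basic anyway, since both $dx^i$ and $\eta^i$ are.

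One small slip: by \eqref{eq:nonflat-intrinsic-generator} the coframe $s\eta$ belongs to the generator $sX$, not $s^{-1}X$, and $X\log s=b$ is the condition for $sX$ to preserve the Poisson tensors. This is harmless, because only $s>0$ is used.
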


\begin{proof}
Pass to the orientation cover of $L$. The Poisson tensors
$\pi_1,\pi_2,\pi_3,\pi_1+\pi_2+\pi_3$ determine four integrable
rank-two distributions containing $L$, hence the prescribed transverse
projective frame. The ordered normal frame and the orientation of
$L$ orient the cover. The rigidity input therefore applies.
A transverse invariant metric upstairs would descend after averaging
over the finite deck group. Thus the affine case of \cref{prop:rigal-input}(i) yields
$dy^i=s\eta^i$, $s>0$, and \eqref{eq:GT} gives
\[
 0=d(s\eta^i)\wedge s\eta^i
   =s^2d\eta^i\wedge\eta^i
   =-s^2\kappa\,\eta^{123}.
\]
Consequently $\kappa=0$ and, by \eqref{eq:GTcompat-intro},
$d\theta=0$. Locally $\theta=du$, whence
\[
 d(e^{-u}\eta^i)=0,\qquad D(e^{-2u}h)=0.
\]
Thus $D$ is the Euclidean Levi--Civita connection in this closed
coframe. Its flatness descends to $M$.
\end{proof}

\begin{proposition}
\label{prop:hopf-positive-scale}
On $M_a=(\RR^4\setminus0)/\langle a\Id\rangle$, $a>1$, with
coordinates $(t,x)\in\RR\times\RR^3$, let $f>0$ be smooth and
$f(at,ax)=a^2f(t,x)$. The tensors
$\pi_i=f\partial_t\wedge\partial_{x^i}$ admit no untwisted ordered
pure type-one lift.
\end{proposition}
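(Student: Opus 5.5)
We argue by contradiction. Suppose the tensors $\pi_i=f\,\partial_t\wedge\partial_{x^i}$ on $M_a$ admit an untwisted ordered pure type-one lift. Since $H_3(M_a;\ZZ)\cong\ZZ$ is generated by the image $\Sigma$ of a sphere $S^3\subset\RR^4\setminus0$, the strategy is to produce a closed three-form $H$ that must be exact but has $\int_\Sigma H\neq0$.

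\textbf{Step 1: the global frame.} Put $X=f\partial_t$ and $E_i=\partial_{x^i}$. Then $[X,E_i]=-(\partial_if)\partial_t\equiv0\pmod L$, so the common function $b$ of \cref{prop:global-scale} vanishes. The coframe $e^0=dt/f$, $e^i=dx^i$ is dual to $(X,E_1,E_2,E_3)$, is globally defined on $M_a$, and is invariant under $(t,x)\mapsto(at,ax)$ up to the common positive scale $a^{-1}$ on $e^i$ and $a$ on $e^0$. The transverse coframe $dx^i$ is closed, so the GT data are $\theta=0$ and $\kappa=0$. This is consistent with \cref{thm:transverse-dichotomy}, and shows the foliation here is of positive scalar-affine type.

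\textbf{Step 2: the twist is exact.} By \cref{prop:linear-B-normal}, which is pointwise and smooth and hence global, any lift with these ordered Poisson blocks equals $e^{B}$ applied to the standard spinors
\[
 \rho^0_i=(e^j-\ii e^k)e^{\ii e^0\wedge e^i}
\]
for a unique global real two-form $B$ on $M_a$. For the lift to be untwisted, \eqref{eq:twist-conjugation} requires the standard spinors $\rho^0_i$ to be integrable for the twist $H=-dB$. Thus $H$ is a closed three-form on $M_a$ that is \emph{exact}, so $\int_\Sigma H=0$.

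\textbf{Step 3: $H$ is forced by integrability.} Solving \eqref{eq:spinor-criterion}, $(H+\ii\,d\omega_i)\wedge\alpha_i=0$ with $\omega_i=e^0\wedge dx^i$, determines every component of $H$ except the $dx^{123}$ coefficient. Since $d\omega_i=-f^{-2}df\wedge dt\wedge dx^i$, one expects
\[
 H=dt\wedge *_x d_x(1/f)+\tau\,dx^{123},
\]
up to a sign to be checked; this is the analogue of $H=-dt\wedge*\theta+\tau\vol_h$ in \eqref{eq:H-GT} after the time change $ds=dt/f$. Closedness $dH=0$ gives $\partial_t\tau=\Delta_x(1/f)$, with the sign fixed by the same computation. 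Hence $\tau$ is determined up to a $t$-independent function $\tau_0(x)$.

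Invariance of $H$ under the dilation comes from the degree-$(-2)$ homogeneity of $1/f$: the $dt\wedge*d(1/f)$ term has total scaling degree $0$. The same homogeneity forces $\tau_0(ax)=a^{-3}\tau_0(x)$, so that $\tau_0\,dx^{123}$ is also dilation invariant.

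\textbf{Step 4: the period (main obstacle).} It remains to show $\int_\Sigma H\neq0$. Rather than integrating over a round sphere, I would evaluate the period on a cycle adapted to the slicing. Take the boundary of the slab $\{|t|\le T,\ |x|\le R\}$, or, using dilation invariance, a fundamental domain cut out by the hyperplanes $t=\pm T$.

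\emph{The $dt$-part.} On the cylinder $\{|x|=R\}$ its contribution is $\int dt$ of the flux of $\operatorname{grad}(1/f)$ through $S^2_R$. By $\partial_t\tau=\Delta_x(1/f)$ and the divergence theorem, this flux equals $\partial_t\int_{|x|\le R}\tau\,dx$.

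\emph{The caps.} On the caps $t=\pm T$ the contribution is $\pm\int\tau\,dx$.

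\emph{Cancellation and what remains.} Everything should collapse to a boundary term at $|x|\to\infty$ for fixed $t$. The $\tau_0$ part should drop out, because it is $t$-independent and its contributions to the two caps cancel.

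The expected residue is a nonzero multiple, essentially $4\pi$ times a positive constant, of
\[
 \lim_{R\to\infty}R^2\int_{|x|=R}\partial_r(1/f)\,dA\quad\text{integrated over }t,
\]
which is controlled by the quadratic growth of $f$. A useful sanity check is the model $f=t^2+|x|^2$, where $H$ should be a multiple of the standard $S^3$ volume form.

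The most delicate point is the general positive homogeneous $f$, where I would get positivity by one of two routes. The first is a direct sign argument: $\partial_r(1/f)<0$ at large $r$, since $f$ grows like $r^2$. The second is to deform $f_s=(1-s)f+s(t^2+|x|^2)$ and show the period is independent of $s$. Independence would follow because the change in $H$ under the deformation is $d$ of an explicit dilation-invariant two-form, built from $*_x d_x(\dot{(1/f)})$ and a primitive in $t$. Once $\int_\Sigma H\neq0$ is established, it contradicts the exactness from Step 2.
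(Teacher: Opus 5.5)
\textbf{The gap is Step 4, which is the whole content of the proposition.} Your Steps 1--3 agree with the paper: the global $B$ from the pointwise uniqueness in \cref{prop:linear-B-normal}, then $H=-dt\wedge*_3d_xu+h\,dx^{123}$ with $u=1/f$, $h_t=-\Delta_xu$ and $h(at,ax)=a^{-3}h(t,x)$. (Only the spinor lines descend to $M_a$, not the coframe $dt/f,\,dx^i$; the paper inserts a factor $f^{-1/2}$.) But you never prove $\int_\Sigma H\neq0$.

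\emph{The slice-wise divergence argument is circular.} The slab $[-T,T]\times B_R^3$ contains the removed origin. Applying the divergence theorem on the slices $t\neq0$ and integrating in $t$ only moves everything to the singular slice $t=0$, that is, back to a small sphere around the origin. A bounded slab has no boundary term at $|x|\to\infty$, and your $R^2$-weighted limit diverges already for $f=t^2+|x|^2$.

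\emph{Neither route closes the gap.} The pointwise sign in the first route is false, because $f$ is homogeneous only under $\langle a\Id\rangle$. Take $f=(t^2+|x|^2)\,\phi(\log(t^2+|x|^2))$ with $\phi>0$ of period $2\log a$ and $\phi+\phi'<0$ somewhere. It satisfies the hypotheses, yet $\partial_{|x|}(1/f)>0$ on parts of $\{t=0\}$ at every scale $a^n$. Under continuous homogeneity your idea would work after integrating in $t$, since then $\partial_rU=-U/|x|$ for $U=\int_\RR u\,dt$; but that is not the hypothesis.

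The second route asserts something false: the period is not deformation invariant. For $f=c(t^2+|x|^2)$, which lies on your own path $f_s$, the period is $-4\pi^2/c$. Correspondingly, the natural primitive $*_xd_x\int_{-\infty}^t\dot u\,dt'$ of $\dot H$ is singular along $\{x=0,\ t>0\}$. Moreover, for a general intermediate $f_s$ there is no compatible closed $H_s$ on $M_a$ at all, since a dilation-invariant $h_s$ requires $\int_\RR\Delta_xu_s\,dt=0$.

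\textbf{The missing idea is the decay forced by homogeneity.} Since $h$ is smooth on $\RR^4\setminus0$ with $h(at,ax)=a^{-3}h(t,x)$, rescaling $|t|$ into $[1,a]$ gives $h=O(|t|^{-3})$ and $\partial_x^\alpha u=O(|t|^{-2-|\alpha|})$ as $|t|\to\infty$, locally uniformly in $x$. In particular your $\tau_0$ vanishes and $\int_\RR h_t\,dt=0$. Hence $U(x)=\int_\RR u(t,x)\,dt$ is a positive harmonic function on $\RR^3\setminus0$ with $U(ax)=a^{-1}U(x)$.

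For fixed $R$, the caps of $\partial([-T,T]\times B_R^3)$ contribute $O(T^{-3})$. Letting $T\to\infty$ therefore gives $\int_{S^3}H=\int_{S_R^2}*_3dU$.

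The paper then sets $v(s,\omega)=e^sU(e^s\omega)$ on $(\RR/(\log a)\ZZ)\times S^2$. This function satisfies $v_{ss}-v_s+\Delta_{S^2}v=0$, and integrating against $v$ forces $v$ to be constant. So $U=C/|x|$ with $C>0$, and the period is $-4\pi C\neq0$, contradicting exactness of $H$. Alternatively, the spherical means $m(R)$ of $U$ satisfy $m'(R)=F/(4\pi R^2)$ for the $R$-independent flux $F$, and $m(aR)=a^{-1}m(R)>0$ forces $F\neq0$.
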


\begin{proof}
Set $u:=f^{-1}$ and $\Omega:=dx^{123}$. The standard spinors
\begin{equation}\label{eq:riem-hopf-standard}
 \rho_i^0
 =f^{-1/2}(dx^j-\ii\,dx^k)
  \exp(\ii u\,dt\wedge dx^i)
\end{equation}
are invariant under the deck transformation
$(t,x)\mapsto(at,ax)$.

Suppose, for a contradiction, that an untwisted lift exists.
By \cref{prop:linear-B-normal}, comparison with the standard
spinors gives a global real two-form $B$ on $M_a$.
Equation \eqref{eq:twist-conjugation} then makes
\eqref{eq:riem-hopf-standard} integrable for the exact twist
$H=dB$. Their degree-one factors satisfy the Frobenius condition,
and \eqref{eq:spinor-criterion} gives
\begin{equation}\label{eq:hopf-positive-H}
 H=-dt\wedge *_3d_xu+h\Omega
\end{equation}
for a smooth real function $h$, where $*_3$ is the Euclidean
Hodge star in the $x$-variables. Since $H$ is closed,
\[
 h_t=-\Delta_xu.
\]
Moreover, deck invariance implies
$h(at,ax)=a^{-3}h(t,x)$.

The positive minimum and finite maximum of $f$ on a compact
fundamental annulus, together with its scaling relation, give
constants $c_1,c_2>0$ such that
\begin{equation}\label{eq:riem-hopf-estimates}
 \frac{c_1}{t^2+|x|^2}
 \le u(t,x)
 \le \frac{c_2}{t^2+|x|^2}.
\end{equation}
For each multi-index $\alpha$, the same scaling argument gives
$\partial_x^\alpha u=\operatorname{O}(|t|^{-2-|\alpha|})$
and $h=\operatorname{O}(|t|^{-3})$ as $|t|\to\infty$,
locally uniformly in $x$. Indeed, rescaling $|t|$ into $[1,a]$
reduces these estimates to smoothness near the punctured
$t$-axis.

For $x\ne0$, define
$U(x):=\int_{\RR}u(t,x)\,dt$.
The two-sided bound above makes $U$ finite and positive.
The derivative estimates through order two justify
differentiation under the integral on
$\RR^3\setminus\{0\}$. Furthermore,
$h_t=-\Delta_xu=\operatorname{O}(|t|^{-4})$ and $h\to0$
at both ends. Thus, using also a change of variables, we obtain
\begin{equation}\label{eq:riem-hopf-harmonic}
\begin{aligned}
 \Delta_xU(x)
 &=-\int_{\RR}h_t(t,x)\,dt=0,\\
 U(ax)&=a^{-1}U(x).
\end{aligned}
\end{equation}

It follows that $v(s,\omega):=e^sU(e^s\omega)$ is a smooth
positive function on
$Q=(\RR/(\log a)\ZZ)\times S^2$, satisfying
\[
 v_{ss}-v_s+\Delta_{S^2}v=0.
\]
Multiplying by $-v$ and integrating by parts over $Q$ gives
\[
 0=-\int_Qv(v_{ss}-v_s+\Delta_{S^2}v)
   =\int_Q\bigl(|v_s|^2+|\nabla_{S^2}v|^2\bigr).
\]
Hence $v=C>0$, and therefore $U(x)=C/|x|$.

Let $S^3\subset M_a$ be the image of the unit sphere in
$\RR_t\times\RR_x^3$, oriented as the boundary of the unit ball
with ambient orientation $dt\wedge dx^{123}$.
Under $M_a\cong S^3\times S^1$, it represents the corresponding
generator of $H_3(M_a;\ZZ)$.

In the punctured covering space, the boundary of
$[-T,T]\times B_R^3$ is homologous to this unit sphere.
For fixed $R$, the decay estimate for $h$ makes the integrals
over its two caps $\operatorname{O}(T^{-3})$.
The side has orientation $-dt\wedge\vol_{S_R^2}$.
Consequently,
\begin{equation}\label{eq:riem-hopf-period}
\begin{aligned}
 \int_{S^3}H
 &=\lim_{T\to\infty}
   \int_{S_R^2}*_3d_x
   \left(\int_{-T}^T u(t,x)\,dt\right)=\int_{S_R^2}*_3dU=-4\pi C\ne0.
\end{aligned}
\end{equation}
This nonzero period contradicts the exactness of $H=dB$.
\end{proof}

Appendix~\ref{app:scalar-dynamics} supplies the global scalar-affine
dynamics, preserving the ordered transverse projective frame.

\begin{corollary}\label{cor:compact-riemannian}
The canonical foliation of every untwisted ordered pure type-one
generalized hypercomplex triple on a closed connected four-manifold
is Riemannian.
\end{corollary}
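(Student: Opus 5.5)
We argue by contradiction. Suppose the canonical foliation $\mathcal F$ of an untwisted ordered triple on closed connected $M$ is not Riemannian. Two standard reductions come first. Pass to the orientation cover of $L$, which is a finite cover carrying the pulled-back untwisted triple. If that cover carried a holonomy-invariant transverse metric, averaging over the two-element deck group would give one on $M$. So it suffices to treat the case where $L$ is oriented.

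\textbf{Reduction to the flat affine case.} \Cref{thm:transverse-dichotomy} then gives three facts. The transverse Weyl connection is flat, $\kappa\equiv0$, and the ordered transverse projective frame admits a positive scalar-affine atlas with transitions $x\mapsto ax+b$, $a>0$. In such charts $dx^i=s\eta^i$ with a common $s>0$. Since $\theta$ is closed, locally $\theta=du$ and $e^{-u}\eta$ is closed, compatible with the affine charts.

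\textbf{Global scalar-affine dynamics.} This is the step where the Appendix is invoked. Develop the transverse structure and study the holonomy pseudogroup, which is contained in the group of positive homotheties and translations of $\RR^3$. Apply \cref{prop:rigal-input}(ii). If some leaf of $\mathcal F_T$ were relatively compact in $E_T$, there would be a global adapted transverse parallelism. Then $\sum_i(s\eta^i)^2$ would be invariant and the foliation would be Riemannian, which is excluded. So no leaf of $\mathcal F_T$ is relatively compact. I expect the Appendix to convert this non-compactness into a genuine contracting homothety: the holonomy image contains some $x\mapsto cx+b$ with $c\neq1$. Conjugating, this is a homothety fixing a point, and the developing map becomes a covering onto a complement of that point. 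The target is to show that, up to finite covering and equivalence, the structure is one of the Hopf models $M_a=(\RR^4\setminus0)/\langle a\Id\rangle$.

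In that model the Poisson tensors take the form $f\partial_t\wedge\partial_{x^i}$ with $f(at,ax)=a^2f$. The exponent $2$ is forced by the weights: $\partial_t\wedge\partial_{x^i}$ scales by $a^{-2}$, and the $\pi_i$ must be deck invariant. Here the argument passes through the canonical time normalization of \cref{prop:global-scale}, with the generator scaled to match the homothety.

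\textbf{Contradiction.} On the resulting finite cover $M_a$, the pulled-back untwisted triple has exactly these Poisson tensors. \Cref{prop:hopf-positive-scale} says no such untwisted ordered lift exists, because the required twist has nonzero period on $S^3$. This contradicts untwistedness, since untwisted lifts pull back to untwisted lifts under finite covers. Hence $\mathcal F$ is Riemannian.

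\textbf{Main obstacle.} The hard part is the middle step: showing that a non-Riemannian compact positive scalar-affine transverse structure must globally be the radial homothety model. This requires ruling out pure-translation holonomy, which would give an invariant metric, and nontrivial translation parts. It also requires identifying the developing image with $\RR^4\setminus0$ up to finite index. I would first try to show that the linear holonomy character $\pi_1\to\RR_{>0}$ is nontrivial, then use compactness to force a unique fixed point of the affine holonomy, and finally obtain the realization of $M$ as a finite quotient of $M_a$ from the Appendix.
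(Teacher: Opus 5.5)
\textbf{The outer frame is right; the central step is missing and misdirected.} Passing to the orientation cover of $L$, invoking \cref{thm:transverse-dichotomy}, and closing with \cref{prop:hopf-positive-scale} is exactly how the paper proceeds. The gap is the middle step, which is the real content of the corollary. The mechanism you sketch for it does not work. You propose to extract a contracting homothety from the holonomy and show it has a unique fixed point. You then conclude that the transverse developing map is a covering onto the complement of that point.

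In the very model you are aiming for, this is false. On the universal cover $\RR^4\setminus0$ of $M_a$, the transverse developing map is $(t,x)\mapsto x$, and its image is all of $\RR^3$. The two periodic leaves, the half-axes $t>0$ and $t<0$, both develop onto the fixed point $0$, and the leaf space is non-Hausdorff there. A point-complement developing image is instead the radial case of a \emph{Riemannian} similarity flow, which the paper excludes separately in \cref{prop:radial-developing-obstruction}. More generally, a nontrivial linear holonomy character and a fixed point of the holonomy group do not determine the global structure of a non-Riemannian flow. There is no Hausdorff leaf space to develop, so you are conflating the universal cover of $M$ with a transverse developing image.

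\textbf{The missing idea is dynamical.} The paper argues through invariant measures, in four steps.
\begin{enumerate}[label=(\roman*),nosep]
\item \cref{lem:nonatomic-scalar-holonomy} uses a Ghys-type ball argument: a nonatomic holonomy-invariant transverse measure bounds all holonomy scale factors uniformly. Then a leaf of $\mathcal F_T$ is relatively compact, and \cref{prop:rigal-input}(ii) makes $\mathcal F$ Riemannian. Hence, by \cref{cor:scalar-affine-ergodic-periodic}, every ergodic invariant probability is supported on a periodic orbit.
\item Each periodic orbit has return map $x\mapsto cx$ with $c\ne1$, since identity holonomy would give a compact leaf of $\mathcal F_T$. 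So each periodic orbit is an isolated source or sink.
\item \cref{lem:scalar-source-sink} then gives finitely many periodic orbits, with exactly one source and one sink.
\item \cref{lem:scalar-hopf-gluing} lifts the transit map between the two tube boundaries to $\RR^3\setminus0$, which is simply connected because the codimension is three. It shows this lift is a single global positive homothety, and glues the tubes. This identifies the oriented cover itself, not merely a finite quotient, with the vertical Hopf foliation on $M_a$, preserving the ordered projective frame.
\end{enumerate}

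\textbf{A smaller correction.} The common positive factor in $\pi_i=f\,\partial_t\wedge\partial_{x^i}$ cannot come from the normalization in \cref{prop:global-scale}. By that proposition, a global positive Poisson-preserving generator exists only when $\mathcal F$ is Riemannian. The factor instead comes from the frame-preserving identification: a common projective rescaling, positive by orientation. The relation $f(at,ax)=a^2f(t,x)$ then follows from deck invariance, as in your weight count.
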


\begin{proof}
Suppose otherwise and orient $L$, taking its connected orientation
double cover if necessary. The lifted foliation is still
non-Riemannian: averaging an invariant metric over the deck group
would produce one downstairs. By \cref{thm:transverse-dichotomy},
the prescribed transverse frame is positive scalar-affine; by
\cref{cor:scalar-affine-ergodic-periodic}, all ergodic probabilities
are supported on periodic orbits. Identity holonomy on such an orbit would give a compact leaf of
$\mathcal F_T$; \cref{prop:rigal-input}(ii) would then make
$\mathcal F$ Riemannian. Thus \cref{lem:scalar-source-sink,lem:scalar-hopf-gluing}
identify the cover with the Hopf model, preserving the ordered
transverse projective frame. On the punctured universal cover, the Poisson tensors take the form
\[
 \pi_i=f\,\partial_t\wedge\partial_{x^i},
 \qquad i=1,2,3,\]
where $f>0$ satisfies $f(at,ax)=a^2f(t,x)$, with $a>1$ the
fixed dilation factor introduced above.
The common positive factor follows from preservation of the ordered
projective frame, and the last identity from deck invariance.
This contradicts \cref{prop:hopf-positive-scale}.
\end{proof}

\begin{example}\label{ex:twisted-hopf}
For $a>1$, put $M_a=(\RR^4\setminus\{0\})/\langle a\Id\rangle$,
$r^2=t^2+|x|^2$ and $\mathcal R=t\partial_t+\sum_i x^i\partial_{x^i}$.
The forms
\begin{equation}\label{eq:riem-twisted-hopf}
 H=-2r^{-4}\iota_{\mathcal R}(dt\wedge dx^{123}),\qquad
 \rho_i=r^{-1}(dx^j-\ii\,dx^k)
                  \exp(\ii r^{-2}dt\wedge dx^i)
\end{equation}
descend to $M_a$ and satisfy
\[
 dH=0,\qquad (d+H\wedge)\rho_i=-d\log r\wedge\rho_i,
 \qquad\int_{\{r=1\}}H=-4\pi^2.
\]
The integrability identity is unchanged by a constant rotation of
the $x$-coframe in \eqref{eq:riem-twisted-hopf}; hence the whole
$H$-twisted sphere is integrable. Its Poisson tensors are
$\pi_i=r^2\partial_t\wedge\partial_{x^i}$. The two periodic leaves
have nonunit scalar holonomy, which cannot preserve a positive
transverse metric. Thus the canonical foliation is not Riemannian.

\end{example}

\subsection{Compatible Killing generators and transverse geometry}
\label{sec:killing}

A generator $X$ of the canonical line is \emph{compatible Killing} if
$\Lie_X\pi_i=0$ and there is a one-form $\lambda$ with
$\lambda(X)=1$ and $\iota_Xd\lambda=0$. The tensors
$\pi_i=X\wedge E_i$ determine a basic transverse coframe
$\eta=(\eta^1,\eta^2,\eta^3)$ dual to $[E_i]\in TM/L$.
A form $\omega$ is basic if $\iota_X\omega=\Lie_X\omega=0$;
its differential is denoted $d_B$. With
$h=\sum_i(\eta^i)^2$, the metric $g=\lambda^2+h$ makes $X$ unit
Killing. The transverse star and volume use the ordered coframe.

\begin{proposition}
\label{prop:taut-compatible}
For an untwisted ordered pure type-one triple on a closed connected
four-manifold whose canonical foliation is oriented and Riemannian,
geometric tautness is equivalent to the existence of a compatible
Killing generator.
\end{proposition}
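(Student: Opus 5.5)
The reverse implication is a direct computation; the forward implication needs Carrière's structure theory for taut Riemannian flows plus an averaging argument. Throughout, $X$ is an adapted generator as in \cref{prop:global-scale}. Because the foliation is oriented and Riemannian, that proposition makes $X$ preserve the three Poisson tensors, and it supplies the basic coframe $\eta$ and the holonomy-invariant transverse metric $h=\sum_i(\eta^i)^2$.

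\emph{Compatible Killing implies taut.} Given $\lambda$ with $\lambda(X)=1$ and $\iota_Xd\lambda=0$, the text before the statement notes that $g=\lambda^2+h$ makes $X$ a unit Killing field. For every $Y$,
\[
 g(\nabla_XX,Y)=-g(\nabla_YX,X)=-\tfrac12\,Y\bigl(g(X,X)\bigr)=0.
\]
So the leaves are geodesics and the foliation is geometrically taut.

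\emph{Taut implies compatible Killing.} A closed Riemannian flow that is geometrically taut is isometric, by Carrière's theorem (equivalently the Molino--Sergiescu description of taut Riemannian flows). So there is a metric $g_0$ and a nowhere-zero Killing field $Y$ spanning $L$, which after renormalising we take to be $g_0$-unit. Let $T$ be the closure of the flow of $Y$ in $\operatorname{Isom}(g_0)$; it is a compact torus that fixes $Y$ and preserves $L$. Write $Y=gX$ with $g>0$ and compute, using $X\wedge X=0$,
\[
 \Lie_{gX}(X\wedge E_i)=-X(g)\,\pi_i .
\]
Hence $Y$ preserves the Poisson tensors exactly when $X(g)=0$. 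Rather than insisting on $Y$ itself, I will build a new generator $X''=cX$ whose coefficient $c$ is invariant along the leaves. For this I average $\log g$ over $T$ with respect to Haar measure and exponentiate, which gives a function $c$ with $Y(c)=0$, hence $X(c)=0$. Then $X''=cX$ still preserves every $\pi_i$, since $\Lie_{cX}\pi_i=-X(c)\pi_i=0$. This is the same kind of rescaling as in \eqref{eq:nonflat-intrinsic-generator}. For $\lambda$ I then try $\lambda:=\frac{1}{c}\,\lambda_0$, where $\lambda_0$ is the $T$-average of $g_0(X,\cdot)/g_0(X,X)$. This $\lambda_0$ satisfies $\lambda_0(X)=1$ and is $T$-invariant.

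\emph{Main obstacle.} The hard step is showing $\iota_{X''}d\lambda=0$. Cartan's formula reduces it to $\Lie_{X''}\lambda=0$, and this requires $X''$ to commute with the $T$-action. It does: $X''$ is a $T$-invariant function times the $T$-invariant field $Y$. However, $T$-invariance of $c$ and $\lambda_0$ alone does not give $\Lie_{X''}\lambda=0$ without an extra step. My plan for that step is to use that the Poisson tensors, and hence $X''$ and $\eta$, are invariant under the closure $T$, which would make $h$ $T$-invariant. The justification I would try is that $T$ is generated by limits of flows of $Y$, and that on the dense set where the flow of $Y$ agrees with a reparametrised flow of $X''$ the induced transverse map preserves $\eta$ up to the scalar found in \cref{prop:transverse-Weyl}, and that scalar must be $1$ by compactness of $T$. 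If this holds, everything is $T$-invariant, $\Lie_{X''}\lambda=0$, and $X''$ together with $\lambda$ is the required compatible Killing generator. The fallback route is Álvarez López's criterion, namely vanishing of the basic class of the mean curvature of the bundle-like metric $\lambda_0^2+h$, which would let me correct $\lambda_0$ by a basic exact form. I expect this invariance step to be where the argument needs the most care.
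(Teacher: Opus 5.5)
Your first implication is correct and is the paper's argument. The converse, however, has a genuine gap at exactly the step you flag, and the invariance argument you sketch cannot close it. Write $Y=aX$ (your $g$). Because $T$ is the closure of the flow of $Y$, it fixes $Y$, not $X$: for $k\in T$, $dk_p(X_p)=\frac{a(k(p))}{a(p)}X_{k(p)}$. Since $\Lie_Y\pi_i=-(Xa)\pi_i$, neither the flow of $Y$ nor $T$ preserves the Poisson tensors unless $Xa=0$, and in that case $Y$ was already Poisson-preserving. Hence your $X''=cX=(c/a)Y$ is not $T$-invariant. Your averaged form also satisfies $\lambda_0(X)=\bar a/a$, with $\bar a$ the Haar average of $a$, so even $\lambda(X'')=1$ fails. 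The proposed repair through the holonomy scalar of \cref{prop:transverse-Weyl} targets the wrong component. The forms $\eta$ and $h$ are already $T$-invariant, because $\Lie_Y\eta=0$ and this passes to the closure; the defect is purely longitudinal. (Carri\`ere's theorem is also unnecessary: the unit tangent $Y$ of a geodesic metric $g_0$ is unit Killing for $g_0(Y,\cdot)^2+h$, since $h$ is basic.)

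What actually has to be proved is the following. Put $\lambda_0:=g_0(Y,\cdot)$, so that $\lambda_0(Y)=1$ and $\iota_Yd\lambda_0=0$. The natural Poisson-preserving candidate is $Z=\bar aX=b^{-1}Y$ with $b=a/\bar a$, and $\lambda=\lambda_0+dv$ is compatible exactly when $Yv=b-1$. This is a leafwise cohomological equation with small divisors, and no averaging argument solves it.

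To see this, work on $T^4$ with a constant, totally irrational Liouville field $Y$, a constant transverse frame $E_i$, a smooth $a>0$, $X=a^{-1}Y$ and $\pi_i=X\wedge E_i$. The foliation is oriented, Riemannian and taut. The only Poisson-preserving generators are $sX$ with $s$ constant. A compatible form $\lambda=\sum_i\lambda_i\,dx^i$ for $sX$ would need $\hat\lambda_i(k)=k_i\hat a(k)/(s\,k\cdot Y)$ for $k\ne0$, and this fails to decay for suitable $a$. Your argument never uses the untwisted lift, so it would apply verbatim to this example, where the conclusion is false.

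The \'Alvarez L\'opez fallback has the same defect. It detects tautness, which is already assumed, and a basic exact correction cannot remove the nonbasic part of the mean curvature of $X$.

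The paper closes the gap by using the lift. The exact twist $H=dB=-\lambda_0\wedge *_h(d_\perp a+a\theta)+\tau\vol_h$ together with $dH=0$ gives $\mathcal Pa=Y(Ya-\tau)$ for the elliptic operator $\mathcal P=\Delta_g+\langle\theta,d\,\cdot\,\rangle+q$, which commutes with $T$. Averaging gives $\mathcal P\bar a=0$. The ground-state transform $\mathcal P_0u=\bar a^{-1}\mathcal P(\bar au)$ commutes with $Y$ and satisfies $\mathcal P_0b=Yc$, where $c=(Ya-\tau)/\bar a$. Solving $\mathcal P_0v=c-c_0$ by the Fredholm alternative forces $\mathcal P_0(Yv-b)=0$, and hence $Yv=b-1$.
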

\begin{proof}
A unit Killing field has geodesic integral curves, proving one
implication. Conversely, \cref{prop:global-scale} supplies a positive
Poisson-preserving generator $X$ and a basic GT coframe $\eta$.
Choose a metric $g_0$ for which the leaves are geodesic, and let
$Y=aX$, with $a>0$, be their positive unit tangent field.
Set $\lambda_0:=g_0(Y,\cdot)$, so that $\lambda_0(Y)=1$.
Since $Y$ is unit and geodesic,
\[
 d\lambda_0(Y,W)
 =g_0(\nabla^{g_0}_YY,W)
 =0.
\]
Cartan's formula gives $\Lie_Y\lambda_0=0$, while basicness of
$\eta$ gives $\Lie_Y\eta=0$. Thus $Y$ is unit Killing for
$g=\lambda_0^2+h$. Let $T$ be the closure of its flow in
$\operatorname{Isom}(M,g)$.

By \cref{prop:linear-B-normal}, the given spinors have the form
$e^B(\eta^j-\ii\eta^k)e^{\ii a\lambda_0\wedge\eta^i}$.
For horizontal forms, define
$d_\perp\omega:=d\omega-\lambda_0\wedge\Lie_Y\omega$.
The calculation in \cref{prop:GT-reduction} gives
\begin{equation}\label{eq:killing-variable-lift}
 H=dB
 =-\lambda_0\wedge *_h(d_\perp a+a\theta)+\tau\vol_h.
\end{equation}
Indeed, the additional term
$(d\lambda_0\wedge\eta^i)\wedge(\eta^j-\ii\eta^k)$
has horizontal degree four and therefore vanishes.

Define $q$ and $\mathscr D$ by
$d_B(*_h\theta)=q\,\vol_h$ and 
 $(\mathscr D u)\,\vol_h
=d_\perp\!\left(*_h(d_\perp u+u\theta)\right),$ respectively.
Differentiating \eqref{eq:killing-variable-lift} and using $dH=0$
gives
\[
 0=dH
 =(\mathscr D a+Y\tau)\lambda_0\wedge\vol_h.
\]
Set $\mathcal P:=\mathscr D+Y^2$. Since $Y$ is unit Killing,
the Laplacian decomposition and the preceding identity yield
\begin{equation}\label{eq:killing-elliptic-scale}
\begin{aligned}
 \mathcal P
 &=\Delta_g+\langle\theta,d\,\cdot\,\rangle+q,\\
 \mathcal P a
 &=Y(Ya-\tau).
\end{aligned}
\end{equation}

For a function $u$, write
$\bar u:=\int_T k^*u\,d\mu_T(k)$, where $\mu_T$ is normalized
Haar measure. The coefficients of $\mathcal P$ are $T$-invariant,
so $\mathcal P$ commutes with the $T$-action and, in particular,
with $Y$. Averaging \eqref{eq:killing-elliptic-scale} gives
\[
 \mathcal P\bar a=0,\qquad
 \bar a>0,\qquad
 Y\bar a=0.
\]
Define the ground-state transform
$\mathcal P_0u:=\bar a^{-1}\mathcal P(\bar a u)$, and set
$b:=a/\bar a$ and $c:=(Ya-\tau)/\bar a$.
The preceding identities imply
\begin{equation}\label{eq:killing-ground-state}
\left\{\begin{aligned}
 &\mathcal P_0u
 =\Delta_gu+\langle\theta+2d\log\bar a,du\rangle,\\
& [\mathcal P_0,Y]=0,\\
 &\mathcal P_0b=Yc.
\end{aligned}\right.
\end{equation}

On H\"older spaces,
$\mathcal P_0:C^{k+2,\alpha}\to C^{k,\alpha}$ has index zero
by elliptic homotopy to $\Delta_g$.
The maximum principle gives $\ker\mathcal P_0=\RR$ and excludes
$\mathcal P_0u=1$. Thus its range has codimension one and intersects
the constants only at zero. Elliptic regularity therefore gives
\[
 C^\infty(M)=\im\mathcal P_0\oplus\RR.
\]
We may consequently choose $v\in C^\infty(M)$ and $c_0\in\RR$
such that $\mathcal P_0v=c-c_0$. By
\eqref{eq:killing-ground-state},
\[
 \mathcal P_0(Yv-b)
 =Y(\mathcal P_0v)-Yc
 =0.
\]
Hence $Yv-b$ is constant. Since $\overline{Yv}=0$ and $\bar b=1$,
its average is $-1$, and therefore
\[
 Yv=b-1.
\]

Set $Z:=\bar aX=b^{-1}Y$ and $\lambda:=\lambda_0+dv$.
Using $\lambda_0(Y)=1$ and $\iota_Yd\lambda_0=0$, we obtain
\[
\begin{aligned}
 \lambda(Z)
 &=b^{-1}(1+Yv)=1,\\
 \iota_Zd\lambda
 &=b^{-1}\iota_Yd\lambda_0=0.
\end{aligned}
\]
Finally, $Y\bar a=0$ and $Y=aX$ imply $X\bar a=0$. Thus
\[
 \Lie_Z\pi_i
 =\bar a\Lie_X\pi_i-(X\bar a)\pi_i
 =0.
\]
Therefore $Z$ is a compatible Killing generator.
\end{proof}
\begin{proposition}\label{thm:killing-reconstruction}
Let $(\lambda,\eta)$ be a global coframe on a closed connected
four-manifold, with $\lambda(X)=1$, $\iota_Xd\lambda=0$ and $\eta$
basic. Here $X$ denotes the normalized generator determined by this
coframe. Its Poisson tensors $X\wedge E_i$ admit an untwisted ordered
pure type-one lift exactly when the GT equations hold and
\begin{equation}\label{eq:killing-GT-lift}
                 *_h\theta=d_B\beta
\end{equation}
for a basic one-form $\beta$. Every such lift, modulo a common
closed $B$-field, has the form
\begin{equation}\label{eq:killing-spinors}
 \rho_i=e^{\lambda\wedge\beta+\gamma}
       (\eta^j-\ii\eta^k)e^{\ii \lambda\wedge\eta^i},
 \qquad \gamma\in\Omega_B^2(M),
\end{equation}
where $(i,j,k)$ is cyclic. Conversely every such $\beta,\gamma$
gives a lift; no further condition is imposed on $F=d\lambda$.
\end{proposition}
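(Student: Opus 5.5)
By \cref{prop:linear-B-normal}, any untwisted ordered lift of the Poisson tensors $X\wedge E_i$ can be written as
\[
 \rho_i=e^B(\eta^j-\ii\eta^k)e^{\ii\lambda\wedge\eta^i}
\]
for a unique global real two-form $B$. By \eqref{eq:twist-conjugation}, the untwisted integrability of $\rho_i$ is then equivalent to integrability of the standard spinors $\rho_i^0=(\eta^j-\ii\eta^k)e^{\ii\lambda\wedge\eta^i}$ for the exact twist $H=dB$. So the problem splits into two parts. First, describe all closed $H$ for which the $\rho_i^0$ are integrable. Second, decide which of these are exact with a primitive in the required form.

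\textbf{Step 1: the twist equation.} I would rerun the computation of \cref{prop:GT-reduction} with $dt$ replaced by $\lambda$. Here $d\lambda=F$ is basic, since $\iota_Xd\lambda=0$ and $\Lie_XF=d\iota_XF=0$, so it has horizontal degree two. The term $F\wedge\eta^i\wedge\alpha_i$ is then horizontal of degree four and vanishes. The spinor criterion \eqref{eq:spinor-criterion} therefore holds exactly when the GT equations are satisfied. In that case
\[
 H=-\lambda\wedge*_h\theta+\tau\vol_h,
\]
which is \eqref{eq:killing-variable-lift} with $a=1$. Since $\eta$ is basic, $\theta$ and $\kappa$ are basic. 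Closure of $H$ gives $F\wedge*\theta=0$ automatically, so the only remaining condition is $d_B(*_h\theta)+(X\tau)\vol_h=0$. This shows that the GT equations are necessary and sufficient at the level of twisted integrability.

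\textbf{Step 2: exactness.} Decompose $B=\lambda\wedge\beta'+\gamma'$ with $\beta'$, $\gamma'$ horizontal. Comparing the $\lambda$-components of $dB=H$ gives $-\Lie_X\beta'+d_\perp\beta'=-*\theta$ modulo sign bookkeeping. Integrating along the flow does not by itself produce a basic primitive, so I would average instead. The closure $T$ of the flow of $X$ in $\operatorname{Isom}(M,\lambda^2+h)$ is a torus. It preserves $\lambda$, $\eta$ and the spinors $\rho_i^0$, hence it acts on $B$, and the average $\bar B$ again satisfies $d\bar B=\bar H$. We have $\bar H=-\lambda\wedge*\theta+\bar\tau\vol_h$ with $\bar\tau$ basic, and the cohomology class is unchanged: $[\bar H]=[H]=0$. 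For $T$-invariant forms, $\bar B=\lambda\wedge\beta+\gamma$ with $\beta$ and $\gamma$ basic. Its differential is
\[
 d\bar B=F\wedge\beta-\lambda\wedge d_B\beta+d_B\gamma.
\]
Matching $\lambda$-components gives $*_h\theta=d_B\beta$, which is \eqref{eq:killing-GT-lift}. Matching the horizontal parts gives $\bar\tau\vol_h=F\wedge\beta+d_B\gamma$, and this only defines $\bar\tau$. The difference $B-\bar B$ is a closed form, so modulo a common closed $B$-field every lift is \eqref{eq:killing-spinors}.

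\textbf{Converse.} Given basic $\beta$ and $\gamma$ with $*_h\theta=d_B\beta$, set $B=\lambda\wedge\beta+\gamma$. Then $dB$ has exactly the Step 1 form with $\tau$ basic, and closure holds because $d_B(*\theta)=d_B^2\beta=0$. No condition on $F$ appears. Hence the spinors \eqref{eq:killing-spinors} are untwisted-integrable, and the whole sphere follows by rotating the coframe as in \cref{prop:untwisting-existence}.

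\textbf{Main obstacle.} The delicate point is the averaging step. One has to check that $T$ preserves the ordered spinor lines, so that the uniqueness in \cref{prop:linear-B-normal} makes $B$ transform equivariantly. One also has to verify that invariant forms split into basic pieces, so that $\beta$ is genuinely basic rather than merely horizontal.
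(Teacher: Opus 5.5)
Your proposal is correct and follows the paper's proof essentially step for step. The shared steps are the algebraic normal form from \cref{prop:linear-B-normal}, the computation of \cref{prop:GT-reduction} with $dt$ replaced by $\lambda$ (the $F\wedge\eta^i$ term vanishing by horizontal degree), Haar averaging of $B$ over the torus closure $T$ of the flow in $\operatorname{Isom}(M,\lambda^2+h)$, the basic splitting $\bar B=\lambda\wedge\iota_X\bar B+\gamma$, comparison of $\lambda$-components, and the same converse.

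The one step you assert without justification is that $B-\bar B$ is closed. This requires $H$ itself to be $T$-invariant, i.e.\ $X\tau=0$. The paper obtains this from boundedness of $\tau(\Phi_tm)=\tau(m)-tq(m)$. In your route it also follows: $*_h\theta=d_B\beta$ forces $q=0$, and your closure identity then gives $X\tau=0$. This is the genuinely delicate point, rather than equivariance of $B$ under $T$, which is not needed because one simply averages the two-form.
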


\begin{proof}
The common algebraic two-form from \cref{prop:linear-B-normal}
gives $H=dB$. In \cref{prop:GT-reduction}, replacing $dt$ by
$\lambda$ adds $F\wedge\eta^i$; its wedge with
$\eta^j-\ii\eta^k$ vanishes by horizontal degree. Thus the GT
equations hold and
\begin{equation}\label{eq:killing-H}
 H=-\lambda\wedge *_h\theta+\tau\vol_h.
\end{equation}
The GT data $\eta$, $\theta$ and $\kappa$ are basic. Writing
$d_B(*_h\theta)=q\vol_h$, closedness of \eqref{eq:killing-H} gives
\[
 X\tau=-q,\qquad Xq=0,\qquad
 \tau(\Phi_t m)=\tau(m)-tq(m).
\]
Boundedness forces $q=0$ and $X\tau=0$. Let $T$ be the closure of
the flow of the present generator $X$ in
$\operatorname{Isom}(M,\lambda^2+h)$. Thus $H$ is $T$-invariant. Let
\[
 \bar B:=\int_T g^*B\,d\mu_T(g)
\]
be the normalized Haar average of $B$. Since $dB$ is $T$-invariant,
\[
 d\bar B=dB.
\]
Define
\[
 \beta:=\iota_X\bar B,
 \qquad
 \gamma:=\bar B-\lambda\wedge\beta.
\]
Because $\lambda(X)=1$, $\Lie_X\lambda=0$, and
$\Lie_X\bar B=0$, the forms $\beta$ and $\gamma$ are basic.
Moreover,
\[
 \bar B=\lambda\wedge\beta+\gamma.
\]
The change $\bar B-B$ is closed, and
\begin{equation}\label{eq:killing-basic-differential}
 d\bar B=-\lambda\wedge d_B\beta+F\wedge\beta+d_B\gamma.
\end{equation}
Comparing \eqref{eq:killing-basic-differential} with
\eqref{eq:killing-H} proves \eqref{eq:killing-GT-lift}.
Conversely, \eqref{eq:killing-GT-lift} gives the required vertical
part in \eqref{eq:killing-H}; horizontal three-forms impose no
condition in \eqref{eq:spinor-criterion}. Constant
$\operatorname{SO}(3)$ covariance gives integrability of the whole sphere.
\end{proof}

\begin{proposition}
\label{prop:killing-kappa-constant}
In \cref{thm:killing-reconstruction}, $\kappa$ is constant.
If $\kappa=0$, then $\theta=0$ and $d\eta^i=0$; otherwise
$\beta_0=\theta/\kappa$ is a basic primitive of $*_h\theta$.
\end{proposition}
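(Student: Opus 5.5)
We work on the closed transverse data of \cref{thm:killing-reconstruction}. There $\eta$, $\theta$, $\kappa$ are basic, and $*_h\theta=d_B\beta$ for a basic one-form $\beta$. The compatibility identity \eqref{eq:GTcompat-intro} holds basically and reads $*d\theta=d\kappa+\kappa\theta$. The plan is to turn the exactness $*_h\theta=d_B\beta$ into an integral identity that forces $d\kappa=0$.

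\textbf{Step 1: $\theta$ is coclosed and $d\theta$ is controlled.} From $*_h\theta=d_B\beta$ we get $d(*_h\theta)=0$. Hence $q=0$, i.e.\ $\delta_h\theta=0$ transversally. Next we apply $d$ to \eqref{eq:GTcompat-intro}: since $d(d\kappa+\kappa\theta)=d\kappa\wedge\theta+\kappa\,d\theta$, we obtain
\[
 d*d\theta=d\kappa\wedge\theta+\kappa\,d\theta .
\]

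\textbf{Step 2: an integral identity on $M$.} All these forms are basic three-forms, so we wedge with $\lambda$ and integrate over $M$, using $\lambda\wedge\vol_h$ as the volume form. Since $\iota_Xd\lambda=0$, Stokes' theorem applies to basic forms: for basic $\omega$ we have $\int_M\lambda\wedge d_B\omega=\int_M F\wedge\omega$. This vanishes when $\omega$ is a basic three-form, and it also vanishes for the relevant wedge with $\lambda$ whenever $\omega$ has transverse degree two and $F\wedge\omega$ is of horizontal degree four. So integration by parts holds as on a compact three-manifold, up to $F$-terms of excessive horizontal degree. Define $\kappa$ using the normalized generator. Then
\[
 \int_M\lambda\wedge d\kappa\wedge *(d\kappa+\kappa\theta)=\int_M\lambda\wedge d\kappa\wedge d\theta .
\]
The right side equals $\int_M\lambda\wedge d(\kappa\,d\theta)=0$, which is exact up to an $F\wedge\kappa\,d\theta$ term of horizontal degree four that vanishes. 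Therefore
\[
 \int|d\kappa|^2+\int\kappa\langle d\kappa,\theta\rangle=0 .
\]
The second integral is $\tfrac12\int\langle d\kappa^2,\theta\rangle$, which vanishes because $\delta_h\theta=0$ (integrate by parts again using $d*\theta=0$). Hence $d\kappa=0$, and $\kappa$ is constant on the connected manifold $M$.

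\textbf{Step 3: the two cases.} If $\kappa\ne0$, then \eqref{eq:GTcompat-intro} gives $*d\theta=\kappa\theta$. Therefore $d(\theta/\kappa)=*\theta$ as basic forms, so $\beta_0=\theta/\kappa$ is a basic primitive of $*_h\theta$.

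If $\kappa=0$, then $d\theta=0$ and $d*\theta=0$. We combine this with exactness, using the $L^2$ pairing and Stokes as above:
\[
 \int\lambda\wedge\theta\wedge*\theta=\int\lambda\wedge\theta\wedge d_B\beta=\int\lambda\wedge d\theta\wedge\beta=0 .
\]
The sign and $F$-terms again vanish by degree. Hence $\theta=0$, and the GT equations reduce to $d\eta^i=0$.

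\textbf{Main obstacle.} The delicate point is justifying the basic Stokes theorem on $M$: the integrals $\int_M\lambda\wedge d_B(\cdot)$ must vanish despite $F=d\lambda\ne0$. I would check carefully that every correction term $F\wedge(\text{basic two-form})$ lands in horizontal degree four. Such a term is zero because the transverse dimension is three, but only when the basic form involved has degree two. The $\kappa\ne0$ and $\kappa=0$ computations are arranged so that this is always the case.
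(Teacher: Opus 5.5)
Your proof is correct. The two cases at the end coincide with the paper's argument. For $\kappa=0$, your Stokes computation is the paper's identity $d(\lambda\wedge\theta\wedge\beta)=|\theta|_h^2\vol_g$. For $\kappa\ne0$, both arguments read off $d_B(\theta/\kappa)=*_h\theta$ directly.

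The genuine difference is how you prove that $\kappa$ is constant. The paper applies $d_B$ to $d_B\theta=*_h(d_B\kappa+\kappa\theta)$ and uses $d_B(*_h\theta)=0$. It then identifies $d_B(*_hd_B\kappa)$ with $(\Delta_g\kappa)\vol_h$ by a divergence computation for $\vol_g=\lambda\wedge\vol_h$. Finally it applies the strong maximum principle to $\Delta_g\kappa+\langle\theta,d\kappa\rangle=0$.

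You stay at first order. The compatibility equation gives the pointwise identity $d\kappa\wedge d\theta=\bigl(|d\kappa|_h^2+\kappa\langle d\kappa,\theta\rangle_h\bigr)\vol_h$. Since $d\kappa\wedge d\theta=d(\kappa\,d\theta)$, its integral against $\lambda$ vanishes, and coclosedness of $\theta$ removes the cross term.

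Your route needs only Stokes' theorem and the vanishing of $F\wedge(\text{basic two-form})$. It requires no identification of the transverse operator with $\Delta_g$ and no maximum principle, and it works unchanged on a closed GT three-manifold. The paper's route instead yields an explicit elliptic equation for $\kappa$, matching the drift-Laplacian and maximum-principle arguments it uses elsewhere.

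In both arguments, constancy of $\kappa$ uses only $d_B(*_h\theta)=0$; full exactness of $*_h\theta$ enters only when $\kappa=0$. The second display in your Step~1 is never used and can be omitted.
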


\begin{proof}
Equations \eqref{eq:GTcompat-intro} and \eqref{eq:killing-GT-lift}
give
\[
\left\{\begin{aligned}
 &d_B\theta=*_h(d_B\kappa+\kappa\theta),\\
 &d_B(*_h\theta)=0,\\
 &d_B(*_hd_B\kappa)+d_B\kappa\wedge *_h\theta=0.
\end{aligned}\right.
\]
For basic $u$, divergence with respect to
$\vol_g=\lambda\wedge\vol_h$ gives
$d_B(*_hd_Bu)=(\Delta_gu)\vol_h$; the curvature term has
horizontal degree four. Thus $\kappa$ satisfies
\[
 \Delta_g\kappa+\langle\theta,d\kappa\rangle=0.
\]
By the maximum principle, $\kappa$ is constant. The GT compatibility
equation therefore reduces to
\begin{equation}\label{eq:killing-constant-kappa}
 d_B\theta=\kappa *_h\theta.
\end{equation}
If $\kappa=0$, \eqref{eq:killing-GT-lift} and Stokes' theorem give
\[
 d(\lambda\wedge\theta\wedge\beta)=|\theta|_h^2\vol_g,
 \qquad \int_M|\theta|_h^2\vol_g=0.
\]
Consequently $\theta=0$ and $d\eta=0$.
If $\kappa\ne0$, \eqref{eq:killing-constant-kappa} gives
$d_B(\theta/\kappa)=*_h\theta$.
\end{proof}

\begin{lemma}\label{lem:killing-parametrization}
Compatible Killing generators for fixed ordered Poisson tensors on
a closed connected manifold differ by a nonzero constant.
\end{lemma}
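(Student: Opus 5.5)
Let $X$ and $X'$ be two compatible Killing generators for the same ordered Poisson tensors $\pi_i$. Both span $L$, so $X'=fX$ for a nowhere-zero smooth function $f$. The goal is to show that $f$ is constant, in two steps.

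The first step uses the Poisson condition. From $\Lie_X\pi_i=0$ and $\Lie_{fX}\pi_i=0$ we get
\[
 0=\Lie_{fX}\pi_i=f\Lie_X\pi_i-X\wedge\iota_{df}\pi_i
 =-X\wedge\iota_{df}(X\wedge E_i).
\]
Here $\iota_{df}(X\wedge E_i)=(Xf)E_i-(E_if)X$, so the right-hand side equals $-(Xf)X\wedge E_i$. Hence $Xf=0$, in agreement with \eqref{eq:nonflat-intrinsic-generator}. Consequently the coframe $\eta'=f\eta$ is again basic, and $f$ is basic.

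The second step applies \cref{prop:killing-kappa-constant} to both generators. Each compatible Killing generator gives, through \cref{thm:killing-reconstruction}, a global coframe satisfying the GT equations, and the coefficient $\kappa$ is constant for each. By \eqref{eq:nonflat-intrinsic-generator}, $\kappa'=\kappa/f$.

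\emph{Case $\kappa\neq0$.} Then $f=\kappa/\kappa'$ is constant, and we are done.

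\emph{Case $\kappa=0$.} Then $\theta=0$, and likewise $\theta'=0$. Since $\theta'=\theta+d\log|f|$, this gives $d\log|f|=0$. On a connected manifold $f$ is therefore constant.

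The main obstacle is that the transformation law \eqref{eq:nonflat-intrinsic-generator} for $\theta$ and $\kappa$ was stated for adapted generators. Here it must hold globally, with the global coframes $(\lambda,\eta)$ and $(\lambda',\eta')$. This follows because the GT data depend only on the transverse coframe $\eta$, through uniqueness of $(\theta,\kappa)$ in \eqref{eq:GT}. Applying $d$ to $f\eta^i$ gives $d\eta'^i=(\theta+d\log|f|)\wedge\eta'^i-(\kappa/f)\,*_{h'}\eta'^i$, using $*_{h'}=f\,*_h$ on one-forms for $h'=f^2h$ with the ordered coframe orientation. This holds for either sign of $f$, and the argument goes through with $\log|f|$ throughout.
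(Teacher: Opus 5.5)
Your argument is correct, but it takes a different route from the paper's.

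\emph{The paper's route.} It never uses a lift or the GT structure. From $\lambda'(X)=1/f$ and $\iota_Xd\lambda'=0$, Cartan's formula gives $\Lie_X\lambda'=d(1/f)$. This form is $X$-invariant because $Xf=0$, so $\Phi_t^*\lambda'=\lambda'+t\,d(1/f)$ grows linearly in $t$. The flow of $X$ is isometric for $\lambda^2+h$, so $\|\Phi_t^*\lambda'\|_\infty$ stays bounded, and this forces $d(1/f)=0$.

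\emph{Your route.} You pass both generators through \cref{thm:killing-reconstruction} and \cref{prop:killing-kappa-constant}. You then use the global transformation law $\kappa'=\kappa/f$, $\theta'=\theta+d\log|f|$, which you re-derive correctly, including $*_{h'}=f\,*_h$ on one-forms for either sign of $f$. The case split is sound: $\kappa\ne0$ gives $f=\kappa/\kappa'$, and $\kappa=0$ gives $\theta=\theta'=0$. There is no circularity, since neither cited result depends on this lemma.

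\emph{What each route buys.} Your argument requires the Poisson tensors to admit an untwisted lift. That lift is what \cref{thm:killing-reconstruction} uses to supply the GT equations and the basic primitive of $*_h\theta$. Your argument also imports the maximum-principle and Stokes arguments of \cref{prop:killing-kappa-constant}. This is harmless in the paper's standing context and in every place the lemma is applied, since an untwisted triple is always given there. The paper's argument, by contrast, is elementary and holds for bare tensors $X\wedge E_i$ with no integrability input. It uses only the Killing property of one generator and the compatibility form of the other. Your version does show something extra: in the nonflat case the constant is $\kappa/\kappa'$, so it is fixed by the intrinsic section $K=\kappa X=\kappa'X'$.
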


\begin{proof}
If $X'=fX$ preserves the tensors, then
$\Lie_{fX}\pi_i=-(Xf)\pi_i=0$, hence $Xf=0$.
A characteristic form $\lambda'$ for $X'$ satisfies
\[
 \Lie_X\lambda'=d(1/f),\qquad
 \Phi_t^*\lambda'=\lambda'+t\,d(1/f).
\]
The flow $\Phi_t$ is isometric for the adapted metric of $X$, so
\[
 |t|\,\|d(1/f)\|_\infty
 \leq\|\Phi_t^*\lambda'\|_\infty+\|\lambda'\|_\infty
 =2\|\lambda'\|_\infty.
\]
Letting $|t|\to\infty$ gives $df=0$.
\end{proof}

\paragraph{Regular circles and compact transverse coframes.}

\begin{proposition}\label{prop:reg-generator}
If the oriented canonical leaves are the fibres of a principal
circle bundle $p:M\to N$, there is a unique positive
Poisson-preserving generator of least period $2\pi$. Its induced
transverse coframe descends to $N$.
\end{proposition}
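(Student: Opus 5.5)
Let $V$ be the fundamental vector field of the principal circle action, normalized to have period $2\pi$ and oriented positively along the leaves. I would show that $V$ is itself Poisson-preserving and that every other positive Poisson-preserving generator of period $2\pi$ equals $V$. Then I would check that the transverse coframe is invariant and horizontal, so it descends to $N$.

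\textbf{Preservation.} By \cref{cor:compact-riemannian} the foliation is Riemannian. \Cref{prop:global-scale} therefore supplies a positive generator $X=fV$, $f>0$, with $\Lie_X\pi_i=0$ and basic coframe $\eta$. Write $\pi_i=X\wedge E_i$. On each fibre, every orbit of $X$ is the whole circle, so $X$ has period function
\[
 P(y)=\oint_{p^{-1}(y)}\frac{d\phi}{f},
\]
where $\phi$ is the angle coordinate of the circle action. The key claim is that $f$ is constant on fibres. Suppose $f$ is not constant on fibres. Then $\Lie_V\pi_i=\Lie_{f^{-1}X}\pi_i=-(Xf^{-1})\pi_i\ne0$. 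Two Poisson-preserving generators differ by a factor constant on leaves. So it suffices to produce one preserving generator of the form $cV$ with $c$ constant on fibres. I would do this by replacing $X$ with $X'=gX$, where $g=2\pi/P$ is constant along leaves (hence $Xg=0$). By \eqref{eq:nonflat-intrinsic-generator}, $X'$ still preserves the Poisson tensors, and now every orbit has period exactly $2\pi$. Next, I would compare $X'$ with $V$ leafwise. Both generate the same circle with the same period and orientation, so $X'=\phi_*$-reparametrizations differ by a fibrewise diffeomorphism of $S^1$. I would argue that they coincide, as follows. $X'$ preserves the basic coframe $g\eta$, which is a transverse framing, so the holonomy of $X'$'s flow is transversely trivial. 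In a local trivialization $p^{-1}(U)\cong U\times S^1$, the flow of $X'$ at time $2\pi$ is the identity. The flow of $X'$ therefore defines a free circle action with the same orbits as the principal action. The remaining gap is that two free $S^1$-actions with the same orbits, orientation, and all periods $2\pi$ need not agree pointwise. This is the main obstacle I expect.

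\textbf{Resolving the obstacle.} The statement only asks for a positive Poisson-preserving generator of least period $2\pi$, not that it equal the given principal $V$. I would therefore define the generator to be $X'=gX$ above, and take its own circle action as the principal structure. The new action has the same orbit space $N$, and by hypothesis it has no exceptional fibres, since the holonomy is trivial. Uniqueness then follows from \eqref{eq:nonflat-intrinsic-generator}. Any other positive preserving generator is $f'X'$ with $X'f'=0$, so $f'$ is constant on leaves. Its period on a leaf is $2\pi/f'$, which forces $f'\equiv1$ if the least period is $2\pi$. Least period is exact here because the leaves are embedded circles on which the flow of $X'$ is a nonsingular circle flow.

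\textbf{Descent.} The coframe $\eta'=g\eta$ satisfies $\iota_{X'}\eta'=0$ and $\Lie_{X'}\eta'=0$. It is therefore basic for the fibration $p$, whose fibres are connected. A basic form on the total space of a submersion with connected fibres is the pullback of a unique form on the base, so $\eta'=p^*\bar\eta$. The same argument applies to $\theta$ and $\kappa$ via the GT equations. The orientation of $L$ is used to fix the sign, so the descended coframe is well defined.
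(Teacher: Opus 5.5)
Your argument is correct, and its skeleton matches the paper's. You make a generator Poisson-preserving, rescale it by the smooth fibre period, obtain uniqueness from $Xa=0$ together with the period, and descend the basic coframe. Your basic-form descent is the dual of the paper's observation $U(p_*Y_i/f)=0$.

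The real difference is where the scale function comes from. You invoke \cref{cor:compact-riemannian} and then \cref{prop:global-scale}. The paper instead writes $f=|\nu(p_*Y_1,p_*Y_2,p_*Y_3)|^{1/3}$ for a volume form $\nu$ on $N$. This is exactly the converse half of \cref{prop:global-scale}, applied to the manifestly holonomy-invariant transverse volume $p^*\nu$. The vertical foliation of a fibration is trivially Riemannian. Your appeal to automatic Riemannianity (Rigal's rigidity, the Hopf period obstruction and the appendix dynamics) is therefore not circular, but it is far heavier than needed; the paper's step is elementary and self-contained.

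In the other direction, you simply adopt the circle action of $X'$ as the new principal structure. At that point the paper does more: it writes $X=aU$, solves $Uu=a^{-1}-1$ by fibre integration, and uses $F(m)=\Phi^U_{u(m)}(m)$, with $F_*X=U$. This shows the new action is isomorphic to the original one, so the integral Euler class is unchanged. The statement does not require this step. It also follows independently, because the Euler class of an oriented circle fibration does not depend on the chosen principal action. Still, it is what lets later results such as \cref{thm:closed-flat-arithmetic-data} use the Euler class of the given bundle with the normalized generator.

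Finally, your preliminary ``key claim'' that $f$ is constant on fibres is false in general, and you were right to abandon it. The normalized generator usually differs from the original fundamental field, which is precisely what the map $F$ accounts for.
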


\begin{proof}
Let $U$ generate the initial action and write $\pi_i=U\wedge Y_i$
using an invariant connection. Along each fibre, regard
$m\mapsto(p_*Y_i)(m)$ as a map into the fixed vector space
$T_{p(m)}N$. Poisson compatibility gives
$U(p_*Y_i)=b\,p_*Y_i$ with the same $b$ for all $i$.
For a positive base volume $\nu$, set
\[
 f=|\nu(p_*Y_1,p_*Y_2,p_*Y_3)|^{1/3},\qquad
 U\log f=b,\qquad U(p_*Y_i/f)=0.
\]
Thus $fU$ preserves the tensors. Its fibrewise period $T$ is smooth;
$X=(T/2\pi)fU$ has least period $2\pi$. Any other positive
Poisson-preserving generator is $aX$ with $Xa=0$; its period forces
$a=1$.

Write $X=aU$. The normalization gives
\[
 \int_0^{2\pi}(a^{-1}-1)(\Phi_t^U m)\,dt=0.
\]
Fibre integration gives a unique zero-mean smooth solution of
$Uu=a^{-1}-1$. For $F(m)=\Phi_{u(m)}^U(m)$,
\[
 F_*X=a(1+Uu)U=U,
 \qquad 1+Uu=a^{-1}>0.
\]
The map $F$ has degree one on each fibre and is a bundle
diffeomorphism; the integral Euler class is unchanged.
\end{proof}

\begin{proposition}\label{thm:reg-reduction}
For this normalized action the transverse data are a global GT
coframe on $N$ satisfying $*_h\theta=d\beta$.
Every lift is given by \eqref{eq:killing-spinors}, with
$\beta,\gamma$ pulled back from $N$. Conversely these data work
on every principal circle bundle over $N$, for any connection.
\end{proposition}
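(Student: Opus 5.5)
The plan is to reduce the statement to \cref{thm:killing-reconstruction} applied to the normalized generator $X$ of \cref{prop:reg-generator}, and then check that every basic object is pulled back from $N$. By \cref{prop:reg-generator}, $X$ is a positive Poisson-preserving generator of least period $2\pi$ whose induced coframe $\eta$ descends to $N$. The connection form $\lambda$ is chosen as a principal connection for this circle action, i.e.\ $\lambda(X)=1$ and $\Lie_X\lambda=0$. Then $\iota_Xd\lambda=0$ follows from Cartan's formula, so $(\lambda,\eta)$ is a global coframe of exactly the kind required in \cref{thm:killing-reconstruction}. That proposition then gives the GT equations for $\eta$, the identity $*_h\theta=d_B\beta$ for a basic one-form $\beta$, and the normal form \eqref{eq:killing-spinors} for every lift modulo closed $B$-fields.

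The key observation is that, for a principal circle bundle, basic forms are exactly pullbacks from $N$. A form annihilated by $\iota_X$ and $\Lie_X$ is invariant under the $S^1$-action, which is the flow of $X$ because the period is $2\pi$, and it is horizontal. Since $p$ is a submersion with connected fibres, such a form equals $p^*$ of a unique form on $N$, and $d_B$ corresponds to $d$ on $N$.

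Applying this, $\eta,\theta,\kappa$ descend and form a global GT coframe on $N$, since the GT equations hold upstairs and $p^*$ is injective. The forms $\beta,\gamma$ descend as well, and $*_h\theta=d\beta$ holds on $N$. The Hodge star commutes with pullback because $h$ is built from the descended coframe and the orientation is the coframe orientation.

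For the converse, start from GT data on $N$ with $*_h\theta=d\beta$, an arbitrary principal circle bundle $M\to N$, and any connection $\lambda$. Pull $\eta,\beta,\gamma$ back to $M$. The pair $(\lambda,p^*\eta)$ is a global coframe; the generator $X$ with $\lambda(X)=1$ and $p_*X=0$ is the fundamental vector field; $\iota_Xd\lambda=0$ because the curvature is horizontal; and $p^*\eta$ is basic. The converse part of \cref{thm:killing-reconstruction}, which imposes no condition on $F=d\lambda$, then shows that \eqref{eq:killing-spinors} is an untwisted lift, with integrability of the whole sphere coming from $\operatorname{SO}(3)$ covariance.

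The only delicate point is the generator. The $X$ of \cref{thm:killing-reconstruction} must coincide with the normalized generator, and this needs an invariant $\lambda$ with $\lambda(X)=1$ for that specific $X$. The plan is to average any connection over the circle action and rescale it by the given $X$; \cref{prop:reg-generator} guarantees that $X$ itself generates a free $2\pi$-periodic action, possibly after the bundle diffeomorphism $F$ constructed there, so this averaging is legitimate.
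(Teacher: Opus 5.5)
Your proposal is correct and matches the paper's proof. Both choose an invariant connection for the normalized $2\pi$-periodic action so that the generator becomes compatible Killing, apply \cref{thm:killing-reconstruction} together with its converse (which imposes no condition on $d\lambda$), and use $\Omega_B^*(M)=p^*\Omega^*(N)$. Your extra care in securing an invariant $\lambda$ with $\lambda(X)=1$ for that specific $X$ is exactly what the bundle diffeomorphism of \cref{prop:reg-generator} supplies.
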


\begin{proof}
An invariant connection makes the normalized generator compatible
Killing. Apply \cref{thm:killing-reconstruction}, using
$\Omega_B^*(M)=p^*\Omega^*(N)$.
\end{proof}

\begin{lemma}\label{lem:reg-compact-GT}
A closed connected GT three-manifold with $*_h\theta$ exact has
constant $\kappa$. If $\kappa=0$, its coframe is closed and it is
a three-torus. If $\kappa\ne0$, then $\theta^\sharp$ is Killing and
\begin{equation}\label{eq:reg-positive-Ricci}
 \Ric_h=
 (\kappa^2/2+|\theta|^2)h-\theta\otimes\theta
 \geq(\kappa^2/2)h.
\end{equation}
In particular its universal cover is compact and diffeomorphic to
$S^3$, and $\beta_0=\theta/\kappa$ is a lifting primitive.
\end{lemma}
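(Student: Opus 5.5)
We work on the closed three-manifold $N$ itself; this is the special case of \cref{prop:killing-kappa-constant} with $M=N$, no vertical direction, and $d_B=d$. Write $*\theta=d\beta$. The GT compatibility \eqref{eq:GTcompat-intro} gives $*d\theta=d\kappa+\kappa\theta$, and $d(*\theta)=0$. We differentiate: $0=d*d\theta=d*d\kappa+d\kappa\wedge*\theta+\kappa\,d(*\theta)$, which yields $\Delta_h\kappa+\langle\theta,d\kappa\rangle_h=0$. By the strong maximum principle on closed $N$, $\kappa$ is constant.

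\emph{Flat case} $\kappa=0$. Then $d\theta=0$. Stokes applied to $d(\theta\wedge\beta)=-\theta\wedge d\beta=-\theta\wedge*\theta$ gives $\int_N|\theta|^2\vol_h=0$, so $\theta=0$ and $d\eta^i=0$. The coframe is then a closed, hence parallel, orthonormal coframe for the flat metric $h$. The dual frame consists of commuting parallel fields, so $N$ is a compact quotient of $\RR^3$ by translations only (trivial holonomy), that is, a flat three-torus. Alternatively, the $\RR^3$-action generated by the commuting complete frame is locally free and transitive with a lattice stabilizer.

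\emph{Nonflat case} $\kappa\neq0$. Now $d\theta=\kappa*\theta$, so $d^*\theta=\pm*d*\theta=\pm\kappa^{-1}*dd\theta=0$, and $\beta_0=\theta/\kappa$ satisfies $d\beta_0=*\theta$. To show $\theta^\sharp$ is Killing, we compute $\nabla^h\theta$ from the Levi-Civita connection forms of the GT coframe, obtained from $d\eta^i=\theta\wedge\eta^i-\kappa*\eta^i$ as in the proof of \cref{prop:GTcompat}. The symmetric part of $\nabla\theta$ should reduce to a combination of $d^*\theta$ and a trace-free term $\theta\otimes\theta-|\theta|^2h/3$-type. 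Coclosedness alone does not kill the latter, so we expect to need an integral Bochner argument. Specifically, we use \eqref{eq:EWricci} to express $\Ric_h$ via the standard Weyl-to-Levi-Civita formula,
\[
\Ric_h=\Ric^D_{\mathrm{sym}}-(\nabla\theta)_{\mathrm{sym}}-\theta\otimes\theta+(\ldots)h ,
\]
and then integrate $|\mathcal L_{\theta^\sharp}h|^2$ using $d^*\theta=0$ and the Bochner identity for $\Delta\theta$. The computation $\Delta_H\theta=d^*d\theta=\kappa^2\theta$ together with Bochner,
\[
\int|\nabla\theta|^2+\Ric(\theta,\theta)=\kappa^2\int|\theta|^2 ,
\]
should combine with the Ricci expression to force the symmetric part to vanish. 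This is the main obstacle: getting signs and constants right in the Weyl/Levi-Civita Ricci comparison. Once $\theta^\sharp$ is Killing, substituting $\nabla\theta=\tfrac12 d\theta=\tfrac\kappa2*\theta$ into that comparison yields \eqref{eq:reg-positive-Ricci}. Since $\Ric_h(v,v)\ge\kappa^2|v|^2/2$, we get the inequality directly from $\theta\otimes\theta\le|\theta|^2h$.

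Finally, Myers' theorem makes the universal cover compact with finite $\pi_1$. Hamilton's theorem on three-manifolds with positive Ricci curvature, or the elliptization consequence of finite $\pi_1$, identifies the cover with $S^3$. A simpler route may be available here: since the lifted coframe is a left-invariant-type parallelism with constant $\kappa$ and Killing $\theta$, it may be recognizable as an $\mathrm{SU}(2)$ structure.
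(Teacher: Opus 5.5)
Your outline follows the paper's proof step for step. You use the maximum principle for $\Delta_h\kappa+\langle\theta,d\kappa\rangle_h=0$, and Stokes on $\theta\wedge\beta$ to kill $\theta$ when $\kappa=0$. The commuting complete dual frame then gives $\RR^3/\Lambda$. For $\kappa\ne0$ you use the Weyl--Levi-Civita Ricci comparison with \eqref{eq:EWricci}, the Weitzenb\"ock formula for $\Delta_H\theta=\kappa^2\theta$, and Hamilton's theorem. In your first display you mean $0=d(d\theta)=d*(d\kappa+\kappa\theta)$, not $0=d*d\theta$.

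The one step you leave unfinished is the Killing argument. It does close along your lines, but only because of two facts you do not state. Put $V=\theta^\sharp$ and $S=\tfrac12(\nabla\theta+\nabla\theta^T)$. In the paper's conventions the comparison reads $\Ric^D=\Ric_h+S+\theta\otimes\theta-|\theta|^2h+(\diver_hV)h-\tfrac32d\theta$, so your unknown coefficient $(\ldots)$ is exactly $|\theta|^2-\diver_hV$.

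First, with $\diver_hV=0$ this gives $\Ric_h=\tfrac{\kappa^2}{2}h-S-\theta\otimes\theta+|\theta|^2h$. The quartic terms cancel, so $\Ric_h(V,V)=\tfrac{\kappa^2}{2}|\theta|^2-S(V,V)$. Moreover $\int S(V,V)=\tfrac12\int V(|\theta|^2)=0$, again because $\diver_hV=0$.

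Second, $d\theta=\kappa*\theta$ fixes the skew part of $\nabla\theta$ pointwise. Hence $|\nabla\theta|^2=|S|^2+\tfrac12|d\theta|^2=|S|^2+\tfrac{\kappa^2}{2}|\theta|^2$.

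Substituting both into $\kappa^2\int|\theta|^2=\int|\nabla\theta|^2+\int\Ric_h(V,V)$ leaves $\int|S|^2=0$, i.e.\ $\Lie_Vh=0$. The Ricci formula and the inequality then follow as you say.

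Drop your aside that $S$ might reduce pointwise to $\theta\otimes\theta$-type terms. The local GT equations with constant $\kappa$ do not force any such form; the Killing property is a genuinely global consequence, which is why the integral argument is needed.
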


\begin{proof}
The maximum-principle and Stokes arguments in
\cref{prop:killing-kappa-constant} apply on the base. If $\kappa=0$,
the complete commuting dual fields identify it with $\RR^3/\Lambda$
for a lattice $\Lambda$.

Suppose $\kappa\ne0$. With the nonnegative Hodge Laplacian,
\begin{equation}\label{eq:killing-theta-eigenform}
 d\theta=\kappa*_h\theta,\qquad
 \delta\theta=0,\qquad \Delta_H\theta=\kappa^2\theta.
\end{equation}
Put $V=\theta^\sharp$ and $S=\tfrac12(\nabla\theta+\nabla\theta^T)$.
For the Weyl connection
$D_XY=\nabla_X^hY-\theta(X)Y-\theta(Y)X+h(X,Y)V$,
contraction of the difference tensor gives, in dimension three,
\[
 \Ric^D=\Ric_h+S+\theta\otimes\theta-|\theta|^2h
             +(\diver_hV)h-\tfrac32d\theta.
\]
Using \eqref{eq:EWricci} and
$\delta\theta=-\diver_hV=0$ from \eqref{eq:killing-theta-eigenform}
gives
\begin{equation}\label{eq:killing-ricci-comparison}
 \Ric_h=\tfrac{\kappa^2}{2}h-S
                       -\theta\otimes\theta+|\theta|^2h.
\end{equation}
All integrals below use $\vol_h$. Since $\diver_hV=0$, we have
\[
 \int S(V,V)
 =\tfrac12\int V(|\theta|^2)
 =0.
\]
Using \eqref{eq:killing-theta-eigenform} and
\eqref{eq:killing-ricci-comparison}, the Weitzenb\"ock identity
therefore gives
\[
\begin{aligned}
 \kappa^2\int|\theta|^2
 &=\int|\nabla\theta|^2+\int\Ric_h(V,V)=\int|\nabla\theta|^2
   +\tfrac{\kappa^2}{2}\int|\theta|^2.
\end{aligned}
\]
On the other hand, the pointwise decomposition of $\nabla\theta$
into its symmetric and skew-symmetric parts yields
\[
 |\nabla\theta|^2
 =|S|^2+\tfrac12|d\theta|^2
 =|S|^2+\tfrac{\kappa^2}{2}|\theta|^2.
\]
Hence $\int|S|^2=0$, so $\Lie_Vh=2S=0$ and
\eqref{eq:reg-positive-Ricci} follows from
\eqref{eq:killing-ricci-comparison}. Hamilton's theorem
\cite[Main Theorem~1.1]{Hamilton1982} identifies the base with a
spherical space form; its universal cover is compact and
diffeomorphic to $S^3$.
\end{proof}

\begin{proposition}
\label{thm:closed-leaves-regular}
The canonical leaves of an untwisted ordered pure type-one triple on
a closed connected four-manifold are all closed if and only if $L$
is the vertical bundle of a principal circle bundle. In particular,
closedness of all leaves implies orientability of $L$.
\end{proposition}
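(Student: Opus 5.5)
The reverse implication is immediate: the fibres of a principal circle bundle are closed, and the fundamental vector field of the action orients $L$. For the forward implication I would first invoke \cref{cor:compact-riemannian}, so that the canonical foliation $\mathcal F$ is Riemannian and all of its leaves are circles. A Riemannian foliation by compact leaves on a closed manifold has finite holonomy groups; by the Reeb--Epstein local stability theorem it is then a Seifert-type fibration. Its leaf space $N$ is an orbifold, and the exceptional leaves are exactly those with nontrivial holonomy. The first step is therefore to show that every holonomy group is trivial.

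To do this, fix a leaf $\ell$ and a holonomy germ $g$ at $p\in\ell$. Because the foliation is Riemannian, $g$ is the germ of an isometry of a bundle-like transverse metric fixing $p$, and it is determined by its derivative $dg_p\in O(3)$. By \cref{prop:transverse-Weyl}, $dg_p=c\Id$ with $c>0$. The only such element of $O(3)$ is the identity, so $g$ is the identity germ. This applies uniformly, including to leaves along which $L$ might a priori reverse orientation; the proof of \cref{prop:transverse-Weyl} only transports a generator around the one-dimensional leaf, whose tangent line is always orientable. Hence there are no exceptional leaves, $N$ is a smooth closed three-manifold, and $p:M\to N$ is a smooth fibre bundle with fibre $S^1$ and $\ker dp=L$.

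Next I would upgrade this bundle to a principal one. If $L$ is oriented, choose a positive generator. Along each fibre, the Poisson tensors $\pi_i=U\wedge Y_i$ and the argument of \cref{prop:reg-generator} produce the rescaled field $fU$ with $f=|\nu(p_*Y_1,p_*Y_2,p_*Y_3)|^{1/3}$, which preserves the tensors. Normalising by its fibrewise period $T$, a smooth positive function since all orbits are closed with trivial holonomy, gives a nowhere-zero vector field whose flow is a free $S^1$-action with orbits the fibres. That is a principal circle bundle. The normalisation in \cref{prop:reg-generator} only used the oriented fibration, not principality, so it can be quoted almost verbatim.

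The main obstacle is orientability of $L$, which is where the pure type-one structure must enter, since a general circle fibration need not have orientable fibres. My first attempt would use the descended transverse data. Over $N$, local Poisson-preserving generators give coframes $\eta$ on $N$, related on overlaps by $\eta'=a^{-1}\eta$ as in \eqref{eq:riem-transverse-transition}. The sign of $a$ is simultaneously the sign of the change of generator, i.e.\ the transition cocycle of $L$ pulled back from $N$, and the sign of the change of $\eta^{123}$, i.e.\ the orientation cocycle of $N$. Hence $w_1(L)=p^*w_1(N)$.

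I would then try to kill this class. One route is to apply the construction above on the orientation double cover $\widetilde M$ of $L$, where it yields a principal bundle over the orientation cover of $N$. The deck involution $\sigma$ reverses both the fibre orientation and the base orientation, and I would aim to show that such a $\sigma$ cannot act compatibly with the ordered frame. The relevant fact is that $\sigma$ must act on the descended coframe by a negative scalar, so $-\Id$ rather than an element of $\operatorname{SO}(3)$. I would combine this with the constraint from \cref{prop:transverse-Weyl}, that transported generators return positively, applied to loops in $M$ that project to orientation-reversing loops in $N$. I expect this step to require the most care. If it fails, the fallback is to show directly that the $\ZZ/2$-valued cocycle $\operatorname{sign}(a)$ is a coboundary, using the global ordered Poisson trivialisation $L\otimes(TM/L)\cong\underline{\RR}^3$ from \cref{thm:canonical-line}.
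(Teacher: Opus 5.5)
\textbf{Verdict: the proposal has a genuine gap in the orientability step.} The holonomy and oriented-case steps are sound, but neither proposed mechanism can prove that $L$ is orientable.

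Your first two steps are fine. Killing the holonomy via \cref{cor:compact-riemannian} is a valid substitute for the paper's argument: an isometry germ with derivative $c\Id$, $c>0$, is the identity. The paper instead linearizes the holonomy with the invariant Weyl connection and uses closedness of a neighbouring leaf to exclude $c\neq1$, so it does not need Riemannianity. The oriented case is the period normalization, as in the paper.

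The gap is orientability of $L$, which is the real content of the statement. Here is why each of your mechanisms fails.
\begin{itemize}
\item \emph{A negative scalar action is not an obstruction.} $\psi^*\eta=-\eta$ is exactly how a nonorientable canonical line shows up. The untwisted axis models of \cref{prop:axis-group-models} with odd $m$ have $g^*\eta^i=(-1)^m\eta^i$ and $g_*X_0=(-1)^mX_0$, and their $L$ is genuinely nonorientable.
\item \emph{The positivity in \cref{prop:transverse-Weyl} does not reach the relevant loops.} It concerns only leafwise loops, whose tangent line is trivial. It says nothing about loops projecting to orientation-reversing loops of $N$, which is precisely where $L$ would flip.
\item \emph{The fallback is circular.} $L\otimes(TM/L)\cong\underline{\RR}^3$ gives only $w_1(L)=w_1(TM/L)=p^*w_1(N)$, which you already have.
\end{itemize}
Your plan also never uses the untwisted hypothesis or compactness of the base in this step. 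Some global input beyond sign bookkeeping is unavoidable, because that bookkeeping is equally satisfied by the nonorientable axis examples.

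The missing idea is global rigidity of the compact GT base.
\begin{enumerate}
\item On the orientation cover, \cref{prop:reg-generator} gives a unique positive Poisson-preserving generator $X$ of period $2\pi$.
\item The deck involution $\tau$ preserves the $\pi_i$ and reverses $L$, so uniqueness forces $\tau_*X=-X$.
\item The induced base involution $\psi$ therefore satisfies $\psi^*\eta=-\eta$, $\psi^*\theta=\theta$ and $\psi^*\kappa=-\kappa$.
\item This is where untwistedness enters. By \cref{thm:reg-reduction}, the base data satisfy $*_h\theta=d\beta$. Then \cref{lem:reg-compact-GT} makes $\kappa$ constant, hence $\kappa=0$.
\item It follows that $\theta=0$, the base is a flat torus $\RR^3/\Lambda$ with $\eta=dx$, and $\psi(x)=-x+c$.
\item This map has a fixed point, since $2x=c$ is solvable modulo $\Lambda$.
\item Over that fixed point, $\tau$ acts on the fibre circle by an orientation-reversing diffeomorphism, which has a fixed point. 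This contradicts freeness of $\tau$.
\end{enumerate}
Equivalently, since each leaf lifts to two disjoint leaves, $\psi$ is the deck involution of the base double cover and should already be fixed-point free.
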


\begin{proof}
A closed leaf is compact in its leaf topology
\cite[\S2.3--2.4]{Epstein1976}, hence is a circle. Its holonomy germ
$\varphi$ at $p$ preserves $D$. By \cref{prop:transverse-Weyl},
\[
 \varphi(\exp_p^D v)=\exp_p^D(cv),\qquad c>0.
\]
If $c\ne1$, replace $\varphi$ by its inverse so that $c<1$.
The compact leaf $L_p$ is embedded; choose a small transverse disk
$S$ with $S\cap L_p=\{p\}$. For $v\ne0$ small, the points
$q_n=\exp_p^D(c^nv)$ lie on the same leaf $L_{q_0}\ne L_p$,
whereas $q_n\to p$. This contradicts the closedness of $L_{q_0}$.
Thus $c=1$ and $\varphi=\Id$.
Reeb stability \cite[Theorems~4.1--4.3]{Epstein1976} gives a smooth
circle bundle over a closed Hausdorff base. If $L$ is oriented,
normalization by the smooth fibre-period function gives a free
circle action.

Otherwise pass to the connected orientation cover, with deck
involution $\tau$. For its lifted principal circle bundle,
\cref{prop:reg-generator} gives a unique positive Poisson-preserving
$X$ of period $2\pi$. Uniqueness and the GT equations give
\[
 \tau_*X=-X,\qquad \psi^*\eta=-\eta,
 \qquad \psi^*\theta=\theta,\qquad \psi^*\kappa=-\kappa,
\]
where $\psi$ is the induced base map. By
\cref{thm:reg-reduction,lem:reg-compact-GT}, $\kappa$ is constant;
hence $\kappa=0$, $\theta=0$ and the base is $\RR^3/\Lambda$.
In its affine coordinates,
\[
 \psi(x)=-x+c,\qquad \psi(x)=x\iff2x=c\pmod\Lambda.
\]
Multiplication by two on the torus is onto. Over a fixed point of
$\psi$, the involution $\tau$ reverses the fibre orientation and
therefore has a fixed point, contradicting freeness. Thus $L$ is
orientable. The converse follows from the circle-bundle definition.
\end{proof}

\paragraph{Nonclosed Killing leaves.}

\begin{proposition}\label{thm:killing-topology}
For compatible Killing data admitting a lift, the closure $T$ of
the canonical flow is a free torus action. Either all leaves are
closed or all are nonclosed. In the latter case:
\begin{enumerate}
\item if $\kappa=0$, the manifold is a four-torus;
\item if $\kappa\ne0$, $T$ is a two-torus, its orbit space is $S^2$,
and its Chern vector is nonzero. The manifold is consequently
$L(d,1)\times S^1$ for some $d\geq1$, up to orientation convention.
\end{enumerate}
\end{proposition}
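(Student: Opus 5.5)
The setting is compatible Killing data $(\lambda,\eta)$ with normalized generator $X$ and adapted metric $g=\lambda^2+h$. By \cref{thm:killing-reconstruction} the coframe satisfies the GT equations with $*_h\theta=d_B\beta$, and by \cref{prop:killing-kappa-constant} the coefficient $\kappa$ is constant. The plan is to start from $T$, the closure of the flow of $X$ in $\operatorname{Isom}(M,g)$, which is a compact connected abelian group, hence a torus. Every element of $T$ preserves $\lambda$, $\eta$ and the Poisson tensors, because these are limits of flow maps.

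\textbf{Freeness.} Suppose $k\in T$ fixes a point $m$. Its derivative at $m$ preserves $X(m)$ and the frame $(X,E_1,E_2,E_3)$ at $m$. An isometry of a connected manifold fixing a point with identity derivative is the identity, so $k=\Id$. Hence all stabilizers are trivial and the action is free. It follows that all $T$-orbits are diffeomorphic to $T$ and have the same dimension $r$.

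\textbf{Closed versus nonclosed.} The leaf through $m$ is the orbit of the one-parameter subgroup $\exp(tX)$ inside $T\cdot m\cong T$. This leaf is closed exactly when that subgroup is closed in $T$. Since $T$ is its closure, this happens iff $r=1$, a condition independent of $m$. In the nonclosed case $r\ge2$, and $M\to M/T$ is a principal $T$-bundle over a closed $(4-r)$-manifold. Its connection form is given by the $\lambda$-component together with the basic data.

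\textbf{Case $\kappa=0$.} Here \cref{prop:killing-kappa-constant} gives $d\eta=0$ and $\theta=0$. The dual fields $E_i$ (basic lifts with $\lambda(E_i)=0$) together with $X$ commute up to the curvature term $d\lambda(E_i,E_j)X$. I would show that $F=d\lambda$ must vanish when leaves are nonclosed. The approach is to average and note that $F$ is basic and $T$-invariant. The fibre directions of $T$ span $X$ plus some basic Killing fields $Y_\ell$ with $d\eta^i$-compatible flows. Since $F$ is basic, $\iota_{Y}F$ and closedness, combined with the vanishing of the Chern classes forced by $X$ being dense in $T$ (the class of $F$ pairs with $T$-directions), should give $[F]=0$. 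Then one adjusts $\lambda$ by $dv$ and sets $F=0$. Once $F=0$, the frame $(X,E_1,E_2,E_3)$ consists of complete commuting fields, so $M=\RR^4/\Lambda$. I expect this step, showing that $F$ can be removed or is harmless, to be the main obstacle. Alternatively, one can argue that closed coframe plus a Killing $X$ with dense orbits forces the base $T^3$ holonomy to give a flat nilmanifold that is abelian, since Nil quotients have closed Killing orbits of the centre.

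\textbf{Case $\kappa\neq0$.} Locally the leaf space carries the GT metric. By \cref{lem:reg-compact-GT}, applied on local transversals, the metric satisfies $\Ric_h\ge(\kappa^2/2)h$, and $\theta^\sharp$ is a transverse Killing field. The transverse Killing algebra generated by the closure directions is abelian. On the round or Berger $S^3$ geometry an abelian isometric torus has rank at most two, and $T$ contains $X$ plus one transverse direction, so $r=2$. The orbit space $M/T$ is a closed surface with positive transverse curvature in the quotient sense, hence $S^2$ by Gauss--Bonnet. For the Chern vector $(c_1,c_2)\in\ZZ^2$ of the $T^2$-bundle over $S^2$: if it were zero, $M=S^2\times T^2$, which has $b_2=2\ne2b_1-2=2$. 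That is consistent, so instead I would use $\kappa\ne0$. Integrating $d\lambda$ along $S^2$ pairs with the Euler class of the circle subbundle along the direction transverse to $\theta$, and the GT equation $d\eta^i=\theta\wedge\eta^i-\kappa*\eta^i$ gives a nonzero flux proportional to $\kappa\vol$. Finally, a $T^2$-bundle over $S^2$ with nonzero Chern vector $(c_1,c_2)$ is, after an $\operatorname{SL}(2,\ZZ)$ change of basis, $(d,0)$ with $d=\gcd$. This yields $L(d,1)\times S^1$.
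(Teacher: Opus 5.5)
Your freeness argument and the dichotomy (leaves closed iff $r=\dim T=1$) are fine and agree with the paper. Both substantive cases, however, have genuine gaps.

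\textbf{Case $\kappa=0$.} You name the right target, a closed characteristic form (equivalently, vanishing of the torus Chern class), but you do not prove it. Note also that adjusting $\lambda$ by $dv$ cannot change $F=d\lambda$; what you would need is a \emph{basic} primitive of $F$. The missing idea is to evaluate $d\eta=0$ on the torus directions. Since $T$ preserves $\eta$, Cartan's formula gives $d(\eta(Y_y))=\Lie_{Y_y}\eta-\iota_{Y_y}d\eta=0$. So $\ell(y):=\eta(Y_y)$ is a constant linear map $\mathfrak t\to\RR^3$, and $\ker\ell=\RR x$: if $Y_y=gX$, then $0=\Lie_{Y_y}\lambda=dg$ because $\iota_Xd\lambda=0$. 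For a principal connection $A$ one has $\eta=\ell(A)+p^*\zeta$, and $d\eta=0$ gives $\ell(F_A)=-d\zeta$. Hence every period of the integral Chern class lies in $\ker\ell\cap\Lambda_T=\RR x\cap\Lambda_T$. This intersection is zero exactly because the leaves are nonclosed. Together with $Q\cong T^{4-r}$, which follows from the descended closed coframe, this gives $c=0$ and $M\cong T^4$. This is the precise way density of $X$ in $T$ enters; your sketch only gestures at it.

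\textbf{Case $\kappa\ne0$.} The proposed route does not work. \Cref{lem:reg-compact-GT} and \cref{thm:compact-Berger-coframe} are global results on a closed three-manifold, proved by integrating over it. When the leaves are nonclosed there is no such base, and locally a GT coframe with $\kappa=1$ need not be Berger.

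Even granting Berger transverse geometry, abelian isometry groups of $S^3$ have rank two, which only yields $r\le3$. What you need is that \emph{coframe-preserving} commuting fields have transverse rank one. The paper obtains this algebraically. For commuting $Y,Z\in\mathfrak t$, $d\eta(Y,Z)=0$ and the GT equation give $\theta(Y)\eta(Z)-\theta(Z)\eta(Y)=\kappa\,\eta(Y)\times\eta(Z)$. Dotting with $\eta(Y)\times\eta(Z)$ gives $\kappa|\eta(Y)\times\eta(Z)|^2=0$, so $r-1\le1$.

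Similarly, $Q\cong S^2$ does not follow from an unproved ``positive curvature in the quotient sense.'' It comes from the Gauss map $v=\eta(Y)/|\eta(Y)|$, which descends to $Q$ with $dv=(-\theta_v\Id+\kappa\,v\times)\circ b$. This has determinant $\theta_v^2+\kappa^2>0$, so $v$ is a covering $Q\to S^2$.

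Finally, $d\lambda$ is the wrong form for detecting the Chern vector. One uses $\alpha=s^{-1}\nu$, with $\alpha(X)=0$, $\alpha(Y)=1$ and $d\alpha=(\kappa/s)*_h\nu$. This curvature descends to an area form of fixed sign on $Q$. Its nonzero integral is the value of the functional $x\mapsto0$, $y\mapsto1$ on the Chern vector, which is therefore nonzero.
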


\begin{proof}
The torus preserves $(\lambda,\eta)$. A coframe automorphism fixing
a point commutes with the dual-frame flows, hence fixes a
neighbourhood and therefore the connected manifold. Thus the
effective action is free. Set $r=\dim T$, $Q=M/T$, and let
$x\in\mathfrak t$ generate $X$. Every leaf is dense in its $T$-orbit;
nonclosedness is equivalent to
$r\geq2$ and $\RR x\cap\Lambda_T=0$ for the integral lattice.

Assume henceforth that the canonical leaves are nonclosed.
If $\kappa=0$, \cref{prop:killing-kappa-constant} gives $d\eta=0$.
Cartan's formula yields
\[
 d\bigl(\eta(Y_y)\bigr)
 =\Lie_{Y_y}\eta-\iota_{Y_y}d\eta=0,
 \qquad \ell:\mathfrak t\longrightarrow\RR^3,
 \quad \ell(y)=\eta(Y_y).
\]
For $y\in\ker\ell$, write $Y_y=gX$. Then
\[
 0=\Lie_{Y_y}\lambda
   =d\bigl(\lambda(Y_y)\bigr)+\iota_{Y_y}d\lambda=dg.
\]
Effectiveness gives $y=gx$, so $\ker\ell=\RR x$.
The projection of $\eta$ to $\RR^3/\im\ell$ descends to a closed
coframe on $Q$; hence $Q\cong T^{4-r}$. For a principal connection
$A$ write $\eta=\ell(A)+p^*\zeta$. For every
$[\Sigma]\in H_2(Q;\ZZ)$,
\begin{equation}\label{eq:killing-flat-chern}
 \ell(F_A)+d\zeta=0,
 \qquad \langle c,[\Sigma]\rangle\in
 \ker\ell\cap\Lambda_T=0,
 \qquad c\in H^2(Q;\Lambda_T).
\end{equation}
Since $H^2(Q;\ZZ)$ is torsion-free,
\eqref{eq:killing-flat-chern} gives $c=0$ and $M\cong T^4$.

If $\kappa\ne0$, the GT equations on commuting torus fields give
\[
 0=\theta(Y)\eta(Z)-\theta(Z)\eta(Y)
                        -\kappa\eta(Y)\times\eta(Z).
\]
Since the action is free and $\ker\eta=\RR X$, the restriction of
$\eta$ to each torus orbit has one-dimensional kernel.
Taking the scalar product with $\eta(Y)\times\eta(Z)$ yields
\[
 \kappa|\eta(Y)\times\eta(Z)|^2=0,
 \qquad r-1=\rank(\eta|_{\mathfrak t})\leq1,
 \qquad r=2.
\]
Choose $Y$ independent of $X$ and put
$A=\eta(Y)$, $s=|A|>0$, $v=A/s$, $\nu=v\cdot\eta$,
$b=\eta-v\nu$ and $\theta_v=\langle\theta,v\rangle$.
Cartan's formula gives
\begin{equation}\label{eq:GT-Cartan-evaluation}
 dA=\theta A-\theta(Y)\eta+\kappa A\times\eta.
\end{equation}
Taking the components of \eqref{eq:GT-Cartan-evaluation} along
$v$ and $v^\perp$, and differentiating $\nu=v\cdot\eta$, yields
\begin{equation}\label{eq:killing-Gauss}
\left\{\begin{aligned}
 &d\log s=\theta-\theta_v\nu,\\
 &dv=-\theta_v b+\kappa v\times b,\\
& d\nu=\theta\wedge\nu+\kappa*\nu.
\end{aligned}\right.
\end{equation}
The identities \eqref{eq:GT-Cartan-evaluation}--\eqref{eq:killing-Gauss}
hold for every local field $Y$ with $\Lie_Y\eta=0$ and
$\eta(Y)\ne0$.
The map $v$ descends to $Q$, and $b:TQ\to v^*TS^2$ is an
isomorphism. By \eqref{eq:killing-Gauss},
\[
 dv=(-\theta_v\Id+\kappa v\times)\circ b,
 \qquad \det(-\theta_v\Id+\kappa v\times)|_{v^\perp}=\theta_v^2+\kappa^2>0.
\]
Thus $v:Q\to S^2$ is a local diffeomorphism; compactness makes it
a covering, hence a diffeomorphism. Set $\alpha:=s^{-1}\nu$. By construction, $\alpha(X)=0$ and
$\alpha(Y)=1$. Equation \eqref{eq:killing-Gauss} then gives
\begin{equation}\label{eq:killing-nonzero-chern}
 d\alpha=\frac{\kappa}{s}*_h\nu.
\end{equation}
The curvature in \eqref{eq:killing-nonzero-chern} descends to an
area form of fixed sign on $Q$.
Its nonzero integral evaluates the torus Chern vector by the
functional $\ell(x)=0$, $\ell(y)=1$; thus $c\ne0$.
An integral change of basis sends $c$ to $(d,0)$, giving
$M\cong L(d,1)\times S^1$.
\end{proof}

\begin{corollary}
\label{cor:killing-flat-connection}
In the nonclosed case there is a closed invariant one-form
$\lambda_0$ with $\lambda_0(X)=1$. If $\kappa=0$, the coframe
$(\lambda_0,\eta)$ identifies the Poisson tensors with constant
tensors on a four-torus.
\end{corollary}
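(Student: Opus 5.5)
The plan is to build $\lambda_0$ by averaging over the free torus action $T$ from \cref{thm:killing-topology}, and then to correct it by a basic exact term so that it becomes closed. Write $p:M\to Q=M/T$ and $r=\dim T\ge2$. Let $x\in\mathfrak t$ generate $X$ and let $c\in H^2(Q;\Lambda_T)$ be the Chern vector.

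The first step is to choose a linear functional $\phi:\mathfrak t\to\RR$ with
\[
 \phi(x)=1,\qquad \phi(c)=0.
\]
If $\kappa=0$, \cref{thm:killing-topology} gives $c=0$, so any $\phi$ with $\phi(x)=1$ works. If $\kappa\ne0$, then $r=2$ and $Q\cong S^2$, so $c$ is a single nonzero lattice vector. Since $\RR x\cap\Lambda_T=0$, $c$ and $x$ are linearly independent in the two-dimensional $\mathfrak t$, and such a $\phi$ exists.

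Next, fix a $T$-invariant principal connection $A$ with curvature $F_A=p^*\Omega$. The $\RR$-valued form $\phi(\Omega)$ represents $\phi(c)$ up to a $2\pi$ normalization, so it is exact on $Q$: $\phi(\Omega)=d\zeta$. Set
\[
 \lambda_0:=\phi(A)-p^*\zeta .
\]
This form is $T$-invariant, because $A$ is invariant and $\zeta$ is pulled back from $Q$. It satisfies $\lambda_0(X)=\phi(x)=1$, since $A(Y_y)=y$ and $p^*\zeta$ vanishes on vertical vectors. Finally,
\[
 d\lambda_0=\phi(F_A)-p^*d\zeta=0 .
\]
The only delicate point here is checking the lattice independence of $c$ and $x$ in the $\kappa\ne0$ case, and that is exactly what nonclosedness supplies.

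For the flat case, \cref{prop:killing-kappa-constant} gives $d\eta=0$, so $(\lambda_0,\eta^1,\eta^2,\eta^3)$ is a global closed coframe. Its dual frame $(X_0,E_1,E_2,E_3)$ consists of commuting vector fields, by Cartan's formula. Since $\eta(X)=0$ and $\lambda_0(X)=1$, we have $X_0=X$. Also $\pi_i=X\wedge E_i$, because $\pi_i$ depends only on $E_i$ modulo $L$, and $E_i$ is a lift of the class dual to $\eta^i$.

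On the closed manifold these commuting fields are complete, so they generate a transitive, locally free $\RR^4$-action. Its stabilizer is a lattice, which identifies $M$ with $\RR^4/\Gamma$, with the coframe becoming $(dt,dx^1,dx^2,dx^3)$. In these coordinates $\pi_i=\partial_t\wedge\partial_{x^i}$ are constant tensors, consistent with $M\cong T^4$ from \cref{thm:killing-topology}.
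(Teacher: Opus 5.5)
Your proposal is correct and follows the paper's proof. Both choose $\ell(x)=1$ and $\ell(c)=0$, using $\RR x\cap\Lambda_T=0$, subtract a base primitive of $\ell(F_A)$, and in the flat case integrate the complete commuting dual fields of the closed coframe $(\lambda_0,\eta)$. The only cosmetic difference is that when $c=0$ the paper takes a flat connection directly, whereas you correct an arbitrary invariant connection by a primitive.
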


\begin{proof}
On the trivial torus bundle choose a flat principal connection $A$
and $\ell\in\mathfrak t^*$ with $\ell(x)=1$; then $\lambda_0=\ell(A)$.
For the bundle over $S^2$, its nonzero integral Chern vector $c$
is not proportional to $x$, since $\RR x\cap\Lambda_T=0$.
Choose $\ell(x)=1$ and $\ell(c)=0$. Then
\[
 [\ell(F_A)]=0,\qquad \ell(F_A)=d\xi,
 \qquad \lambda_0=\ell(A)-p^*\xi,
 \qquad d\lambda_0=0.
\]
In the flat case $d\eta=0$ as well. The complete commuting dual
fields of $(\lambda_0,\eta)$ give the asserted linear coordinates.
\end{proof}

\begin{lemma}
\label{lem:closed-characteristic-unique}
For compatible Killing data with a closed characteristic form,
all lifts with the same ordered Poisson tensors are equivalent by
a common closed $B$-field and a global basic orbit shift.
\end{lemma}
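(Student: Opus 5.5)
The plan is to compare two lifts through the parametrization of \cref{thm:killing-reconstruction}. Fix the closed characteristic form $\lambda$, so $d\lambda=F=0$, with $\lambda(X)=1$ and $\eta$ basic. By \cref{thm:killing-reconstruction}, any two lifts with the same ordered Poisson tensors are, modulo common closed $B$-fields, of the form \eqref{eq:killing-spinors}. They have data $(\beta,\gamma)$ and $(\beta',\gamma')$, both satisfying $d_B\beta=*_h\theta=d_B\beta'$. The task is therefore to remove the difference $(\beta'-\beta,\gamma'-\gamma)$ by a closed $B$-field together with an orbit shift.

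First, the $\gamma$-term. When $F=0$, \eqref{eq:killing-basic-differential} gives
\[
 d(\lambda\wedge\beta+\gamma)=-\lambda\wedge d_B\beta+d_B\gamma .
\]
Integrability only constrains the vertical part, and horizontal three-forms are killed by $\alpha_i$ in \eqref{eq:spinor-criterion}. So $e^{\gamma}$ and $e^{\gamma'}$ differ by the basic two-form $\gamma'-\gamma$, which need not be closed. I would absorb it as follows. A basic two-form $\gamma$ acts on $\rho_i$ as $e^{\gamma}\rho_i$. Since $\gamma$ is horizontal of degree two and $\alpha_i\wedge\Lambda^2_{\rm hor}$ is spanned by $\vol_h$-type terms, the horizontal part of $e^{\gamma}$ changes the spinor only by a term of the form $\gamma\wedge\alpha_i$. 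I would check, as in \eqref{eq:star-identity}, that every horizontal two-form $*u$ satisfies $*u\wedge\alpha_i=-\ii u\wedge\eta^i\wedge\alpha_i$. Hence $e^{\gamma}\rho_i=e^{\ii\lambda'\wedge\eta^i}$-type expressions, and the effect of $\gamma=*_hu$ equals that of a shift generated by $u$. This is exactly the compensator identity $B_u=-*_h(du+u\theta)$ of \cref{thm:classification}. Writing $\gamma'-\gamma=*_h\mu$ for a basic one-form $\mu$, the freedom in $\gamma$ becomes a freedom in a basic one-form.

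Second, the $\beta$-term. The difference $\delta:=\beta'-\beta$ is a basic $d_B$-closed one-form. The change $\lambda\wedge\delta$ is closed exactly when $d\lambda=0$, which holds here, so $e^{\lambda\wedge\delta}$ is a common closed $B$-field and can be removed outright. The residual is $\mu$. Decompose $\mu$ into the part of the form $du+u\theta$ for a basic function $u$, plus a remainder. The orbit shift $m\mapsto\Phi_{u(m)}(m)$ is well defined globally because $u$ is basic and the flow preserves $(\lambda,\eta)$. By the computation of $\Phi_u^*\rho_i$ in \cref{thm:classification}, this shift produces exactly $e^{-*_h(du+u\theta)}$ together with the $d(u\lambda)$ correction, which is closed modulo the $\lambda$-term. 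The remainder of $\mu$ has to be shown to contribute a closed two-form, and then it is absorbed into the $B$-field.

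The main obstacle is this last step: solving $*_h\mu\equiv *_h(du+u\theta)$ up to a closed term. By closedness of the two total spinor differentials, $d_B(*_h\mu)=(\mathscr P_\theta u)\vol_h$-type equations appear. I would use \cref{prop:killing-kappa-constant} to split into cases. If $\kappa=0$, then $\theta=0$, and the equation reduces to a Poisson equation $\Delta_g u=\text{(basic divergence of }\mu)$. This is solvable on the closed manifold because the right side has zero mean by Stokes. The harmonic remainder of $*_h\mu$ is then closed. If $\kappa\neq0$, I would use the ground-state/index argument of \cref{prop:taut-compatible}, applied to $\mathscr P_\theta$ restricted to basic functions. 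That gives $C^\infty=\im\oplus\RR$, and the one-dimensional cokernel has to be matched by a constant time translation or shown to vanish via $d_B(*_h\theta)=0$.
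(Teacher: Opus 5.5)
Your reduction is the paper's: remove $\beta'-\beta$ with the closed two-form $\lambda\wedge(\beta'-\beta)$, then absorb the nonclosed part of $\gamma'-\gamma$ by a basic orbit shift $\Phi_u$. That shift's compensator $d_Bu\wedge\beta-*_hd_Bu$ has $d_B$ equal to $-(\Delta_\theta u)\vol_h$, where $\Delta_\theta=\Delta_g+\langle\theta,d\,\cdot\,\rangle$. Here $\mathscr P_\theta=\Delta_\theta$ because $q=0$ by \eqref{eq:killing-GT-lift}. The paper compares every lift with the fixed representative $\lambda\wedge\beta_0$ rather than comparing two lifts with each other; that difference is immaterial.

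The gap is in the one nontrivial step: solvability of $\Delta_\theta u=f$, where $d_B(\gamma'-\gamma)=f\vol_h$. You leave this undecided for $\kappa\neq0$.

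Your first option cannot work. For constant $c$ one has $\Phi_c^*\lambda=\lambda$ and the compensator vanishes, so a constant time translation is a symmetry of the lift and cannot absorb a constant defect.

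What is needed is your second option, made precise. Using $d\lambda=0$ and $d_B(*_h\theta)=0$,
\[
 \int_M\langle\theta,du\rangle\,\lambda\wedge\vol_h
 =\int_M\lambda\wedge du\wedge *_h\theta
 =-\int_M d\bigl(u\,\lambda\wedge *_h\theta\bigr)=0 .
\]
Hence the formal adjoint of $\Delta_\theta$ for $\vol_g=\lambda\wedge\vol_h$ is $\Delta_g-\langle\theta,d\,\cdot\,\rangle$, whose kernel is $\RR$ by the maximum principle. The Fredholm alternative then identifies $\im\Delta_\theta$ with the $\vol_g$-mean-zero functions. In your splitting $f=\Delta_\theta u+c_0$, integration forces $c_0=0$ once $f$ has mean zero.

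The source has mean zero for every $\kappa$, by the same Stokes argument you used when $\kappa=0$:
\[
 \int_M f\,\lambda\wedge\vol_h=-\int_M d\bigl(\lambda\wedge(\gamma'-\gamma)\bigr)=0 .
\]
So the case split through \cref{prop:killing-kappa-constant} and the ground-state transform of \cref{prop:taut-compatible} are unnecessary. The argument is uniform in $\kappa$, exactly as in the paper.

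Two smaller points.

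First, the solution must be basic for $\Phi_u$ to be a global basic orbit shift. Either average over the torus closure of the flow, as the paper does, or note that $\Delta_\theta$ commutes with $X$. Then $Xu\in\ker\Delta_\theta=\RR$, and since it integrates to zero, $Xu=0$.

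Second, your first paragraph conflates two operations. By \eqref{eq:circle-connection-change}, adding a horizontal two-form $*_h\mu$ to $B$ amounts to replacing $\lambda$ by $\lambda-\mu$. Orbit shifts only produce exact $\mu$ (together with the accompanying term $d_Bu\wedge\beta$). Your later decomposition of $\mu$ handles this correctly.
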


\begin{proof}
Use the closed characteristic form $\lambda$ in
\eqref{eq:killing-spinors}. Set $\beta_0:=0$ if $\kappa=0$, and
$\beta_0:=\theta/\kappa$ otherwise. Since
$d_B\beta=d_B\beta_0=*_h\theta$ and $d\lambda=0$, the two-form
$\lambda\wedge(\beta-\beta_0)$ is closed. Applying its negative
as a common $B$-field transformation, we may assume that
$\beta=\beta_0$.

Write $d_B\gamma=f\vol_h$. Stokes' theorem gives
\[
 \int_M f\lambda\wedge\vol_h
 =-\int_M d(\lambda\wedge\gamma)
 =0.
\]
Define $\Delta_\theta:=\Delta_g+\langle\theta,d\,\cdot\,\rangle$.
Since $d_B(*_h\theta)=0$, its formal adjoint with respect to
$\vol_g=\lambda\wedge\vol_h$ is
\[
 \Delta_\theta^*
 =\Delta_g-\langle\theta,d\,\cdot\,\rangle.
\]
The maximum principle gives
$\ker\Delta_\theta=\ker\Delta_\theta^*=\RR$.
The Fredholm alternative and the zero-mean condition on $f$
therefore yield a smooth solution
\begin{equation}\label{eq:killing-drift-solvability}
 \Delta_\theta v=f.
\end{equation}
Both $\Delta_\theta$ and $f$ are invariant under the torus closure
of the canonical flow. Averaging $v$ over this torus, we may
therefore assume that $v$ is basic.

The corresponding orbit shift $\Phi_v$ preserves the Poisson
tensors and satisfies $\Phi_v^*\lambda=\lambda+d_Bv$.
Define $Q_v:=d_Bv\wedge\beta_0-*_hd_Bv$. By
\eqref{eq:star-identity},
\begin{equation}\label{eq:killing-orbit-correction}
\begin{aligned}
 \Phi_v^*\rho_i
 &=e^{\lambda\wedge\beta_0+\gamma+Q_v}
   (\eta^j-\ii\eta^k)e^{\ii\lambda\wedge\eta^i},\\
 d_BQ_v
 &=-(\Delta_\theta v)\vol_h.
\end{aligned}
\end{equation}
Together with \eqref{eq:killing-drift-solvability}, this gives
\[
 d_B(\gamma+Q_v)
 =\bigl(f-\Delta_\theta v\bigr)\vol_h
 =0.
\]
Thus $\gamma+Q_v$ is a common closed two-form. Applying the
$B$-field transformation $-(\gamma+Q_v)$ reduces the lift to
the representative with $B=\lambda\wedge\beta_0$, proving
the claim.
\end{proof}

\begin{corollary}\label{cor:killing-nonclosed-unique}
For fixed compatible Killing Poisson tensors with nonclosed leaves,
the lift is unique under closed $B$-fields and basic orbit shifts.
If $\kappa=0$, the full triple is equivalent to a constant triple
on a four-torus.
\end{corollary}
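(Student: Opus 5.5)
The corollary follows by combining \cref{lem:closed-characteristic-unique} with \cref{cor:killing-flat-connection}; almost no new analysis is needed. The main point is to produce a closed characteristic form, since \cref{lem:closed-characteristic-unique} asks for one.

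\textbf{Step 1: a closed characteristic form.} Start with compatible Killing data $(\lambda,\eta)$ whose leaves are nonclosed. By \cref{thm:killing-topology}, the torus closure $T$ acts freely, and the bundle is either trivial over $T^3$ (when $\kappa=0$) or has base $S^2$ (when $\kappa\neq0$). In either case \cref{cor:killing-flat-connection} supplies a closed $T$-invariant one-form $\lambda_0$ with $\lambda_0(X)=1$. It automatically satisfies $\iota_Xd\lambda_0=0$.

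\textbf{Step 2: $(\lambda_0,\eta)$ is again admissible.} The difference $\lambda_0-\lambda$ vanishes on $X$ and is $T$-invariant, so it is basic. Hence $(\lambda_0,\eta)$ is a global coframe with the same normalized generator $X$, the same basic coframe $\eta$, and therefore the same ordered Poisson tensors $X\wedge E_i$. Here the $E_i$ are dual to $\eta$ modulo $L$, so they are unchanged up to $L$, and the tensors $X\wedge E_i$ do not change.

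\textbf{Step 3: uniqueness.} \cref{thm:killing-reconstruction} applies equally to $(\lambda_0,\eta)$, so every lift has the form \eqref{eq:killing-spinors} with $\lambda_0$ in place of $\lambda$. \cref{lem:closed-characteristic-unique} then makes any two lifts equivalent by a common closed $B$-field and a basic orbit shift. This is the uniqueness claim.

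\textbf{Step 4: the flat case.} Suppose $\kappa=0$. Then $\beta_0=0$, so the normalized representative from the proof of \cref{lem:closed-characteristic-unique} is the triple with $B=0$:
\[
 \rho_i=(\eta^j-\ii\eta^k)e^{\ii\lambda_0\wedge\eta^i}.
\]
By \cref{prop:killing-kappa-constant} we have $d\eta=0$, and we also have $d\lambda_0=0$. \cref{cor:killing-flat-connection} gives linear coordinates on $T^4=\RR^4/\Lambda$ in which $\lambda_0$ and $\eta$ are constant. In those coordinates the spinors are constant, so the triple is a constant triple on a four-torus.

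\textbf{Expected obstacle.} The only real care is in Step 2: one must check that replacing $\lambda$ by $\lambda_0$ does not change the generator $X$ used in \cref{lem:closed-characteristic-unique}. It does not, because $\lambda_0(X)=1$ and $\ker\eta=L$.
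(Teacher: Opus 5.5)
Your proposal is correct and follows the paper's proof, which simply applies \cref{cor:killing-flat-connection} to obtain a closed characteristic form, then \cref{lem:closed-characteristic-unique}, and notes that the flat representative has $\beta=\gamma=0$; your Step~2 check that $(\lambda_0,\eta)$ is again compatible Killing data with the same generator and the same Poisson tensors makes explicit what the paper leaves implicit. One harmless slip: with nonclosed leaves the torus closure has dimension $r\ge2$, so in the flat case the orbit space is $T^{4-r}$, not $T^3$ (\cref{thm:killing-topology}), but this does not matter because you only use the existence of $\lambda_0$.
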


\begin{proof}
Apply \cref{cor:killing-flat-connection,lem:closed-characteristic-unique}.
The constant representative in the flat case has $\beta=\gamma=0$.
\end{proof}

\subsection{Nonflat structures and their complete quotients}

\begin{proposition}\label{thm:nonflat-riemannian-taut}
On a closed connected four-manifold, an untwisted ordered pure
type-one triple whose GT invariant is not identically zero has
oriented, geometrically taut canonical foliation.  Its invariant
is nowhere zero, and it admits a compatible Killing generator.
\end{proposition}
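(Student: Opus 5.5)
We first use \cref{cor:compact-riemannian}: the canonical foliation is Riemannian. By \cref{prop:global-scale}, on the orientation cover of $L$ (or locally, with a sign ambiguity) there is a Poisson-preserving generator $X$ and a basic GT coframe $\eta$ with GT coefficient $\kappa$; the intrinsic section $K=\kappa X$ is globally defined by \eqref{eq:nonflat-intrinsic-generator}. The plan has three steps: show $\kappa$ is constant for a suitable generator, deduce that $K$ is nowhere zero and that $L$ is oriented, and finally obtain tautness and a compatible Killing generator via \cref{prop:taut-compatible}.

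For constancy, the natural tool is the elliptic argument of \cref{prop:killing-kappa-constant}, but that needs a compatible Killing generator, which is what we want to prove. I would avoid the circularity by working transversally. The basic data $(\eta,\theta,\kappa)$ satisfy \eqref{eq:GTcompat-intro}, so $d_B\theta=*_h(d_B\kappa+\kappa\theta)$. Applying $d_B$ gives $d_B*_hd_B\kappa+d_B(\kappa*_h\theta)=0$, a second-order transversally elliptic equation $\Delta_h\kappa+\langle\theta,d\kappa\rangle+q\kappa=0$ with $q$ defined by $d_B(*_h\theta)=q\vol_h$. The zero-order term $q\kappa$ is the obstacle, since $q$ is a priori only constrained through the untwisted lift. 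So I would first show $q=0$ globally. Repeating the computation in \cref{thm:killing-reconstruction} for a general transverse splitting $\lambda_0$ gives $H=dB=-\lambda_0\wedge*_h\theta+\tau\vol_h+(\text{terms from }d\lambda_0)$, hence $X\tau=-q$ along leaves. Riemannian foliations on closed manifolds have leaf closures forming a singular Riemannian foliation (Molino), and $q$ is basic, hence constant on leaf closures. Averaging $X\tau$ over leaf closures, and using that $\tau$ is bounded, should force $q=0$. With $q=0$, the maximum principle on the compact leaf-closure space (or for basic functions via the basic Laplacian of the bundle-like metric $\lambda_0^2+h$) gives $d\kappa=0$ locally, where $\kappa$ is computed with respect to the Poisson-preserving generator. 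Since $K\not\equiv0$, this constant is nonzero, so $K$ is nowhere zero. Then $K$ itself is a global nowhere-vanishing section of $L$, which orients $L$.

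For tautness, normalize the generator to $X=K$, so that $\kappa\equiv1$. By \eqref{eq:nonflat-intrinsic-generator} this generator is intrinsically Poisson-preserving, and $\eta$ is basic. By \eqref{eq:GTcompat-intro} we get $d_B\theta=*_h\theta$, so $*_h\theta=d_B\theta$ is basic-exact. Under the basic-cohomology interpretation, this is exactly the vanishing of the obstruction for tautness: Álvarez-López's class is represented by the mean curvature form, and here I would take $\lambda_0$ as a characteristic form of the metric $\lambda_0^2+h$ and try to kill $\iota_Xd\lambda_0$ using the basic primitive $\beta_0=\theta$. Concretely, the step I expect to be hardest is producing $\lambda$ with $\lambda(X)=1$ and $\iota_Xd\lambda=0$. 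My first attempt is to follow the proof of \cref{prop:taut-compatible}: pick any $\lambda_0$ with $\lambda_0(X)=1$, write $\iota_Xd\lambda_0=\mu$ (a transverse one-form, which is the mean curvature), and use the untwisted closure $dH=0$ together with $q=0$ to show that $\mu$ is basic-exact, $\mu=d_Bw$. Then $\lambda=\lambda_0-w$ works. If this fails directly, the fallback is to argue that the Álvarez class lies in $H^1_B$, and that the positive Ricci bound \eqref{eq:EWricci} for the transverse Weyl structure with $\kappa\ne0$ forces $H^1_B$-harmonic forms to vanish by a Bochner argument, as in \cref{lem:reg-compact-GT}. Either way, $X=K$ becomes a compatible Killing generator, and \cref{prop:taut-compatible} gives geometric tautness.
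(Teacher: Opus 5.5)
\textbf{There is a genuine gap at your first substantive step, and the rest of the proposal depends on it.}

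The identity $X\tau=-q$ holds only when the characteristic form satisfies $\iota_Xd\lambda_0=0$. That is exactly the compatible Killing condition you are trying to prove. For an arbitrary splitting, \cref{prop:mean-curvature-lift} gives $H=\lambda_0\wedge *_h(u-\theta)+\tau\vol_h$ with $u=\iota_Xd\lambda_0$, and $dH=0$ yields
\[
 (X\tau)\vol_h=d_\perp *_h(u-\theta)-u\wedge *_h(u-\theta).
\]
The $u$-dependent terms are not basic, and there is no reason for them to average to zero over leaf closures.

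In fact your conclusion is false for the generators your argument covers, because $q$ is gauge dependent. Under $X\mapsto fX$ with $Xf=0$ and $f>0$, one gets $q'=f^{-3}\mathscr P_\theta f$. Two examples show the failure:
\begin{itemize}
\item In the round model \eqref{eq:nonflat-universal-spinors} with $a=0$, take $f$ a nonconstant positive function on $\operatorname{SU}(2)$. Then $f\partial_t$ is Poisson preserving, with $\theta'=d\log f$, $q'=f^{-3}\Delta_hf\not\equiv0$, and $\kappa'=1/f$ nonconstant.
\item In the axis models the standard generator has $q=1$, even though the lift is untwisted.
\end{itemize}
Your derivation never uses $\kappa$, so it cannot tell these cases apart. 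Choosing a gauge with $q=0$ amounts to finding a positive basic solution of $\mathscr P_\theta f=0$. In \cref{prop:taut-compatible} such a solution ($\bar a$) is obtained by torus averaging only after tautness is known. So constancy of $\kappa$, nowhere-vanishing of $K$, and orientability of $L$ are not established.

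\textbf{The tautness step also has no working mechanism.}
\begin{itemize}
\item Basic exactness $*_h\theta=d_B\theta$ is the degree-two lifting condition \eqref{eq:killing-GT-lift}. It is not the \'Alvarez class, which lives in degree one.
\item The axis models are untwisted, Riemannian and non-taut. So the lift alone cannot force tautness; $\kappa\ne0$ must enter in an essential way.
\item The identity for $X\tau$ above, with $\tau$ unknown, imposes no constraint that would make $\iota_Xd\lambda_0$ basic exact.
\item The Bochner fallback uses the Weyl Ricci tensor \eqref{eq:EWricci}, not the Levi-Civita Ricci tensor of $h$. Moreover, transverse integrations by parts on $M$ pick up mean-curvature terms, so the argument presupposes the tautness it is meant to prove.
\end{itemize}

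\textbf{The paper proceeds in the reverse order.} Molino's theorem gives the leaf-closure torus bundle $T^{r+1}\to M\to W$. On commuting structural fields, the GT equations give $\kappa|\eta(Y)\times\eta(Z)|^2=0$, so $r\le1$. For $r=1$, the Gauss map on the double cover $\widehat W$ satisfies $v^*\omega_{S^2}=(\theta_v^2+\kappa^2)\omega_b$. This forces $\widehat W\cong S^2$ and monodromy $\pm1$ for Molino's commuting sheaf, and the Nozawa--Royo-Prieto criterion then gives tautness. For $r=0$ the leaves form a circle bundle. Only after tautness do \cref{prop:taut-compatible,prop:killing-kappa-constant} supply a compatible Killing gauge with constant nonzero $\kappa$. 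From that, $K$ is nowhere zero and orients $L$.
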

\begin{proof}
Since $K\not\equiv0$, \cref{thm:transverse-dichotomy} rules out
the non-Riemannian alternative, in which $\kappa\equiv0$.
Thus $\mathcal F$ is Riemannian.

Assume first that $L$ is oriented. By \cref{prop:global-scale},
choose a Poisson-preserving generator $X$ and its global basic
transverse coframe $\eta$. Choose a one-form $\lambda$ with
$\lambda(X)=1$, and let $Y_i$ be the horizontal lifts of the
dual normal frame. Basicness gives $\eta([X,Y_i])=0$, so each
$Y_i$ is foliated. The fields $X,Y_1,Y_2,Y_3$ span $TM$ and
are complete because $M$ is compact. Hence $\mathcal F$ is
transversely complete in the sense of \cite[\S1.1]{Molino1977}.

Molino's theorem \cite[Theorem~1 and \S1.3]{Molino1977} therefore
applies directly to $(M,\mathcal F)$ and gives a locally trivial
fibration $p:M\to W$ whose fibres are the connected leaf closures,
each carrying a dense Lie flow. The global basic coframe and
complete foliated lifts verify the hypotheses on $M$ itself,
without passing to the transverse orthonormal frame bundle.

The Caron--Carri\`ere classification \cite{CaronCarriere1980}
identifies these fibres with tori carrying dense linear flows;
compare the proof of \cite[Theorem~4.2]{NozawaRoyoPrieto2014}.
Since Molino's local trivializations preserve the induced
foliations, this is a bundle with a fixed linearly foliated
torus as fibre.

Let $\mathscr C$ denote Molino's commuting local system of
$(M,\mathcal F)$, and put $r:=\rank\mathscr C$.
By \cite[Proposition~4.6]{NozawaRoyoPrieto2014}, the structure
group reduces to the affine automorphisms preserving the linear
foliation. The bundle has the form
\begin{equation}\label{eq:nonflat-closure-fibration}
 \mathbb T^{r+1}\longrightarrow M\xrightarrow{p}W,
\end{equation}
where $\dim W=3-r$.

Write $\RR v_0$ for the direction of the dense linear flow in
an affine fibre chart. In this chart, the fibre of $\mathscr C$
is identified with $\RR^{r+1}/\RR v_0$. Its parallel local
sections are represented by constant vertical vector fields
modulo the flow direction. The transition maps are induced by
\[
 A_{\alpha\beta}\in\operatorname{GL}(r+1,\ZZ),
 \qquad A_{\alpha\beta}(\RR v_0)=\RR v_0.
\]
Let $\bar\rho$ be the resulting representation of $\pi_1(W)$
on this quotient. By
\cite[Proposition~4.8]{NozawaRoyoPrieto2014},
\begin{equation}\label{eq:nonflat-commuting-holonomy}
 \operatorname{hol}_{\mathscr C}=\bar\rho\circ p_*.
\end{equation}

If $r=0$, all leaves are closed, and
\cref{thm:closed-leaves-regular} gives a principal circle action,
hence tautness. For the remainder of the tautness argument,
assume that $r>0$.

In an affine chart $\mathbb T^{r+1}_y\times U$, write
$V=\sum_j(v_0)_j\partial_{y^j}$. Since
$\iota_V\eta=\Lie_V\eta=0$ and every $V$-orbit is dense in its
fibre, the coefficients of $\eta$ are independent of $y$.
For each $w\in\RR^{r+1}$, define the constant vertical field
$Y_w:=\sum_jw_j\partial_{y^j}$. These fields commute pairwise. Since the coefficients of $\eta$
are independent of $y$, we have
\begin{equation}\label{eq:nonflat-structural-fields}
 \Lie_{Y_w}\eta=0.
\end{equation}
Moreover, $\ker\eta=\RR V$ implies that evaluation under $\eta$
induces an injective map from $\RR^{r+1}/\RR v_0$ to $\RR^3$
at each point.
Their classes modulo $\RR V$ represent the parallel local
sections of $\mathscr C$.

For two such structural fields $Y,Z$, Cartan's formula and
\eqref{eq:GT} give
$$
 \theta(Y)\eta(Z)-\theta(Z)\eta(Y)
 =\kappa\,\eta(Y)\times\eta(Z).$$
Taking the scalar product with $\eta(Y)\times\eta(Z)$ yields
\begin{equation}\label{eq:nonflat-structural-rank}
 \kappa|\eta(Y)\times\eta(Z)|^2=0.
\end{equation}
The evaluation map $[w]\mapsto\eta(Y_w)$ on $\mathscr C$ is
injective, because its kernel before taking the quotient is
exactly $\RR v_0$. At a point where $\kappa\ne0$,
\eqref{eq:nonflat-structural-rank} therefore forces $r\le1$.
Consequently $r=1$, and $W$ is a closed surface.

Since the holonomy of $\mathscr C$ factors through $\pi_1(W)$,
there is a connected cover $\widehat W\to W$ of degree at most
two such that the pulled-back rank-one local system is oriented.
Pull back the fibration to $\widehat p:\widehat M\to\widehat W$,
and denote the pulled-back local system by $\widehat{\mathscr C}$.
We retain the notation for the pulled-back geometric tensors.

Choose local structural fields $Y$ representing positive
parallel generators of $\widehat{\mathscr C}$. Their classes
differ by positive constants on overlaps, so the same is true
of $A:=\eta(Y)$. Set $s:=|A|>0$ and define
\[
 v:=A/s,\qquad
 \nu:=v\cdot\eta,\qquad
 b:=\eta-v\nu,\qquad
 \theta_v:=\langle\theta,v\rangle.
\]
The quantities $v,\nu,b,\theta_v$ agree on overlaps and are
therefore globally defined on $\widehat M$.

The GT equations determine $\theta$ and $\kappa$ uniquely from
$\eta$, so both are invariant under the local structural fields.
In a foliated torus chart, the functions $v,\theta_v,\kappa$ are
constant along the dense linear flow and hence on the entire
torus fibre. They therefore descend to $\widehat W$.
The form $b$ annihilates both the canonical and structural
directions and is invariant under their local flows. Thus it
also descends to $\widehat W$, where it defines a bundle
isomorphism $b:T\widehat W\longrightarrow v^*TS^2.$

Since each local structural field preserves $\eta$,
\eqref{eq:killing-Gauss} applies. Orient $\widehat W$ by $b$,
and denote the induced area form by $\omega_b$. The determinant identity gives
\[
 v^*\omega_{S^2}
 =(\theta_v^2+\kappa^2)\omega_b.
\]
Integrating over $\widehat W$ and using $\kappa\not\equiv0$,
we obtain
\[
 4\pi\deg v
 =\int_{\widehat W}(\theta_v^2+\kappa^2)\omega_b
 >0.
\]
The oriented bundle isomorphism
$T\widehat W\cong v^*TS^2$ then gives
\begin{equation}\label{eq:nonflat-Gauss-Euler}
 \chi(\widehat W)
 =\langle v^*e(TS^2),[\widehat W]\rangle
 =2\deg v
 >0.
\end{equation}
Since $\widehat W$ is a closed connected oriented surface,
it follows that $\widehat W\cong S^2$.

By \eqref{eq:nonflat-commuting-holonomy}, the local system
$\widehat{\mathscr C}$ is trivial. Thus the monodromy
$\rho:\pi_1(M)\to\RR^*$ of the original rank-one local system
$\mathscr C$ has finite image, and therefore
\[
 \rho(\pi_1(M))\subset\{\pm1\}.
\]
The foliation on $M$ is transversely oriented. Since $M$ is compact,
every bundle-like metric is geodesically complete, so $\mathcal F$
is transversally complete in the sense of
\cite[Definition~2.1]{NozawaRoyoPrieto2014}. This monodromy condition is
condition~(v) of
\cite[Corollary~1.10]{NozawaRoyoPrieto2014}, which implies
tautness.

We have therefore proved tautness in both cases.
By \cref{prop:taut-compatible,prop:killing-kappa-constant},
there is a compatible Killing gauge in which $\kappa$ is
constant. Since $K\not\equiv0$, this constant is nonzero.

Finally, remove the assumption that $L$ is oriented by applying
the preceding argument to its orientation cover. The intrinsic
field $K=\kappa X$ from \eqref{eq:nonflat-intrinsic-generator}
is nowhere zero on that cover, and hence nowhere zero on $M$.
It therefore orients $L$ downstairs. The oriented argument now
applies directly to $M$, completing the proof.
\end{proof}
\paragraph{Explicit geometry and quotient equivalence.}

The compatible Killing generator $K$ replaces $\eta$ by
$\kappa\eta$ and sets $\kappa=1$, fixing the scale of canonical time.
The compact Einstein--Weyl and hyperCR geometries are classical
\cite{Tod1992,GauduchonTod1998,CalderbankPedersen2000}; retaining the
ordered coframe determines the quotient equivalence.

\begin{proposition}\label{thm:compact-Berger-coframe}
Let $N$ be a closed connected three-manifold equipped with a
global GT coframe with $\kappa=1$. Then its universal cover is
diffeomorphic to $\operatorname{SU}(2)$, and, under a suitable
identification, the lifted coframe is $\eta_a$ for some $a\ge0$,
as defined below.

Identify $\mathfrak{su}(2)$ with the imaginary quaternions,
with bracket $[u,w]=2u\times w$, and write
$\sigma:=g^{-1}dg=\sum_{i=1}^3\sigma_i e_i$.
Set $c_a:=1+a^2$, $v(g):=ge_3g^{-1}$, and
$q:=-2c_a^{-1}\sigma_3$. The model coframe and its GT one-form are
\begin{equation}\label{eq:Berger-coframe-explicit}
\begin{aligned}
 \eta_a
 &=vq-c_a^{-1}(a\,dv+v\times dv),\\
 \theta_a&=aq.
\end{aligned}
\end{equation}
The induced metric $h_a:=\sum_{i=1}^3(\eta_a^i)^2$ is
\begin{equation}\label{eq:Berger-metric-explicit}
 h_a
 =\frac{4}{c_a}(\sigma_1^2+\sigma_2^2)
  +\frac{4}{c_a^2}\sigma_3^2.
\end{equation}

The diffeomorphisms preserving $\eta_a$ as an ordered coframe
are exactly the right translations $g\mapsto gc$, with
$c\in A_a$, where
\[
 A_a:=
 \begin{cases}
  \{\exp(u e_3):u\in\RR\}\cong S^1,& a>0,\\
  \operatorname{SU}(2),& a=0.
 \end{cases}
\]
More generally, every diffeomorphism preserving the coframe
up to a constant $\operatorname{SO}(3)$ rotation has the form
$g\mapsto bgc$, with $b\in\operatorname{SU}(2)$ and $c\in A_a$.
\end{proposition}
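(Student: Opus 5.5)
The proof will pass to the universal cover and solve a rigid system. Let $\widetilde N$ be the universal cover of $N$, with the lifted GT coframe $\eta$ and $\kappa=1$. First we show that $*_h\theta$ is exact on $N$, so that \cref{lem:reg-compact-GT} applies. Since $\kappa=1$ is constant, \eqref{eq:GTcompat-intro} reduces to $*d\theta=\theta$, so $d\theta=*\theta$ and $*\theta=d\theta$ is exact with primitive $\beta_0=\theta$. \Cref{lem:reg-compact-GT} then shows that $\theta^\sharp$ is Killing, that $\Ric_h\ge\tfrac12h$, and that $\widetilde N$ is compact and diffeomorphic to $S^3$.

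Next we produce the group structure. Consider the Lie algebra $\mathfrak g$ of infinitesimal automorphisms of the ordered coframe $\eta$, that is, fields $Y$ with $\Lie_Y\eta=0$. Such a field preserves $h$, $\theta$ and $\kappa$. Because automorphisms of a coframe are determined by their value at one point, $\dim\mathfrak g\le3$. The dual frame $E_i$ does not itself preserve $\eta$, since $d\eta\ne0$. Instead I would use the left-invariant picture: on $\widetilde N$, the fields preserving $\eta$ up to a constant rotation form a Lie algebra acting simply transitively. More concretely, set $\Theta:=\eta$ with the $\mathfrak{so}(3)$-valued connection forms $\Omega$ from \cref{prop:GTcompat}. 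The pair $(\eta,\Omega)$ gives a Cartan-type structure, and the structure equations $d\eta^i=\theta\wedge\eta^i-*\eta^i$ together with $d\theta=*\theta$ close up. The sharper route is to show that $\theta$ satisfies $|\theta|$ constant and that $\theta^\sharp$ commutes with the $E_i$ up to rotation.

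Differentiating $\theta=\sum\theta_i\eta^i$ and inserting the GT equations into $d\theta=*\theta$ gives $d\theta_i$ in terms of $\theta_j\theta_k$. This yields $d|\theta|^2=0$, so $|\theta|=2a/c_a\cdot(\ldots)$ is a constant, which defines $a\ge0$. When $a=0$ we have $\theta=0$ and $d\eta^i=-*\eta^i$. These are the Maurer--Cartan equations of $\mathfrak{su}(2)$ after rescaling, so Cartan's theorem on simply connected $\widetilde N$ gives a developing map to $\operatorname{SU}(2)$ that is a covering, hence a diffeomorphism, and $\eta=\eta_0$. When $a>0$, we set $v:=\theta/|\theta|$, viewed via $\eta$ as a map $\widetilde N\to S^2$, together with $q$ proportional to $\langle\theta,\eta\rangle$. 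Computing $dv$ as in \eqref{eq:killing-Gauss} should show that the triple $(q,v\times dv,dv)$ satisfies the structure equations of $\operatorname{SU}(2)$ with $v(g)=ge_3g^{-1}$. Solving for $\eta$ from the formulas in \eqref{eq:Berger-coframe-explicit} then identifies $\eta=\eta_a$; the metric \eqref{eq:Berger-metric-explicit} is a direct computation. I expect this identification to be the main obstacle: choosing the correct left-invariant Maurer--Cartan combination so that Cartan's uniqueness theorem applies, and fixing the constant $c_a$.

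Finally, for the symmetry statement, an ordered-coframe automorphism preserves $\theta$ and hence $v$ up to the same transformation. On $\operatorname{SU}(2)$, right translations preserve $\sigma$, and therefore preserve $\eta_a$ exactly when they fix $e_3$ under conjugation (for $a>0$) or always (for $a=0$). Left translations by $b$ rotate $v$ by $\operatorname{Ad}_b$, and so rotate $\eta_a$ by a constant element of $\operatorname{SO}(3)$. Conversely, an automorphism fixing a point is determined by its $1$-jet there, and the rigidity of coframe automorphisms shows that the candidate groups exhaust all possibilities. Composing with a suitable $b$ to remove the rotation reduces the general statement to the exact-preservation case.
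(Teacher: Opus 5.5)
\textbf{There is a gap in the case $a>0$.} Your reduction to \cref{lem:reg-compact-GT} and your treatment of $a=0$ are fine. The case $a>0$, however, is the heart of the statement, and there you only announce the step (``should show'', ``main obstacle''). The route you sketch would also not work as described.

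First, ``solving for $\eta$ from \eqref{eq:Berger-coframe-explicit}'' presupposes the conclusion; the reconstruction has to be derived.
\begin{itemize}
\item You need $\Lie_V\eta=0$ for $V=\theta^\sharp$ exactly, not ``up to rotation''. This follows from the Killing property together with the GT connection forms. The same input makes $d\theta_i$ algebraic in $\theta$ and gives $a:=|\theta|_h$ constant; the equation $d\theta=*\theta$ alone fixes only the skew part of $\nabla\theta$.
\item Your normalization ``$|\theta|=2a/c_a\cdot(\ldots)$'' is off. In the model $|q|_{h_a}=1$, so $a$ is simply $|\theta|_h$.
\item Then \eqref{eq:killing-Gauss} with $Y=V$ (so $s=\theta_v=a$, $\nu=q$, $b=\beta=\eta-vq$) gives $dv=(-a\Id+v\times)\beta$ and $dq=*_hq$.
\item On $v^\perp$ the map $-a\Id+v\times$ is conformal with determinant $1+a^2=c_a$; this is where $c_a$ comes from. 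Inverting it yields $\eta=vq-c_a^{-1}(a\,dv+v\times dv)$ and $h=q^2+c_a^{-1}v^*g_{S^2}$, hence $dq=c_a^{-1}v^*\omega_{S^2}$.
\end{itemize}
Second, no \emph{left}-invariant Maurer--Cartan combination can be built from $(q,v,dv)$. Only $\sigma_3=-\tfrac{c_a}{2}q$ is intrinsic; $\sigma_1,\sigma_2$ need a frame of $v^\perp$, that is, $g$ itself. Nor is $(q,v\times dv,dv)$ a coframe, since $dv$ and $v\times dv$ take values in the same rank-two bundle $v^\perp$.

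\textbf{What does work is the right Maurer--Cartan form.} Put $\omega:=\tfrac{c_a}{2}(a\,v\times-\Id)^{-1}\eta=-\tfrac{c_a}{2}vq+\tfrac12v\times dv$, an $\mathfrak{su}(2)$-valued coframe.
\begin{itemize}
\item A direct computation gives $d\omega-\omega\wedge\omega=-\tfrac{c_a}{2}v\,(dq-c_a^{-1}v^*\omega_{S^2})=0$. Also $[\omega,v]=2\omega\times v=dv$ holds identically.
\item Cartan's theorem on the simply connected cover gives $g$ with $dg\,g^{-1}=\omega$. Then $d(g^{-1}vg)=0$, so after a right translation $v=ge_3g^{-1}$.
\item Since $\eta_a=2c_a^{-1}(a\,v\times-\Id)\,dg\,g^{-1}$, this gives $g^*\eta_a=\eta$. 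So $g$ is a local isometry from a complete manifold onto the simply connected $\operatorname{SU}(2)$, hence a diffeomorphism.
\end{itemize}

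Completed this way, your route genuinely differs from the paper's. The paper shows that $V$ generates a free circle action (torus closure plus \eqref{eq:nonflat-structural-rank}) and identifies the orbit space with $S^2$ through $v$. It then fixes the Euler number $+1$ by simple connectivity and the sign of $\int dq$, and matches $q$ with $-2c_a^{-1}\sigma_3$ by a gauge transformation using $H^1(S^2;\RR)=0$. Cartan's theorem bypasses this bundle bookkeeping.

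\textbf{Two corrections in the symmetry part.}
\begin{itemize}
\item Right translations do not preserve $\sigma=g^{-1}dg$; indeed $R_c^*\sigma=\operatorname{Ad}_{c^{-1}}\sigma$. They preserve $dg\,g^{-1}$, and the formula above gives $R_c^*\eta_a=\eta_a$ if and only if $a\,v\circ R_c=a\,v$, that is, $c\in A_a$.
\item For the converse with $a>0$, $A_a$ is not transitive, so rigidity alone does not suffice. An ordered automorphism $\phi$ preserves $\theta_a=a\,v\cdot\eta_a$ and hence $v$, so $\phi(g)\in gA_a$. Composing with a right translation from $A_a$ creates a fixed point, and a coframe automorphism with a fixed point is the identity.
\end{itemize}
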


\begin{proof}
Equation \eqref{eq:GTcompat-intro} gives $d\theta=*\theta$.
Thus \cref{lem:reg-compact-GT} supplies a compact simply connected
cover and makes $V=\theta^\sharp$ Killing. We work on this universal
cover for the remainder of the proof, omitting pullback notation.
Consequently
\[
 \nabla_i\theta_j=\tfrac12\varepsilon_{ijk}\theta_k,
 \qquad d|\theta|^2=0.
\]
Set $a=|\theta|$.  The GT connection forms give
\begin{equation}\label{eq:nonflat-Berger-components}
 d\theta_i=\theta_i\theta-a^2\eta^i+(\theta\times\eta)_i,
 \qquad \Lie_V\eta^i=0.
\end{equation}
For $a=0$, the complete Maurer--Cartan coframe integrates to
$\eta=-2dg\,g^{-1}$, which is \eqref{eq:Berger-coframe-explicit}.
Suppose $a>0$.  By \eqref{eq:nonflat-Berger-components}, the compact
torus closure of the flow of $V$ preserves the coframe.  Its
effective action is free: an automorphism fixing a point commutes
with the dual-frame flows and hence is the identity.  Applying
\eqref{eq:nonflat-structural-rank} with $\kappa=1$ to two commuting
fundamental fields bounds the torus dimension by one.  Thus $V$
generates a free circle action.

Set $v_i:=\theta_i/a$, $q:=\theta/a$, and $\beta:=\eta-vq$.
Equation \eqref{eq:nonflat-Berger-components} gives
\[
 dv=-a\beta+v\times\beta,\qquad dq=*_hq.
\]
The restriction of $-a\Id+v\times$ to $v^\perp$ has determinant
$c_a=1+a^2>0$. Inverting this restriction and using $|q|_h=1$,
we obtain
\begin{equation}\label{eq:nonflat-Berger-reconstruction}
\begin{aligned}
 \eta
 &=vq-c_a^{-1}(a\,dv+v\times dv),\\
 h&=q^2+c_a^{-1}v^*g_{S^2},\\
 dq&=c_a^{-1}v^*\omega_{S^2}.
\end{aligned}
\end{equation}
The induced local diffeomorphism from the compact orbit surface
to $S^2$ is a covering, hence a diffeomorphism.
Orient the fibres by $V$, let $U$ have least period $2\pi$, and put
$p_0=q(U)>0$. For $\alpha=q/p_0$, our convention is
$e=[d\alpha/(2\pi)]$. Simple connectedness of the total space gives
Euler number $\pm1$; positivity of
$\int_{S^2}dq=c_a^{-1}\int_{S^2}\omega_{S^2}$ selects $+1$. Hence
\[
 p_0=\frac{2}{c_a},\qquad
 \alpha=\frac{c_a}{2}q,\qquad
 \int_{\text{fibre}}q=\frac{4\pi}{c_a}.
\]
Identify the bundle with the Hopf bundle so that the positive fibre
action is $g\mapsto g\exp(-u e_3)$, $u\in\RR/2\pi\ZZ$.
Its connection $-\sigma_3$ has curvature
$\tfrac12v^*\omega_{S^2}$, so $q$ and $-2c_a^{-1}\sigma_3$ have
the same curvature and positive fibre period. Their difference
is a closed base one-form, removed by a gauge transformation since
$H^1(S^2;\RR)=0$. Substitution in
\eqref{eq:nonflat-Berger-reconstruction} gives
\eqref{eq:Berger-coframe-explicit}.

Conversely, for the Hopf map in \eqref{eq:Berger-coframe-explicit},
\[
 v^*\omega_{S^2}=4\sigma_1\wedge\sigma_2,\qquad
 d(v\times dv)=2v\,v^*\omega_{S^2}.
\]
Writing $\beta=\eta_a-vq$, one obtains
\[
 *\eta_a=q\wedge(v\times\beta)+v*q,\qquad
 d\eta_a=-q\wedge dv-v*q=aq\wedge\eta_a-*\eta_a.
\]
Moreover,
\[
 \eta_a=2c_a^{-1}\operatorname{Ad}_g
       \bigl(a(e_3\times)-\Id\bigr)\sigma,\qquad
 \det\bigl(2c_a^{-1}\operatorname{Ad}_g
       (a(e_3\times)-\Id)\bigr)=-8c_a^{-2}\ne0.
\]
This proves the coframe condition and \eqref{eq:Berger-metric-explicit}.
For $a>0$, an ordered automorphism preserves $\theta_a$, hence $q$
and its coefficient vector $v$.  It is therefore a Hopf gauge
transformation preserving the connection, hence a constant circle
rotation.  For $a=0$, the right Maurer--Cartan coframe has precisely
the right translations as automorphisms.  Finally,
$L_b^*\eta_a=\operatorname{Ad}_b\eta_a$; absorbing a constant
rotation gives the asserted maps $g\mapsto bgc$.
\end{proof}

On $\operatorname{SU}(2)\times\RR$ consider the untwisted triple
\begin{equation}\label{eq:nonflat-universal-spinors}
 \rho_i=e^{dt\wedge\theta_a}
       (\eta_a^j-\ii\eta_a^k)e^{\ii \,dt\wedge\eta_a^i}.
\end{equation}
Here $K=\partial_t$; since $d\theta_a=*\theta_a$, integrability
follows from \eqref{eq:killing-GT-lift}--\eqref{eq:killing-spinors}
with $\lambda=dt$, $\beta=\theta_a$ and $\gamma=0$.

\begin{proposition}\label{thm:nonflat-universal-decks}
Every untwisted ordered pure type-one triple on a closed connected
four-manifold whose GT invariant is not identically zero is
equivalent to a quotient of the model
\eqref{eq:nonflat-universal-spinors}, for some $a\ge0$, by a
discrete cocompact subgroup $\Lambda\subset A_a\times\RR$.
The action is
$$
 (g,t)\cdot(c,s)=(gc,t+s).$$
Conversely, every such quotient carries the descended untwisted
ordered pure type-one triple.

Two such quotients, with parameters $a,a'$ and deck groups
$\Lambda,\Lambda'$, are equivalent under a diffeomorphism, a common
closed real $B$-field and a constant $\operatorname{SO}(3)$ rotation
if and only if $a=a'$ and
\begin{equation}\label{eq:nonflat-deck-conjugacy}
 \Lambda'=(c,0)^{-1}\Lambda(c,0)
 \qquad\text{for some }c\in A_a.
\end{equation}

Every such subgroup has time projection $T\ZZ$ for some $T>0$
and finite kernel $\Lambda_0\subset A_a$. It can be written as
\begin{equation}\label{eq:nonflat-deck-generators}
 \Lambda=\langle\Lambda_0\times\{0\},(h,T)\rangle,
\end{equation}
where $h\in N_{A_a}(\Lambda_0)$.
The canonical leaves are closed precisely when the class $[h]$
has finite order $m$ in $N_{A_a}(\Lambda_0)/\Lambda_0$.
In that case every leaf has least $K$-period $mT$.
Otherwise every canonical leaf is nonclosed.
\end{proposition}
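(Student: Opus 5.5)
The proof runs in four steps: reduce to a compatible Killing model, lift to the universal cover and identify it with the model \eqref{eq:nonflat-universal-spinors}, read off the deck group, and then settle equivalence and leaf closure.

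\textbf{Reduction to a Killing model.} By \cref{thm:nonflat-riemannian-taut}, $L$ is oriented and there is a compatible Killing generator, which we normalize to $K$ so that $\kappa=1$. We then apply \cref{thm:killing-reconstruction} to the coframe $(\lambda,\eta)$. This gives a basic GT coframe with $\kappa=1$ and a basic primitive $\beta$ of $*_h\theta$. By \cref{prop:killing-kappa-constant} we may take $\beta_0=\theta$.

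\textbf{Lifting to the universal cover.} The leaf space is not a manifold in general, so we lift to $\widetilde M$. The basic coframe $\eta$ is preserved by the flow of $K$ and has kernel $L$. Hence the developing construction gives a submersion $\widetilde M\to\widetilde N$ onto a simply connected three-manifold carrying a complete GT coframe.

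Completeness of $\widetilde N$ follows from compactness of $M$: the dual frame fields, lifted horizontally, are complete. Since $\Ric\ge\frac12 h$, Bonnet--Myers together with the argument of \cref{lem:reg-compact-GT} and \cref{thm:compact-Berger-coframe} should identify $\widetilde N$ with $\operatorname{SU}(2)$ and $\eta$ with $\eta_a$. One point needs care: Hamilton's theorem was stated for closed manifolds, so we first apply Myers to get compactness of $\widetilde N$, and then run the Killing-field computation upstairs.

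The fibres of $\widetilde M\to\widetilde N$ are lines, so $\widetilde M\cong\operatorname{SU}(2)\times\RR$. We choose $\lambda$ closed, using \cref{cor:killing-flat-connection}-type reasoning or simply exactness of $d\lambda$ on the simply connected cover. Then $\lambda=dt$, and \cref{lem:closed-characteristic-unique} applied upstairs puts the lift in the form \eqref{eq:nonflat-universal-spinors}.

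\textbf{The deck group.} Deck transformations preserve the ordered Poisson tensors and $K$. They therefore preserve $dt$ up to translation and preserve $\eta_a$ as an ordered coframe. By \cref{thm:compact-Berger-coframe} they have the form $(g,t)\mapsto(gc,t+s)$ with $c\in A_a$.

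We must also check that the closed $B$-field compensators can be chosen equivariantly, so that the normal form descends. This is the main obstacle, since $\pi_1(M)$ is infinite and \cref{lem:finite-Courant-descent} does not apply. The plan is to run the uniqueness argument of \cref{lem:closed-characteristic-unique} directly on $M$: solve the drift equation \eqref{eq:killing-drift-solvability} on $M$ rather than on the cover, so that the normalizing $B$-field and orbit shift are deck-invariant from the outset. The converse direction is immediate, because the spinors \eqref{eq:nonflat-universal-spinors} are invariant under $A_a\times\RR$.

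\textbf{Equivalence, structure of $\Lambda$, and leaf closure.}

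For equivalence, \cref{thm:main-local} and \cref{thm:classification} show that an equivalence lifts to a map $g\mapsto bgc$, $t\mapsto \alpha t+v$. Preservation of $\kappa=1$ forces $\alpha=1$, and $\mathscr P_\theta u=0$ on the compact quotient forces $v$ to be constant. We should also check that $a$ is invariant, being determined by $|\theta|$. Intertwining the deck groups then yields \eqref{eq:nonflat-deck-conjugacy}; left multiplication by $b$ commutes with the right action and drops out.

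For the structure of $\Lambda$: since $A_a$ is compact and $\Lambda$ is discrete and cocompact, the projection of $\Lambda$ to $\RR$ is discrete and nonzero, hence equal to $T\ZZ$. The kernel $\Lambda_0$ is finite. Choosing $(h,T)$ over the generator $T$ gives \eqref{eq:nonflat-deck-generators}, and normality of the kernel gives $h\in N_{A_a}(\Lambda_0)$.

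For leaf closure: the leaf through $(g,t)$ is closed exactly when some $(g,t+mT)$ is identified with $(g,t)$. This means $h^m\in\Lambda_0$, using freeness. The least $K$-period is then $mT$; if no such $m$ exists, the leaf is nonclosed.
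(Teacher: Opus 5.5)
Your equivalence argument, your structure of $\Lambda$ and your leaf-closure criterion match the paper. The existence part, however, has a genuine gap at the lifting step.

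\textbf{The lifting step.} You assert a submersion $\widetilde M\to\widetilde N$ onto a Hausdorff simply connected three-manifold via a ``developing construction'', but no such construction is available. Lifting a Riemannian flow to the universal cover does not in general give a Hausdorff leaf space: irrational Killing flows on $S^3$ already fail. The paper has no developing theorem for the nonflat GT transverse geometry; Asuke's theorem is used only for flat similarity structures. Even granting the submersion, your order of argument is circular. The bound $\Ric_h\ge\frac12 h$ in \cref{lem:reg-compact-GT} is obtained only after proving that $\theta^\sharp$ is Killing, by integrating $|S|^2$ over a \emph{closed} GT base. So you cannot apply Myers first and ``run the Killing computation upstairs'' afterwards. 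You would have to carry out that computation basic-wise on $M$.

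\textbf{The characteristic form.} The product form $(g,t)\mapsto(gc,t+s)$ of the deck group requires a closed characteristic form on $M$ itself that is deck-invariant. So does applying \cref{lem:closed-characteristic-unique} on $M$. A primitive of $d\lambda$ chosen on the simply connected cover need not be deck-invariant. In that case the deck maps are only $(g,t)\mapsto(gc,t+\phi(g))$, and the model spinors do not descend. \cref{cor:killing-flat-connection} provides such a form only when the leaves are nonclosed.

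\textbf{The missing idea.} Construct $\lambda$ on $M$ in both cases. When the leaves are closed, \cref{lem:reg-compact-GT} makes $\pi_1(N)$ finite. Hence $H^2(N;\RR)=0$, and the curvature of an invariant connection normalized on $K$ has a base primitive; subtracting it gives the closed form. In both cases you then have $d\lambda=0$, $\lambda(K)=1$ and $b_1(M)=1$, the last by \cref{thm:killing-topology} or the Gysin sequence. The periods of $\lambda$ therefore form $T\ZZ$, and integrating $\lambda$ gives a bundle $M\to\RR/T\ZZ$ with connected fibres.

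Each fibre is a \emph{closed} cross-section of the $K$-flow and carries the restricted GT coframe with $\kappa=1$. So \cref{thm:compact-Berger-coframe} applies to it directly, with no leaf-space, Myers or Hamilton argument on a noncompact space. Then $M$ is the mapping torus of the time-$T$ return map, which gives the universal cover $\operatorname{SU}(2)\times\RR$ and the deck group \eqref{eq:nonflat-deck-generators}. Finally, \cref{lem:closed-characteristic-unique} applies on $M$ and brings the lift to the model \eqref{eq:nonflat-universal-spinors}.
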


\begin{proof}
Use the intrinsic generator $K$ from
\eqref{eq:nonflat-intrinsic-generator}, with the corresponding
GT gauge $\kappa=1$.

If the leaves are nonclosed, \cref{cor:killing-flat-connection}
provides a closed characteristic form $\lambda$, and
\cref{thm:killing-topology} gives $b_1(M)=1$.
If the leaves are closed, their circle quotient $N$ has finite
fundamental group by \cref{lem:reg-compact-GT}. Hence
$H^1(N;\RR)=0$, and Poincar\'e duality gives $H^2(N;\RR)=0$.
Choose an invariant characteristic form normalized on $K$.
Its curvature descends to an exact two-form on $N$; subtracting
a base primitive gives a closed characteristic form $\lambda$
with $\lambda(K)=1$.
The real Gysin sequence again gives $b_1(M)=1$.

Thus, in both cases, $d\lambda=0$, $\lambda(K)=1$, and $b_1(M)=1$.
The nowhere-zero form $\lambda$ cannot be exact on compact $M$.
Since the free part of $H_1(M;\ZZ)$ has rank one, its period
group is
\begin{equation}\label{eq:nonflat-characteristic-periods}
 \operatorname{Per}(\lambda)
 :=\left\{\int_\gamma\lambda:
          [\gamma]\in H_1(M;\ZZ)\right\}
 =T\ZZ
 \qquad\text{for some }T>0.
\end{equation}
Consequently, path integration defines a map
\[
 p:M\longrightarrow\RR/T\ZZ,\qquad
 p(m):=\int_{m_0}^{m}\lambda\pmod T.
\]
Since $dp=\lambda$, this is a proper submersion and hence a
fibre bundle. The definition of $T$ makes the induced map
$p_*:\pi_1(M)\to\pi_1(\RR/T\ZZ)\cong\ZZ$ surjective, so the
homotopy sequence shows that its fibres are connected.

A fibre $S$ is a closed connected three-manifold carrying the
restricted GT coframe with $\kappa=1$.
By \cref{thm:compact-Berger-coframe}, its universal-cover
coframe is $(\operatorname{SU}(2),\eta_a)$, with finite ordered
deck group $\Lambda_0\subset A_a$.
The flow of $K$ identifies $M$ with the mapping torus of its
time-$T$ return map on $S$. The inverse return map preserves
the coframe and lifts to a right translation $R_h$, with
$h\in N_{A_a}(\Lambda_0)$.
Thus the simply connected cover is
$\operatorname{SU}(2)\times\RR$, with deck group
\eqref{eq:nonflat-deck-generators}.

By \cref{lem:closed-characteristic-unique}, the original lift is
equivalent to the representative with common two-form
$\lambda\wedge\theta$. Its pullback to the universal cover is
\eqref{eq:nonflat-universal-spinors}, proving the quotient
description.

Conversely, the product action of $A_a\times\RR$ preserves
\eqref{eq:nonflat-universal-spinors}. It is free, since a
nonidentity element has either a nonzero time shift or a
nontrivial right translation. It is also proper, by compactness
of $A_a$ and properness of translations on $\RR$.
Every discrete cocompact subgroup therefore yields a closed
example.

We next determine equivalence between the quotients.
An equivalence lifts to the simply connected covers and
preserves the intrinsic generator $K$. Its underlying map
therefore has the form
\[
 \Phi(g,t)=(\psi(g),t+v(g)).
\]
Comparison of the Poisson tensors gives
$\psi^*\eta_{a'}=R\eta_a$ for a constant
$R\in\operatorname{SO}(3)$. The norm of the associated GT
one-form then gives $a'=a$, and
\cref{thm:compact-Berger-coframe} yields
$\psi(g)=bgc$, with $b\in\operatorname{SU}(2)$ and $c\in A_a$.

After absorbing the constant rotation, the common two-form
arising from the time shift is
$C_v:=dv\wedge\theta_a-*_{h_a}dv$. Its exterior derivative is
\begin{equation}\label{eq:nonflat-shift-compensator}
 dC_v
 =-\bigl(\Delta_{h_a}v+\langle\theta_a,dv\rangle\bigr)
   \vol_{h_a}.
\end{equation}
Uniqueness of the common compensator
(\cref{prop:linear-B-normal}) requires $dC_v=0$.
The maximum principle on compact $\operatorname{SU}(2)$
therefore makes $v$ constant, so
\[
 \Phi(g,t)=(bgc,t+s).
\]
Conjugation by this map sends a right deck transformation
$(d,\tau)$ to $(c^{-1}dc,\tau)$, giving
\eqref{eq:nonflat-deck-conjugacy}.
Conversely, that identity makes $(g,t)\mapsto(gc,t)$ descend
to an equivalence.

Finally, let $\Lambda\subset A_a\times\RR$ be any discrete
cocompact subgroup. Since $A_a$ is compact, each intersection
$\Lambda\cap(A_a\times[-C,C])$ is finite.
Its time projection is therefore discrete and, by cocompactness,
nonzero. Write it as $T\ZZ$, with $T>0$.
The kernel $\Lambda_0$ is finite, and a lift $(h,T)$ of the
positive generator normalizes $\Lambda_0$.
This proves \eqref{eq:nonflat-deck-generators}.

A return time of the $K$-flow must be a multiple $nT$.
Such a return occurs precisely when $h^n\in\Lambda_0$,
independently of the point $[g,t]$.
Thus, if $[h]$ has finite order $m$, every leaf has least
$K$-period $mT$; otherwise every leaf is nonclosed.
\end{proof}

\begin{corollary}
\label{cor:nonflat-explicit-parameters}
For $a>0$, the equivalence classes are parametrized by
\[
 (a,n,T,[\alpha]),\qquad
 n\ge1,\quad T>0,\quad
 [\alpha]\in\RR/(2\pi/n)\ZZ,
\]
through
\[
 \Lambda_0=\langle\exp(2\pi e_3/n)\rangle,\qquad
 \Lambda=\langle\Lambda_0\times\{0\},
                 (\exp(\alpha e_3),T)\rangle.
\]
The canonical leaves are closed if and only if
$n\alpha/(2\pi)\in\QQ$; if this rational number has
denominator $m$ in lowest terms, their least $K$-period is $mT$.
For every $[\alpha]$, the underlying manifold is
$L(n,1)\times S^1$.
For $a=0$, the parameters are a finite subgroup
$\Lambda_0\subset\operatorname{SU}(2)$, a number $T>0$, and a coset
$[h]\in N_{\operatorname{SU}(2)}(\Lambda_0)/\Lambda_0$, modulo
simultaneous $\operatorname{SU}(2)$ conjugacy, as in
\cref{thm:nonflat-universal-decks}. Nonclosed leaves force
$\Lambda_0$ to be cyclic.
In that case the underlying manifold is diffeomorphic to
$L(n,1)\times S^1$, with $n=|\Lambda_0|$ and $L(1,1)=S^3$.
\end{corollary}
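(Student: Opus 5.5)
The plan is to read \cref{cor:nonflat-explicit-parameters} off \cref{thm:nonflat-universal-decks}: list the discrete cocompact subgroups $\Lambda\subset A_a\times\RR$ and reduce them modulo the conjugation \eqref{eq:nonflat-deck-conjugacy}.

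For $a>0$ we have $A_a=\{\exp(ue_3)\}\cong S^1$, an abelian group.

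\begin{itemize}[nosep]
\item The finite kernel $\Lambda_0$ is then cyclic, namely $\Lambda_0=\langle\exp(2\pi e_3/n)\rangle$ with $n=|\Lambda_0|$.
\item The normalizer $N_{A_a}(\Lambda_0)$ is all of $A_a$, so $h=\exp(\alpha e_3)$ for some real $\alpha$.
\item Changing the lift $(h,T)$ by elements of $\Lambda_0$ shifts $\alpha$ by multiples of $2\pi/n$, so only $[\alpha]\in\RR/(2\pi/n)\ZZ$ matters.
\item Conjugation by $(c,0)$ with $c\in A_a$ is trivial because $A_a$ is abelian. Hence distinct $(n,T,[\alpha])$ give distinct subgroups, and by \eqref{eq:nonflat-deck-conjugacy} inequivalent structures. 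Together with $a=a'$ this yields the stated parametrization.
\end{itemize}

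For the closedness criterion, $A_a/\Lambda_0\cong\RR/(2\pi/n)\ZZ$, and the order of $[h]$ in this quotient is the denominator $m$ of $n\alpha/(2\pi)$. The leaf statement and the period $mT$ then follow directly from the last part of \cref{thm:nonflat-universal-decks}.

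To identify the manifold, deform $\alpha$ to $0$ inside the connected group $A_a$. This isotopes the return map $R_h$ to the identity through diffeomorphisms of $\operatorname{SU}(2)/\Lambda_0$, so the mapping torus is diffeomorphic to $(\operatorname{SU}(2)/\Lambda_0)\times S^1$. A right quotient of $\operatorname{SU}(2)$ by a cyclic group of order $n$ is $L(n,1)$, which gives $L(n,1)\times S^1$ with $L(1,1)=S^3$.

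For $a=0$ the parameter list $(\Lambda_0,T,[h])$ modulo conjugacy is just a restatement of \cref{thm:nonflat-universal-decks}. For the manifold, the step I expect to be the main obstacle is showing that nonclosed leaves force $\Lambda_0$ to be cyclic.

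I would argue this as follows. If the leaves are nonclosed, $[h]$ has infinite order in $N(\Lambda_0)/\Lambda_0$. For a noncyclic finite subgroup of $\operatorname{SU}(2)$ (binary dihedral, tetrahedral, octahedral or icosahedral), the normalizer is finite, so the quotient $N(\Lambda_0)/\Lambda_0$ is finite and this is impossible. Hence $\Lambda_0$ is cyclic.

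After conjugating, a cyclic $\Lambda_0$ lies in $\{\exp(ue_3)\}$. When $|\Lambda_0|\ge3$, its normalizer is the disconnected normalizer of that circle. Since $[h]$ has infinite order, some power of $h$ lies in the identity component. I would then argue directly that $h$ itself lies in the identity component: its square has infinite order, while an element of the other component squares into the center $\{\pm1\}$. For $|\Lambda_0|\le2$ the normalizer is all of the connected group $\operatorname{SU}(2)$.

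In either case $h$ lies in a connected subgroup normalizing $\Lambda_0$, so the same isotopy argument as for $a>0$ gives $M\cong L(n,1)\times S^1$ with $n=|\Lambda_0|$.
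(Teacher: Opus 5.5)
Your proposal is correct and follows essentially the paper's argument: abelianness of $A_a$ for $a>0$, the squaring trick placing $h$ in $T_0$ when $|\Lambda_0|\ge 3$, and the isotopy of right translations to identify $L(n,1)\times S^1$. The one difference is how you get cyclicity of $\Lambda_0$ when $a=0$. You use the classification of finite subgroups of $\operatorname{SU}(2)$ and the finiteness of the normalizers of the binary polyhedral groups; this is true, but you state it without justification. The paper avoids the classification: the identity component of $N(\Lambda_0)$ centralizes $\Lambda_0$ because $\operatorname{Aut}(\Lambda_0)$ is finite, so a circle in that component forces $\Lambda_0\subset Z(T_0)=T_0$. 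The same argument would also justify your finiteness claim.
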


\begin{proof}
For $a>0$, $A_a=S^1$ is abelian, so
\eqref{eq:nonflat-deck-conjugacy} is equality of subgroups.
Its finite subgroup of order $n$ is unique, and changing the
positive-time generator multiplies $\exp(\alpha e_3)$ by
$\Lambda_0$. This gives precisely the displayed parameters;
the period assertion follows from
\eqref{eq:nonflat-deck-generators}.
Right translations along a path from $1$ to $\exp(\alpha e_3)$
in $A_a$ induce an isotopy on $\operatorname{SU}(2)/\Lambda_0$,
so its mapping torus is $L(n,1)\times S^1$.

Now let $a=0$ and $[h]$ have infinite order in
$N_{\operatorname{SU}(2)}(\Lambda_0)/\Lambda_0$.
The normalizer then has positive-dimensional identity component,
which centralizes $\Lambda_0$ because
$\operatorname{Aut}(\Lambda_0)$ is finite. A circle in this
centralizer is contained in a maximal torus $T_0$, and
\[
 \Lambda_0\subset
 Z_{\operatorname{SU}(2)}(T_0)=T_0.
\]
Hence $\Lambda_0$ is cyclic. If $n>2$, its normalizer is $N(T_0)$,
and every element of $N(T_0)\setminus T_0$ squares to $-1$.
Thus an infinite-order class has $h\in T_0$.
For $n=1,2$, choose a maximal torus containing $h$.
In either case a path from $1$ to $h$ in $T_0$ induces an
isotopy of $\operatorname{SU}(2)/\Lambda_0$. Its mapping torus
is therefore $L(n,1)\times S^1$.
\end{proof}

\begin{corollary}
\label{cor:nonflat-finite-cover-criterion}
A structure-preserving finite covering between the nonflat models,
up to the stated equivalences, exists exactly when
$a=a'$ and $(c,0)^{-1}\Lambda(c,0)\subset\Lambda'$ has finite index
for some $c\in A_a$.  Its degree is that index; it is regular exactly
when this inclusion is normal.  Thus all finite ordered quotients
are precisely the normal finite-index inclusions, up to this conjugacy.
\end{corollary}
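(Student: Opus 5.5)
The plan is to reduce every structure-preserving covering to a map between universal-cover models, using \cref{thm:nonflat-universal-decks} in the same way as the equivalence part of that proposition. Let $M=(\operatorname{SU}(2)\times\RR)/\Lambda$ with parameter $a$ and $M'=(\operatorname{SU}(2)\times\RR)/\Lambda'$ with parameter $a'$. Let $F\colon M\to M'$ be a finite covering that preserves the triples, up to a common closed $B$-field and a constant rotation.

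\emph{Step 1: lift to the universal covers.} Both manifolds have universal cover $\operatorname{SU}(2)\times\RR$, so $F$ lifts to a diffeomorphism $\Phi$ of the model that is an equivalence of the untwisted triples \eqref{eq:nonflat-universal-spinors}. I would repeat the argument in the proof of \cref{thm:nonflat-universal-decks} verbatim. The intrinsic generator $K$ is preserved, and $|\theta_a|$ is an invariant, which forces $a=a'$. \Cref{thm:compact-Berger-coframe} then gives $\Phi(g,t)=(bgc,t+v(g))$. Closedness of the compensator $C_v$ in \eqref{eq:nonflat-shift-compensator}, together with the maximum principle on compact $\operatorname{SU}(2)$, makes $v$ a constant $s$. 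This is the step I expect to be the main obstacle. The earlier argument was written for equivalences of quotients, so I must check that it only uses the local data on the universal cover: the uniqueness in \cref{prop:linear-B-normal} is pointwise, and compactness of $\operatorname{SU}(2)$ is used only upstairs. I also need to note that for $a=0$ the left factor $b$ commutes with the right deck action, and for $a>0$ it still commutes with right translations by $A_a$, so left multiplication by $b$ normalizes every deck group.

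\emph{Step 2: the covering condition.} The lifted map $\Phi$ covers $F$ exactly when $\Phi\Lambda\Phi^{-1}\subset\Lambda'$. Conjugating a deck element $(d,\tau)$ by $\Phi$ gives $(c^{-1}dc,\tau)$; the left factor $b$ and the time shift $s$ drop out. So the condition becomes $(c,0)^{-1}\Lambda(c,0)\subset\Lambda'$. Since both groups are cocompact, this inclusion has finite index, and the degree of $F$ equals $[\Lambda':\Phi\Lambda\Phi^{-1}]$, which is that index.

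Conversely, suppose such an inclusion holds. The map $(g,t)\mapsto(gc,t)$ preserves \eqref{eq:nonflat-universal-spinors} exactly, so it descends to a structure-preserving covering of the stated degree.

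\emph{Step 3: regularity and quotients.} A covering of the form $\widetilde X/\Gamma\to\widetilde X/\Gamma'$ is regular exactly when $\Gamma\trianglelefteq\Gamma'$. In that case the covering group $\Gamma'/\Gamma$ acts on $M$ by maps of the form $(g,t)\mapsto(gc',t+s')$. These preserve the triple, so they act as ordered Courant automorphisms, freely, with trivial compensator. This identifies the finite ordered quotients with the normal finite-index inclusions, modulo the conjugacy \eqref{eq:nonflat-deck-conjugacy}.

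For the reverse direction, take a finite ordered quotient. By \cref{lem:finite-Courant-descent}, after a common closed $B$-field it becomes a free group of structure-preserving diffeomorphisms, so its quotient map is a regular covering of the type treated in Steps 1–2.
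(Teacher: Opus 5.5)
Your proposal is correct and follows essentially the same route as the paper. You lift to the universal model, use the Berger coframe automorphisms and the maximum principle on the shift compensator to get $\Phi(g,t)=(bgc,t+s)$, read off equivariance as $(c,0)^{-1}\Lambda(c,0)\subset\Lambda'$, and then invoke the subgroup correspondence for coverings together with \cref{lem:finite-Courant-descent}; the checks you flag (only universal-cover data and compactness of $\operatorname{SU}(2)$ are used) are exactly what the paper's brief appeal to the equivalence argument relies on.
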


\begin{proof}
The lifted covering preserves the universal structures.  The
coframe calculation in \cref{thm:compact-Berger-coframe} and
\eqref{eq:nonflat-shift-compensator} give $a=a'$ and a lift $(g,t)\mapsto(bgc,t+s)$.  Equivariance is
\[
 (c,0)^{-1}\Lambda(c,0)\subset\Lambda'.
\]
The subgroup correspondence for connected coverings gives the
degree and regularity assertions; conversely any such finite-index
inclusion defines the required structure-preserving covering.
The averaging lemma \cref{lem:finite-Courant-descent} includes
finite ordered Courant actions with closed $B$-components.
\end{proof}

\subsection{Flat developing alternatives}\label{sec:beyond-taut}

Let $M$ be closed and connected, with $K\equiv0$, equivalently
$\kappa\equiv0$.
By \cref{cor:compact-riemannian,prop:global-scale}, the canonical
foliation is Riemannian and its orientation cover has a global
basic GT coframe satisfying
\begin{equation}\label{eq:flat-GT-compatibility}
\begin{gathered}
 d\eta^i=\theta\wedge\eta^i,\qquad d\theta=0,\\
 \theta=dv\ \Longrightarrow\ d(e^{-v}\eta^i)=0.
\end{gathered}
\end{equation}
This gives a transverse developing map with positive scalar linear
holonomy; we shall obtain its Euclidean and axis alternatives.

\paragraph{The exact-twist obstruction.}

The following identity requires neither a Killing generator nor
$\kappa=0$. Suppose $X$ preserves the ordered Poisson tensors
$\pi_i=X\wedge E_i$, with basic GT coframe $\eta$. For
$\lambda(X)=1$ and horizontal $\omega$, put
\begin{equation}\label{eq:flat-horizontal-decomposition}
 u=\iota_Xd\lambda,\qquad
 F=d\lambda-\lambda\wedge u,\qquad
 d_\perp\omega=d\omega-\lambda\wedge\Lie_X\omega.
\end{equation}
\begin{proposition}
\label{prop:mean-curvature-lift}
A three-form compatible with the simultaneous standard spinors has
\begin{equation}\label{eq:mean-curvature-H}
 H=\lambda\wedge *_h(u-\theta)+\tau\vol_h.
\end{equation}
An untwisted lift exists precisely when a compatible $H$ is exact,
$H=dB$, for a global real $B$. Writing $B=\lambda\wedge\beta+\gamma$ with horizontal
$\beta,\gamma$, this is equivalent to
\begin{equation}\label{eq:mean-curvature-lifting-system}
 \begin{aligned}
 u\wedge\beta-d_\perp\beta+\Lie_X\gamma&=*_h(u-\theta),\\
 F\wedge\beta+d_\perp\gamma&=\tau\vol_h.
 \end{aligned}
\end{equation}
No invariance of $\lambda,\beta,\gamma$ is required.
\end{proposition}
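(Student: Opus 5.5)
The plan is to compute the integrability conditions \eqref{eq:spinor-criterion} directly for the standard spinors
\[
 \alpha_i e^{\ii\omega_i},\qquad \alpha_i=\eta^j-\ii\eta^k,\qquad \omega_i=\lambda\wedge\eta^i,
\]
now that $dt$ has been replaced by an arbitrary one-form $\lambda$ with $\lambda(X)=1$. We then translate the existence of an untwisted lift into exactness of $H$ through \cref{prop:linear-B-normal} and \eqref{eq:twist-conjugation}.

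\textbf{Step 1: the first condition.} Since $\eta$ is basic GT, $d\eta^i=\theta\wedge\eta^i-\kappa*_h\eta^i$ is horizontal. The Frobenius condition $\alpha_i\wedge d\alpha_i=0$ therefore holds exactly as in \cref{prop:GT-reduction}, independently of $\lambda$.

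\textbf{Step 2: the second condition.} Using \eqref{eq:flat-horizontal-decomposition}, write $d\lambda=\lambda\wedge u+F$ and expand
\[
 d\omega_i=\lambda\wedge(u-\theta)\wedge\eta^i+F\wedge\eta^i+\kappa\,\lambda\wedge*_h\eta^i .
\]
Each term is handled separately:
\begin{itemize}
\item $F\wedge\eta^i\wedge\alpha_i$ is horizontal of degree four, so it vanishes.
\item $*_h\eta^i\wedge\alpha_i=\eta^{jk}\wedge\alpha_i=0$.
\item For the first term, apply \eqref{eq:star-identity} with $w=u-\theta$, which gives $w\wedge\eta^i\wedge\alpha_i=\ii\,*_hw\wedge\alpha_i$.
\end{itemize}
Together these yield $\ii\,d\omega_i\wedge\alpha_i=-\lambda\wedge*_h(u-\theta)\wedge\alpha_i$. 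Hence $(H+\ii\,d\omega_i)\wedge\alpha_i=0$ for all $i$ is equivalent to
\[
 \bigl(H-\lambda\wedge*_h(u-\theta)\bigr)\wedge\alpha_i=0,\qquad i=1,2,3 .
\]

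\textbf{Step 3: solving for $H$.} Decompose $H=\lambda\wedge\sigma+\tau\vol_h$ with $\sigma$ a horizontal two-form. The horizontal three-form part is unconstrained, since its wedge with $\alpha_i$ is horizontal of degree four. Write $\sigma-*_h(u-\theta)=*_hs$ for a horizontal one-form $s$. Since $*_hs\wedge\eta^m=s_m\vol_h$, the real and imaginary parts of the three conditions force $s_j=s_k=0$ for every cyclic triple, so $s=0$. This gives \eqref{eq:mean-curvature-H}.

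\textbf{Step 4: lifts versus exact twists.} By \cref{prop:linear-B-normal}, any untwisted lift with these ordered Poisson tensors is $e^B$ applied to the standard spinors, for a unique global real $B$. By \eqref{eq:twist-conjugation}, the standard spinors are then integrable for $H=dB$. Conversely, if a compatible $H$ equals $dB$, then $e^B$ applied to the standard spinors is an untwisted lift. Both directions use only the algebraic gauge, so no invariance of $\lambda$, $\beta$ or $\gamma$ enters.

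\textbf{Step 5: the lifting system.} Write $B=\lambda\wedge\beta+\gamma$ with $\beta=\iota_XB$. For horizontal $\omega$ we have $d\omega=d_\perp\omega+\lambda\wedge\Lie_X\omega$, and $d_\perp\omega$ is horizontal because $\iota_Xd_\perp\omega=\iota_Xd\omega-\Lie_X\omega=-d\iota_X\omega=0$. Expanding $dB=(\lambda\wedge u+F)\wedge\beta-\lambda\wedge d\beta+d\gamma$ and separating the $\lambda$-component from the horizontal component against \eqref{eq:mean-curvature-H} gives \eqref{eq:mean-curvature-lifting-system}.

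The main obstacle I expect is the sign bookkeeping in Step 2, in particular orienting $\lambda\wedge u$ against the order in the $*$-identity, so that the vertical part comes out as $*_h(u-\theta)$ rather than $*_h(\theta-u)$. A secondary point to check is that $F\wedge\eta^i$ really drops out by horizontal degree even though $F$ is not assumed basic.
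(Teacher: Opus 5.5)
Your proposal is correct and follows essentially the same route as the paper. Like the paper, you expand $d(\lambda\wedge\eta^i)$ and $dB$ using $d\lambda=\lambda\wedge u+F$, apply the spinor criterion of \cref{prop:GT-reduction} to obtain \eqref{eq:mean-curvature-H}, compare the vertical and horizontal parts of $dB$, and invoke \cref{prop:linear-B-normal} for the equivalence between lifts and exact twists. Your Steps~2--3 spell out algebra the paper leaves implicit, and your sign $*_h(u-\theta)$ is right: it reduces to $-dt\wedge*_h\theta$ when $\lambda=dt$.
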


\begin{proof}
By \eqref{eq:flat-horizontal-decomposition},
\begin{equation}\label{eq:flat-lifting-differentials}
 \begin{aligned}
 d(\lambda\wedge\eta^i)
 &=\lambda\wedge(u-\theta)\wedge\eta^i
   +\kappa\lambda\wedge *_h\eta^i+F\wedge\eta^i,\\
 dB&=\lambda\wedge
 (u\wedge\beta-d_\perp\beta+\Lie_X\gamma)
       +F\wedge\beta+d_\perp\gamma.
 \end{aligned}
\end{equation}
The spinor equations of \cref{prop:GT-reduction} applied to the
first line give \eqref{eq:mean-curvature-H}; comparison with the
second gives \eqref{eq:mean-curvature-lifting-system}.
By \cref{prop:linear-B-normal}, an untwisted lift exists exactly
when a compatible $H$ is globally exact.
\end{proof}
For $g=\lambda^2+h$, the form $u=(\nabla_XX)^\flat$ is the
mean curvature of the unit field $X$.

\begin{proposition}
\label{prop:radial-developing-obstruction}
Suppose a Poisson-preserving generator $X$ has a global basic
coframe $\eta^i=dn_i+n_i\sigma$, where $n:M\to S^2$ and
$d\sigma=0$. For every $\lambda$ with $\lambda(X)=1$ and every
compatible $H$,
\begin{equation}\label{eq:radial-period-obstruction}
 \int_M H\wedge\sigma
       =\int_M\lambda\wedge\vol_h>0.
\end{equation}
Consequently these Poisson tensors have no untwisted lift.
\end{proposition}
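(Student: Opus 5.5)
The plan is to use the closed one-form $\sigma$ as a test class against $H$. I will compute $H\wedge\sigma$ from \eqref{eq:mean-curvature-H} and show that the $u$-contribution integrates to zero while the $\theta$-contribution gives the positive integrand $\lambda\wedge\vol_h$. Orient $M$ by $\lambda\wedge\vol_h$; this is a nowhere-zero top form because $(\lambda,\eta)$ is a coframe, which gives the positivity in \eqref{eq:radial-period-obstruction}. The final assertion then follows from \cref{prop:mean-curvature-lift}. An untwisted lift would make a compatible $H=dB$ exact, and since $d\sigma=0$, Stokes gives $\int_M dB\wedge\sigma=\int_M d(B\wedge\sigma)=0$, contradicting \eqref{eq:radial-period-obstruction}.

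\emph{Step 1: identify the GT data and $\sigma$ pointwise.} Since $|n|=1$, we have $n\cdot dn=0$, so $n\cdot\eta=\sigma$. Thus $\sigma$ is horizontal, with $\sigma(X)=0$, and $|\sigma|_h=|n|=1$ because $\eta$ is $h$-orthonormal. Next,
\[
 d\eta^i=dn_i\wedge\sigma=(\eta^i-n_i\sigma)\wedge\sigma=(-\sigma)\wedge\eta^i .
\]
By the uniqueness of GT data for a fixed coframe, this gives $\theta=-\sigma$ and $\kappa=0$.

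\emph{Step 2: expand $H\wedge\sigma$.} Since $\vol_h\wedge\sigma$ has horizontal degree four, it vanishes. For horizontal one-forms we have $*_hw\wedge\sigma=\langle w,\sigma\rangle_h\vol_h$. Therefore
\[
 H\wedge\sigma=\lambda\wedge\bigl(\langle u,\sigma\rangle_h+|\sigma|_h^2\bigr)\vol_h
 =\lambda\wedge u\wedge *_h\sigma+\lambda\wedge\vol_h .
\]
It then remains to show that $\int_M\lambda\wedge u\wedge*_h\sigma=0$.

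\emph{Step 3: the $u$-term vanishes.} By \eqref{eq:flat-horizontal-decomposition}, $\lambda\wedge u=d\lambda-F$. The form $F\wedge*_h\sigma$ is horizontal of degree four, hence zero, so the integrand equals $d\lambda\wedge *_h\sigma$. I will show that $*_h\sigma$ is closed. Writing $*_h\sigma=\tfrac12\,n\cdot(\eta\times\eta)$ and substituting $\eta=dn+n\sigma$, the cross terms are proportional to $n\cdot(dn\times n)=0$ and $n\cdot(n\times n)=0$. This leaves $*_h\sigma=\tfrac12\,n\cdot(dn\times dn)=n^*\omega_{S^2}$, which is closed. Hence $d\lambda\wedge*_h\sigma=d(\lambda\wedge *_h\sigma)$, and its integral over the closed manifold $M$ is zero.

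I expect Step 3 to be the main obstacle. It needs two things: the identification of $*_h\sigma$ with the pulled-back area form $n^*\omega_{S^2}$, which requires the orientation convention for $h$ to match that of $S^2$ up to a global sign that does not affect closedness, and the horizontal-degree argument that kills $F\wedge *_h\sigma$. Neither step uses any invariance of $\lambda$, in agreement with the statement, which allows every $\lambda$ with $\lambda(X)=1$.
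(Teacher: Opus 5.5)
Your proposal is correct and follows the paper's proof essentially step for step. The paper also derives $\theta=-\sigma$ and identifies $*_h\sigma=n^*\vol_{S^2}$ as a closed form. It then writes $H\wedge\sigma=d(\lambda\wedge *_h\sigma)+\lambda\wedge\vol_h$ and applies Stokes' theorem, which is your Steps~2--3 combined; your observation $F\wedge *_h\sigma=0$ is the horizontal-degree argument the paper uses implicitly. The paper's proof additionally checks that a developing image $\RR^3\setminus\{0\}$ yields such a normalized generator and coframe, for later use, but this is not needed for the statement itself.
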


\begin{proof}
From $\sum_i n_i^2=1$ we obtain
\begin{equation}\label{eq:flat-radial-identities}
\begin{aligned}
 \sigma&:=\sum_i n_i\eta^i,\qquad |\sigma|_h=1,\qquad\theta=-\sigma,\\
 b&:=*_h\sigma=n^*\vol_{S^2},\qquad db=0.
\end{aligned}
\end{equation}
Since $\vol_h=\sigma\wedge b$ and horizontal four-forms vanish,
\eqref{eq:mean-curvature-H} gives
\[
 H\wedge\sigma
 =\lambda\wedge(u+\sigma)\wedge b
 =d(\lambda\wedge b)+\lambda\wedge\vol_h.
\]
Integrating and applying Stokes' theorem proves
\eqref{eq:radial-period-obstruction}. If $H=dB$, however, then
\[
 \int_MH\wedge\sigma
 =\int_Md(B\wedge\sigma)=0,
\]
a contradiction.
For developing image $\RR^3\setminus\{0\}$, scalar holonomy fixes
the omitted point. On the transverse universal cover write
\[
 n=\frac{\operatorname{dev}}{|\operatorname{dev}|},\qquad
 \sigma=d\log|\operatorname{dev}|,\qquad
 d\operatorname{dev}=e^{-v}\eta,\qquad f=e^v|\operatorname{dev}|.
\]
The function $f$ descends to a positive global basic function, and
\[
 \widehat X=f^{-1}X,\qquad
 \widehat\eta=f^{-1}\eta
 =\frac{d\operatorname{dev}}{|\operatorname{dev}|}=dn+n\sigma.
\]
Since $Xf=0$, the scale identity \eqref{eq:riem-scale-identities}
gives $\Lie_{\widehat X}\pi_i=0$. Thus
\eqref{eq:mean-curvature-H} applies in this normalized gauge.
An untwisted triple would supply an exact compatible three-form,
contradicting \eqref{eq:radial-period-obstruction}.
\end{proof}

\begin{proposition}
\label{thm:flat-riemannian-topology}
On the orientation cover, the flat transverse developing image is
either $\RR^3$ or the complement of an affine line.
In the first case the flow is taut; if its leaves are nonclosed,
the triple has a constant torus normal form. In the second case
the leaf closures are the fibres of a smooth $T^2$-bundle over a
translation torus. The logarithmic radius $t$ and lifted angle
$\varphi$ have differentials descending to a closed coframe of
this base.
\end{proposition}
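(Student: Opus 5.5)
On the orientation cover the leaves carry the flat transverse structure \eqref{eq:flat-GT-compatibility}. The plan is to classify its developing image and then treat the two possible images separately.

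\emph{Holonomy and developing map.} On the transverse universal cover $\widetilde Q$ (the leaf space of the lifted foliation on the universal cover), integrate the closed coframe $e^{-v}\eta$ to a developing map $\operatorname{dev}:\widetilde Q\to\RR^3$ with $d\operatorname{dev}=e^{-v}\eta$. Ordered preservation of the frame forces the holonomy into the scalar-affine group $x\mapsto cx+b$ with $c>0$. Let $\chi:\pi_1\to\RR_{>0}$ be the linear part.

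\emph{Euclidean case, $\chi\equiv1$.} Here $\theta=dv$ with $v$ global, so $f=e^{v}$ solves \eqref{eq:global-scale-equation} with a closed basic coframe. Riemannianity (\cref{cor:compact-riemannian}) plus completeness of the transverse metric makes $\operatorname{dev}$ a diffeomorphism onto $\RR^3$. The fields dual to the closed basic coframe are complete foliated fields commuting modulo $L$, which gives a transverse parallelism and a transverse Lie flow with abelian structure algebra $\RR^3$. Tautness would come from the Molino/Nozawa--Royo Prieto criterion as in \cref{thm:nonflat-riemannian-taut}: the commuting sheaf has trivial holonomy because $\chi$ is trivial. \Cref{prop:taut-compatible} then supplies a compatible Killing generator. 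If the leaves are nonclosed, \cref{cor:killing-nonclosed-unique} (with $\kappa=0$) gives the constant torus normal form.

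\emph{Nontrivial $\chi$.} The holonomy group is then a group of homotheties of $\RR^3$ with Lie closure in $\operatorname{Sim}^+$. By Carrière/Molino structure theory for the Riemannian flow with this transverse affine structure, the developing map is a covering onto an open set $\Omega$ invariant under a holonomy containing genuine dilations. Two cases arise.
\begin{enumerate}[label=(\alph*),nosep]
\item All dilations share a common fixed point and there are no translations. Then $\Omega=\RR^3\setminus\{0\}$, and \cref{prop:radial-developing-obstruction} excludes it.
\item The dilation centres sweep an invariant affine line $\ell$ with translations along $\ell$. Then $\Omega=\RR^3\setminus\ell$.
\end{enumerate}
I expect the main obstacle to be ruling out every other $\Omega$, namely a complement of a plane, a half-space, or a complement of a closed set with nontrivial dynamics. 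To do this I would use compactness via the invariant transverse metric. Being holonomy-invariant, the metric pulls back on $\Omega$ to a complete metric of the form $\phi\,|dx|^2$ with $\phi$ homogeneous of degree $-2$ under the holonomy. Completeness forces $\partial\Omega$ to be exactly the set fixed by the closure of the dilation subgroup, and that set is an affine subspace of dimension $0$, $1$ or $2$. The plane case is impossible because transverse orientation and cocompactness would disconnect the image, or the holonomy would have to preserve a half-space, leaving leaves near the wall noncompact in the leaf-space sense.

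\emph{Axis case.} Write $x=(r e^{\ii\varphi},z)$ with $\ell=\{r=0\}$ and set $t=\log r$. Dilations centred on $\ell$ and translations along $\ell$ act by translations of $(t,\varphi)$, with $\varphi$ lifted, while $z$ is rescaled. The forms $dt$ and $d\varphi$ are therefore holonomy-invariant, so they descend to closed basic forms on $M$. Consider the map $(t,\varphi)$ into $\RR^2$ modulo its period lattice. Its fibres are invariant under the canonical flow and the $z$-direction, and by Molino they are exactly the leaf closures: a $T^2$ transversely generated by the $z$-direction together with the leaf. Cocompactness makes the period group a lattice, so the base is a translation torus $T^2$ with closed coframe $(dt,d\varphi)$. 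This map is a submersion with compact connected fibres, hence a smooth $T^2$-bundle by Ehresmann, which is the asserted second alternative.
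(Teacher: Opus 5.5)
Your Euclidean case is fine. Trivial $\chi$ gives a Lie $\RR^3$-flow, its developing map is onto $\RR^3$, and its commuting sheaf is constant, so the Nozawa--Royo Prieto monodromy criterion gives tautness; the paper instead proves $H^3_B\neq0$ by Haar averaging. The nontrivial-$\chi$ case, however, has a genuine gap exactly where you expected one: excluding the plane complement, i.e.\ a half-space developing image.

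\emph{The plane case.} Completeness of the invariant metric, transverse orientation and cocompactness cannot exclude it. On $\{(y,s)\in\RR^2\times\RR:s>0\}$ the complete hyperbolic metric $(|dy|^2+ds^2)/s^2$ is invariant under every homothety $(y,s)\mapsto(cy+b,cs)$ with $c>0$, $b\in\RR^2$. These maps act transitively, so nothing degenerates near the wall. Once a rotational part is allowed, compact examples actually exist. Take the mapping torus of $A\in\operatorname{SL}(3,\ZZ)$ with a real eigenvalue $\lambda\in(0,1)$ and a nonreal pair of modulus $r=\lambda^{-1/2}$. The linear flow along the real eigenline is a Riemannian flow of codimension three. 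Its transverse holonomy consists of translations $y\mapsto y+b$ and $(y,s)\mapsto(rR_\alpha y,rs)$, where $R_\alpha$ is a rotation in a conformal basis of the quotient and $s=r^u$ for the base coordinate $u$. Its developing image is a half-space.

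So the exclusion must combine integrality with the rotation-free ordered holonomy. The paper takes three inputs from Asuke's structure theory: the identity component of the holonomy closure has no dilation part, his Proposition~4.1(2) gives $d=0$, and scalar holonomy gives $B_0=I$. From these it obtains $A\in\operatorname{SL}(3,\ZZ)$ and a surjection $\Phi:\RR^3\to\RR^2$ with $\Phi A=r\Phi$ and $r>1$. Then $(T-r)^2\mid\chi_A(T)$. Writing $m_r$ for the minimal polynomial of $r$, Galois conjugates occur in $\chi_A$ with equal multiplicity, so $m_r^2\mid\chi_A$ and hence $r\in\QQ$. Since $r^{\pm1}$ are algebraic integers, $r=\pm1$, a contradiction.

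Similarly, the dichotomy ``$\RR^3$ or a component of the complement of an affine subspace,'' with $\operatorname{dev}$ factoring through a covering of a Hausdorff leaf space, is Asuke's developing theorem. It does not follow from completeness alone. Your description of $\partial\Omega$ as the fixed set of the closure of the dilations is also inaccurate. Conjugating a dilation by an axial translation moves its centre, so in the axis case no point is fixed by all of them.

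\emph{The axis case.} There is a second gap here. The periods of $dt$ are the logarithms of the dilation factors. They form a finitely generated subgroup of $\RR$ that could a priori be dense, so your period map need not land in a torus, and ``cocompactness'' does not make this group discrete. What is needed is that $t$ and $\varphi$ are constant on leaf closures, i.e.\ that the identity component of the lifted holonomy closure has neither a dilation nor an angular part. The paper obtains this from Asuke's lemmas excluding a dilation part, from discreteness of the angular translations in $2\pi\ZZ$, and from Asuke's Lemma~3.1 and Theorem~3.2. Then $dt,d\varphi$ descend to the compact Hausdorff base of the leaf-closure fibration supplied by Asuke's Corollary~4.3. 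Their complete commuting dual fields act transitively there, identifying the base with $\RR^2/P$ for a lattice $P$. Your order of argument---define the period map first, then identify its fibres with leaf closures ``by Molino''---presupposes exactly this input.
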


\begin{proof}
On the connected orientation cover of $L$, the flow is oriented.
The global basic coframe $\eta$ orients the normal bundle.  By
\cref{cor:compact-riemannian} the lifted flow is Riemannian, and
\eqref{eq:flat-GT-compatibility} gives a transverse similarity
atlas whose linear holonomy consists of positive scalar matrices.
The cover is closed and the codimension is three. Thus we have a
closed, oriented and transversely oriented Riemannian flow with a
transverse similarity structure, as required by
Asuke's developing theorem. The holonomy-invariant
transverse metric also provides a smooth positive holonomy-invariant
transverse density. That theorem \cite{Asuke1996}, recalled in
\cite[pp.~203--204]{Asuke1997}, gives a simply connected Hausdorff
leaf space $\widehat N$ and a factorization
$\operatorname{dev}=\operatorname{dev}_0\circ p$.
Here $p:\widetilde M\to\widehat N$ is a locally trivial fibration
whose fibres are the connected leaves of the lifted foliation,
and $\operatorname{dev}_0:\widehat N\to U$ is a covering.
The image $U$ is $\RR^3$ or a component of
$\RR^3\setminus\RR^{q_0}$, $0\le q_0<3$, with
$q_1=3-q_0$ in Asuke's notation. The case $q_0=0$ is excluded by
\cref{prop:mean-curvature-lift,prop:radial-developing-obstruction}.

Suppose now that $U$ is a proper complement.  Its holonomy preserves
the omitted affine subspace.  If all its scale factors were one,
then the positive function $|z_\perp|$ would be invariant and would
descend from the developing map to a smooth function on the closed
manifold.  Since the developing map is a submersion and
$d|z_\perp|$ is nowhere zero on $U$, this function would have no
critical point, a contradiction.  Thus the scale homomorphism is
nontrivial; this is the standing hypothesis needed in the cited
non-Euclidean parts of \cite{Asuke1997}.

For $q_0>0$, let $H_{\mathrm{hol}}$ be the closure of the holonomy group, using the
lift to the universal cover of $U$ in the case $q_1=2$ as in
\cite[\S3]{Asuke1997}.  Its identity component has no dilation part
\cite[Lemmas~2.1 and~3.1]{Asuke1997}.
Thus $|H_{\mathrm{hol},0}|=\{1\}$, and the construction in
\cite[proof of Proposition~4.1, pp.~220--221]{Asuke1997} gives
\begin{equation}\label{eq:flat-Asuke-monodromy}
 A\in\operatorname{SL}(q_0+d+1,\ZZ),\qquad
 \Phi A=P\Phi,\qquad
 P=\operatorname{diag}(rB_0,B_1),\quad r>1.
\end{equation}
Here $\Phi:\RR^{q_0+d+1}\to\RR^{q_0+d}$ is
surjective, $d$ is the angular orbit dimension, and
$B_0\in \operatorname{SO}(q_0)$, $B_1\in \operatorname{SO}(d)$.
Ordered scalar holonomy gives $B_0=I$. If $q_0=2$,
\cite[Proposition~4.1(2)]{Asuke1997} gives $d=0$, and
\eqref{eq:flat-Asuke-monodromy} becomes
\[
 A\in\operatorname{SL}(3,\ZZ),\qquad
 \Phi A=r\Phi,\qquad
 (T-r)^2\mid\chi_A(T).
\]
Let $m_r\in\QQ[T]$ be the minimal polynomial of $r$.
Characteristic zero makes $m_r$ separable. Every conjugate of
$r$ has the same multiplicity in $\chi_A$, whence
\[
 m_r^2\mid\chi_A,\qquad 2\deg m_r\le3,
 \qquad r\in\QQ.
\]
Since $A,A^{-1}$ are integral, $r$ and $r^{-1}$ are rational
algebraic integers, so $r\in\{\pm1\}$, contrary to $r>1$.

For $q_0=1$, hence $q_1=2$, \cite[Corollary~4.3]{Asuke1997}
gives a smooth $T^2$-bundle over $T^2$ whose fibres carry dense
irrational linear flows. To identify the base coordinates, use
$(x,t,\varphi)$ on the universal cover of the axis complement,
where $t=\log|z_\perp|$. Ordered positive scalar holonomy lifts to
\begin{equation}\label{eq:flat-axis-discrete-angle}
 (x,t,\varphi)\longmapsto
 (a x+b,\,t+\log a,\,\varphi+2\pi n),
 \qquad a>0,\quad b\in\RR,\quad n\in\ZZ.
\end{equation}
The angular translations lie in the discrete group $2\pi\ZZ$;
therefore the angular part of the identity component of the lifted
holonomy closure is trivial.  By \cite[Lemma~3.1 and Theorem~3.2]{Asuke1997},
the developing image of each connected component of a lifted leaf
closure has fixed $t$ and $\varphi$ and varies only in the axial
coordinate.  The invariant one-forms $dt,d\varphi$ are consequently
basic for the leaf-closure fibration.  Their descended forms on its
base are pointwise linearly independent, since the developing map is
a submersion, and closed.  Their dual vector fields commute and are
complete on the compact base; the resulting transitive $\RR^2$-action
identifies that base with $\RR^2/P$ for a lattice $P$.

In the Euclidean alternative $U=\RR^3$, the covering
$\operatorname{dev}_0$ is a diffeomorphism. Thus the developing fibres
are connected and the invariant transverse metric descends to $\RR^3$. For a
holonomy map $g(z)=az+b$, $a>0$, $a\ne1$, its fixed point
$z_0=b/(1-a)$ would satisfy
\[
 h_{z_0}(v,v)
 =h_{z_0}(Dg_{z_0}v,Dg_{z_0}v)
 =a^2h_{z_0}(v,v),
\]
a contradiction. Thus holonomy is translational, and $dx^i$
descend to a global closed transverse coframe $\zeta^i$.
Basic forms identify with smooth forms on $\RR^3$ invariant
under the translation holonomy group $\Delta$: local descent
through the developing submersion glues because its fibres are
connected. Put $Q=\RR^3/\overline\Delta$. It is a compact
abelian Lie group, since it is a continuous image of $M$.
If $\zeta^{123}$ were basic-exact, its primitive would descend to
\[
 b=b_1\,dx^{23}+b_2\,dx^{31}+b_3\,dx^{12},
 \qquad \partial_1b_1+\partial_2b_2+\partial_3b_3=1,
\]
with $b_i\in C^\infty(Q)$. For normalized Haar measure $\mu_Q$,
\[
 1=\int_Q1\,d\mu_Q
   =\sum_i\int_Q\partial_i b_i\,d\mu_Q=0.
\]
Consequently $H_B^3(M,\mathcal F)\ne0$. The Euclidean coframe
orients the normal bundle, and compactness gives transverse
completeness in the sense of
\cite[Definition~2.1]{NozawaRoyoPrieto2014} and
$H_{c,B}^3=H_B^3$. By
\cite[Corollary~1.10, (i)$\Leftrightarrow$(ii)]{NozawaRoyoPrieto2014},
including its zero-or-$\RR$ alternative, $\mathcal F$ is taut.
Now \cref{prop:taut-compatible} gives a compatible Killing
generator; \cref{cor:killing-nonclosed-unique} gives the constant
torus normal form when the leaves are nonclosed.
For nonclosed Euclidean structures, descent to the original manifold
is established in \cref{cor:euclidean-finite-quotients}.
\end{proof}

\subsection{Axis normal forms and full equivalence}\label{sec:axis-proof}

The axis alternative admits a common universal model that retains
angular winding, torus-bundle extension data and nonorientable
canonical lines. Put $\mathcal U=\RR^4_{x,y,t,\varphi}$ and
\begin{equation}\label{eq:axis-G-action}
 D(u,m)=(-1)^m\operatorname{diag}(e^u,e^{-u}),\qquad
 g_{w,u,m}(z,t,\varphi)
 =(D(u,m)z+w,t+u,\varphi+\pi m),
\end{equation}
where $z=(x,y)$, $w\in\RR^2$, $u\in\RR$, $m\in\ZZ$.
They form a group $\mathcal G$ with multiplication
\begin{equation}\label{eq:axis-group-law}
 (w,u,m)(w',u',m')
 =(w+D(u,m)w',u+u',m+m').
\end{equation}
Call $\Gamma<\mathcal G$ \emph{admissible} if it is discrete and
both its translation subgroup $\Lambda=\Gamma\cap\RR^2$
and its base-period group
$P=\{(u,\pi m):g_{w,u,m}\in\Gamma\}$ are full lattices in their
respective planes. In particular some radial period $u$ is nonzero.
Write $M_\Gamma=\mathcal U/\Gamma$.

Use the coframe
\begin{equation}\label{eq:axis-universal-coframe}
 \lambda=e^t dy,\quad \eta^1=e^{-t}dx,\quad
 \eta^2=\cos\varphi\,dt-\sin\varphi\,d\varphi,\quad
 \eta^3=\sin\varphi\,dt+\cos\varphi\,d\varphi.
\end{equation}
For $(a,b)\in\RR^2$, define
\begin{equation}\label{eq:axis-universal-B}
 B_{a,b}=(a\cos\varphi+b\sin\varphi)\eta^1\wedge d\varphi,
 \qquad
 \rho_i^{a,b}=e^{B_{a,b}}(\eta^j-\ii\eta^k)e^{\ii \lambda\wedge\eta^i}.
\end{equation}
Denote the resulting ordered triple on $M_\Gamma$ by
$\mathcal J_{\Gamma,a,b}$.

\begin{proposition}\label{prop:axis-group-models}
Every admissible group gives a compact smooth quotient carrying
\eqref{eq:axis-universal-B}. These triples are untwisted, and their
canonical foliations are Riemannian and non-taut, with leaves dense
in the torus fibres over $\RR^2/P$. The canonical line is
orientable exactly when all $m$ in $\Gamma$ are even; otherwise the
even-$m$ subgroup defines its orientation double cover.
\end{proposition}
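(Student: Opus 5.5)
The plan is to verify everything directly on the universal model $\mathcal U$ and then descend. Most claims reduce to computations with the explicit coframe \eqref{eq:axis-universal-coframe}; the two that need an argument are density of the leaves and non-tautness.

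\emph{Step 1: the quotient.} Since $\Gamma$ is discrete, the action \eqref{eq:axis-G-action} is free and properly discontinuous: a nonidentity element either moves $(t,\varphi)$, or is a nonzero translation in $z$. Hence $M_\Gamma$ is a smooth manifold. The projection $(z,t,\varphi)\mapsto(t,\varphi)$ is equivariant for $g_{w,u,m}\mapsto(u,\pi m)$. It therefore induces a fibration $M_\Gamma\to\RR^2/P$. Its fibre is $\RR^2/\Lambda$ twisted by the matrices $D(u,m)$; one checks that $D(u,m)$ preserves $\Lambda$ because $\Lambda$ is normal in $\Gamma$. Both base and fibre are compact because $P$ and $\Lambda$ are full lattices, so $M_\Gamma$ is compact.

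\emph{Step 2: invariance and descent.} Pull back by $g_{w,u,m}$. This gives $g^*\lambda=(-1)^m\lambda$ and $g^*\eta^i=(-1)^m\eta^i$ for every $i$, using $\cos(\varphi+\pi m)=(-1)^m\cos\varphi$ and similarly for $\sin$. Also $g^*B_{a,b}=B_{a,b}$. Hence each $\lambda\wedge\eta^i$ is invariant, and each $\rho_i^{a,b}$ is multiplied by $(-1)^m$. The pure-spinor lines, and therefore the ordered triple, descend. The generator $X$ dual to $\lambda$, namely $X=e^{-t}\partial_y$, satisfies $g_*X=(-1)^mX$ modulo the canonical line. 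This gives the orientability statement: $L$ is oriented by $X$ exactly on the even-$m$ subgroup. That subgroup has index at most two, so it yields the orientation double cover when some $m$ is odd.

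\emph{Step 3: integrability and the untwisted lift.} Compute
\[
 d\eta^1=-dt\wedge\eta^1,\qquad d\eta^{2,3}=-dt\wedge\eta^{2,3},
\]
so the GT equations hold with $\theta=-dt$ and $\kappa=0$. Next, $d\lambda=dt\wedge\lambda$, so in \eqref{eq:flat-horizontal-decomposition} we get $u=\iota_Xd\lambda=-dt=\theta$ and $F=0$. By \cref{prop:mean-curvature-lift}, every compatible $H$ therefore equals $\tau\vol_h$. The form $dB_{a,b}$ is a horizontal three-form, so it is compatible. Applying the conjugation rule \eqref{eq:twist-conjugation} with $B_{a,b}$ then gives $d$-integrability of the untwisted spinors $\rho_i^{a,b}$. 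Constant $\operatorname{SO}(3)$ covariance, as in \cref{prop:untwisting-existence}, extends integrability to the whole sphere.

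\emph{Step 4: Riemannian, density, non-taut.} The transverse metric $h=\sum(\eta^i)^2$ is basic and $\Gamma$-invariant, since the signs square away. So the foliation is Riemannian.

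For density, leaves are $y$-lines in the fibres $\RR^2/\Lambda$. Choose some $g\in\Gamma$ with $u\ne0$. Then $D(u,m)$ is a hyperbolic integral automorphism of $\Lambda$, and its eigenlines (the $x$- and $y$-axes) have irrational slope with respect to $\Lambda$. The $y$-direction therefore meets $\Lambda$ only at $0$, and the linear flow is dense in each fibre torus.

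For non-tautness, argue by contradiction on the orientation cover. If the flow were taut, \cref{prop:taut-compatible} would give a compatible Killing generator $X'=fX$. Then \cref{prop:killing-kappa-constant} with $\kappa=0$ forces $\theta'=0$. By \eqref{eq:nonflat-intrinsic-generator}, $\theta'=\theta+d\log|f|$, so $\theta=-dt$ would be exact on $M$. But along a loop representing $g$ with radial period $u\ne0$, the form $-dt$ has period $-u\ne0$. This contradiction proves non-tautness.

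The main obstacle I expect is the density argument, together with making sure the $D(u,m)$-twisted fibration is really a locally trivial torus bundle whose fibres are exactly the leaf closures. Non-tautness then follows cleanly from the period of $\theta$.
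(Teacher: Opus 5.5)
Your proof is correct. The quotient, descent, Riemannian, density and orientation parts follow the paper's argument closely. There are two cosmetic differences there. The paper differentiates $\rho_i^{a,b}$ directly to get $d\rho_i^{a,b}=\theta\wedge\rho_i^{a,b}$, rather than routing through \cref{prop:mean-curvature-lift} and the conjugation rule. Your route also tacitly uses the converse of that proposition, namely that every horizontal $\tau\vol_h$ is compatible; this holds because $d(\lambda\wedge\eta^i)=0$. For irrationality of the eigenlines, the paper notes that $(-1)^m e^{\pm u}$ would otherwise be a rational algebraic-integer unit, which you only assert.

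The genuine divergence is non-tautness. The paper argues directly on the orientation cover. There $\vol_h=d(\eta^1\wedge d\varphi)$, with $\eta^1\wedge d\varphi$ horizontal and deck-invariant. Let $\lambda'$ be the characteristic form of a putative geodesible generator $Y$. Then $\iota_Yd\lambda'=0$ kills $d\lambda'\wedge\eta^1\wedge d\varphi$, and Stokes gives $0<\int\lambda'\wedge\vol_h=0$. This is the classical exact-basic-volume obstruction: purely foliation-theoretic, one line, and independent of the lift.

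You instead pass through \cref{prop:taut-compatible}, \cref{thm:killing-reconstruction} and \cref{prop:killing-kappa-constant}. These force $\theta'=\theta+d\log|f|=0$ for a compatible Killing generator $fX$, which contradicts the nonzero radial period of $\theta=-dt$. This is valid and not circular: those results are proved earlier and do not use the axis section. It also usefully identifies the obstruction with the nonvanishing Weyl class $[\theta]$, i.e.\ the nontrivial scale homomorphism. The cost is that it relies on the elliptic and Haar-averaging construction of compatible Killing generators. It also uses the untwisted lift essentially, through $*_h\theta=d_B\beta$, whereas the paper's argument sees only the Riemannian flow.

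One small fix: on the orientation cover you need a deck element with even $m$ and $u\ne0$. Replace $g$ by $g^2$ if necessary.
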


\begin{proof}
A nonidentity element either translates the base or is a nonzero
fibre translation. Thus the action is free. Discreteness of $P$
and $\Lambda$ gives properness; compact fundamental sets in the
base and fibre give compactness. By \eqref{eq:axis-group-law},
\[
 g_{w,u,m}g_{v,0,0}g_{w,u,m}^{-1}=g_{D(u,m)v,0,0},
 \qquad D(u,m)\Lambda=\Lambda.
\]
If $u\ne0$ and an eigenline were rational,
its eigenvalue would satisfy
\[
 (-1)^m e^{\pm u}\in\QQ\cap
 \{\text{algebraic integer units}\}=\{\pm1\},
\]
which is impossible. Hence the $y$-lines are dense in the fibres.

Put $\tau_{a,b}=a\cos\varphi+b\sin\varphi$. By
\eqref{eq:axis-G-action}--\eqref{eq:axis-universal-B},
\begin{equation}\label{eq:axis-deck-covariance}
 g^*\lambda=(-1)^m\lambda,\quad
 g^*\eta^i=(-1)^m\eta^i,\quad
 g^*\tau_{a,b}=(-1)^m \tau_{a,b},\quad g^*B_{a,b}=B_{a,b}.
\end{equation}
By \eqref{eq:axis-deck-covariance}, the spinor lines descend. Set $\theta:=-dt$. Differentiating
\eqref{eq:axis-universal-coframe} gives
\[
 d\lambda=\lambda\wedge\theta,\qquad
 d\eta^i=\theta\wedge\eta^i,
\]
and hence $d(\lambda\wedge\eta^i)=0$.
Direct differentiation of the model two-form and spinors then yields
\begin{equation}\label{eq:axis-model-differentials}
\begin{aligned}
 dB_{a,b}
 &=\tau_{a,b}\vol_h,\\
 d\rho_i^{a,b}
 &=\theta\wedge\rho_i^{a,b}.
\end{aligned}
\end{equation}
Equation \eqref{eq:axis-model-differentials} is unchanged by a constant
$\operatorname{SO}(3)$ rotation, so the whole sphere is integrable. Finally
\[
 h=e^{-2t}dx^2+dt^2+d\varphi^2,\qquad
 X_0=e^{-t}\partial_y,\qquad
 g_*X_0=(-1)^m X_0
\]
give the Riemannian and orientation assertions.

On the orientation cover,
$\vol_h=d(\eta^1\wedge d\varphi)$. If a positive generator $Y$
were geodesible, its unit dual form $\lambda'$ would satisfy
$\iota_Yd\lambda'=0$. Horizontality gives
$d\lambda'\wedge\eta^1\wedge d\varphi=0$, and Stokes yields
\[
 0<\int\lambda'\wedge\vol_h
   =\int d\lambda'\wedge\eta^1\wedge d\varphi=0.
\]
The cover, and hence the original foliation, is non-taut. For
$g=\lambda^2+h$ the mean curvature is $u=\theta=-dt$.
\end{proof}

\begin{proposition}
\label{thm:axis-poisson-normalform}
Every compact triple with oriented canonical line and axis
developing image has, up to a diffeomorphism and constant rotation,
the standard ordered Poisson tensors of an admissible group whose
angular periods are even multiples of $\pi$.
\end{proposition}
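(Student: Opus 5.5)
The approach is to work on the universal cover $\widetilde M$ and build the coordinates $(x,y,t,\varphi)$ of \eqref{eq:axis-universal-coframe} directly from the axis developing map of \cref{thm:flat-riemannian-topology}. Since $L$ is oriented, \cref{prop:global-scale} gives a positive Poisson-preserving generator $X_1$ with a basic coframe $\eta_1$ satisfying \eqref{eq:flat-GT-compatibility}. Choose affine coordinates so that the omitted line is the $x$-axis, and write $z_\perp=e^{t}(\cos\varphi,\sin\varphi)$ on the universal cover of $U$. Ordered positive scalar holonomy has the form \eqref{eq:flat-axis-discrete-angle}. The functions $x\circ\operatorname{dev}$, $t$ and $\varphi$ are therefore basic on $\widetilde M$, and $d(x\circ \operatorname{dev})$, $dt$, $d\varphi$ span the transverse forms.

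Next I would renormalize the generator. The GT coframe is $\eta_1=e^{v}d\operatorname{dev}$ with $\theta=dv$. The function $f=e^{v}|z_\perp|$ is holonomy invariant, because scaling multiplies both $e^{-v}$ and $|z_\perp|$ by $a$, so $f$ descends to a positive basic function. As in the proof of \cref{prop:radial-developing-obstruction}, pass to the generator $X=f^{-1}X_1$. Its coframe is $d\operatorname{dev}/|z_\perp|$. In the coordinates above, after a constant rotation of the transverse frame, this coframe is exactly $(e^{-t}dx,\ \cos\varphi\,dt-\sin\varphi\,d\varphi,\ \sin\varphi\,dt+\cos\varphi\,d\varphi)$, since $d z_\perp/|z_\perp|$ is the rotation by $\varphi$ applied to $(dt,d\varphi)$. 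Strictly, $\eta^1$ would be $e^{-t}dx$ only up to the scaling; I would absorb this by using $x$ itself, so that $\eta^1=dx/|z_\perp|=e^{-t}dx$.

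The remaining task is to produce the leaf coordinate $y$ with $\lambda=e^t dy$, that is, a closed form $dy=e^{-t}\lambda$ with $\lambda(X)=1$. I would choose any $\lambda_0$ with $\lambda_0(X)=1$ and look for a function $y$ on $\widetilde M$ with $X y=e^{-t}$ and $\Gamma$-equivariant behaviour $y\mapsto \pm e^{\mp u}y+\mathrm{const}$. Here I expect to use the $T^2$-bundle structure of \cref{thm:flat-riemannian-topology}. The leaf closures are tori over the translation torus $\RR^2/P$, with coordinates $(t,\varphi)$. The developing image of a leaf closure varies only in $x$. Lifting to $\widetilde M$, each leaf-closure component is a plane in which $x$ is one coordinate and the leaf direction is the other. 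Taking $y$ as the complementary linear coordinate in the affine structure on these fibres, normalized so that $X=e^{-t}\partial_y$, gives global coordinates $\widetilde M\cong\RR^4$. The deck group then acts affinely on the fibres $\RR^2_{x,y}$, and the monodromy over $P$ is $A\in\operatorname{SL}(2,\ZZ)$ on the fibre lattice. It has an eigenline $\RR\partial_y$, and its eigenvalue on $\partial_x$ is $e^{u}$ by \eqref{eq:flat-axis-discrete-angle}. Hence it is hyperbolic, of the form $\operatorname{diag}(e^u,e^{-u})$ in the $(x,y)$ basis, where $e^{-u}$ is forced by determinant one. Orientability of $L$ forces the sign $+$, so $m$ is even. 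Then $\Gamma$ is a subgroup of $\mathcal G$ with $\Lambda$ the fibre lattice and $P$ the base lattice, and the Poisson tensors $X\wedge E_i$ coincide with those of $M_\Gamma$.

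The main obstacle is this last step: showing that the leaf closure fibration carries a flat affine structure in which the canonical flow is linear with the specific speed $e^{-t}$, so that $y$ exists globally and deck transformations act affinely. For the linearity I would rely on \cite[Corollary~4.3]{Asuke1997}, which gives linear flows on the fibres. For the speed, the residual freedom is $X\mapsto gX$ with $g$ basic. I would fix $g$ by the normalization above, and then solve $Xy=e^{-t}$ along the fibres, averaging over the torus fibres and using an elliptic argument analogous to \cref{prop:taut-compatible} to make the resulting fibre coordinate affine.
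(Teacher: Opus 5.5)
Your first two steps agree with the paper: the radial gauge with coframe $|z_\perp|^{-1}d\operatorname{dev}$ is exactly how \eqref{eq:axis-universal-coframe} arises there, and so is the monodromy relation $\ell A=e^{u}\ell$ with $A\in\operatorname{SL}(2,\ZZ)$. You would still need the irrationality argument giving $u=0\Rightarrow A=I$, and a reason why some $u\ne0$ occurs.

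The genuine gap is the step you yourself flag. In the fibre-affine structure, the normalized generator is $X=f^{-1}X_0$, with $X_0=e^{-t}\partial_y$ linear and $f>0$ a priori arbitrary. A fibre-affine coordinate $y'$ with $Xy'=e^{-t}$ and deck maps in $\mathcal G$ exists only if $f=c+X_0v_0$ with $c$ constant. In particular the fibre mean $F=\overline f$ must be constant on the base $\RR^2/P$.

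No choice of generator or coordinate can force this:
\begin{itemize}
\item $F$ is unchanged by leafwise reparametrizations, since the time-$w$ map of $X_0$ has fibre Jacobian $1+X_0w$.
\item Every positive $P$-periodic $F(t,\varphi)$ already gives Poisson tensors $F^{-1}X_0\wedge E_i$ with the same transverse projective frame.
\end{itemize}
So the untwisted lift, which your proposal never invokes, is indispensable. The paper uses it through \cref{prop:mean-curvature-lift}. The compatible three-form becomes $H=-\lambda\wedge *_hd_\perp f+\tau\vol_h$. Averaging the global $B$ over the torus fibres and comparing vertical parts of $d\overline B$ gives $F_t=(b_1)_\varphi$ and $F_\varphi=-(b_1)_t$. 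Hence $F$ is harmonic on the flat compact base, and therefore constant.

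Second, the remaining equation $X_0v_0=f-c$ is a cohomological equation along a dense linear flow on the fibre tori. It is a small-divisor problem, not an elliptic one, and the device of \cref{prop:taut-compatible} does not transfer. That argument needs a unit Killing field tangent to the leaves commuting with the elliptic operator, which exists only for taut flows. Axis flows are non-taut by \cref{prop:axis-group-models}.

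The paper instead solves the equation by fibrewise Fourier series using the Diophantine bound \eqref{eq:axis-Diophantine-bound}. That bound holds because the flow direction is the quadratic-irrational stable eigendirection of the hyperbolic monodromy $C$. Deck invariance then follows from uniqueness of the zero-mean solution. Finally the paper flows by $v_0/c$ and rescales $y\mapsto cy$.

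Your claim that the deck group acts on $\RR^2_{x,y}$ with constant translation parts also does not follow automatically from Asuke's bundle. The paper establishes it in four steps:
\begin{enumerate}
\item it reduces the structure group to affine maps;
\item it classifies the gluings with monodromy $(C,I)$ by $H^2(T^2;\ZZ^2_\rho)\cong\ZZ^2/(C-I)\ZZ^2$;
\item it realizes each class by the explicit decks \eqref{eq:axis-oriented-decks};
\item it matches the developing differential $\zeta=\ell\,dz+\beta$ by an equivariant shear.
\end{enumerate}
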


\begin{proof}
Place the omitted line at the first coordinate axis. Scalar
holonomy is $(x,z_\perp)\mapsto(ax+b,az_\perp)$, $a>0$.
By \cref{thm:flat-riemannian-topology}, $dt,d\varphi$ descend to
the translation-torus base of the closure fibration. Fibre
holonomy consists solely of axis translations; its character
$\ell:\ZZ^2\to\RR$ is nonzero and irrational, since
otherwise the axis coordinate would descend to a function without
critical points on a compact fibre, or its kernel foliation would
have closed leaves. The oriented leaf direction and ordered axis
direction orient each torus fibre. Hence the monodromy around a radial period $s$ satisfies
\begin{equation}\label{eq:axis-radial-monodromy}
 A\in\operatorname{SL}(2,\ZZ),\qquad \ell A=e^s\ell.
\end{equation}
Irrationality means $\ker\ell\cap\QQ^2=\{0\}$, so
\[
 s=0\ \Longrightarrow\ (A-I)\QQ^2
 \subset\ker\ell\cap\QQ^2=\{0\}
 \Longrightarrow A=I.
\]
Some $s\ne0$, since otherwise $t$ would descend to a global
function with nowhere-zero differential on the compact base.
The monodromies commute and have positive eigenvalues. By
\eqref{eq:axis-radial-monodromy}, a fixed eigenbasis puts them in the
form $\operatorname{diag}(e^s,e^{-s})$. Bounded $s$ gives bounded
integral matrices in a lattice basis; thus
\[
 \{s:\operatorname{diag}(e^s,e^{-s})\text{ occurs}\}
      =T\ZZ,\qquad T>0.
\]
The radial character $P\to T\ZZ$ is surjective. Its kernel is
primitive of rank one, so choose a basis of $P$ with radial
periods $(T,0)$. The corresponding monodromies are $(C,I)$, where
$C\in \operatorname{SL}(2,\ZZ)$ is positive hyperbolic and
$\varrho=e^T>1$. The global circle-valued angle gives their periods
\[
 (T,2\pi k),\ (0,2\pi n),\qquad
 T=\log\varrho,\quad k\in\ZZ,\quad n\ge1.
\]

The foliated torus-bundle structure group reduces to affine maps
preserving the fixed irrational line
\cite[Proposition~4.6]{NozawaRoyoPrieto2014}. This reduction preserves
the dense linear fibre foliation determined by $\ker\ell$; an
equivariant shear below will match the developing differential.
Fix its linear monodromy $\rho=(C,I)$ and denote the associated lattice, vector
and torus bundles by $\ZZ^2_\rho$, $E_\rho$ and
$T_\rho=E_\rho/\ZZ^2_\rho$. Differences between affine
gluings with this monodromy are translation-valued one-cocycles.
The exact sequence of sheaves
\[
 0\longrightarrow\ZZ^2_\rho
 \longrightarrow\mathcal C^\infty(E_\rho)
 \longrightarrow\mathcal C^\infty(T_\rho)
 \longrightarrow0
\]
and the fineness of $\mathcal C^\infty(E_\rho)$ identify their
isomorphism classes with $H^2(T^2;\ZZ^2_\rho)$.
The final cellular coboundary for the standard cell structure is
\[
 (\ZZ^2)^2\longrightarrow\ZZ^2,
 \qquad (a,b)\longmapsto(C-I)b.
\]
Consequently
\begin{equation}\label{eq:axis-twisted-Euler-class}
 H^2(T^2;\ZZ^2_\rho)=\ZZ^2/(C-I)\ZZ^2.
\end{equation}
For a representative $\mathbf c\in\ZZ^2$, set
$v=(C-I)^{-1}\mathbf c$. On
$\RR^2_z\times\RR^2_{t,\varphi}$ define
\begin{equation}\label{eq:axis-oriented-decks}
 \begin{aligned}
  \gamma_j(z,t,\varphi)&=(z+j,t,\varphi),&&j\in\ZZ^2,\\
  \tau(z,t,\varphi)&=(Cz,t+T,\varphi+2\pi k),\\
  \sigma(z,t,\varphi)&=(z+v,t,\varphi+2\pi n).
 \end{aligned}
\end{equation}
Their relations are
\begin{equation}\label{eq:axis-oriented-deck-relations}
 \tau\gamma_j\tau^{-1}=\gamma_{Cj},\qquad
 \sigma\gamma_j\sigma^{-1}=\gamma_j,\qquad
 [\tau,\sigma]=\gamma_{\mathbf c}.
\end{equation}
Equation \eqref{eq:axis-oriented-deck-relations} realizes
$[\mathbf c]$ in \eqref{eq:axis-twisted-Euler-class}. The action is
free and properly discontinuous, with compact quotient: the base
translations form a lattice and the fibre translations are
$\ZZ^2$. Replacing $v$ by $v+j$ changes $\mathbf c$ by
$(C-I)j$. Thus these models realize every affine translation
class with the prescribed monodromy, including its torsion classes.

To match the transverse structure, let $\zeta$ be its axis
developing differential on the universal-base pullback.
The form $\zeta$ is closed and basic for the fixed dense linear
flow. Density makes its coefficients independent of $z$,
while closedness gives
\begin{equation}\label{eq:axis-developing-differential}
 \zeta=\ell\,dz+\beta(t,\varphi),
\end{equation}
where $\ell$ is a constant covector and $\beta$ is a closed
base one-form. Equivariance further gives
\[
 \tau_B^*\beta=\varrho\beta,\qquad
 \sigma_B^*\beta=\beta.
\]

On the simply connected universal base, choose $\psi_0$ with
$d\psi_0=\beta$, and define
\[
\begin{aligned}
 b_\tau&:=\psi_0\circ\tau_B-\varrho\psi_0,\\
 b_\sigma&:=\psi_0\circ\sigma_B-\psi_0.
\end{aligned}
\]
The equivariance of $\beta$ makes both defects constant.
Since $\tau_B$ and $\sigma_B$ commute,
\[
 (\varrho-1)b_\sigma=0.
\]
Thus $b_\sigma=0$, because $\varrho>1$.
Replacing $\psi_0$ by
$\psi:=\psi_0+b_\tau/(\varrho-1)$ eliminates the remaining
defect and gives
\begin{equation}\label{eq:axis-shear-equivariance}
\left\{\begin{aligned}
 &d\psi=\beta,\\
& \psi\circ\tau_B=\varrho\psi,\\
& \psi\circ\sigma_B=\psi.
\end{aligned}\right.
\end{equation}

Choose an eigenvector $Z$ satisfying $CZ=\varrho Z$ and
$\ell(Z)=1$, and define the shear
\[
 \mathcal S(z,t,\varphi)
 :=(z+\psi(t,\varphi)Z,t,\varphi).
\]
By \eqref{eq:axis-shear-equivariance}, this map commutes with
every deck transformation in \eqref{eq:axis-oriented-decks},
including the affine fibre translation in $\sigma$.
It therefore descends to a diffeomorphism of the quotient.
Moreover,
\[
 \mathcal S^*(\ell\,dz)
 =\ell\,dz+d\psi
 =\zeta,
\]
and $\mathcal S$ fixes $dt$ and $d\varphi$.
Hence it identifies the model ordered transverse projective
frame with the given one.

Choose a covector $\alpha C=\varrho^{-1}\alpha$ and put
$x=\ell z$, $y=\alpha z$, with sign chosen so $X_0$ is positive.
The radial normalization $|z_\perp|^{-1}d\operatorname{dev}$ is now precisely
\eqref{eq:axis-universal-coframe}. The Poisson tensors have the form
\[
 \pi_i=f^{-1}X_0\wedge E_i,\qquad f>0,
\]
where $(X_0,E_i)$ is dual to $(\lambda,\eta)$. We prove that their
speed is constant up to a global orbit shift.

Set $\lambda_f:=f\lambda$ and $X_f:=f^{-1}X_0$.
A direct computation gives
\[
 \iota_{X_f}d\lambda_f
 =\theta-d_\perp\log f.
\]
Substituting this identity into \eqref{eq:mean-curvature-H} yields
\begin{equation}\label{eq:axis-variable-speed-lift}
 H=-\lambda\wedge *_h d_\perp f+\tau\vol_h.
\end{equation}
Haar averaging over the affine torus fibres commutes with $d$ and
with all transitions in \eqref{eq:axis-oriented-decks}. Write
$F=\overline f$ and
\[
 \overline B=\lambda\wedge\beta_0+\gamma_0,
 \qquad \beta_0=b_1\eta^1+b_2dt+b_3d\varphi.
\]
Its coefficients are base functions. Comparing the vertical parts
of $d\overline B$ and \eqref{eq:axis-variable-speed-lift} gives
\[
 *_h dF=d\beta_0-\theta\wedge\beta_0
 \quad\Longrightarrow\quad
 F_t=(b_1)_\varphi,\qquad F_\varphi=-(b_1)_t.
\]
Thus $F$ is harmonic on the compact translation base:
\[
 F_{tt}+F_{\varphi\varphi}=0.
\]
Integration by parts gives
\[
 0=-\int F(F_{tt}+F_{\varphi\varphi})
   =\int(F_t^2+F_\varphi^2).
\]
Hence $F$ is constant. Since $F>0$, we conclude that
\begin{equation}\label{eq:axis-constant-mean-speed}
 F=c>0.
\end{equation}

The stable eigendirection of $C$ is quadratic irrational in lattice
coordinates. Its constant vector $V$ consequently satisfies
\begin{equation}\label{eq:axis-Diophantine-bound}
 |j\cdot V|\ge C_0(1+|j|)^{-1},\qquad
 0\ne j\in\ZZ^2.
\end{equation}
Indeed, write $V=c_*(1,\xi)$, $c_*\ne0$, with $\xi$ quadratic
irrational. If $a_2x^2+a_1x+a_0$ is its integral minimal polynomial
and $\xi'$ its conjugate, then for $(p,q)\in\ZZ^2\setminus\{0\}$,
\[
 1\leq\bigl|a_2(p+q\xi)(p+q\xi')\bigr|
 =|a_2p^2-a_1pq+a_0q^2|,
 \qquad |p+q\xi'|\leq |p|+|q|\,|\xi'|.
\]
These inequalities give \eqref{eq:axis-Diophantine-bound}.
In local fibre coordinates
$z\in\RR^2/\ZZ^2$, define
\[
 \widehat v_0(j;t,\varphi)=
 \frac{e^t\widehat f(j;t,\varphi)}{2\pi\ii\,j\cdot V}
 \quad(j\ne0),\qquad \widehat v_0(0)=0.
\]
On a relatively compact base chart, the Fourier coefficients of
$f$ and all their base derivatives decay faster than every power
of $|j|$. Equation \eqref{eq:axis-Diophantine-bound} loses only one
power, so the series and every differentiated series converge
locally uniformly; the same estimate applies after differentiating
with respect to base coordinates. The fibrewise kernel of $X_0$
consists of constants, by density of its orbits, so the zero-mean
solution is unique. In this oriented case every deck transformation
preserves $X_0$ and $f$ and maps normalized fibre Haar measure to
normalized fibre Haar measure. The pullback of the solution is
therefore another zero-mean solution of the same equation; uniqueness
makes it deck invariant. Using \eqref{eq:axis-constant-mean-speed},
we obtain
\begin{equation}\label{eq:axis-speed-cohomological-equation}
 v_0\in C^\infty(M),\qquad X_0v_0=f-c,\qquad\overline v_0=0.
\end{equation}
For
$\Phi(p)=\operatorname{Fl}_{X_0}^{v_0(p)/c}(p)$,
\eqref{eq:axis-speed-cohomological-equation} yields
\[
 \begin{aligned}
 D\Phi(X_0)&=\left(1+\frac{X_0v_0}{c}\right)X_0
             =\frac fcX_0,\\
 D\Phi|_{TM/L}&=I,\qquad
 \Phi_*\pi_i=\frac1cX_0\wedge E_i.
 \end{aligned}
\]
Since $f/c>0$, $\Phi$ is a local diffeomorphism homotopic to the identity;
compactness and degree one make it a diffeomorphism. The final
change $y\mapsto cy$ makes the speed one and transforms the
fibre lattice by $\operatorname{diag}(1,c)$. All deck
maps retain the form \eqref{eq:axis-G-action}.
\end{proof}

\paragraph{Lift parameters and full equivalence.}

The lifting calculation also applies to nonorientable canonical lines.
\begin{lemma}\label{lem:axis-twisted-marked}
For fixed standard ordered Poisson tensors on $M_\Gamma$, the
untwisted lifts modulo common closed $B$-fields and global basic
orbit shifts form $\RR^2$, represented by
\eqref{eq:axis-universal-B}.
\end{lemma}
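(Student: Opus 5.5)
We work with the standard ordered Poisson tensors on $M_\Gamma$: characteristic form $\lambda=e^t\,dy$, basic coframe \eqref{eq:axis-universal-coframe}, $\theta=-dt$, and mean curvature $u=\iota_{X_0}d\lambda=\theta$. Then \eqref{eq:mean-curvature-H} says every compatible three-form has $H=\tau\vol_h$ with no vertical part. By \cref{prop:mean-curvature-lift}, a lift is therefore a global $B=\lambda\wedge\beta+\gamma$ solving \eqref{eq:mean-curvature-lifting-system} with right-hand side $*_h(u-\theta)=0$.

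\textbf{Reduction to averaged data.} First pass to the orientation cover (even-$m$ subgroup). There we average over the affine torus fibres; this commutes with $d$ and with the deck transformations, as in the proof of \cref{thm:axis-poisson-normalform}. We get $\bar B=\lambda\wedge\beta+\gamma$ with $\bar B-B$ closed and $\beta,\gamma$ coefficients depending only on $(t,\varphi)$. The vertical equation becomes $d\beta-\theta\wedge\beta=0$, i.e.\ $d(e^{t}\beta)=0$ after twisting. Write $\beta=b_1\eta^1+b_2\,dt+b_3\,d\varphi$ and recall that deck maps act on $\eta^1$ with the scale $e^{-T}$. Invariance then forces the closed form $e^t\beta$ to be exact up to a basic orbit shift.

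\textbf{Removing $\beta$.} Concretely, a basic orbit shift $\Phi_v$ with $v$ a base function changes $\lambda$ by $e^t$-weighted terms. Arguing as in \cref{lem:closed-characteristic-unique} (with the drift operator now $\Delta_h$ on the flat translation base), and using the harmonic-function argument from \eqref{eq:axis-constant-mean-speed}, we set $\beta=0$ modulo closed $\lambda\wedge\beta'$ and orbit shifts. The horizontal equation then reads $d\gamma=\tau\vol_h$ with $\gamma$ basic.

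\textbf{Classifying $\gamma$.} Modulo closed forms, $\gamma$ is classified by the basic class of $\tau\vol_h$, modulo the image of orbit-shift compensators $-(\Delta_\theta v)\vol_h$ as in \eqref{eq:killing-orbit-correction}. Since $\theta=-dt$, the operator $\Delta_\theta$ on the base $T^2\times$ (axis direction) acts on functions of $(t,\varphi)$ twisted by the deck weights $(-1)^m$ of \eqref{eq:axis-deck-covariance}. The cokernel computation is a Fourier analysis in $\varphi$ and the exponential behaviour in $t$. The $\varphi$-periodicity $2\pi n$ combined with the sign twist should leave exactly the modes $\cos\varphi,\sin\varphi$, since $e^{\pm \ii\varphi}$ solves the homogeneous adjoint equation; this matches $dB_{a,b}=\tau_{a,b}\vol_h$. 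The pairing with these two adjoint solutions gives a well-defined map to $\RR^2$, injective modulo shifts by the Fredholm alternative. Surjectivity is exhibited by \eqref{eq:axis-universal-B}, whose descent is \eqref{eq:axis-deck-covariance}.

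\textbf{Main obstacle.} The hard step is the precise identification of the cokernel of the twisted drift operator with a two-dimensional space. One needs the kernel of the adjoint on sign-twisted functions to be spanned by $\cos\varphi$ and $\sin\varphi$ times the appropriate power of $e^{t}$. I would attempt this by separating variables in $\varphi$ and using the maximum principle, or the integration by parts from \cref{thm:axis-poisson-normalform}, in $t$. A further check is that the nonorientable case passes to the quotient by equivariance under the even subgroup.
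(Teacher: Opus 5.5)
\textbf{Gap: the orbit-shift operator is misidentified, and the cokernel computation is left unproved.} Your architecture is right. Lifts modulo closed $B$-fields are determined by $dB=\tau\vol_h$ with $\tau$ a sign-twisted base function. Orbit shifts act on $\tau$ by an elliptic operator, the answer is $\Span\{\cos\varphi,\sin\varphi\}$, and the representatives are $B_{a,b}$. But the step that actually produces $\RR^2$ fails as written.

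You take the orbit-shift action $\tau\mapsto\tau-\Delta_\theta v$ from \eqref{eq:killing-orbit-correction}. That formula was derived for a \emph{closed} characteristic form with $d_B(*_h\theta)=0$. Here $d\lambda=\lambda\wedge\theta\ne0$ and $d(*_h\theta)=d(\eta^1\wedge d\varphi)=\vol_h$, so $q=1$. The correct computation uses the shift $\Phi_v(x,y,t,\varphi)=(x,y+e^{-t}v,t,\varphi)$ with $v$ sign-twisted. Then $\Phi_v^*\lambda=\lambda+dv+v\theta$, the compensator is $-*_h(dv+v\theta)$, and differentiating gives $\tau\mapsto\tau-\mathscr P_{\mathrm{ax}}v$ with $\mathscr P_{\mathrm{ax}}=\partial_t^2-2\partial_t+\partial_\varphi^2+1$.

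The zeroth-order term $+1$ is precisely what creates the resonance. For the operator you wrote, $\Delta_\theta e^{\pm\ii\varphi}=-e^{\pm\ii\varphi}$, so these modes are not null solutions of the adjoint. Moreover, the maximum principle would give a cokernel of constants: one-dimensional on the orientation cover, and zero on the sign-twisted space. Neither is $\RR^2$. For the same reason, the maximum-principle or integration-by-parts route you propose for your ``main obstacle'' cannot work, because the true operator has a two-dimensional kernel.

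\textbf{How to close it.} Do a Fourier computation on the even-$m$ base torus. The symbol of $\mathscr P_{\mathrm{ax}}$ is $p(\xi)=1-|\xi|^2-2\ii\xi_t$, which vanishes exactly at $(\xi_t,\xi_\varphi)=(0,\pm1)$. Away from these points, discreteness of the dual lattice gives $|p(\xi)|\ge C(1+|\xi|^2)$, so division by $p$ preserves smoothness. The adjoint has symbol $\overline{p}$, hence kernel and cokernel are both $\Span\{\cos\varphi,\sin\varphi\}$. Both modes satisfy $\tau(t+u,\varphi+\pi m)=(-1)^m\tau(t,\varphi)$, and division commutes with projection to the twisted subspace. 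This settles the nonorientable case directly, with no separate descent argument.

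Your intuition about ``$\cos\varphi,\sin\varphi$ times a power of $e^t$'' is correct in a different parametrization. In terms of the actual $y$-displacement $w=e^{-t}v$, one has $\mathscr P_{\mathrm{ax}}(e^tw)=e^t(w_{tt}+w_{\varphi\varphi})$. But $w$ is then twisted by $(-1)^me^{-u}$, so it is not a function on a compact base and again no maximum principle applies.

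\textbf{Smaller points.} Deck maps act on $\eta^1$ by $(-1)^m$, not $e^{-T}$; see \eqref{eq:axis-deck-covariance}. Your $\beta$-reduction is unnecessary, and as written it is not justified. By uniqueness of the algebraic normalizer, two lifts differ by a closed form if and only if their $\tau$ agree. You also need $\tau$ to be fibre-constant before averaging can give a closed $\bar B-B$. This follows from $d^2B=0$, which forces $X_0\tau=0$, together with density of the $y$-lines in the fibres.
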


\begin{proof}
On the orientation cover, $u=\theta$ by
\eqref{eq:axis-model-differentials}. Hence
\eqref{eq:mean-curvature-H} and $d^2=0$ give
\[
 dB=\tau\vol_h,\qquad
 0=d^2B=\lambda\wedge\Lie_{X_0}(\tau\vol_h)
 \quad\Longrightarrow\quad X_0\tau=0.
\]
By fibrewise density, $\tau$ is a base function. Deck invariance gives
\begin{equation}\label{eq:axis-twisted-functions}
 \tau(t+u,\varphi+\pi m)=(-1)^m\tau(t,\varphi),
 \qquad (u,\pi m)\in P.
\end{equation}
Every such function occurs. On the even-$m$ base cover, set
\[
 \widehat g(\xi)=\frac{\widehat\tau(\xi)}{1-\ii\xi_t},
 \qquad B=g\eta^1\wedge d\varphi;
 \qquad dB=(1-\partial_t)g\,\vol_h=\tau\vol_h.
\]
The inverse multiplier is smoothness preserving and commutes with
deck transformations, hence preserves the twisted subspace.
Moreover $\tau=\tau'$ if and only if $d(B-B')=0$.

A function $v$ satisfying \eqref{eq:axis-twisted-functions} gives
a global basic orbit shift with lift
\[
 \Phi_v(x,y,t,\varphi)
 =(x,y+e^{-t}v(t,\varphi),t,\varphi).
\]
The identity $\Phi_v^*\lambda=\lambda+dv+v\theta$ changes the
simultaneous algebraic normalizer by $-*_h(dv+v\theta)$. Since
$\theta=-dt$,
\[
 \begin{aligned}
 *_h(dv+v\theta)
 &=-(v_t-v)\eta^1\wedge d\varphi
       +v_\varphi\eta^1\wedge dt,\\
 d\bigl(*_h(dv+v\theta)\bigr)
 &=(v_{tt}-2v_t+v_{\varphi\varphi}+v)\vol_h.
 \end{aligned}
\]
Thus
\begin{equation}\label{eq:axis-marked-operator}
\begin{gathered}
 \mathscr P_{\mathrm{ax}}:=\partial_t^2-2\partial_t+
                           \partial_\varphi^2+1,\\
 \tau\longmapsto\tau-\mathscr P_{\mathrm{ax}}v.
\end{gathered}
\end{equation}
On the dual lattice of the even-$m$ base cover, the Fourier symbol is
\begin{equation}\label{eq:axis-Fourier-symbol}
 p(\xi)=1-|\xi|^2-2\ii\xi_t,\qquad
 p(\xi)=0
 \Longleftrightarrow (\xi_t,\xi_\varphi)=(0,\pm1).
\end{equation}
Both resonant modes satisfy \eqref{eq:axis-twisted-functions}.
Away from them, discreteness of the dual lattice and
$|p(\xi)|\ge |\xi|^2-1$ give
\[
 |p(\xi)|\ge C(1+|\xi|^2).
\]
Thus division by \eqref{eq:axis-Fourier-symbol} preserves smoothness
and commutes with projection to the twisted subspace. The adjoint
for the flat base measure has symbol $\overline{p(\xi)}$, so
\begin{equation}\label{eq:axis-resonant-lifting-space}
 \begin{gathered}
 \ker\mathscr P_{\mathrm{ax}}=\ker\mathscr P_{\mathrm{ax}}^*
   =\Span_{\RR}\{\cos\varphi,\sin\varphi\},\\
 \tau\equiv a\cos\varphi+b\sin\varphi
                   \pmod{\im\mathscr P_{\mathrm{ax}}}.
 \end{gathered}
\end{equation}
The representatives follow from \eqref{eq:axis-model-differentials}.
\end{proof}

Let $\mathcal N$ consist of
\begin{equation}\label{eq:axis-N-action}
 N_{s,\epsilon,w_0,\delta}(x,y,t,\varphi)
 =\bigl(s\epsilon x+x_0,s^{-1}y+y_0,
             t+\log|s|,\epsilon\varphi+\delta\bigr),
\end{equation}
where $s\ne0$, $\epsilon=\pm1$, $w_0=(x_0,y_0)\in\RR^2$ and
$\delta\in\RR$.
Writing $B_{s,\epsilon}=\operatorname{diag}(s\epsilon,s^{-1})$,
its action on the deck group is
\begin{equation}\label{eq:axis-deck-conjugation}
 N g_{w,u,m}N^{-1}
  =g_{B_{s,\epsilon}w+(I-D(u,m))w_0,\,u,\,\epsilon m}.
\end{equation}
In particular $\delta$ is independent of the deck-group conjugacy.

\begin{lemma}
\label{lem:axis-all-poisson-equivalences}
Every diffeomorphism of axis models preserving their Poisson
triples up to constant $\operatorname{SO}(3)$ rotation is an $N\in\mathcal N$
conjugating their deck groups, composed with a global twisted
basic orbit shift.
\end{lemma}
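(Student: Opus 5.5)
We lift the given diffeomorphism $F:M_\Gamma\to M_{\Gamma'}$ to the universal covers, obtaining $\widetilde F:\mathcal U\to\mathcal U$ that is equivariant for the deck groups through some isomorphism $\Gamma\to\Gamma'$. By \cref{thm:canonical-line}, $\widetilde F$ preserves the canonical line. By \cref{thm:classification}, in the form \eqref{eq:weyl-gauge}, it therefore has the shape $(p,\text{transverse})\mapsto(\text{leaf reparametrization},\psi)$, and the transverse map satisfies $\psi^*\eta'=a^{-1}R\eta$ for a nowhere-zero function $a$ and a constant $R\in\operatorname{SO}(3)$.

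First we determine $\psi$. Both coframes are the normalized developing differentials of the axis structure, so $\psi^*\theta=\theta-d\log|a|$ with $\theta=-dt$ makes $\psi$ a conformal diffeomorphism of the flat transverse geometry. We pass to the Euclidean closed coframe $e^{t}\eta$, for which $d(e^t\eta^i)=0$ by \eqref{eq:flat-GT-compatibility}. In these coordinates $\psi$ is a similarity of $\RR^3$ up to a constant rotation. Since $\psi$ preserves the developing image, it must preserve the omitted axis. Hence it acts as $x\mapsto s\epsilon x+x_0$ along the axis, and as a scaled rotation or reflection on $z_\perp$. The ordered frame condition with $R$ constant restricts the rotational part. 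Because $\eta^1=e^{-t}dx$ is fixed up to $\pm$ and $(\eta^2,\eta^3)$ rotates with $\varphi$, the admissible maps are $t\mapsto t+\log|s|$ and $\varphi\mapsto\epsilon\varphi+\delta$. Reflections $\epsilon=-1$ are allowed because they can be compensated by $R$ acting on $(\eta^2,\eta^3)$ combined with the sign of $s$ on $\eta^1$.

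Next we determine the leaf component. We write it as $y\mapsto s^{-1}y+y_0+e^{-t}v$ after normalizing the Poisson speed. Comparing $\pi_i=X_0\wedge E_i$ forces the leaf speed factor to equal $a$, which fixes the $s^{-1}$ scaling on $y$ as in \eqref{eq:axis-N-action}. The remaining freedom is a leafwise shift by a function constant along the leaves. Since leaves are dense in the torus fibres (\cref{prop:axis-group-models}), this function descends to the base, so it is a twisted basic orbit shift $\Phi_v$ as in \cref{lem:axis-twisted-marked}, with $v$ satisfying \eqref{eq:axis-twisted-functions}. We factor $\widetilde F=N\circ\Phi_v$ with $N\in\mathcal N$, and equivariance then reads $N\Gamma N^{-1}=\Gamma'$ via \eqref{eq:axis-deck-conjugation}.

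The main obstacle is making rigorous the claim that a transverse conformal map preserving the ordered coframe up to constant rotation is affine of the form \eqref{eq:axis-N-action}. The difficulty is that $\psi$ is a priori only defined on the leaf space of the universal cover, and the developing map might fail to be injective because it is a covering onto the axis complement. I would handle this by lifting to the universal cover of the axis complement, where $(x,t,\varphi)$ are global coordinates. There, $\psi^*(e^{t}\eta)$ being a constant conformal multiple of $R\,e^{t}\eta$ gives a Liouville-type rigidity: a flat conformal map in dimension three is Möbius, and preserving the line at infinity together with the axis makes it a similarity. Deck equivariance then determines $x_0,y_0$ only up to the conjugation freedom already recorded in \eqref{eq:axis-deck-conjugation}.
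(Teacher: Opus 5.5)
\textbf{Gap in the leaf component.} Your transverse step follows the paper's outline, but the treatment of the leaf component has a genuine gap.

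On $\mathcal U$ you obtain $y'=s^{-1}y+v(x,t,\varphi)$, where $v$ is a function on the universal cover, not on $M_\Gamma$. ``Constant along the leaves'' there only means $\partial_yv=0$. Density of the canonical leaves in the torus fibres is a statement about the quotient. It cannot be applied to $v$ until you know how $v$ transforms under the deck groups, and that is exactly the content of the lemma.

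What equivariance $\widetilde F\circ\gamma=\gamma'\circ\widetilde F$ actually gives, for $\gamma=g_{w,u,m}\in\Gamma$ with image $\gamma'=g_{w',u,\epsilon m}\in\Gamma'$, is only an \emph{affine} twisted relation $v\circ\bar\gamma=(-1)^me^{-u}v+k_\gamma$. Here $\bar\gamma$ is the induced action on $(x,t,\varphi)$, and $k_\gamma=w'_2-s^{-1}w_2$ is a constant that need not vanish. Two facts must be extracted from this relation, and your proposal supplies neither.

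\emph{First, $v$ does not depend on $x$.} The lattice translations, whose $x$-projection is dense, make $\partial_xv$ a constant $h$. A radial element with $u\ne0$ then forces $h(e^u-e^{-u})=0$. Note that the fibre shear $y\mapsto y+hx$ preserves the Poisson tensors, is independent of $y$, and is compatible with the lattice translations (it merely shears $\Lambda$). So no density argument can exclude it. Only the hyperbolic radial monodromy does, because conjugation by the shear takes $D(u,m)$ out of $\mathcal G$.

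\emph{Second, the constant defects are a coboundary.} Once $v$ is independent of $x$, the constants $k_\gamma$ vanish on $\Lambda$, factor through $P$, and form a cocycle for the character $\chi(u,\pi m)=(-1)^me^{-u}$. Commutativity of $P$ gives $(1-\chi(q))k_p=(1-\chi(p))k_q$. A radial period $p_0$ with $\chi(p_0)\ne1$ then yields $k_p=(1-\chi(p))c_0$. This is what determines your unspecified $y_0$, namely $y_0=c_0$. After that, $v=c_0+e^{-t}v_1$ with $v_1$ satisfying \eqref{eq:axis-twisted-functions}. A constant shift is not itself a twisted basic orbit shift, so this cocycle trivialization cannot be skipped.

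As written, your final sentence is circular. You infer $N\Gamma N^{-1}=\Gamma'$ from a factorization $\widetilde F=N\circ\Phi_v$, but the factor $\Phi_v$ is only known to descend to $M_\Gamma$ once $N$ already conjugates the deck groups.

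\textbf{A smaller point on the transverse step.} The step you flag as the main obstacle is elementary, and the reason you give for it would not suffice.

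In the developing coordinates $z=(x,e^t\cos\varphi,e^t\sin\varphi)$ one has $\pi_i=\partial_y\wedge\partial_{z_i}$. Comparing the ordered tensors gives $d(z'\circ\widetilde F)=s\,R\,dz$, with $R$ constant and $s$ a priori a function. Since $R^{-1}d(z'\circ\widetilde F)=s\,dz$ is closed, $ds\wedge dz_k=0$ for every $k$, so $s$ is constant. No appeal to Liouville's theorem is needed.

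Axis preservation alone would not force a similarity: an inversion centred at a point of the axis is a conformal diffeomorphism of the axis complement. It is the constancy of $R$ that excludes such maps. Axis preservation then follows because the resulting affine bijection of $\RR^3$ carries one developing image onto the other, since the inverse diffeomorphism lifts as well.
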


\begin{proof}
Use local developing coordinates
$z=(x,e^t\cos\varphi,e^t\sin\varphi)$, so
$\pi_i=\partial_y\wedge\partial_{z_i}$.
Write the rotation convention as
$F_*\pi_i=\sum_jR_{ji}\pi'_j$.
Preservation of the common line makes the target developing coordinate
independent of $y$, and comparison of the ordered triples gives
\begin{equation}\label{eq:axis-Poisson-Jacobian}
 z'=F_\perp(z),\qquad (\partial_y y')DF_\perp=R,\qquad
 DF_\perp=s(z)R,\qquad R\in\operatorname{SO}(3),
\end{equation}
where $s=(\partial_y y')^{-1}$ is nowhere zero and independent of $y$.
Commuting mixed
partials in \eqref{eq:axis-Poisson-Jacobian} and multiplying by
$R^{-1}$ gives
\[
 (\partial_j s)e_k=(\partial_k s)e_j\quad(j\ne k),
 \qquad ds=0.
\]
Therefore, on the connected universal cover,
\[
 z'=sRz+d_0,\qquad y'=s^{-1}y+v(x,t,\varphi).
\]
The model space $\mathcal U$ is simply connected, so the original
diffeomorphism and its inverse lift to inverse diffeomorphisms between
the universal covers. The developing map on each cover has image
$\RR^3$ with the corresponding axis removed. Hence the affine map
$z\mapsto sRz+d_0$ maps the source axis complement bijectively onto
the target axis complement. Since it is a bijection of $\RR^3$, it
maps the omitted source axis onto the omitted target axis. Thus
$Re_1=\epsilon e_1$, $d_0$ lies on that axis, and its polar lift gives
the $x,t,\varphi$ part of
\eqref{eq:axis-N-action}.

For $(a,b)\in\Lambda$ with image $(a',b')\in\Lambda'$, lattice
equivariance reads
\[
 v(x+a,t,\varphi)-v(x,t,\varphi)=b'-s^{-1}b.
\]
The dense $x$-projection of $\Lambda$ implies
$\partial_xv=h(t,\varphi)$; the same equation with $a\ne0$
makes $h$ constant. Intertwining a radial deck element then gives
\[
 h(e^u-e^{-u})=0,\qquad u\ne0,
 \qquad v=v_0(t,\varphi).
\]

Put $\chi(u,\pi m)=(-1)^m e^{-u}$. The remaining equivariance is
\[
 v_0(b+p)=\chi(p)v_0(b)+k_p,
 \qquad b=(t,\varphi),\quad p\in P.
\]
The diagonal fibre map already identifies the translation lattices,
so the defects factor through $P$, including for a nonsplit Euler
extension. Commutativity of $P$ gives
\[
 (1-\chi(q))k_p=(1-\chi(p))k_q.
\]
Choose a nonzero radial period $p_0$. Since $\chi(p_0)\ne1$, define
$c_0:=k_{p_0}/(1-\chi(p_0))$.
The preceding identity then gives
\begin{equation}\label{eq:axis-affine-defect-trivialization}
 k_p=(1-\chi(p))c_0
 \qquad\text{for all }p\in P.
\end{equation}
Writing $v_0=c_0+e^{-t}v_1$, the remaining equivariance condition
becomes
\[
 v_1(b+p)=(-1)^m v_1(b),
 \qquad p=(u,\pi m)\in P.
\]
By \eqref{eq:axis-affine-defect-trivialization}, the affine part
conjugates the deck groups and the remainder is a global twisted
basic orbit shift.
\end{proof}

\begin{proposition}
\label{thm:axis-full-classification}
Every untwisted ordered pure type-one generalized hypercomplex
structure in the compact axis branch is equivalent, under a
diffeomorphism, a constant $\operatorname{SO}(3)$ rotation and a common closed
$B$-field, to
\[
 (M_\Gamma,\mathcal J_{\Gamma,r,0}),
 \qquad \Gamma<\mathcal G\text{ admissible},\quad r\ge0.
\]
Two such models are equivalent exactly when
\begin{equation}\label{eq:axis-full-equivalence}
 r=r',\qquad N\Gamma N^{-1}=\Gamma'
 \quad\text{for some }N\in\mathcal N.
\end{equation}
This includes nonorientable canonical lines and comparisons
between different finite-cover presentations.
\end{proposition}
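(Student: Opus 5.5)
The proof has three stages. First reduce to the oriented case, then normalize the lift, and finally compare models using \cref{lem:axis-all-poisson-equivalences}.

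\emph{Existence of a model.} Given a compact structure in the axis branch, pass to the orientation double cover of $L$. By \cref{thm:axis-poisson-normalform}, after a diffeomorphism and a constant rotation, the ordered Poisson tensors upstairs are the standard ones of an admissible group $\Gamma_+$ whose angular periods are even multiples of $\pi$. The deck involution $\iota$ of the double cover preserves the ordered Poisson tensors and reverses $L$. By \cref{lem:axis-all-poisson-equivalences}, applied with $R=\Id$, $\iota$ equals some $N\in\mathcal N$ composed with a twisted basic orbit shift. The orbit shift is removed using its cohomological description in the proof of that lemma. Because $\iota$ reverses $L$, the element $N$ must have $s<0$, and preservation of the ordered frame forces $\epsilon=1$. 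I will then conjugate within $\mathcal N$ so that $N$ takes the form $g_{w,u,m}$ with $m$ odd; the conditions $\iota^2\in\Gamma_+$ and $N^2\in\Gamma_+$ should allow this normalization. Adjoining $N$ to $\Gamma_+$ gives an admissible $\Gamma$ with $M\cong M_\Gamma$, which carries the standard Poisson tensors.

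\emph{Normalizing the lift.} \cref{lem:axis-twisted-marked} puts the lift in the form $\mathcal J_{\Gamma,a,b}$ modulo closed $B$-fields and orbit shifts. Now apply the angular rotation $\varphi\mapsto\varphi+\delta$ from $\mathcal N$, with $s=1$, $\epsilon=1$, $w_0=0$. By \eqref{eq:axis-deck-conjugation} it commutes with $\Gamma$, and it sends $\tau_{a,b}$ to a rotated pair. Choosing $\delta$ appropriately makes $(a,b)=(r,0)$ with $r=\sqrt{a^2+b^2}\ge0$.

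\emph{Equivalence criterion.} Sufficiency is immediate. An $N$ with $N\Gamma N^{-1}=\Gamma'$ descends to a map $M_\Gamma\to M_{\Gamma'}$. By \eqref{eq:axis-N-action} it pulls back $\tau_{r,0}$ to $s^{-1}$ times a rotated, possibly reflected, $\tau$; I expect this mismatch to be absorbed by a constant rotation of the ordered triple. If an exact match fails, the remaining discrepancy is corrected by an orbit shift together with a closed $B$-field.

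For necessity, lift an equivalence to the universal covers. \cref{lem:axis-all-poisson-equivalences} writes it as $N\circ(\text{orbit shift})$ with $N\Gamma N^{-1}=\Gamma'$. It remains to show that $r$ is invariant. I would track how $N$ and orbit shifts act on the resonant class $(a,b)\in\ker\mathscr P_{\mathrm{ax}}^*$ from \eqref{eq:axis-resonant-lifting-space}. Orbit shifts change $\tau$ only by an element of $\im\mathscr P_{\mathrm{ax}}$. The map $N$ acts on $(a,b)$ by the rotation or reflection coming from $\delta$ and $\epsilon$, combined with the scaling $s$ and the weight of $\vol_h$. The key check is that under the normalized coframe this combined action is an isometry of $\RR^2$, so $|(a,b)|$ is preserved.

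I expect the main obstacle to be this bookkeeping. One must verify that the scale $s$ cancels against the change of the normalizing generator, which was fixed to unit speed in \cref{thm:axis-poisson-normalform}. One must also verify that the constant $\operatorname{SO}(3)$ rotation, which must fix $e_1$ up to sign, acts on the class $(a,b)$ only through an orthogonal map. The nonorientable descent step, producing an odd-$m$ element of $\mathcal G$, is the secondary difficulty.
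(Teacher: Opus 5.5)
There is a genuine gap in the equivalence criterion. Beyond \cref{lem:axis-all-poisson-equivalences} and \cref{lem:axis-twisted-marked}, the only real content is how $N\in\mathcal N$ acts on the resonant class $(a,b)$. You leave this as an expectation, and where you do state it, in the sufficiency paragraph, it is wrong: there is no factor $s^{-1}$. Since $\eta^1=e^{-t}dx$ and $N$ shifts $t$ by $\log|s|$, one has $N^*\eta^1=\operatorname{sgn}(s)\,\epsilon\,\eta^1$ and $N^*d\varphi=\epsilon\,d\varphi$. Hence
\[
 N^*B_{a',b'}=\operatorname{sgn}(s)\bigl(a'\cos(\epsilon\varphi+\delta)+b'\sin(\epsilon\varphi+\delta)\bigr)\,\eta^1\wedge d\varphi ,
\]
which is an orthogonal action on $(a',b')$. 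The scale cancels through the $e^{-t}$ weight, not through the speed normalization.

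Your fallback for a residual mismatch cannot work. By \eqref{eq:axis-marked-operator} and \eqref{eq:axis-resonant-lifting-space}, orbit shifts and closed $B$-fields leave the class in $\RR^2$ unchanged; that is exactly why $r$ is an invariant, so a change of norm could never be corrected that way. The constant rotation does not act on the class either: $\vol_h$ is $\operatorname{SO}(3)$-invariant, and only $N^*\tau$ and $N^*\vol_h=\operatorname{sgn}(s)\vol_h$ enter.

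With the displayed formula, both directions close as in the paper. Necessity follows as you outline. For sufficiency, \eqref{eq:axis-deck-conjugation} is independent of $\delta$, so you may take $\delta\in\{0,\pi\}$ to make $N^*B_{r,0}=B_{r,0}$ exactly. The change of ordered Poisson frame is then absorbed by the constant rotation.

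Your existence argument differs from the paper's and is sound once two points are fixed. The paper identifies the full structure on the orientation cover with $\mathcal J_{\Gamma_+,a,b}$ and computes the ordered Courant automorphism group as $V\times Q_\Gamma$. It then uses that finite subgroups have zero $B$-compensators to descend the normalizing equivalence. You instead descend only the Poisson normal form and then apply \cref{lem:axis-twisted-marked} directly on the nonorientable $M_\Gamma$. This is legitimate, because that lemma already works with $(-1)^m$-twisted functions, and it avoids any $B$-field descent. The paper's route additionally yields the automorphism group reused in \cref{cor:axis-all-finite-quotients}.

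The two fixes are as follows.
\begin{enumerate}
\item The proof of \cref{lem:axis-all-poisson-equivalences} only absorbs the constant $c_0$. The remaining shift $y\mapsto y+e^{-t}v_1$ is a genuine Poisson automorphism of $M_{\Gamma_+}$, so it is not removed by that lemma's cohomological step. Remove it by conjugating $\iota$ with the half-shift by $e^{-t}v_1/2$. The relation $\iota^2=\Id$ gives $N^2\in\Gamma_+$ and $N^{-1}\cdot v_1+v_1=0$, because a shift $y\mapsto y+e^{-t}w$, with $w$ periodic under a nonzero radial period, is affine only when $w=0$.
\item No conjugation within $\mathcal N$ is needed. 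With $R=\Id$, the developing relation $z'_\perp=s z_\perp$ forces $\epsilon=1$, and for $s<0$ also $\delta\equiv\pi\pmod{2\pi}$. So $N=g_{w,\log|s|,m}$ with $m$ odd already lies in $\mathcal G$.
\end{enumerate}
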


\begin{proof}
For oriented canonical line, existence follows from
\cref{thm:axis-poisson-normalform,lem:axis-twisted-marked}.
We prove descent from the orientation cover by controlling finite
ordered Courant actions, including their closed $B$-components.

For an admissible group, put
\[
 \mathcal P_\Lambda
 =\{(u,\pi m):D(u,m)\Lambda=\Lambda\}.
\]
In a lattice basis, bounded $u$ gives only finitely many integral
matrices $D(u,m)$, since $m$ enters only through its parity.
Thus $\mathcal P_\Lambda$ is discrete and contains the full lattice
$P$, so $[\mathcal P_\Lambda:P]<\infty$.
Only finitely many base components of
$Q_\Gamma=N_{\mathcal G}(\Gamma)/\Gamma$ remain.
Two normalizer elements with the same base class modulo $P$ can be
made to have the same base period by multiplication by an element of
$\Gamma$. Their quotient is then a pure translation $g_{p,0,0}$.
Fix
$D_0=D(u_0,m_0)$ from an element of $\Gamma$ with $u_0\ne0$.
Within one component, differences of normalizing translations satisfy
\[
 (I-D_0)p\in\Lambda,\qquad
 p\bmod\Lambda\in (I-D_0)^{-1}\Lambda/\Lambda,
\]
and
\[
 \#\bigl((I-D_0)^{-1}\Lambda/\Lambda\bigr)
       =|\det(I-D_0)|<\infty.
\]
Hence $Q_\Gamma$ is finite.

An ordered Poisson automorphism has $R=I$ in
\cref{lem:axis-all-poisson-equivalences}.
Its affine part belongs to $\mathcal G$: $\epsilon=1$ and
$\delta\equiv0$ or $\pi\pmod{2\pi}$ according as $s>0$ or $s<0$.
These affine maps fix every $\mathcal J_{a,b}$ exactly. By
\eqref{eq:axis-marked-operator} and \eqref{eq:axis-resonant-lifting-space},
the residual shift of an ordered Courant automorphism belongs to
\[
 V=\ker\mathscr P_{\mathrm{ax}}
   =\Span_{\RR}\{\cos\varphi,\sin\varphi\}.
\]
Its closed $B$-compensator is unique by \cref{prop:linear-B-normal}. The shift action is
faithful and meets $Q_\Gamma$ trivially: a lift of a common element
would be a constant fibre translation, whereas
$e^{-t}(A\cos\varphi+B\sin\varphi)$ is constant only when $A=B=0$.
For $g=g_{w,u,m}\in\mathcal G$ and $v\in V$, direct substitution gives
\[
 g\Phi_vg^{-1}=\Phi_{g\cdot v},\qquad
 (g\cdot v)(t,\varphi)
 =(-1)^m v(t-u,\varphi-\pi m)=v(t,\varphi).
\]
Uniqueness of the compensator gives the direct-product decomposition
\begin{equation}\label{eq:axis-ordered-automorphisms}
 \operatorname{Aut}_{\mathrm{Cour,ord}}
       (\mathcal J_{\Gamma,a,b})
 \cong V\times Q_\Gamma
 \cong\RR^2\times Q_\Gamma.
\end{equation}
Every finite subgroup has zero projection to $\RR^2$, hence lies
in $Q_\Gamma$ with zero $B$-compensators.

Apply this to the deck involution after identifying the canonical
orientation cover with a normal form. Its affine lifts generate
an admissible finite-index supergroup of $\Gamma$ in $\mathcal G$.
The normalizing Courant equivalence is equivariant and descends,
including its closed $B$-field. This proves existence also for
nonorientable canonical line.

Finally set $\zeta=\operatorname{sgn}s$. For
$N\in\mathcal N$, $N^*\lambda=\zeta\lambda$ and
$N^*\eta=\zeta R\eta$ for a constant $R\in \operatorname{SO}(3)$, while
\begin{equation}\label{eq:axis-amplitude-action}
 N^*B_{a',b'}=
 \zeta\bigl(a'\cos(\epsilon\varphi+\delta)
             +b'\sin(\epsilon\varphi+\delta)\bigr)
                    \eta^1\wedge d\varphi.
\end{equation}
Equation \eqref{eq:axis-amplitude-action} acts orthogonally on the
parameter plane. By \eqref{eq:axis-marked-operator} and
\eqref{eq:axis-resonant-lifting-space}, residual shifts do not change
the marked class. Thus equivalence implies \eqref{eq:axis-full-equivalence}.
Conversely, \eqref{eq:axis-deck-conjugation} is independent of $\delta$,
whereas \eqref{eq:axis-amplitude-action} varies transitively on each
parameter circle as $\delta$ varies. Equal radii therefore suffice.
\end{proof}

\begin{corollary}
\label{cor:axis-all-finite-quotients}
Every finite subgroup of the ordered Courant automorphism group
of an axis model lies in $Q_\Gamma$ and acts freely. Its
quotients correspond to normal finite-index supergroups
$\Gamma\triangleleft\Gamma'<\mathcal G$. More generally a
structure-preserving finite covering between models $(\Gamma,r)$ and $(\Gamma',r')$ exists
up to the stated equivalences exactly when $r=r'$ and
$N\Gamma N^{-1}\subset\Gamma'$ has finite index for some
$N\in\mathcal N$; that index is the degree, and normality is
exactly regularity.
\end{corollary}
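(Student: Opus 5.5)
The plan is to mirror the nonflat finite-cover criterion, \cref{cor:nonflat-finite-cover-criterion}, using the axis automorphism structure already computed in the proof of \cref{thm:axis-full-classification}.

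\emph{Finite subgroups.} Let $G$ be a finite subgroup of the ordered Courant automorphism group of $(M_\Gamma,\mathcal J_{\Gamma,a,b})$. By \eqref{eq:axis-ordered-automorphisms} this group is $V\times Q_\Gamma$ with $V\cong\RR^2$. The projection of $G$ to $\RR^2$ is a finite subgroup of a vector group, hence trivial. So $G\subset Q_\Gamma$, and its closed $B$-compensators are zero. Each element of $Q_\Gamma=N_{\mathcal G}(\Gamma)/\Gamma$ lifts to some $g\in\mathcal G$ normalizing $\Gamma$. Let $\Gamma'\subset N_{\mathcal G}(\Gamma)$ be the preimage of $G$. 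Then $\Gamma'$ is normal over $\Gamma$ with index $|G|$. It is discrete because $\Gamma$ has finite index in it, and its lattices $\Lambda'$ and $P'$ contain $\Lambda$ and $P$ with finite index, so they remain full lattices. Thus $\Gamma'$ is admissible. The paper already shows that every nonidentity element of $\mathcal G$ acts freely on $\mathcal U$, so $G=\Gamma'/\Gamma$ acts freely on $M_\Gamma$. The quotient is $M_{\Gamma'}$. Since the affine maps in $\mathcal G$ fix every $\mathcal J_{a,b}$ exactly, the descended structure is $\mathcal J_{\Gamma',a,b}$. Conversely, any normal finite-index supergroup $\Gamma\triangleleft\Gamma'<\mathcal G$ is admissible by the same lattice argument and gives a finite ordered quotient. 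This establishes the stated correspondence; \cref{lem:finite-Courant-descent} covers the possibility of closed $B$-components.

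\emph{Coverings.} Let $F:M_\Gamma\to M_{\Gamma'}$ be a structure-preserving finite covering, taken up to a diffeomorphism, a constant rotation and a closed $B$-field. Lift $F$ to the simply connected model $\mathcal U$. The lift preserves the Poisson triples up to a rotation, so \cref{lem:axis-all-poisson-equivalences} writes it as $N\in\mathcal N$ composed with a twisted basic orbit shift. Using the proof of \cref{thm:axis-full-classification}, I will absorb the shift: in marked-class form it contributes only an element of $\ker\mathscr P_{\mathrm{ax}}$, which leaves the class unchanged, and its compensator is unique by \cref{prop:linear-B-normal}. The remaining map $N$ is equivariant, which gives $N\Gamma N^{-1}\subset\Gamma'$. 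Finiteness of the covering makes this inclusion finite-index. By \eqref{eq:axis-amplitude-action}, $N$ acts orthogonally on the lift parameters, which forces $r=r'$. Conversely, if $r=r'$ and $N\Gamma N^{-1}\subset\Gamma'$ has finite index, then $N$ descends to a covering $M_\Gamma\to M_{N\Gamma N^{-1}}\to M_{\Gamma'}$. Choosing $\delta$ as at the end of the proof of \cref{thm:axis-full-classification} matches the parameter circle. The statements on degree and regularity then follow from the subgroup correspondence for connected coverings of the aspherical quotient.

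\emph{Main obstacle.} The delicate step is the shift absorption in the covering direction. A residual orbit shift on the source, satisfying the twisted periodicity \eqref{eq:axis-twisted-functions} for $\Gamma$, need not be $\Gamma'$-twisted. So I cannot directly compose it away on the target. I would first reduce to the shift-free map $N$ on $\mathcal U$. I would then check that the compensated remainder is a Courant automorphism of the source model, hence lies in $V$, and that it commutes with $\mathcal G$ by the computation $g\Phi_vg^{-1}=\Phi_{g\cdot v}$. Once that holds, it can be discarded without affecting the equivariance of $N$.
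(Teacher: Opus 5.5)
Your proposal is correct and follows essentially the paper's own argument: finite subgroups are forced into $Q_\Gamma$ by \eqref{eq:axis-ordered-automorphisms}, freeness comes from the free action of $\mathcal G$ on $\mathcal U$, and coverings are handled by lifting, applying \cref{lem:axis-all-poisson-equivalences}, and using marked-class invariance together with \eqref{eq:axis-amplitude-action} to get $r=r'$, with $\delta$ adjusted in the converse. The obstacle you flag does not actually arise. Apply the lemma, as the paper does, to the lift $\widetilde F$ regarded as a diffeomorphism onto the model of the admissible finite-index subgroup $\widetilde F\Gamma\widetilde F^{-1}\subset\Gamma'$; it then yields a shift that is twisted for $\Gamma$ and hence commutes with $\Gamma$, so $N\Gamma N^{-1}=\widetilde F\Gamma\widetilde F^{-1}$ directly and the shift is absorbed on the source, which makes your check that it lies in $V$ true but unnecessary.
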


\begin{proof}
A fixed point of an affine class modulo $\Gamma$ would give a
nonidentity element of $\mathcal G$ fixing a point of $\mathcal U$,
which is impossible. The normal-supergroup statement follows.
A structure-preserving finite covering lifts to a diffeomorphism of
universal covers. Applying \cref{lem:axis-all-poisson-equivalences}
to the corresponding finite-index target subgroup and using
\eqref{eq:axis-resonant-lifting-space} and
\eqref{eq:axis-amplitude-action} gives $r=r'$ and the stated
subgroup inclusion. Conversely such an inclusion induces the
covering, with degree equal to its index; it is regular exactly
when the subgroup is normal.
Descent of the untwisted triple is also a special case of
\cref{lem:finite-Courant-descent}.
Moreover, finite ordered subgroups up to ordered Courant conjugacy
are exactly the subgroup conjugacy classes of $Q_\Gamma$, by
\eqref{eq:axis-ordered-automorphisms}.
\end{proof}

Before normalization, an equivalence between speeds $c,c'>0$
has fibre linear part $\operatorname{diag}(s\epsilon,c/(c's))$, so
\[
 c\,\operatorname{covol}\Lambda
       =c'\,\operatorname{covol}\Lambda'.
\]
At speed one this is the lattice area. The group $\Gamma$ also
retains angular winding and the torsion class
\eqref{eq:axis-twisted-Euler-class}. The stated group orbits
classify the structures; no smooth orbit space or shorter numerical
enumeration is asserted.

For the finite-cover topology, use \eqref{eq:axis-oriented-decks}
on the orientation cover. Its monodromy is $(C,I)$, with $C$
positive hyperbolic and $v=(C-I)^{-1}\mathbf c\in\QQ^2$.
Choose $q\ge1$ with $qv\in\ZZ^2$. The angular generator on the
corresponding finite cover may be taken as
\[
 \gamma_{-qv}\sigma^q(z,t,\varphi)
   =(z,t,\varphi+2\pi qn).
\]
A linear change of base coordinates removes the oblique period
of $\tau$. The resulting cover is $N_C\times S^1$, where $N_C$
is the mapping torus of $C\in\operatorname{SL}(2,\ZZ)$ with
$\operatorname{tr}C>2$. On this cover the canonical foliation is
the product with $S^1$ of the classical hyperbolic torus-bundle flow
studied by Carri\`ere \cite[Example~I.D.6 and \S III.B]{Carriere1984};
see also \cite[Example~4.10]{NozawaRoyoPrieto2014}.
Conversely, $v=k=0$ and $n=1$ realize every such covering model in
\eqref{eq:axis-oriented-decks}.
\subsection{Torus and circle-bundle parameters}
\label{sec:flat-global-data}

For the two taut flat branches, parameter orbits classify structures;
a specified covering also retains its embedded fundamental subgroup.

\begin{proposition}\label{thm:euclidean-lattice-moduli}
On $V=\RR_t\oplus\RR_x^3$, equip each quotient by a
full lattice $\Gamma$ with the constant spinors
$(dx^j-\ii\,dx^k)e^{\ii\,dt\wedge dx^i}$.
Two such triples are equivalent if and only if $A\Gamma=\Gamma'$
for some
\begin{equation}\label{eq:euclidean-lattice-group}
 A\in G:=\left\{\begin{pmatrix}a&b\\0&a^{-1}R\end{pmatrix}:
 a\ne0,\ b\in(\RR^3)^*,\ R\in\operatorname{SO}(3)\right\}.
\end{equation}
Every diffeomorphism occurring in an equivalence is affine.
The moduli set is
\begin{equation}\label{eq:euclidean-double-coset}
 G\backslash\operatorname{GL}(4,\RR)/\operatorname{GL}(4,\ZZ).
\end{equation}
Nonclosed leaves correspond to $\Gamma\cap\RR\partial_t=0$.
A structure-preserving finite covering from $V/\Gamma_u$ to
$V/\Gamma_d$ exists precisely when
\[
 A\Gamma_u\subset\Gamma_d\qquad\text{for some }A\in G.
\]
Its degree is $[\Gamma_d:A\Gamma_u]$, and every such covering is
regular.
\end{proposition}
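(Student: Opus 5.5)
We work on the universal cover $V$, where the constant model has coframe $\eta=dx$, $\theta=0$, $\kappa=0$ and canonical generator $\partial_t$. Take an equivalence between $V/\Gamma$ and $V/\Gamma'$ and lift it to a diffeomorphism $F$ of $V$ intertwining $\Gamma$ and $\Gamma'$. By \cref{thm:classification}, applied globally on the simply connected cover, it has the form
\[
 F(t,x)=\bigl(a(x)t+v(x),\psi(x)\bigr),\qquad \psi^*dx'=a^{-1}R\,dx .
\]
Taking $d$ of the second relation gives $d(a^{-1})\wedge R\,dx=0$. Since this holds for three independent one-forms, $d(a^{-1})=0$, so $a$ is constant. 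Then $\psi$ has constant Jacobian $a^{-1}R$, hence $\psi(x)=a^{-1}Rx+x_0$.

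The time shift $u=v/a$ satisfies $\mathscr P_\theta u=\Delta u=0$ on $\RR^3$, since $\theta=0$ and $q=0$. To show $u$ is affine, use equivariance: $F\gamma=\gamma'F$ for deck translations. Comparing the $t$-components for $\gamma=(s,y)\in\Gamma$ gives $v(x+y)-v(x)=\text{const}$. Therefore $\nabla v$ is invariant under the projection of $\Gamma$ to $\RR^3$.

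\textbf{Main obstacle.} This projection need not be a lattice; it may be dense, or it may have rank below three when $\Gamma$ contains elements along $\partial_t$. I would handle it as follows. Each partial derivative $\partial_iv$ is harmonic. Its increments along the projected group are constant, so each $\partial_i v(\cdot+y)-\partial_iv$ is a harmonic function that is constant. Averaging over a compact fundamental domain is awkward, so instead I use growth. The increments $v(x+y)-v(x)$ depend only on $y$ and are additive, so $v$ differs from a linear function $\ell$ by a function $w$ that is periodic under the projected group $P$.

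Let $\bar P$ be the closure of $P$. Since $\Gamma$ is full in $\RR^4$, $P$ spans $\RR^3$, so $\RR^3/\bar P$ is compact. Hence $w$ is bounded on $\RR^3$. A bounded harmonic function is constant by Liouville, so $v$ is affine. (If the increments are only continuous in $y$ on $P$, they extend to $\bar P$ by continuity, which settles this point.) Thus $F$ is affine with linear part
\[
 \begin{pmatrix}a&b\\0&a^{-1}R\end{pmatrix}\in G,
\]
and conjugating translations gives $A\Gamma=\Gamma'$.

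\textbf{Converse.} For $A\in G$, the map $A$ pulls the constant spinors back by the local computation \eqref{eq:local-scaled-spinors}, together with the compensator of \eqref{eq:local-equivalence-compensator}. The rotation $R$ is absorbed into the $\operatorname{SO}(3)$ freedom. The linear shift $b$ gives $B_u=-*dU$ with $U$ linear, which is a constant, hence closed and lattice-invariant, two-form. So $A$ descends to an equivalence $V/\Gamma\to V/A\Gamma$.

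\textbf{Double coset.} Writing $\Gamma=g\ZZ^4$ with $g\in\operatorname{GL}(4,\RR)$ identifies lattices with $\operatorname{GL}(4,\RR)/\operatorname{GL}(4,\ZZ)$. Modding out by the left action of $G$ gives \eqref{eq:euclidean-double-coset}.

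\textbf{Closed leaves.} The leaf through a point is the image of $\RR\partial_t$, and it is closed exactly when $\Gamma\cap\RR\partial_t\ne0$.

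\textbf{Coverings.} A structure-preserving covering lifts to the universal cover. The same argument, using equivariance only on the subgroup $\Gamma_u$ together with Liouville, makes the lift an affine map $A\in G$ with $A\Gamma_u\subset\Gamma_d$. The degree is then the index $[\Gamma_d:A\Gamma_u]$. Every such covering is regular because the lattice groups are abelian, so every subgroup is normal.
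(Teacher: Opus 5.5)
Your proof is correct, but it reaches the key rigidity step by a different route from the paper's.

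\textbf{Paper's route.} The paper gets constancy of the scale from \cref{lem:killing-parametrization}, which is a compactness argument on the quotient. It writes the lift of the torus map as $F(z)=Az+p(z)$ with $p$ $\Gamma$-periodic, and averages $DF$ over $V/\Gamma$ to see that $A\in G$. What remains is a $t$-independent time shift $v$ that is a genuine function on the compact flat four-torus. Closedness of $C_v=-*_h dv$ makes $v$ harmonic there, hence constant.

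\textbf{Your route.} You get $a$ constant directly from the gauge law $\psi^*\widehat\theta=\theta-d\log|a|$ with $\theta=\widehat\theta=0$, so no compactness is needed at that stage. You then treat the time shift on the transverse $\RR^3$, where the projected group $P$ may be dense. This forces you to pass to $\bar P$ and use Liouville instead of the maximum principle on a compact manifold.

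\textbf{Comparison.} The paper avoids the density issue entirely by never leaving $V/\Gamma$. Yours is more explicit about how deck equivariance alone pins down $v$.

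\textbf{A simplification.} Your detour through $w=v-\ell$ needs a fact you only gesture at. The additive map $c(y)=v(x_0+y)-v(x_0)$ is automatically continuous on $P$. Because a closed spanning subgroup has the form $\bar P\cong\RR^k\times\ZZ^{3-k}$, it then extends to a linear functional on $\RR^3$. You can skip this by applying Liouville directly to each $\partial_i v$, as you began to do. Each $\partial_i v$ is harmonic and $P$-invariant; its increments are zero, not merely constant. By continuity it is $\bar P$-invariant, and it is bounded because $\RR^3/\bar P$ is compact. Hence it is constant.

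Your converse, double-coset and covering arguments match the paper's.
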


\begin{proof}
Put $X=\partial_t$. By \cref{lem:killing-parametrization}, a lift
$F:V\to V$ of a Poisson equivalence satisfies
\begin{equation}\label{eq:euclidean-lift-periods}
 F_*X=aX,\qquad F^*dx=a^{-1}Rdx,\qquad
 F(z)=Az+p(z),\qquad A\Gamma=\Gamma',
\end{equation}
where $a\ne0$ is constant and $p$ is $\Gamma$-periodic.
Averaging the differential in \eqref{eq:euclidean-lift-periods}
over $V/\Gamma$ gives $A\in G$; the transverse part of $p$ is constant.
Removing this affine part reduces $F$ to
$F_v(z)=z+v(z)X$, with $Xv=0$. The respective algebraic
compensators are
\begin{equation}\label{eq:euclidean-compensators}
 C_A=-a^{-1}*_h(b\,dx),\qquad C_v=-*_h dv.
\end{equation}
The first is closed. For the second,
\[
 dC_v=0
 \iff \sum_{i=1}^3\partial_{x_i}^2v=0
 \overset{Xv=0}{\iff}\Delta_{dt^2+|dx|^2}v=0
 \implies dv=0,
\]
by compactness. Thus every equivalence is affine, and conversely
$C_A$ in \eqref{eq:euclidean-compensators} realizes every affine
map with $A\in G$ and $A\Gamma=\Gamma'$.
Writing $\Gamma=F_0\ZZ^4$ gives
\eqref{eq:euclidean-double-coset}. An $X$-orbit closes exactly
when $\Gamma\cap\RR X\ne0$.

The same argument for a finite covering gives
$A\Gamma_u\subset\Gamma_d$ and degree $[\Gamma_d:A\Gamma_u]$;
regularity follows since $\Gamma_d$ is abelian.
\end{proof}

\begin{corollary}
\label{cor:euclidean-finite-quotients}
Every free finite ordered action on a constant torus triple,
allowing closed $B$-compensators, consists of translations. Its
quotient replaces $\Gamma$ by a finite-index superlattice.
Consequently the canonical line of every nonclosed compact
Euclidean example is orientable, and the original example itself
has constant torus normal form.
\end{corollary}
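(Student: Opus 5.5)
The plan is to lift a finite ordered action to the universal cover $V=\RR_t\oplus\RR^3_x$ and apply \cref{thm:euclidean-lattice-moduli} to the lifted maps. Let $G_0$ be a finite group acting freely by ordered Courant automorphisms on $V/\Gamma$ with the constant triple. By \cref{lem:finite-Courant-descent}, after a common closed $B$-field we may assume the underlying diffeomorphisms preserve the spinor lines, so each $g\in G_0$ is an ordered self-equivalence. The proof of \cref{thm:euclidean-lattice-moduli} then applies with $\Gamma'=\Gamma$ and $R=I$, since the action is ordered. Each lift is therefore affine, $F_g(z)=A_gz+c_g$ with $A_g\in G$ and $A_g\Gamma=\Gamma$.

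The key step is to show $A_g=I$. Because $g$ has finite order, $A_g$ has finite order as well. Since $R=I$, its diagonal blocks are $a$ and $a^{-1}I_3$, so $a^m=1$ and $a=\pm1$. The off-diagonal entry $b$ satisfies $\sum_{j<m}a^{j}\,b\cdots=0$, and combining this with $a=\pm1$ gives $b=0$ when $a=1$.

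If $a=-1$, the map is $A_g=\begin{pmatrix}-1&b\\0&-I\end{pmatrix}$. This matrix is conjugate to $-I$, so $F_g$ is an affine involution with linear part conjugate to $-I$ on $V$. The induced map on the torus $V/\Gamma$ then acts as $z\mapsto -z+c$ in suitable coordinates, and it has a fixed point because multiplication by two is onto the torus, exactly as in the proof of \cref{thm:closed-leaves-regular}. This contradicts freeness. Hence $a=1$, $b=0$ and $A_g=I$, so the elements of $G_0$ act by translations. The $B$-compensators are the constant closed forms $C_A$ of \eqref{eq:euclidean-compensators}, which vanish here. Since the lifts of $G_0$ together with $\Gamma$ form a lattice $\Gamma'\supset\Gamma$ of finite index, the quotient is $V/\Gamma'$ with the same constant spinors.

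For the final assertions, let $M$ be a nonclosed compact Euclidean example. By \cref{thm:flat-riemannian-topology}, the orientation double cover $\widehat M$ of $L$ carries a constant torus normal form. The deck involution of $\widehat M\to M$ is a free ordered Courant action of $\ZZ/2$. By the argument above it is a translation, and translations preserve $\partial_t$. This contradicts the fact that the deck involution reverses the orientation of $L$. Hence $L$ is already orientable, and \cref{thm:flat-riemannian-topology} applied directly to $M$ gives the constant torus normal form.

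The main obstacle I expect is the $a=-1$ case. The fixed-point argument needs care, because the fixed point must exist on the torus for some representative $F_g\gamma$ with $\gamma\in\Gamma$. I would reduce this to solving $(I-A_g)z\in c+\Gamma$, which is solvable since $I-A_g$ is invertible.
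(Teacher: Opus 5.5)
Your proposal is correct and follows the paper's own argument. Ordered finite-order lifts are affine with $R=I$, the case $a=-1$ is excluded by a fixed point on the torus, and the lifted translations form a finite-index superlattice with vanishing compensator. The deck involution of the orientation cover would have to reverse $X$, which a translation cannot do.

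One slip: $\begin{pmatrix}-1&b\\0&-I\end{pmatrix}$ is not conjugate to $-I$ when $b\ne0$. Its square is $\begin{pmatrix}1&-2b\\0&I\end{pmatrix}$, so finite order already forces $b=0$ for both signs, which is the paper's $A=\pm I$. Your own observation that $I-A_g$ is invertible also gives a fixed point of $F_g$ in $V$ itself, whatever $b$ is.
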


\begin{proof}
For an ordered affine map of finite order, $R=I$ in
\eqref{eq:euclidean-lattice-group}; hence
\[
 A^q=I\implies a=\pm1,\ b=0\implies A=\pm I.
\]
If $A=-I$, the equation $2z=c$ supplies a fixed point of
$z\mapsto-z+c$ on the torus. Freeness therefore forces $A=I$.
The lifts of the finite translation group $F$ form a lattice
$\widehat\Gamma$ with
\begin{equation}\label{eq:euclidean-superlattice}
 \Gamma\subset\widehat\Gamma,\qquad
 N=[\widehat\Gamma:\Gamma]<\infty,\qquad
 (V/\Gamma)/F=V/\widehat\Gamma.
\end{equation}
Translations fix all three spinors; uniqueness of the common
algebraic compensator then gives $B=0$.

If a nonclosed Euclidean example had nonorientable canonical line,
its orientation cover would be a constant torus by
\cref{thm:flat-riemannian-topology,cor:killing-nonclosed-unique}.
Its free ordered deck involution would reverse $X$, and hence
have $A=-I$, a contradiction. Finally,
\[
 0\ne v\in\widehat\Gamma\cap\RR X
 \implies 0\ne Nv\in\Gamma\cap\RR X.
\]
Together with \eqref{eq:euclidean-superlattice}, this shows that
finite quotients preserve nonclosedness.
The quotient triple descends as in
\cref{lem:finite-Courant-descent}.
\end{proof}

\begin{proposition}\label{thm:closed-flat-arithmetic-data}
The closed-leaf flat branch consists of principal circle bundles
over $\RR^3/\Lambda$, with period-$2\pi$ generator. For
nonzero Euler class its complete parameters are
\[
 (\Lambda,e,\mu),\qquad
 0\ne e\in\operatorname{Hom}(\Lambda\wedge\Lambda,\ZZ),
 \quad\mu\in\RR.
\]
Two tuples are equivalent if and only if some $Q\in\operatorname{O}(3)$ satisfies
\begin{equation}\label{eq:closed-flat-arithmetic-equivalence}
 Q\Lambda=\Lambda',\qquad
 e'(Qu,Qv)=(\det Q)e(u,v),\qquad \mu=(\det Q)\mu'.
\end{equation}
For $e=0$ the lifting parameter is absent and the structure belongs
to the constant-torus branch.
\end{proposition}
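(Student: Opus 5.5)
We begin from \cref{thm:closed-leaves-regular}, which says the closed-leaf branch is a principal circle bundle $p:M\to N$. By \cref{prop:reg-generator} there is a unique positive Poisson-preserving generator $X$ of least period $2\pi$, and its transverse coframe descends to $N$. In the flat branch \cref{thm:reg-reduction,lem:reg-compact-GT} give $\theta=0$ and $d\eta=0$, so $N\cong\RR^3/\Lambda$ with $\eta=dx$. The lattice $\Lambda$ is determined by the period map of the closed coframe, up to the $\operatorname{O}(3)$ ambiguity discussed below. The Euler class is $e\in H^2(N;\ZZ)\cong\operatorname{Hom}(\Lambda\wedge\Lambda,\ZZ)$, since $H^2(T^3;\ZZ)$ is torsion-free.

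For the lift, \cref{thm:reg-reduction} gives spinors \eqref{eq:killing-spinors} with $\beta$ closed (as $*_h\theta=0$) and $\gamma$ pulled back from $N$. We then normalize using an invariant connection $\lambda$ with harmonic curvature $F=p^*\omega_e$, a constant $2$-form representing $e$. The harmonic (constant) part of $\beta$ can be absorbed: for constant $\beta$, $\lambda\wedge\beta$ satisfies $d(\lambda\wedge\beta)=F\wedge\beta$, so it is closed only when $F\wedge\beta=0$. The leftover invariant is the class of $d\gamma+F\wedge\beta$ modulo what orbit shifts and closed $B$-fields can change. Basic orbit shifts $\Phi_v$ alter $\lambda$ by $dv$ and the compensator by $-*_hdv$ (as in \cref{lem:closed-characteristic-unique}), and they change $d_B\gamma$ by $-\Delta v\,\vol_h$; this kills everything except the mean. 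We therefore define
\[
 \mu:=\frac1{\vol(N)}\int_N\bigl(d\gamma+F\wedge\beta\bigr)/\vol_h ,
\]
that is, essentially the average of $\tau$, together with the part of $F\wedge\beta$ for $\beta$ in the harmonic cohomology modulo shifts. The step to check is that $F\wedge\beta$, for harmonic $\beta$, can be matched by changing the connection $\lambda\mapsto\lambda+\xi$ with $\xi$ a closed base form. Under such a change the characteristic form shifts and $\lambda\wedge\beta$ changes by $\xi\wedge\beta$, and the net effect on the integral of $\tau$ must be computed. We expect the result to be that $\int F\wedge\beta$ is removable when $e\ne0$ by choosing $\beta$ suitably, so only the real number $\mu$ survives. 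We would verify that $\mu$ is invariant by integrating the compatible $H$ against $\vol$ over fibres, in the style of the obstruction in \cref{prop:radial-developing-obstruction}.

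For equivalence, equivalences preserve the period-$2\pi$ generator up to sign, by \cref{lem:killing-parametrization} and the uniqueness in \cref{prop:reg-generator}. The induced base map preserves $dx$ up to a constant rotation, and since the scale $a$ is forced to be $\pm1$ by the period, the base map is $x\mapsto Qx+c$ with $Q\in\operatorname{O}(3)$. When $\det Q=-1$, the map reverses the fibre orientation ($a=-1$), giving $\eta\mapsto-\eta$ and $R=-Q\in\operatorname{SO}(3)$. Tracking orientations then gives $e'(Qu,Qv)=(\det Q)e(u,v)$ and $\mu=(\det Q)\mu'$, since $\vol_h$ and $\lambda$ both change sign. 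Conversely, such $Q$ lifts to a bundle map because the Euler classes correspond, and a closed compensator realizes it.

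When $e=0$, the bundle is trivial and a flat connection exists. The closed characteristic form then allows \cref{lem:closed-characteristic-unique} to reduce $\beta=\gamma=0$, which yields a constant torus. We expect the main obstacle to be proving that $\mu$ is a genuine invariant, not removable by some combination of a nonbasic $B$-field and an orbit shift when $e\ne0$, and that $\beta$ contributes nothing further.
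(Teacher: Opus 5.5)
There is a genuine gap in your treatment of the lifting parameter. By your own definition, $\mu\,\vol(N)=\int_N(d\gamma+F\wedge\beta)$. Since $\gamma$ is a form on the closed base, Stokes' theorem gives $\int_N d\gamma=0$. Hence $\mu\,\vol(N)=\int_N F\wedge\beta=2\pi\langle e\smile[\beta],[N]\rangle$. The invariant therefore comes entirely from pairing $e$ with the class of the closed form $\beta$, and nothing comes from $\gamma$.

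Two of your claims conflict with this. Your expectation that ``$\int F\wedge\beta$ is removable when $e\ne0$ by choosing $\beta$ suitably'' would force $\mu\equiv0$. Your claim that the constant part of $\beta$ can be absorbed contradicts your own remark that $\lambda\wedge\beta$ is closed only when $F\wedge\beta=0$. What is actually removable is only the part of $[\beta]$ in $\ker(e\smile\cdot)$. There $F\wedge\beta=d\alpha$ on $N$, and $\lambda\wedge\beta-p^*\alpha$ is a closed, non-basic $B$-field.

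The same misidentification makes you miss the existence statement. Every $\mu\in\RR$ occurs when $e\ne0$ because, by Poincar\'e duality, $[\beta]\mapsto\langle e\smile[\beta],[N]\rangle$ is onto $\RR$. It is realized by constant $F,\beta$ with $\gamma=0$; no choice of $\gamma$ can shift the mean of $\tau$. The same formula gives $\mu=0$ when $e=0$, which is why the parameter is absent in that case.

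Invariance also does not need the fibre-integration argument you propose. For fixed Poisson data and connection, $dB=p^*\mathcal T$ with $\mathcal T:=F\wedge\beta+d\gamma$. So two compensators differ by a closed (possibly non-basic) form exactly when their $\mathcal T$ agree. An orbit shift changes $\mathcal T$ by $-d(*_hdv)$. A change of connection $\widetilde\lambda=\lambda+\nu$ is absorbed by $\widetilde B=B+*_h\nu$ through \eqref{eq:star-identity}, changing $\mathcal T$ by $d(*_h\nu)$. Both changes are exact, so $\int_N\mathcal T$ is preserved. Your zero-mean Laplace-equation argument then gives the converse.

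Under an equivalence with base map $\psi$, one gets $\int_N\psi^*\mathcal T'=(\deg\psi)\mu'$ with $\deg\psi=\det Q$. The reversal of $\lambda$ plays no role, since $\mathcal T$ is defined through $dB$ alone; if it did contribute, it would cancel the base sign. The rest of your outline agrees with the paper: the reduction to $\RR^3/\Lambda$ with $\eta=dx$, the base map $x\mapsto Qx+c$ with $Q=\epsilon R$, the Euler-class condition for the bundle map, and the $e=0$ reduction to a constant torus via \cref{lem:closed-characteristic-unique}.
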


\begin{proof}
Write $N=\RR^3/\Lambda$. Circle normalization gives
$\eta=dx$ and $\theta=\kappa=0$.
For an invariant connection $\lambda$, let $F\in\Omega^2(N)$
denote its curvature, so that $d\lambda=p^*F$ and
$[F/(2\pi)]=e$.

Suppressing pullbacks from $N$, \eqref{eq:killing-spinors}
expresses the common two-form as
$B=\lambda\wedge\beta+\gamma$, with $d\beta=0$.
Set $\mathcal T:=F\wedge\beta+d\gamma$, so that
$dB=p^*\mathcal T$. Its integral is
\begin{equation}\label{eq:circle-lift-invariant}
 \mu:=\int_N\mathcal T
 =2\pi\langle e\smile[\beta],[N]\rangle.
\end{equation}

For fixed Poisson data and connection, two such algebraic
two-forms differ by a closed form precisely when their
corresponding three-forms $\mathcal T$ coincide.
Orbit shifts determined by $v\in C^\infty(N)$ and circle gauge
transformations with connection change $\nu$ act, respectively, by
\begin{equation}\label{eq:circle-gauge-action}
\begin{aligned}
 \mathcal T
 &\longmapsto\mathcal T-d(*_hdv)
  =\mathcal T-(\Delta_hv)\vol_h,\\
 \mathcal T
 &\longmapsto\mathcal T-d(*_h\nu).
\end{aligned}
\end{equation}
Here the gauge one-form $\nu$ satisfies
$d\nu=0$ and $[\nu/(2\pi)]\in H^1(N;\ZZ)$.
Both transformations preserve $\mu$.

Conversely, suppose that $\mathcal T_1$ and $\mathcal T_2$
have the same integral. Write
$\mathcal T_2-\mathcal T_1=f\vol_h$.
Then $\int_Nf\vol_h=0$, so elliptic solvability on the flat
torus provides a smooth function $v$ satisfying $\Delta_hv=f$.
Consequently,
\begin{equation}\label{eq:circle-zero-mean}
 \mathcal T_2-d(*_hdv)=\mathcal T_1.
\end{equation}
Thus, for fixed Poisson data and connection, the transformations
in \eqref{eq:circle-gauge-action}, together with common closed
$B$-fields, identify precisely the lifts with the same $\mu$.

We next show that $\mu$ is independent of the connection.
Consider an arbitrary change
$\widetilde\lambda=\lambda+\nu$, with $\nu\in\Omega^1(N)$,
and set $\widetilde B:=B+*_h\nu$.
Writing $\alpha_i:=\eta^j-\ii\eta^k$, the spinor identity gives
\begin{equation}\label{eq:circle-connection-change}
 e^{\widetilde B}\alpha_i
 e^{\ii\widetilde\lambda\wedge\eta^i}
 =e^B\alpha_i e^{\ii\lambda\wedge\eta^i}.
\end{equation}
Hence the same triple is represented using the new connection,
with
\[
 \widetilde{\mathcal T}
 =\mathcal T+d(*_h\nu).
\]
Stokes' theorem therefore gives
$\int_N\widetilde{\mathcal T}=\int_N\mathcal T$,
proving the claimed independence.

For $e\ne0$, Poincar\'e duality makes the linear map
\[
 [\beta]\longmapsto
 \langle e\smile[\beta],[N]\rangle
\]
surjective onto $\RR$.
Choose constant forms $F,\beta$ such that
$[F/(2\pi)]=e$ and $\int_NF\wedge\beta=\mu$,
and choose a connection of curvature $F$.
Taking $\gamma=0$ realizes the prescribed value of $\mu$
in \eqref{eq:circle-lift-invariant}.
If $e=0$, that equation gives $\mu=0$.
A product flat connection and the fixed-data classification
then give a constant-torus representative.

Finally, let $\Phi$ be an equivalence, and let $\psi$ be its
induced base diffeomorphism.
By \cref{lem:killing-parametrization} and the least-period
normalization,
$\Phi_*X=\epsilon X'$ for some $\epsilon\in\{\pm1\}$.
Comparison of the Poisson tensors gives
$\psi^*dx'=\epsilon Rdx$ for a constant
$R\in\operatorname{SO}(3)$.
Thus $Q:=\epsilon R$ lies in $\operatorname{O}(3)$,
$\det Q=\epsilon$, and $\psi$ is affine with
$Q\Lambda=\Lambda'$.

A fibrewise diffeomorphism of degree $\epsilon$ covering $\psi$
exists precisely when $\psi^*e'=\epsilon e$.
Write $\Phi^*\lambda'=\epsilon\lambda+\nu$, where $\nu$ is a
one-form on $N$. Spinor comparison, as in
\eqref{eq:circle-connection-change}, gives
\begin{equation}\label{eq:circle-equivalence-integral}
\begin{aligned}
 \mathcal T_{\mathrm{eff}}
 &=\psi^*\mathcal T'-\epsilon d(*_h\nu),\\
 \int_N\mathcal T_{\mathrm{eff}}
 &=(\deg\psi)\mu'=\epsilon\mu'.
\end{aligned}
\end{equation}
The fixed-data classification therefore makes equality of
lifting classes equivalent to $\mu=\epsilon\mu'$.
Together with the lattice and Euler-class conditions above,
this proves both necessity and sufficiency of
\eqref{eq:closed-flat-arithmetic-equivalence}.
\end{proof}
\begin{example}\label{ex:FG-parameters}
We first fix the lattice normalization used throughout this example.
On $G_{\mathrm{KT}}=\RR^4_{w,x,y,z}$, consider the group law
\[
 (w,x,y,z)(w',x',y',z')
 =(w+w',x+x',y+y',z+z'+xy').
\]
The integer points form a lattice subgroup
$\Gamma_{\mathrm{KT}}=\ZZ^4$.
The left-invariant coframe on
$\Gamma_{\mathrm{KT}}\backslash G_{\mathrm{KT}}$ is
\[
 e^0=dw,\qquad e^1=dx,\qquad
 e^2=dy,\qquad e^3=dz-x\,dy.
\]
Let $e_0,e_1,e_2,e_3$ denote its dual frame.
Then $de^0=de^1=de^2=0$ and $de^3=-e^1\wedge e^2$.
The base periods and the period of the central field
$e_3=\partial_z$ are one.
Orient the base by $e^{012}$, so that
$\int_{T^3}e^{012}=1$.
The equivalence statement below refers to this fixed lattice;
\cite{FinoGrantcharov2025} does not specify such a normalization.

Let $I_{\mathrm{FG}},J_{\mathrm{FG}}$ be the generator matrices
displayed in \cite[pp.~108--109]{FinoGrantcharov2025}, and write
$p:=b_1^{\mathrm{FG}}$ and $r:=b_2^{\mathrm{FG}}$.
The formulas below hold for all $p\in\RR$ and $r\ne0$,
including $p=0$.
To compare with our convention of matrices acting on columns
in the ordered vector--covector basis, we transpose the
displayed generators:
\[
 \mathcal I=I_{\mathrm{FG}}^T,\qquad
 \mathcal J=J_{\mathrm{FG}}^T,\qquad
 \mathcal K=\mathcal I\mathcal J
           =(J_{\mathrm{FG}}I_{\mathrm{FG}})^T.
\]

Take $X:=e_3/(2\pi)$ and $\lambda:=2\pi e^3$.
The transverse coframe is
\[
\begin{aligned}
 \eta^1&=\frac{r}{6\pi}(e^0+e^1)-\frac{e^2}{2\pi},\\
 \eta^2&=-\frac{r}{6\pi}e^0,\\
 \eta^3&=\frac{r}{6\pi}e^1.
\end{aligned}
\]
Its GT data satisfy $\theta=\kappa=0$.
The common two-form is $B=\lambda\wedge\beta+\gamma$, where
\[
 \beta=-\eta^2-\eta^3,\qquad
 \gamma=\frac{12\pi^2p}{r}\eta^2\wedge\eta^3.
\]
Equivalently, in the invariant coframe,
\[
 B=\frac r3 e^3\wedge(e^0-e^1)
   -\frac{pr}{3}e^0\wedge e^1.
\]
The dual horizontal fields are
\[
\begin{aligned}
 E_1&=-2\pi e_2,\\
 E_2&=-\frac{6\pi}{r}e_0-2\pi e_2,\\
 E_3&=\frac{6\pi}{r}e_1+2\pi e_2.
\end{aligned}
\]

For an explicit matrix comparison, substitute these data into
\eqref{eq:standard-matrix} and conjugate by $T_B$.
In the ordered vector basis $(e_0,e_1,e_2,e_3)$, the upper-left
and upper-right blocks of the second generator are
\[
 A_2=
 \begin{pmatrix}
  1&-1&0&0\\
  -1&-1&3/r&0\\
  0&-r&1&0\\
  0&p&0&1
 \end{pmatrix},
 \qquad
 P_2=
 \begin{pmatrix}
  0&0&0&-3/r\\
  0&0&0&0\\
  0&0&0&-1\\
  3/r&0&1&0
 \end{pmatrix}.
\]
These blocks, together with the remaining blocks, agree with
$J_{\mathrm{FG}}^T$.
The first generator similarly agrees with $I_{\mathrm{FG}}^T$,
and the third follows from $\mathcal K=\mathcal I\mathcal J$.

To describe the global parameters, set
\[
\begin{aligned}
 v_0&:=\frac{r}{6\pi}(1,-1,0),\\
 v_1&:=\frac{r}{6\pi}(1,0,1),\\
 v_2&:=-\frac1{2\pi}(1,0,0).
\end{aligned}
\]
The parameters of \cref{thm:closed-flat-arithmetic-data} are
\begin{equation}\label{eq:FG-parameters}
\begin{aligned}
 &\Lambda_r
 =\ZZ v_0+\ZZ v_1+\ZZ v_2,\\
 &e_r(v_1,v_2)=-1,\qquad
 e_r(v_0,v_1)=e_r(v_0,v_2)=0,\\
 &\mu_r
 =\int_{T^3}d\lambda\wedge\beta=-\frac r3.
\end{aligned}
\end{equation}
Indeed, $d\gamma=0$, and direct calculation gives
$\eta^{123}=\frac{r^2}{72\pi^3}e^{012}$ and $d\lambda\wedge\beta=-\frac r3 e^{012}.$
Thus varying $p$ changes only the common closed $B$-field.

Finally,
\[
 \Lambda_{-r}=\Lambda_r,\qquad
 e_{-r}=-e_r,\qquad
 \mu_{-r}=-\mu_r.
\]
Hence $Q=-\Id$ in
\eqref{eq:closed-flat-arithmetic-equivalence} identifies the
parameters $r$ and $-r$.
Conversely, that equivalence preserves $|\mu|=|r|/3$.
Therefore, on the fixed lattice $\Gamma_{\mathrm{KT}}$, two
members of this family are equivalent precisely when
$|r|=|r'|$.
\end{example}

\begin{remark}
\label{rem:type-one-calabi-yau}
Bailey, Cavalcanti and Gualtieri
\cite[Corollary~2.3]{BaileyCavalcantiGualtieri2017} show that a closed
connected four-manifold admits an untwisted type-one generalized
Calabi--Yau structure exactly when it is orientable and fibres over
$T^2$. Our constant-torus and closed-leaf flat representatives have
global closed spinors: $\theta=\kappa=0$ in
\eqref{eq:killing-spinors} gives $d\rho_i=0$.
The product $N_C\times S^1$ also fibres over $T^2$, whereas $b_1=1$
excludes the Berger family by
\cite[Corollary~1.11]{BaileyCavalcantiGualtieri2017}. The existence of one
such structure on an axis manifold does not identify it with a
specified member of our quaternionic sphere; local closed spinors
need not descend globally.
\end{remark}

\begin{proposition}\label{prop:closed-flat-finite-coverings}
A structure-preserving covering from $(\Lambda_u,e_u,\mu_u)$
to $(\Lambda_d,e_d,\mu_d)$, with $e_u,e_d\ne0$, exists precisely
when there are $m\in\ZZ\setminus\{0\}$ and
$R\in\operatorname{SO}(3)$ such that, setting
$A:=m^{-1}R$ and $\Lambda_0:=A\Lambda_u$, one has
\begin{equation}\label{eq:closed-flat-cover-data}
 \left\{
 \begin{aligned}
  \Lambda_0&\subset\Lambda_d,\\
  A^*e_d&=m e_u,\\
  \mu_u&=\delta\mu_d.
 \end{aligned}
 \right.
\end{equation}
Here
\[
 \delta:=
 \det(A)\,
 \frac{\operatorname{covol}(\Lambda_u)}
      {\operatorname{covol}(\Lambda_d)}
\]
is the signed degree of the induced base covering.
The total covering has $|m\delta|$ sheets.
It is regular, and hence a quotient by a finite group of deck
transformations, if and only if
\[
 e_d(\Lambda_d,\Lambda_0)\subset m\ZZ.
\]
\end{proposition}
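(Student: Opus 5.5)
Lift any structure-preserving covering $\Phi\colon M_u\to M_d$ to the circle-bundle data, then read the three conditions in \eqref{eq:closed-flat-cover-data} off the fibre, base and lifting parts in turn. Conversely, build a covering from the data and check regularity through the fundamental-group extension.

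First, $\Phi$ maps canonical leaves to canonical leaves, hence circle fibres onto circle fibres. It therefore covers a map $\psi\colon\RR^3/\Lambda_u\to\RR^3/\Lambda_d$ and restricts to each fibre as a covering of some signed degree $m\neq0$. By \cref{lem:killing-parametrization} and the period-$2\pi$ normalization of \cref{prop:reg-generator}, the pullback of the period-$2\pi$ generator of the target along a fibre is $m$ times the source one, so $\Phi_*X_u=mX_d$. Comparing Poisson tensors as in \cref{thm:closed-flat-arithmetic-data} then gives $\psi^*dx'=m^{-1}R\,dx$ with constant $R\in\operatorname{SO}(3)$. Hence $\psi$ is affine with linear part $A=m^{-1}R$, and it is a well-defined covering exactly when $\Lambda_0=A\Lambda_u\subset\Lambda_d$. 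Its signed degree is $\det(A)[\Lambda_d:\Lambda_0]$, which equals the stated $\delta$. A bundle map of fibre degree $m$ covering $\psi$ exists iff $\psi^*e_d=me_u$; this is the usual pullback-of-Euler-class computation, checked on a connection with $\Phi^*\lambda_d=m\lambda_u+\nu$. For the lifting parameter, repeat the spinor comparison \eqref{eq:circle-connection-change}/\eqref{eq:circle-equivalence-integral}. The effective three-form is $\psi^*\mathcal T_d$ up to a factor determined by the rescaling $\lambda\mapsto m\lambda$, $\eta\mapsto m^{-1}R\eta$ together with exact corrections. Integrating gives $\mu_u=\delta\mu_d$. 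The bookkeeping of the powers of $m$ against the $\lambda\wedge\beta$ normalization is where I expect sign and scale errors; I would check it on the example of \cref{ex:FG-parameters} with $m=2$. The sheet count $|m\delta|$ is the product of the fibre and base degrees.

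Conversely, given the data, take constant connections and lifting forms on both sides, as in the realization step of \cref{thm:closed-flat-arithmetic-data}. The affine $\psi$ lifts to a fibre-degree-$m$ bundle covering, since the Euler classes match. The lifting classes agree by the $\mu$ condition, so the fixed-data classification upgrades this to a structure-preserving covering after composing with an orbit shift and a closed $B$-field.

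The main obstacle is regularity. The fundamental group of the total space is a central extension $1\to\ZZ\to\pi\to\Lambda\to1$ with commutator form $e$, so that $[\tilde u,\tilde v]=e(u,v)$ in the centre. The image of $\pi_u$ is generated by $m\ZZ$ in the centre together with lifts of $\Lambda_0$. This subgroup is normal iff conjugating these lifts by arbitrary lifts of $\Lambda_d$ stays inside it. Conjugation by $\tilde w$ changes a lift $\tilde u$ by the central element $e_d(w,u)$, which lies in the subgroup iff it lies in $m\ZZ$. So normality is equivalent to $e_d(\Lambda_d,\Lambda_0)\subset m\ZZ$. Regular coverings are quotients by the deck group, and \cref{lem:finite-Courant-descent} gives descent of the triple.
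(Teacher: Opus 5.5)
Your proposal is correct and follows the paper's proof essentially step for step. The shared steps are: constancy of the fibre factor via \cref{lem:killing-parametrization}, the affine base map with linear part $A=m^{-1}R$, the Euler-class pullback $A^*e_d=me_u$, the compensator comparison for $\mu$, and normality of $\Pi_0$ in the central extension using $\Pi_0\cap\langle z\rangle=\langle z^m\rangle$.

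The factor you were worried about is exactly one. With $\Phi^*\lambda_d=m\lambda_u+\nu$ and $\Phi^*\eta_d=m^{-1}R\eta_u$, the powers of $m$ cancel in $\Phi^*(\lambda_d\wedge\eta_d^i)=\lambda_u\wedge(R\eta_u)^i+m^{-1}\nu\wedge(R\eta_u)^i$. Hence $\mathcal T_{\mathrm{eff}}=\psi^*\mathcal T_d-m^{-1}d(*_h\nu)$, and integration gives $\mu_u=(\deg\psi)\mu_d=\delta\mu_d$.

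One wording slip: you describe the target generator's pullback as $m$ times the source generator, but in fact $\Phi^*X_d=m^{-1}X_u$. The formula you actually use, $\Phi_*X_u=mX_d$, is the correct one.
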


\begin{proof}
Let $\Phi:M_u\to M_d$ be a structure-preserving covering, and
let $\psi:N_u\to N_d$ be its induced base covering, where
$N_u=\RR^3/\Lambda_u$ and $N_d=\RR^3/\Lambda_d$.
The normalized fibre generators satisfy
$\Phi_*X_u=mX_d$.
Applying \cref{lem:killing-parametrization} to the lifted
generator on $M_u$ shows that $m$ is constant.
The least-period normalization then gives
$m\in\ZZ\setminus\{0\}$, equal to the signed fibre degree.

Comparison of the Poisson tensors makes $\psi$ affine, with
linear part $A=m^{-1}R$ for some $R\in\operatorname{SO}(3)$.
Thus $\Lambda_0:=A\Lambda_u\subset\Lambda_d$, while the
circle-bundle lifting condition gives $A^*e_d=me_u$.
Writing $n:=[\Lambda_d:\Lambda_0]$, we obtain
\begin{equation}\label{eq:circle-cover-degrees}
\begin{aligned}
 \deg\psi
 &=\delta=\operatorname{sgn}(m)n,\\
 \#\Phi^{-1}(q)
 &=|m|n=|m\delta|.
\end{aligned}
\end{equation}
The total map preserves the coframe orientation, since
$\operatorname{sgn}(m)\operatorname{sgn}(\det A)=1$.

For invariant connections, write
$\Phi^*\lambda_d=m\lambda_u+\nu$, where $\nu$ is a one-form
on $N_u$ and pullbacks are suppressed.
The compensator calculation underlying
\eqref{eq:circle-equivalence-integral} gives
\begin{equation}\label{eq:circle-cover-integral}
\begin{aligned}
 \mathcal T_{\mathrm{eff}}
 &=\psi^*\mathcal T_d-m^{-1}d(*_h\nu),\\
 \int_{N_u}\mathcal T_{\mathrm{eff}}
 &=\delta\mu_d.
\end{aligned}
\end{equation}
Hence equality of lifting classes requires
$\mu_u=\delta\mu_d$.

Conversely, suppose that \eqref{eq:closed-flat-cover-data}
holds. The integral Euler-class identity $A^*e_d=me_u$
constructs a fibrewise degree-$m$ covering over the affine
base covering induced by $A$.
By \eqref{eq:circle-cover-integral}, its effective lifting
three-form has integral $\delta\mu_d=\mu_u$.
Equation \eqref{eq:circle-zero-mean} then identifies the
lifting data, proving sufficiency.

It remains to determine when the covering is regular.
The target fundamental group fits into the central extension
\begin{equation}\label{eq:circle-central-extension}
 1\longrightarrow\langle z\rangle
 \longrightarrow\Pi_{e_d}
 \longrightarrow\Lambda_d
 \longrightarrow1.
\end{equation}
Here $\langle z\rangle\cong\ZZ$ is the central infinite cyclic
subgroup represented by the circle fibre.
Choose its generator $z$ so that, for lifts
$\widetilde v,\widetilde w$ of $v,w\in\Lambda_d$,
\[
 [\widetilde v,\widetilde w]=z^{e_d(v,w)}.
\]
The covering subgroup
$\Pi_0:=\Phi_*\pi_1(M_u)$ projects onto $\Lambda_0$ and satisfies
\[
 \Pi_0\cap\langle z\rangle=\langle z^m\rangle.
\]
Since all commutators lie in the centre, $\Pi_0$ is normal in
$\Pi_{e_d}$ precisely when every value
$e_d(v,w)$, with $v\in\Lambda_d$ and $w\in\Lambda_0$,
is divisible by $m$. This is exactly
\[
 e_d(\Lambda_d,\Lambda_0)\subset m\ZZ.
\]

In the regular case, the deck group is $\Pi_{e_d}/\Pi_0$.
A specified covering retains the embedded subgroup $\Pi_0$,
including the central offsets of its generators; the abstract
deck group alone does not determine the covering.
\end{proof}

\begin{proposition}
\label{prop:compact-exhaustion}
The four families exhaust the compact structures. In the Berger,
axis, nontrivial circle-bundle and torus branches, respectively,
$(b_1,b_2)=(1,0),(2,2),(3,4),(4,6)$; moreover $\kappa\equiv0$
if and only if the underlying manifold admits a symplectic form.
\end{proposition}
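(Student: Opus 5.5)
The proof has two parts. First, every compact structure falls into one of the four branches of \eqref{eq:topology-exhaustion}. Second, the Betti numbers and the symplectic criterion are computed branch by branch.

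For exhaustion I would split on the GT invariant. If $K\not\equiv0$, \cref{thm:nonflat-riemannian-taut,thm:nonflat-universal-decks} give a Berger quotient. If $K\equiv0$, \cref{cor:compact-riemannian} makes the foliation Riemannian, and \cref{thm:flat-riemannian-topology} yields either the axis image or the Euclidean image on the orientation cover.
\begin{itemize}
\item In the axis case, \cref{thm:axis-full-classification} covers nonorientable canonical lines as well.
\item In the Euclidean case the foliation is taut, and I split on whether the leaves are closed. Nonclosed leaves give constant tori by \cref{cor:euclidean-finite-quotients}. Closed leaves give principal circle bundles over $\RR^3/\Lambda$ by \cref{thm:closed-leaves-regular,thm:reg-reduction,lem:reg-compact-GT}; the case $e=0$ again gives constant tori (\cref{thm:closed-flat-arithmetic-data}).
\end{itemize}
Non-tautness separates the axis branch from the others, and $\kappa$ separates the Berger branch, so the families are disjoint. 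Once the Betti numbers differ, $b_1$ distinguishes all four.

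For $b_1$ I would argue as follows.
\begin{itemize}
\item \emph{Berger:} $b_1=1$ is already proved inside \cref{thm:nonflat-universal-decks}.
\item \emph{Torus:} $b_1=4$.
\item \emph{Circle bundle over $T^3$ with $e\neq0$:} the real Gysin sequence gives $b_1=3+\dim\ker(\smile e:H^0\to H^2)=3$, since $e$ is nonzero rationally because $H^2(T^3;\ZZ)$ is torsion-free.
\item \emph{Axis:} $M_\Gamma$ fibres over the base torus $\RR^2/P$ with $T^2$ fibre, and I would use the Serre spectral sequence. $E_2^{0,1}=H^1(T^2)^{P}$ vanishes, because the monodromy includes a hyperbolic $D(u,m)$, $u\ne0$, with eigenvalues $\pm e^{\pm u}\ne1$, and any reflection component only helps. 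Hence $b_1=b_1(\text{base})=2$. I would check the nonorientable-line case the same way: the even-$m$ subgroup still contains a hyperbolic element, and the parity acts on the base by the angular translation, which fixes base cohomology.
\end{itemize}
Then $b_2=2b_1-2$ follows from \cref{thm:canonical-line}, since $M$ is orientable there.

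For the symplectic criterion, one direction is immediate: if $\kappa\ne0$ then $b_1=1$ and $b_2=0$, so no symplectic form exists. For the converse, when $\kappa\equiv0$ I would exhibit a symplectic form in each branch.
\begin{itemize}
\item \emph{Torus:} a constant form.
\item \emph{Circle bundle ($\mathrm{KT}$ type):} the closed spinors of \cref{rem:type-one-calabi-yau} give closed forms, and fibring over $T^2$ makes $M$ symplectic by Thurston's construction. Concretely, $\lambda\wedge\eta^1+\eta^2\wedge\eta^3$ with $\lambda$ a connection form chosen so that $F\wedge\eta^1=0$; this works if $e$ is represented by a constant form $F$ with $\iota$-direction arranged suitably, otherwise I fall back on Thurston.
\item \emph{Axis:} $d(\lambda\wedge\eta^1)=0$, and $dt\wedge d\varphi$ is closed and descends. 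So $\omega=\lambda\wedge\eta^1+dt\wedge d\varphi$ is closed and nondegenerate, since $\lambda\wedge\eta^1\wedge dt\wedge d\varphi\neq0$.
\end{itemize}
For a nonorientable canonical line, $\lambda\wedge\eta^1$ is still invariant because both factors change sign, while $dt\wedge d\varphi$ is invariant under $\varphi\mapsto\varphi+\pi m$. So this $\omega$ descends in all cases.

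The step I expect to be the main obstacle is the axis $b_1$ computation with nonsplit torus-bundle extensions and nonorientable parity. I would handle it by passing to the finite cover $N_C\times S^1$ and taking invariants under the finite deck group. Because the axis base torus is the image of a closed coframe $dt,d\varphi$, both base classes survive in $M$, so $b_1\ge2$; the upper bound comes from the hyperbolic monodromy killing the fibre classes.
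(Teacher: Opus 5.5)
Your proposal is correct and follows the paper's proof essentially step by step. It uses the same exhaustion through the nonflat, axis, nonclosed Euclidean and circle-bundle results, the Gysin sequence for $e\ne0$, and the same monodromy argument in the axis branch (your vanishing of $E_2^{0,1}=H^1(T^2;\RR)^P$ is the paper's observation that every $\ell\in\operatorname{Hom}(\Gamma,\RR)$ satisfies $\ell|_\Lambda\circ(D-I)=0$ with $\det(D-I)\ne0$), together with $b_2=2b_1-2$ from \cref{thm:canonical-line} and the same symplectic forms on tori and axis quotients.

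The only loose end is the circle-bundle symplectic form, and there the fallback to Thurston (whose homologically essential fibre hypothesis you did not check) is unnecessary. The harmonic representative $F$ of $2\pi e$ on the flat base is a constant two-form, so it is decomposable in dimension three, $F=c\,\alpha^1\wedge\alpha^2$. For a constant coframe $\alpha$ adapted to $F$, the form $\omega=\lambda\wedge\alpha^1+\alpha^2\wedge\alpha^3$ then satisfies $d\omega=F\wedge\alpha^1=0$ and $\omega^2=2\lambda\wedge\alpha^1\wedge\alpha^2\wedge\alpha^3\ne0$, which is exactly the paper's construction.
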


\begin{proof}
Automatic Riemannianity and regularity of closed leaves follow from
\cref{cor:compact-riemannian,thm:closed-leaves-regular}. The preceding
classification establishes \eqref{eq:topology-exhaustion}.
The nonflat branch is oriented and taut
(\cref{thm:nonflat-riemannian-taut}) and classified by
\cref{thm:nonflat-universal-decks}. The flat alternatives follow from
\cref{thm:flat-riemannian-topology,prop:axis-group-models}, with full
classification in \cref{thm:axis-full-classification} for the axis branch,
\cref{thm:euclidean-lattice-moduli,cor:euclidean-finite-quotients} for
nonclosed taut leaves, and
\cref{thm:closed-flat-arithmetic-data,prop:closed-flat-finite-coverings}
for closed leaves.

For the finite-cover topology, an integral basis puts a nonzero
alternating form on a rank-three lattice in the form
\[
 e=d\,\alpha_1\wedge\alpha_2,\qquad d\ne0.
\]
The corresponding circle bundle is $P_d\times S^1$, where
$P_d\to T^2$ has Euler number $d$. A fibre map of signed degree
$d$ from $P_1$ to $P_d$ gives a $|d|$-sheeted
Kodaira--Thurston cover. For a Berger deck group,
\eqref{eq:nonflat-deck-generators} gives
\[
 \langle(h,T)\rangle\subset\Lambda,\qquad
 [\Lambda:\langle(h,T)\rangle]=|\Lambda_0|<\infty.
\]
This cover is the mapping torus of a right translation of
$\operatorname{SU}(2)$. A path from $1$ to $h$ gives an isotopy
to the identity, so the cover is $S^3\times S^1$.
The axis normal form gives the $N_C\times S^1$ cover at the end
of Section~\ref{sec:axis-proof}. Constant lattices, nonzero Euler
classes, $\langle(1,T)\rangle$, and axis data $v=k=0$, $n=1$
realize all four alternatives in \eqref{eq:topology-exhaustion}.

We compute the Betti numbers on the original manifolds.
For a Berger quotient, the finite kernel in
$1\to\Lambda_0\to\Lambda\to\ZZ\to1$ gives
$\operatorname{Hom}(\Lambda,\RR)\cong\RR$, hence $b_1=1$.
For a circle bundle with $e\ne0$, the Gysin sequence gives
\[
 0\longrightarrow H^1(T^3;\RR)\xrightarrow{p^*}H^1(M;\RR)
 \longrightarrow H^0(T^3;\RR)
 \xrightarrow{\smile e}H^2(T^3;\RR).
\]
The last arrow is injective, so $b_1=3$.
For an axis quotient, put $\Lambda=\Gamma\cap\RR^2$ and
$P=P_\Gamma$. The extension
\[
 1\longrightarrow\Lambda\longrightarrow\Gamma
 \longrightarrow P\longrightarrow1,\qquad
 \Lambda\cong P\cong\ZZ^2
\]
contains a conjugation action
$D=(-1)^m\operatorname{diag}(e^u,e^{-u})$, $u\ne0$.
If $\ell\in\operatorname{Hom}(\Gamma,\RR)$, then
\[
 \ell|_\Lambda\circ(D-I)=0,\qquad \det(D-I)\ne0
 \implies \ell|_\Lambda=0.
\]
Thus $H^1(M;\RR)\cong\operatorname{Hom}(P,\RR)\cong\RR^2$.
Finally $b_1(T^4)=4$. By \cref{thm:canonical-line}, all these manifolds are parallelizable.
Thus $\chi(M)=\sigma(M)=0$, and Poincar\'e duality gives
$b_2(M)=2b_1(M)-2$.
Combining this identity with the values of $b_1$ obtained above yields
\begin{equation}\label{eq:topology-betti}
 \bigl(b_1(M),b_2(M)\bigr)
 \in\{(1,0),(2,2),(3,4),(4,6)\}.
\end{equation}
It remains to prove
\begin{equation}\label{eq:topology-symplectic}
 \kappa\equiv0
 \iff b_2(M)>0
 \iff M\text{ admits a symplectic form}.
\end{equation}
The first equivalence follows from
\eqref{eq:topology-exhaustion} and \eqref{eq:topology-betti}.
For the second, a torus admits a constant symplectic form.
On a nontrivial circle bundle choose an oriented constant coframe
$\alpha$ and a connection of curvature
$F=c\,\alpha^1\wedge\alpha^2$, $c\ne0$. Define $\omega:=\lambda\wedge\alpha^1+\alpha^2\wedge\alpha^3$.
Then
\[
\begin{aligned}
 d\omega
 &=F\wedge\alpha^1=0,\\
 \omega^2
 &=2\lambda\wedge\alpha^1\wedge\alpha^2\wedge\alpha^3
 \ne0.
\end{aligned}
\]
Thus $\omega$ is a symplectic form.
On an axis quotient, \eqref{eq:axis-G-action} preserves
\[
\begin{aligned}
 \omega&:=dx\wedge dy-dt\wedge d\varphi,\\
 d\omega&=0,\qquad \omega^2=2\lambda\wedge\eta^{123}\ne0,
\end{aligned}
\]
where the last identity follows from
\eqref{eq:axis-universal-coframe}. Thus every flat branch is
symplectic. Conversely, on a closed four-manifold,
\[
 [\omega]=0\implies \omega=d\xi
 \implies \int_M\omega^2=\int_Md(\xi\wedge\omega)=0,
\]
contrary to nondegeneracy. Hence $b_2>0$, completing
\eqref{eq:topology-symplectic}.
\end{proof}

\begin{proof}[Proof of Theorem~\ref{thm:intro-compact}]
The classification and topological assertions follow from
\cref{prop:compact-exhaustion}; \cref{thm:closed-leaves-regular}
gives the final assertion on closed leaves.
\end{proof}

\appendix
\section{Positive scalar-affine dynamics}\label{app:scalar-dynamics}
The scalar-affine flow classification, including the vertical Hopf
model, is described in \cite[p.~408]{Rigal1998}. We give the
source--sink argument and smooth gluing needed here, keeping track
of the prescribed ordered transverse projective frame.

\begin{lemma}
\label{lem:nonatomic-scalar-holonomy}
An oriented scalar-affine flow of codimension $q\ge2$ on a closed
connected manifold, with positive scalar changes, is Riemannian if it admits a nonzero locally
finite holonomy-invariant transverse measure without atoms.
\end{lemma}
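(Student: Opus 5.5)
The plan is to construct an invariant transverse metric directly from the measure. In a foliated chart the flow has an affine transverse coordinate $x\in\RR^q$. The changes of chart are $x\mapsto ax+b$ with $a>0$. The given holonomy-invariant transverse measure $\mu$ is then a locally finite measure on each transversal, and it transforms naturally under these maps. Compare it with Lebesgue measure $dx$: under a change $x\mapsto ax+b$, Lebesgue measure scales by $a^q$, while $\mu$ is invariant. The aim is to show that $\mu$ is absolutely continuous with a positive smooth density. More realistically, the aim is to show that the scale factors $a$ along holonomy can be absorbed into a single global cocycle trivialization.

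First I use the measure to define a \emph{scale cocycle}. Consider the closed subgroup of scale factors $\{a\}$ occurring in the linear holonomy pseudogroup. If this subgroup is trivial, the Euclidean metric $\sum_i(dx^i)^2$ in affine coordinates is already holonomy invariant, and the flow is Riemannian. So suppose some holonomy germ $g(x)=ax+b$ has $a\ne1$; after inverting, take $a<1$. This germ is a contraction with fixed point $x_0=b/(1-a)$. Invariance of $\mu$ under $g$, applied to a small ball $B$ around $x_0$ on which the germ is defined, gives $\mu(B)=\mu(g^n B)$ for all $n$. The sets $g^nB$ shrink to $\{x_0\}$, so $\mu(B)=\mu(\{x_0\})=0$ because $\mu$ has no atoms. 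Hence $x_0$ lies outside the support of $\mu$, together with a neighbourhood of it.

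Next I propagate this vanishing. Since $\mu$ is holonomy invariant, its support is a closed saturated set $S\ne\emptyset$. The previous step shows that any leaf whose holonomy contains a nonunit scalar germ fixing a point lies in the complement of $S$. The main obstacle is the global dynamics. I must show that if scale factors $\neq1$ occur anywhere, they occur as holonomy germs with fixed points in every neighbourhood of points of $S$, or else that on the open saturated neighbourhood of $S$ all holonomy is translational. To handle this I restrict to the saturated closed set $S$, where $\mu$ has full support. There I consider the homomorphism from the holonomy groupoid to $\RR_{>0}$ given by the scale factor. This homomorphism is well defined because the changes are positive scalar-affine. I expect to argue that a nonunit scale on a loop based in $S$ contracts some transversal disc into itself, and hence produces a fixed point by the contraction mapping principle. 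That fixed point lies in $S$, since $S$ is closed and saturated and the iterates of points of $S$ converge to it, and this contradicts the previous paragraph. Thus all holonomy along leaves in $S$ is translational.

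Finally I conclude. On $S$ the transverse Euclidean metric $\sum(dx^i)^2$ is holonomy invariant. I then need to extend it to all of $M$, which is the second delicate point. My first attempt is to show that $S=M$, using connectedness together with the fact that the scale homomorphism is locally constant on the holonomy groupoid. A leaf closure meeting $S$ inherits trivial scale. If $S\ne M$, the frontier of the open complement is contained in $S$, and local constancy of the scale homomorphism near frontier leaves forces trivial scale on nearby leaves. I expect to show that the set of points with trivial scale holonomy is open and closed, and hence all of $M$. Then the Euclidean metric in affine coordinates is a global invariant transverse metric, so the flow is Riemannian.
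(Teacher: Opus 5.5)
Your proposal has two genuine gaps, and the first reflects a wrong target. In step 2 you treat a holonomy element $g(x)=ax+b$, $a\ne1$, as if it were defined on a ball around its formal centre $x_0=b/(1-a)$. In step 3 you hope some such element contracts a transverse disc into itself. Nothing in your argument controls the domains of pseudogroup elements: a holonomy map from $y$ to $y'\ne y$ is defined only near $y$, and its centre may lie outside every domain, even outside the developing image.

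This is not a technicality. The oriented axis models of \S\ref{sec:axis-proof} are positive scalar-affine, with holonomy $z\mapsto e^{u}z+(w_1,0,0)$, $u\ne0$, in the developing coordinates $z=(x,e^t\cos\varphi,e^t\sin\varphi)$ of \cref{lem:axis-all-poisson-equivalences}. They are Riemannian, and their transverse volume is a non-atomic invariant measure. The centres of these homotheties lie on the omitted axis. The invariant metric is $|z_\perp|^{-2}|dz|^2$, which is not the Euclidean metric of any compatible affine atlas, because the scale homomorphism is nontrivial. So nonunit scales are compatible with all the hypotheses, and no contradiction can be extracted from them. What must be proved is bounded, not trivial, scale distortion. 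What step 2 does prove correctly, namely that leaf holonomy germs at points of $S$ are trivial, is far too weak: trivial leaf holonomy on an open dense set of leaves does not give an invariant metric, as the Hopf model shows.

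The globalization step is also unsound. The claim $S=M$ is false in general. Take a linear flow on $T^4$ whose leaf closures are $2$-tori. One-dimensional Lebesgue measure along the transverse segment of a single leaf closure is then a non-atomic holonomy-invariant measure supported on that closure alone. Nor can local constancy make ``trivial scale'' an open and closed condition, because holonomy along long loops in nearby leaves need not shadow anything in the frontier leaf.

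The paper supplies both missing ingredients, following Ghys.
\begin{itemize}
\item It fixes finitely many generating holonomy maps on $T_0$ that extend to uniform $\varepsilon$-balls, and sets $m=\inf_{z\in S}\mu(B(z,\varepsilon))>0$.
\item It uses non-atomicity to choose $\delta$ with $\sup_z\mu(B(z,\delta))<m$.
\item By induction on word length, every word at a support point $x$ is defined on all of $B(x,\delta)$ and maps it onto a ball of radius less than $\varepsilon$. A larger image ball would contain an $\varepsilon$-ball centred in $S$, of mass at least $m$, whereas invariance gives it mass $\mu(B(x,\delta))<m$.
\item This yields $\delta/\varepsilon\le a(h)\le\varepsilon/\delta$, so one lifted leaf of $\mathcal F_T$ is relatively compact in $E_T$.
\item \cref{prop:rigal-input}(ii) then gives a global adapted transverse parallelism.
\end{itemize}
The measure is used only for quantitative bounds along one orbit. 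Rigal's theorem performs the local-to-global step that your open--closed argument cannot.
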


\begin{proof}
We prove the positive scalar-affine analogue of
\cite[Theorem~2.8, p.~131]{Ghys1991}, adapting the ball and
uniform-extension arguments of \cite[Lemmas~2.1--2.2, pp.~126--127]{Ghys1991}.
The leaf orientation and positive transverse changes orient $M$. Choose nested finite flow boxes and
let $T$ be the disjoint union of their outer affine transversals.
Shrinking the inner boxes gives $T_0\Subset T$ meeting every leaf
and finitely many generators
$g_i:D_i\to R_i$, $D_i,R_i\subset T_0$, of the restricted
pseudogroup on $T_0$, including inverses. Choose these as restrictions
of holonomy maps $\widetilde g_i$ defined on neighbourhoods of
$\overline{D_i}$ and taking values in the appropriate component of $T$.
Such a compactly extendible generating family follows by subdividing
leafwise paths using nested flow boxes; see the uniform-extension
construction in \cite[Lemmas~2.1--2.2]{Ghys1991}. In the affine
coordinates each $\widetilde g_i$ is a restriction of one map
$x\mapsto a_i x+b_i$, $a_i>0$.

Put $C_0=\overline{T_0}$. Because there are finitely many generators
and $C_0$ and all $\overline{D_i}$ are compact, we may choose
$\varepsilon>0$ such that every $\varepsilon$-ball centred on $C_0$
lies in its component of $T$ and, for each $z\in D_i$, the entire
ball $B(z,\varepsilon)$ is in the domain of $\widetilde g_i$ and its
image is in the target component of $T$.

Let $S=C_0\cap\operatorname{supp}\mu$. Holonomy invariance and the
fact that $T_0$ meets every leaf give $S\ne\varnothing$. Compactness
and the absence of atoms imply
\begin{equation}\label{eq:riem-measure-ball-bounds}
 m:=\inf_{z\in S}\mu(B(z,\varepsilon))>0,
 \qquad
 \lim_{r\downarrow0}\sup_{z\in C_0}\mu(B(z,r))=0.
\end{equation}
For the first assertion, take finitely many positive-mass balls of
radius $\varepsilon/4$ around points of $S$ whose concentric
$\varepsilon/2$-balls cover $S$. For the second, a contrary sequence
$z_n\to z$, $r_n\downarrow0$, with masses bounded below would give
an atom at $z$ by continuity from above in a relatively compact
neighbourhood. Choose $0<\delta<\varepsilon$ with
$\sup_{z\in C_0}\mu(B(z,\delta))<m$.

Fix $x\in T_0\cap\operatorname{supp}\mu$. We extend every admissible
word at $x$ to $B(x,\delta)$ by induction on its length. The induction
hypothesis is that a prefix is a holonomy map on the entire ball,
with image $B(x_j,r_j)$ contained in a component of $T$, and that
$r_j<\varepsilon$. This holds for the empty word. If the next letter
is $g_i$, then $x_j\in D_i$ and the choice of $\varepsilon$ makes
$\widetilde g_i$ a holonomy map on the whole ball $B(x_j,r_j)$, with
image in the target component of $T$. Its image is the entire ball
$B(x_{j+1},r_{j+1})$, since the map is a positive homothety.
Holonomy invariance also gives $x_{j+1}\in S$ and
\[
 \mu(B(x_{j+1},r_{j+1}))=\mu(B(x,\delta))<m.
\]
If $r_{j+1}\ge\varepsilon$, the image ball contains
$B(x_{j+1},\varepsilon)$, whose measure is at least $m$ by
\eqref{eq:riem-measure-ball-bounds}, a contradiction. This proves
the induction step without using the extension beyond its domain.
Consequently a word $h$ sends $B(x,\delta)$ to a ball of radius
$a(h)\delta<\varepsilon$.
Applying the same argument to $h^{-1}$ at $h(x)$ gives
\begin{equation}\label{eq:riem-holonomy-scale-bounds}
 \delta/\varepsilon\le a(h)\le\varepsilon/\delta.
\end{equation}
By \eqref{eq:riem-holonomy-scale-bounds}, the lifted leaf through an
adapted frame over $x$ has scale bounded above and away from zero
over $T_0$. Transport to $T_0$
from finitely many relatively compact leafwise neighbourhoods
multiplies these bounds by smooth positive functions with finite
maxima and positive minima. Relative to a smooth section of $E_T$,
the whole lifted leaf is therefore contained in
\[
 M\times[c,C]\subset E_T,\qquad 0<c<C<\infty.
\]
It is relatively compact, so \cref{prop:rigal-input}(ii) gives a
global adapted transverse parallelism and hence an invariant transverse metric.
\end{proof}

\begin{corollary}\label{cor:scalar-affine-ergodic-periodic}
In the non-Riemannian case of \cref{lem:nonatomic-scalar-holonomy}, every ergodic
invariant probability measure of a positive generator is supported
on a periodic orbit.
\end{corollary}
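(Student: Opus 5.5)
We argue by decomposing an ergodic invariant probability measure $\nu$ of a positive generator $X$ into the induced holonomy-invariant transverse measure. Fix the finite flow boxes and the total transversal $T_0$ from the proof of \cref{lem:nonatomic-scalar-holonomy}.

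\textbf{Step 1: from $\nu$ to a transverse measure.} Since $X$ is nowhere zero, $\nu$ disintegrates in each flow box as a product. One factor is Lebesgue measure along the plaques, because $X$-invariance forces translation invariance along the flow parameter. The other factor is a transverse measure $\mu_\nu$. Flow-box changes are holonomy maps, so these local transverse measures glue into a nonzero, locally finite, holonomy-invariant transverse measure $\mu_\nu$ on $T_0$. Ergodicity of $\nu$ under the flow translates into ergodicity of $\mu_\nu$ under the holonomy pseudogroup: every holonomy-saturated Borel set is $\mu_\nu$-null or conull.

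\textbf{Step 2: the dichotomy.} In the non-Riemannian case, \cref{lem:nonatomic-scalar-holonomy} shows that $\mu_\nu$ cannot be free of atoms. We decompose $\mu_\nu$ into its atomic and diffuse parts. Both parts are holonomy invariant, since holonomy maps are homeomorphisms and so send atoms to atoms. By ergodicity, one of them is zero. The diffuse part must vanish, since otherwise the lemma, applied to it, would make the flow Riemannian. Hence $\mu_\nu$ is purely atomic. Ergodicity then forces $\mu_\nu$ to be supported on a single holonomy orbit, that is, on the intersection of $T_0$ with one leaf $\ell$. Two atoms on different leaves would give disjoint saturated sets of positive measure.

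\textbf{Step 3: the leaf is periodic.} Invariance means that every point of $\ell\cap T_0$ carries the same mass $c>0$. Finiteness of $\mu_\nu$ on the compact $T_0$ then forces $\ell\cap T_0$ to be finite. A leaf meeting a complete transversal in finitely many points is a compact orbit, so $\ell$ is a periodic orbit. An orbit of infinite length would cross the finitely many flow boxes infinitely often, at infinitely many distinct points. Finally, $\nu$ is obtained from Lebesgue measure on $\ell$ times the counting atoms, so $\nu$ is the normalized arc-length measure on $\ell$, supported on a periodic orbit.

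\textbf{Main obstacle.} The main obstacle is Step 1. We need care that the transverse measure is locally finite and genuinely holonomy invariant, rather than merely invariant under the flow-box changes of a chosen cover. We also need that ergodicity passes to it. For this we would use the standard correspondence between flow-invariant measures and holonomy-invariant transverse measures for flows, and check local finiteness from the finiteness of $\nu$ on compact flow boxes of positive length.
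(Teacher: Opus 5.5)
Your proposal is correct and follows essentially the paper's argument. Both disintegrate $\nu$ as $dt\,d\mu$ into a locally finite holonomy-invariant transverse measure, exclude an atomless component by \cref{lem:nonatomic-scalar-holonomy}, and use equal atom masses along a holonomy orbit together with local finiteness to force the leaf to be periodic. The only difference is ordering: you apply ergodicity first to separate the atomic and diffuse parts, whereas the paper shows directly that any atom lies on a periodic orbit of positive, hence full, measure.
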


\begin{proof}
In flow coordinates an invariant probability disintegrates as
$dt\,d\mu$, with holonomy-invariant locally finite transverse measures.
If $\mu(\{x\})=m>0$, every holonomy translate $x_j$ of $x$ has mass
$m$. A nonperiodic leaf has infinitely many distinct intersections
$x_j$ in a relatively compact transversal near an accumulation point,
contradicting local finiteness since $\mu(\{x_1,\ldots,x_N\})=Nm$.
Hence each atom lies on a periodic
orbit, which has positive flow measure and therefore full measure
by ergodicity. If there is no atom, \cref{lem:nonatomic-scalar-holonomy}
instead implies Riemannianity.
\end{proof}

\begin{lemma}\label{lem:scalar-source-sink}
For an oriented positive scalar-affine flow on a closed connected
manifold of dimension at least three, suppose every ergodic invariant
probability is supported on a periodic orbit and no periodic orbit
has identity holonomy. Then there is exactly one
source and one sink, and all other orbits run between them.
\end{lemma}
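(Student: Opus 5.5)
First I establish hyperbolicity of periodic orbits. Let $\gamma$ be a periodic orbit. Its holonomy germ is a positive homothety $x\mapsto cx$ in adapted affine coordinates centred at $\gamma$. The hypothesis excludes $c=1$, so $\gamma$ is either transversely attracting ($c<1$) or repelling ($c>1$). Because the holonomy is an exact homothety on a whole ball, the basin of $\gamma$ contains an open tubular neighbourhood. Thus every periodic orbit is an isolated periodic orbit, and it is a genuine sink or source. By compactness there are only finitely many such orbits.

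Next I show that every orbit accumulates on periodic orbits in both time directions. Take any point $m$ and consider the empirical measures $\frac1T\int_0^T\delta_{\Phi_s m}\,ds$. Their weak limits are invariant probabilities. By hypothesis each ergodic component of such a limit is carried by a periodic orbit, and these are the finitely many sinks and sources. A source cannot carry mass of a forward limit unless $m$ lies on it, since orbits near a source leave a fixed neighbourhood after bounded time. Hence the forward empirical limits charge only sinks. I then want to upgrade this to convergence: the forward orbit eventually stays within the basin neighbourhood of some sink. My first attempt is this. Weak convergence gives a positive proportion of time spent in the union of the sink neighbourhoods. Once the orbit enters the open basin of a sink (a fixed-size tube where the homothety contracts), it converges to that sink. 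So $\omega(m)$ is a single sink, and symmetrically $\alpha(m)$ is a single source for nonperiodic $m$.

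Then I prove uniqueness of the sink and the source. The basins $W^s(\gamma)$ of the sinks are open, pairwise disjoint, and together cover $M$ minus the sources. The sources are finitely many circles, of codimension $q\ge 2$, so their complement is connected. Therefore there is exactly one sink. The same argument applied to the reversed flow gives exactly one source. Existence of at least one source comes from the fact that a sink's basin cannot be all of the closed manifold $M$. If it were, $M$ minus the sink would be an open set flowing into the sink tube, and I would contradict compactness via the backward limit argument. More concretely, backward empirical limits of a nonperiodic point must charge some periodic orbit, and that orbit cannot be a sink.

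The main obstacle is the second step: turning "ergodic measures live on periodic orbits" into pointwise convergence of every orbit. An orbit might spend asymptotically full time near sinks yet make rare excursions. I would handle this with the exact linear (homothetic) model near each periodic orbit. Fix a tube $U_\gamma$ in which the return map is $x\mapsto cx$ with $c<1$; any orbit entering the smaller tube $c^{1/2}U_\gamma$ never leaves $U_\gamma$ again. The positive-time-proportion statement forces an entry. The remaining care is to make sure, using the affine charts, that the tubes have a uniform size, so that this entry argument applies to every orbit.
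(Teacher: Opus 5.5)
The structure of your argument matches the paper's: periodic orbits are sources or sinks with isolating tubes, limit sets are single periodic orbits, and connectedness of the complement of finitely many circles of codimension $\ge2$ forces uniqueness.

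The gap is the sentence ``By compactness there are only finitely many such orbits.'' Compactness plus isolation only shows that periodic orbits cannot accumulate \emph{on a periodic orbit}. The isolating tubes need not have uniform size. So, a priori, infinitely many sinks and sources could accumulate on a compact invariant set that contains no periodic orbit, and nothing in your Step 1 excludes this.

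You need the measure hypothesis here as well. Suppose $C_1,C_2,\dots$ were distinct periodic orbits. Then $\Omega_\infty=\bigcap_{N}\overline{\bigcup_{n\ge N}C_n}$ is nonempty, compact and invariant. It therefore carries an ergodic invariant probability, so it contains a periodic orbit $C$. The isolating tube of $C$ is an open set meeting no periodic orbit other than $C$. But by the definition of $\Omega_\infty$, it meets $C_n$ for infinitely many $n$, a contradiction. This is exactly how the paper obtains finiteness.

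You genuinely need finiteness later. Connectedness of $M$ minus the sources in your Step 3 uses that the sources form a finite union of circles. Your concern about ``uniform size'' of tubes in the last paragraph also disappears once finiteness is established.

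Two smaller points. First, your reason that a source carries no mass of a forward empirical limit (``orbits near a source leave a fixed neighbourhood after bounded time'') is not right as stated. The exit time is not uniform near the source, and leaving once does not by itself prevent returning. What you need is that the source tube can be chosen with boundary crossed strictly outward. Then an orbit not on the source spends only finite time in the tube and never re-enters; this is the time reversal of your trapping property for sinks.

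Second, the empirical-measure detour in Step 2 is unnecessary. The paper applies the hypothesis directly to the compact invariant set $\omega(m)$, which must therefore contain a periodic orbit. That orbit cannot be a source, since there is no re-entry. A sink in $\omega(m)$ means the orbit enters its trapping tube and converges to it. With these repairs your argument goes through.
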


\begin{proof}
A periodic return map is $x\mapsto ax$, $a>0$, $a\ne1$. Its orbit
is therefore a source or sink, with an isolating tube whose boundary
the flow crosses strictly outward or inward. No other periodic
orbit meets this tube. Every nonempty compact invariant set supports
an ergodic probability and hence contains a periodic orbit. If there
were infinitely many distinct periodic orbits $C_n$, the set
\[
 \Omega_\infty:=\bigcap_{N\ge1}\overline{\bigcup_{n\ge N}C_n}
\]
would be nonempty, compact and invariant. A periodic orbit in
$\Omega_\infty$ has an isolating tube disjoint from all but at most one
$C_n$, contradicting the definition of $\Omega_\infty$. Thus the periodic
orbits are finite in number.

For a nonperiodic orbit $\gamma$, the compact invariant set
$\omega(\gamma)$ contains a periodic orbit. This orbit cannot be a
source: a repelling tube cannot be reentered after $\gamma$ leaves it.
Meeting a sink instead puts $\gamma$ in its trapping tube, so
$\gamma$ converges to that sink. Reversing time yields
\[
 \alpha(\gamma)=C_-,\qquad \omega(\gamma)=C_+,
 \qquad C_-\text{ a source},\quad C_+\text{ a sink}.
\]
Removing the finitely many periodic circles leaves a connected
manifold, because their codimension is at least two. The complement
is a disjoint union of open sets, one for each source--sink pair.
Only one is nonempty. Every periodic orbit has nonperiodic neighbours,
so there is exactly one source and one sink.
\end{proof}

\begin{lemma}\label{lem:scalar-hopf-gluing}
An oriented positive scalar-affine flow on a closed connected
$(q+1)$-manifold, $q\ge3$, with one source, one sink and all other
orbits running between them is smoothly equivalent, as an oriented
one-dimensional foliation, to the vertical foliation on
$(\RR^{q+1}\setminus0)/\langle a\Id\rangle$, $a>1$.
The equivalence preserves the ordered transverse projective frame.
\end{lemma}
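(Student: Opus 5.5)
We first linearize near the source and the sink, then glue the two local models along the wandering region and compare the result with the Hopf model.

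\emph{Local models near the periodic orbits.} Let $C_-$ be the source and $C_+$ the sink. Each has return map $x\mapsto a_\pm x$ in adapted affine coordinates, with $a_-$ expanding and $a_+$ contracting. Taking the affine chart along a flow box of $C_\pm$ and suspending the return map, we obtain an open neighbourhood $U_\pm$ of $C_\pm$. It is identified with the suspension $(\RR\times\RR^q)/\langle(s,x)\sim(s+1,a_\pm x)\rangle$. The identification carries the foliation to the $s$-lines and the prescribed transverse frame to the coordinate frame up to a positive scalar. Since the affine charts are adapted, this scalar is a common factor for all frame vectors. Each such suspension is in turn equivalent to a neighbourhood of a periodic leaf in the vertical Hopf foliation on $(\RR^{q+1}\setminus0)/\langle a\Id\rangle$. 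Near the $t$-axis the Hopf holonomy is exactly $x\mapsto a^{\pm1}x$, and the transverse frame is $\partial_{x^i}$ up to a positive scalar.

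\emph{The wandering region and the matching of holonomy.} Let $\Sigma_-=\partial U_-$, a transverse $S^{q-1}$-bundle over $S^1$, chosen as the image of a round tube. Every nonperiodic orbit crosses $\Sigma_-$ once and $\Sigma_+$ once. The resulting holonomy $\Sigma_-\to\Sigma_+$ is a diffeomorphism, and in local affine charts it is positive scalar-affine. Since $M\setminus(U_-\cup U_+)\cong\Sigma_-\times[0,1]$ by the flow, $M$ is the union of the two solid tori $S^1\times D^q$ glued along the boundary by this holonomy map, and the foliation is determined by that gluing.

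\emph{Key step and main obstacle: identifying the gluing with the Hopf gluing.} We would use the developing map of the affine structure on the universal cover of $M\setminus C_+$. The lift of $U_-$ develops onto a neighbourhood of $0\in\RR^q$ suspended, and flowing forward extends this to a map of the whole basin of $C_-$. Its transverse image in each leaf space is $\RR^q\setminus\{0\}$ modulo $x\mapsto a_-x$, because the leaf space of the basin minus $C_-$ is a quotient of a punctured neighbourhood of $0$. Applying the same argument from the sink side identifies the same leaf space with $(\RR^q\setminus0)/\langle a_+\rangle$. The transition between these two descriptions is a scalar-affine map commuting with the dilations that fixes $0$, hence linear scalar; this gives $a_-=a_+^{-1}=:a$. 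This is precisely the leaf-space structure of the Hopf model, whose nonperiodic leaves are parametrized by $S^{q-1}\times\RR_{>0}$ modulo $a$, i.e. by $\RR^q\setminus0$ minus nothing.

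We expect two difficulties here. First, we must check that the developing map on the basin is injective. We would argue this using $q\ge3$: then $S^{q-1}$ is simply connected, so the punctured leaf space has fundamental group $\ZZ$, generated by the dilation, and there is no extra monodromy to kill. Second, we must check that the positive scalar in the frame matches. It is a smooth positive function and can be absorbed because the statement only concerns the projective frame.

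\emph{Conclusion.} With the holonomies identified, we build the equivalence leaf by leaf. The identifications on $U_\pm$ and on the product region agree transversally, since all of them are built from the one developing map. Along the leaves, any orientation-preserving reparametrization is allowed, because the equivalence is only of oriented foliations. We glue with a partition of unity in the flow-time coordinate to obtain a smooth diffeomorphism to the Hopf model. Because it is transversally the developing map, it preserves the ordered transverse projective frame.
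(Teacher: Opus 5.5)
Your proposal is correct and follows essentially the paper's route: tube models around the source and sink, lifting the transit map between the tube boundaries to $\RR^q\setminus\{0\}$ (this is where $q\ge3$ enters), rigidity forcing the lift to be $x\mapsto cx$, deck equivariance forcing equal dilation factors, and gluing to the two Hopf tubes across the product region $\Sigma_-\times[0,1]$. The paper is more explicit at two points you gloss over. It takes the tube as $\{b^{-s}|x|\le\varepsilon\}$, since a round tube in flow-box coordinates has boundary made of leaves. It also interpolates the positive $u$-derivatives of the prescribed collar maps, adding a normalizing middle term, because a partition-of-unity combination of the maps themselves need not remain increasing.
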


\begin{proof}
A periodic return map is $x\mapsto b^{-1}x$, $b>0$, $b\ne1$.
Choose a circle-valued phase near the periodic orbit. Its derivative
along a positive generator is positive after shrinking the tube;
dividing the generator by this derivative gives unit phase speed
and unit return time. The tube is thus a neighbourhood of the zero
section in
\begin{equation}\label{eq:riem-periodic-tube}
 E_b=(\RR_s\times\RR_x^q)/((s,x)\sim(s+1,bx)),
 \qquad\mathcal F=\RR\partial_s.
\end{equation}
The identification is smooth and preserves the transverse affine
charts. The function
\[
 r=e^{-s\log b}|x|,\qquad \partial_s r=-(\log b)r,
\]
is well defined on $E_b$. Thus $U_b=\{r\le\varepsilon\}$ is a
compact tube with transverse boundary, and
\[
 s=\frac{\log(|x|/\varepsilon)}{\log b},\qquad
 \partial U_b\cong
 (\RR^q\setminus\{0\})/\langle b\Id\rangle.
\]
Since $q\ge3$, the punctured affine space is the universal cover
of $\partial U_b$.

Choose disjoint source and sink tubes. Every other orbit meets each
boundary exactly once. The transit map $\mathscr T$ is a smooth
diffeomorphism and locally a holonomy map. Since both punctured
affine spaces are simply connected, $\mathscr T$ lifts to a diffeomorphism
$F:\RR^q\setminus\{0\}\to\RR^q\setminus\{0\}$. Around every point,
$F$ agrees with a scalar-affine transformation: the transit map is
locally holonomy, and changes of the lifted affine charts are scalar
affine. The coefficients of two such expressions agree wherever
their domains overlap, because both expressions represent the same
lift $F$ there. Connectedness of the punctured space therefore gives
a single global scalar-affine expression. Consequently
\[
 F(x)=cx+d,\qquad c>0,\qquad
 F(\RR^q\setminus\{0\})=\RR^q\setminus\{0\},
\]
so $d=0$. Put $a_\pm=\max(b_\pm,b_\pm^{-1})>1$.
Equivariance under the generators of the two infinite cyclic deck
groups gives
\begin{equation}\label{eq:riem-transit-dilation}
\begin{gathered}
 ca_-x=a_+^{\epsilon}cx,\qquad\epsilon\in\{1,-1\},\\
 \epsilon=1,\qquad a_-=a_+=a.
\end{gathered}
\end{equation}

The vertical Hopf model has the two tube models
\eqref{eq:riem-periodic-tube} with $b=a,a^{-1}$. For $k=\log a$,
their embeddings are
\[
 E_a\longrightarrow M_a,\quad(s,x)\longmapsto[e^{ks},x],
 \qquad
 E_{a^{-1}}\longrightarrow M_a,\quad(s,x)\longmapsto[-e^{-ks},x].
\]
They preserve leaf orientation and identify $r\le\varepsilon$ with
$|x|\le\varepsilon|t|$. The Hopf transit map is the identity in $x$.
Multiplying one tube's transverse coordinate by $c$ matches $F$ in
\eqref{eq:riem-transit-dilation} and extends smoothly over the zero
section. The tube radii may differ; a prescribed boundary is preserved
by combining this scaling with $s\mapsto s+\log c/\log b$.

Let $\Sigma_-$ be the source boundary and $\ell:\Sigma_-\to(0,\infty)$ its smooth
hitting time at the sink boundary. The intervening compact region is
\[
 \Sigma_-\times[0,1]\longrightarrow M,
 \qquad (z,u)\longmapsto\Phi^{u\ell(z)}(z).
\]
The tube maps prescribe increasing interval maps on endpoint collars,
smooth in $z$ and taking the endpoints to $0,1$. To extend them,
choose uniform small collars and cutoffs $\chi_0,\chi_1$, equal to
one on smaller collars and supported in the respective endpoint
collars. Let $p_0(z,u),p_1(z,u)>0$ denote the $u$-derivatives of
the prescribed maps on those collars, and choose a fixed nonnegative $\chi_*$
supported in the middle with $\chi_0+\chi_1+\chi_*>0$.
By compactness of $\Sigma_-$, shrinking the collars gives
\[
 A(z):=\int_0^1(\chi_0p_0+\chi_1p_1)\,du<1.
\]
Define
\begin{equation}\label{eq:riem-collar-interpolation}
\begin{aligned}
 p(z,u)&:=\chi_0p_0+\chi_1p_1
       +\frac{1-A(z)}{\int_0^1\chi_*\,du}\chi_*,\\
 F_z(u)&:=\int_0^u p(z,v)\,dv.
\end{aligned}
\end{equation}
Then $p>0$, $F_z(0)=0$ and $F_z(1)=1$. On each smaller endpoint
collar, \eqref{eq:riem-collar-interpolation} has the prescribed
derivative and endpoint value, hence equals the prescribed map.
Thus $(z,u)\mapsto(z,F_z(u))$ glues smoothly with all endpoint
derivatives matched and preserves the transverse coordinate, hence
the ordered transverse projective frame.
\end{proof}

\end{document}